\documentclass[11pt]{amsart}
 \usepackage{amssymb,enumerate,mathtools}

 \theoremstyle{plain}
\newtheorem{theorem}{Theorem}[section]
 \newtheorem{cor}[theorem]{Corollary}
 \newtheorem{lem}[theorem]{Lemma}
  \newtheorem{lemma}[theorem]{Lemma}
 \newtheorem{rem}[theorem]{Remark}
 
  \newtheorem{defi}[theorem]{Definition}
   \newtheorem{definition}[theorem]{Definition}
 \newtheorem{theo}[theorem]{Theorem}
 \newtheorem{thm}[theorem]{Theorem}
  \newtheorem{pro}[theorem]{Proposition}

 \newtheorem*{ackn}{Acknowledgements}

  \theoremstyle{plain}
\newtheorem*{namedthm}{\namedthmname}
\newcounter{namedthm}

\makeatletter
\newenvironment{named}[1]
  {\def\namedthmname{#1}%
   \refstepcounter{namedthm}%
   \namedthm\def\@currentlabel{#1}}
  {\endnamedthm}

 \newcommand{\R}{\mathbb R}
 \newcommand{\bR}{\mathbb R}
 \newcommand{\Q}{\mathbb Q}
 \newcommand{\C}{\mathbb C}
 \newcommand{\N}{\mathbb N}

 \newcommand{\Pc}{\mathcal{P}}

 \newcommand{\e}{\varepsilon}
 \newcommand{\f}{\varphi}
 
 \newcommand{\p}{\psi}

 \newcommand \psh {plurisubharmonic }
 \newcommand{\PSH} {{\rm PSH}}
 \newcommand{\Tr} {{\rm Tr}}

 \newcommand \Sub {\Subset}

 \newcommand \mc   {\mathcal}

 \newcommand{\Ric}{{\rm Ric}}
 
  \newcommand{\Vol}{{\rm Vol}}
  \newcommand{\loc}{{\rm loc}}
  \newcommand{\capacity}{{\rm Cap}}

 \newcommand{\setdef}{\, ; \, }
 \numberwithin{equation}{section}
 
\setcounter{tocdepth}{1}

\usepackage{color}

\usepackage{hyperref}
\hypersetup{
    bookmarks=true,         
    unicode=false,          
    pdftoolbar=true,       
    pdfmenubar=true,       
    pdffitwindow=false,    
    pdfstartview={FitH},   
    pdftitle={Pluripotential K\"ahler-Ricci flows},    
    pdfauthor={V. Guedj, C.H. Lu, A. Zeriahi},     
    colorlinks=true,       
   linkcolor=black,          
    citecolor=black,        
    filecolor=black,      
    urlcolor=black}           

 \begin{document}

%

\author{Vincent Guedj}
\address{Vincent Guedj, Institut de Math\'ematiques de Toulouse  \\ 
Universit\'e de Toulouse, CNRS \\
UPS, 118 route de Narbonne \\
31062 Toulouse cedex 09, France}
\email{vincent.guedj@math.univ-toulouse.fr}
\urladdr{\href{https://www.math.univ-toulouse.fr/~guedj}{https://www.math.univ-toulouse.fr/~guedj/}}

\author{Chinh H.  Lu}
\address{Hoang-Chinh Lu, Laboratoire de Math\'ematiques d'Orsay,
 Univ. Paris-Sud,
 CNRS, Universit\'e Paris-Saclay,
  91405 Orsay, France}
\email{hoang-chinh.lu@math.u-psud.fr}
\urladdr{\href{https://www.math.u-psud.fr/~lu/}{https://www.math.u-psud.fr/~lu/}}

\author{Ahmed Zeriahi}
\address{Ahmed Zeriahi, Institut de Math\'ematiques de Toulouse,   \\ Universit\'e de Toulouse, CNRS \\
UPS, 118 route de Narbonne \\
31062 Toulouse cedex 09, France}
\email{ahmed.zeriahi@math.univ-toulouse.fr}
\urladdr{\href{https://www.math.univ-toulouse.fr/~zeriahi/}{https://www.math.univ-toulouse.fr/~zeriahi/}}

\thanks{The authors are partially supported by the ANR project GRACK}
  
%
%
\keywords{Parabolic Monge-Amp\`ere equation,  pluripotential solution, Perron envelope, K\"ahler-Ricci flow}

\subjclass[2010]{53C44, 32W20, 58J35}

\title[Pluripotential K\"ahler-Ricci flows]{Pluripotential K\"ahler-Ricci flows}
\date{\today}

 \begin{abstract}  
We develop   a parabolic pluripotential theory
on compact K\"ahler manifolds, defining and studying
weak solutions to degenerate parabolic complex Monge-Amp\`ere equations.
We provide
a parabolic analogue of the celebrated Bedford-Taylor theory
and apply it to the study of the K\"ahler-Ricci flow on varieties with log terminal singularities. 
\end{abstract}

 \maketitle

\tableofcontents

\section*{Introduction}

The Ricci flow, first introduced by Hamilton \cite{Ham82} is the equation
$$
\frac{\partial}{\partial t} g_{ij}=-2R_{ij},
$$
evolving a Riemannian metric by its Ricci curvature. If the Ricci flow starts from a K\"ahler metric -the underlying Riemannian manifold being complex K\"ahler-, the evolving metrics remain K\"ahler and the resulting PDE is called the K\"ahler-Ricci flow.

After the spectacular use of the Ricci flow by Perelman to settle the Poincar\'e  and Geometrization conjectures, 
it is expected that the K\"ahler-Ricci flow can be used similarly  to give a geometric classification of 
complex algebraic and K\"ahler manifolds, and produce canonical metrics at the same time.

Understanding the existence of canonical K\"ahler metrics
 on compact K\"ahler manifolds has been a central question in the last fourty years,
 following Yau's solution to the Calabi conjecture \cite{Yau78}.
The K\"ahler-Ricci flow provides a canonical deformation process towards such metrics,
as shown by the works of many authors (see e.g. \cite{Cao85,PSSW08,PSSW09,ST12,SW13,SSW13,CT15,TZ15,CS16,BBEGZ} and the references therein).

Writing locally $g_{ij}=\psi_{ij}=\partial_i\bar{\partial}_j\psi$,  it is classical that the K\"ahler-Ricci flow can be reduced to a nonlinear parabolic scalar equation in $\psi$, of the form
$$
 \det \left( \psi_{ij} \right) =e^{ \dot{\psi}_t + H(t,x)+ \lambda \psi_t },
$$
where $H$ is a smooth density, and $\lambda \in \R$ depends on $c_1(X)$.

The classification of complex algebraic manifolds
requires to work on singular varieties, as advocated by the Minimal Model Program.
Defining the K\"ahler-Ricci flow on midly singular projective varieties was undertaken 
by Song-Tian \cite{ST17}
and requires a theory of weak solutions for  degenerate parabolic complex Monge-Amp\`ere equations,
where $\psi$ is no longer smooth and $H$ can blow up.

A parabolic viscosity approach has been developed in \cite{EGZ16}.
It applies to the K\"ahler context, but requires the   densities to be continuous.
This enabled one  to study the
behavior of the 
K\"ahler-Ricci flow on 
minimal models with positive Kodaira dimension and canonical singularities \cite{EGZ16b}.

While both the approach of Song-Tian and the viscosity one permit a good understanding of
the first singular situations encountered in the Minimal Model Program, 
one needs to extend these theories in order to treat  the fundamental case of
K\"ahler pairs with Kawamata log terminal (klt) singularities.
 {\it This is the main objective of the present work.}
 
\smallskip

From an analytic point of view klt singularities lead one to deal with densities that  may blow up, though belonging to $L^p$ for some exponent $p>1$ whose size is related
to the algebraic nature of the singularities.

 
We develop in this article a parabolic pluripotential approach to the  complex Monge-Amp\`ere flows
\begin{equation}\label{eq:CMAF}  \tag{CMAF}  
 (\omega_t +i \partial \overline{\partial} \f_t)^n=e^{\dot{\f}_t + F (t,x,\f) } g(x) dV (x), 
\end{equation}
in $X_T :=]0,T[ \times X$, where $T \in ]0,+\infty]$ and
  \begin{itemize}
 \item $X$ is a compact K\"ahler $n$-dimensional manifold.
  \item $t \mapsto \omega(t,x)$ is a ${\mathcal C}^2$-family of  closed  semi-positive $(1,1)$-forms 
  such that  $ \theta(x) \leq \omega_t(x) $,
where $\theta$ is a closed semi-positive big form 
with
$$
-A \omega_t \leq \dot{\omega}_t \leq \frac{A}{t} \omega_t
\; \; \text{ and } \; \; 
\ddot{\omega}_t \leq A \omega_t
$$
for some fixed constant $A>0$;
  \item $(t,x,r) \mapsto F (t,x,r)$ is continuous in $[0,T[ \times X \times \R$, quasi-increasing  in $r$,
  locally uniformly Lipschitz and semi-convex  in $(t,r)$;
  \item $g \in L^p(X,dV)$, $p>1$, with $g>0$ almost everywhere;
  \item   $\f : [0,T[ \times X \rightarrow \R$ is the unknown function, with $\f_t: = \f (t,\cdot)$. 
   \end{itemize}
Here $d V$ is a fixed normalized volume form on $X$. 

\smallskip
 
 We introduce a notion of pluripotential solutions to such equations, 
 a parabolic analogue of the theory developed by Bedford and Taylor in their celebrated articles \cite{BT76,BT82}.
 
 We interpret the above parabolic equation on $X$ as a  second order PDE
 on the $(2n+1)$-dimensional manifold $X_T$ :
 \begin{itemize}
 \item the LHS becomes a positive Radon measure
 $(\omega_t +dd^c \f_t)^n \wedge dt$, which is well defined for  paths $t \mapsto \f_t$ of bounded
 $\omega_t$-psh functions \cite{BT82}, 
 \item the RHS  $e^{\dot{\f}_t + F (t,x,\f) } g(x) dV(x) \wedge dt$
 is a well-defined Radon measure if $t \mapsto \f_t(x)$ is (locally) uniformly Lipschitz.
 \end{itemize}
 It is useful in practice to allow the Lipschitz constant to blow up as $t$ approaches zero, so
we introduce the corresponding class $ \mathcal P (X_T, \omega)$ of {\it parabolic potentials} (see Definition \ref{defi:parabpot}).

We develop the local side of this theory in \cite{GLZparab1} 
by a direct approach, taking advantage of the euclidean structure of $\C^n$.
We approximate here  (\ref{eq:CMAF}) by smooth complex Monge-Amp\`ere flows and establish various a priori estimates
to prove our first main result :

\begin{named}{Theorem A}
	\label{main Thm A}
Let $\f_0$ be a bounded $\omega_0$-psh function.
There exists a parabolic potential $\f \in \mathcal P (X_T, \omega)$ such that
\begin{itemize}
\item $(t,x) \mapsto \f(t,x)$ is  locally  bounded in $[0,T[ \times X$;
\item $(t,x) \mapsto \f(t,x)$ is continuous   in $]0,T[ \times \rm{Amp}(\theta)$;
\item $t \mapsto \f_t$ is locally uniformly semi-concave in $]0,T[ \times X$;
\item $\f$ is a pluripotential solution to \eqref{eq:CMAF};
\item $\f_t \rightarrow \f_0$ as $t \rightarrow 0^+$ in $L^1(X)$ and pointwise.
\end{itemize}
\end{named}

Here $\rm{Amp}(\theta)$ denotes the ample locus of $\theta$, i.e. the largest Zariski open subset of $X$ where
the cohomology class of $\theta$ behaves like a K\"ahler class.

It turns out that $t \mapsto \f_t(x)-n(t \log t-t)+Ct$ is increasing for some fixed $C>0$.
The convergence at time zero is therefore rather strong (it is e.g. uniform in case 
$\f_0$ is continuous).

\smallskip

The semi concavity information of the solution $\f$ constructed in Theorem A is a crucial tool for approximation purpose (see Theorem \ref{thm:conv1}). We show that it is the unique pluripotential solution with such time-regularity, by establishing
the following comparison principle :

\begin{named}{Theorem B}  
	\label{main Thm B}
 If $\varphi\in \Pc(X_T,\omega)$ is a bounded pluripotential subsolution to \eqref{eq:CMAF} and $\psi \in \Pc(X_T,\omega)$ is a bounded pluripotential supersolution which is locally uniformly semi-concave in $t$ then  
	$$
	\varphi_0 \leq \psi_0 \Longrightarrow \varphi \leq \psi.
	$$
In particular there is a unique bounded pluripotential solution $\Phi(g,F,\omega_{t},\f_{0})$
to \eqref{eq:CMAF} which is locally uniformly semi-concave in $t$. 
\end{named}

This comparison principle also allows us to establish the following stability result which   generalizes
\cite[Theorem B]{GLZstability} :

\begin{named}{Theorem C}  
	\label{main Thm C}
Assume
\begin{itemize}
\item $(g_j)$ are  densities which converge to $g$ in $L^p$,
\item $F_j$ converges to $F$ with uniform constants;
\item   $\omega_{t,j}$ are smooth semi-positive forms smoothly converging to $\omega_t$,
\item $\f_{0,j}$ are bounded $\omega_{0,j}$-psh functions converging in $L^1(X,dV)$ to $\f_0$. 
\end{itemize}
Then  $\Phi(g_j,F_j,\omega_{t,j},\f_{0,j})$ locally uniformly converges to 
$\Phi(g,F,\omega_{t},\f_{0})$.
\end{named}

It is  delicate to compare pluripotential and viscosity concepts in general.
We refer the interested reader to \cite{GLZparab3} where we prove, when $g$ is continuous, 
that the viscosity solution constructed in \cite{EGZ16}
coincides with the pluripotential solution $\Phi(g,F,\omega_{t},\f_{0})$. 

\smallskip

The present pluripotential approach allows us to deal with non continuous data.
We can, in particular, define a
good notion of weak K\"ahler-Ricci flow on varieties with terminal singularities
(and more generally on k.l.t. pairs), as we explain in section \ref{sec:lt},
where we prove the following :

\begin{named}{Theorem D}
	\label{main Thm D}
	Let $(Y,\omega_0)$ be a compact $n$-dimensional K\"ahler variety
	with log terminal singularities and trivial first Chern class
	($\Q$-Calabi-Yau variety).
	
	Fix $S_0$ a positive closed current with bounded potentials,
 whose cohomology class is K\"ahler.
 The  K\"ahler-Ricci flow 
 $$
 \frac{\partial \omega_t}{\partial t}=-{\rm Ric}(\omega_t)
 $$
 exists for all times $t>0$, and deforms $S_0$ towards 
the unique Ricci flat K\"ahler-Einstein current 
 $\omega_{KE}$ cohomologous to $S_0$, as $t \rightarrow +\infty$.
\end{named}

This extends previous results of \cite{Cao85,Tsu88,TZ06}, avoiding any projectivity assumption on $X$
 \cite{ST17}, nor any restriction on the type of singularities \cite{EGZ16,EGZ16b}.
We refer the reader to section \ref{sec:lt} for  much more general and precise results.

 \subsection*{Assumptions on the data and notations} \label{subsect: ass data uniqueness}
 
 \subsubsection*{Assumptions on the manifold}
  In the whole article we let $X$ be a compact K\"ahler $n$-dimensional manifold.
  We   fix $T\in ]0,+\infty]$.
Except for section \ref{sec:lt} we are mainly concerned with finite time intervals, i.e. $T<+\infty$, and we implicitly assume
that our data are possibly defined in a slightly larger time interval, i.e. on
$(0,T+\e)$ for some $\e>0$.

We let $X_T$ denote the $(2n+1)$-dimensional manifold $X_T=]0,T[ \times X$ with parabolic boundary
$$
\partial X_T:=\{0\} \times X.
$$

We fix $\theta$ a smooth closed semipositive $(1,1)$-form  whose cohomology class is big,
i.e. contains a (singular) positive closed current of bidegree $(1,1)$ which dominates a K\"ahler form.
We let $\Omega$ denote the ample locus of $\theta$,
$$
\Omega:={\rm Amp}(\theta),
$$
which is a non empty Zariski open subset of $X$.

\subsubsection*{Assumptions on the forms}
   
  We assume throughout the article that
 $(\omega_t)_{t\in [0,T[}$ is a $\mathcal{C}^2$-smooth family of closed semipositive $(1,1)$-forms on $X$ satisfying
 \begin{equation*}
	\theta \leq \omega_t 
\end{equation*}
for all $t\in [0,T[$.
For finite times we can also assume without loss of generality that $\omega_t \leq \Theta$
for some  K\"ahler form $\Theta$.

By the end of Section \ref{sec:apriori} we need to assume that 
$t \mapsto \omega_t$ moreover satisfies
 \begin{equation*}
 	\ddot{\omega}_t \leq A\omega_t ,  
 \end{equation*}
 and 
 \begin{equation}
 	\label{eq:omegatLip}
 -A \omega_t \leq  	\dot{\omega}_t \leq A \omega_t, 
 \end{equation}
for some constant $A>0$.
The lower bound in \eqref{eq:omegatLip} is equivalent to the fact that $t \mapsto e^{+At} \omega_t$ is increasing.
 In particular
$$
\omega_{t+s} \geq e^{-As} \omega_t \geq (1-As) \omega_t, \ s>0.
$$
The latter will be used on several occasions in the sequel.

\subsubsection*{Assumptions on the densities}

 We assume throughout the article that
\begin{itemize}
\item $dV$ is a fixed volume form on $X$; 
\item $0\leq g \in L^p(X,dV)$ for some $p>1$, and ${\rm Vol}(\{g=0\})=0$,
\item $(t,x,r) \mapsto F(t,x,r)$ is a continuous function on $[0,T[ \times X\times \mathbb{R}$; 
\item $r \mapsto F(\cdot,\cdot,r)$ is uniformly quasi-increasing, i.e. there exists a constant  $\lambda_F\geq 0$ such that
 for every $(t,x) \in [0,T[ \times X$, the function 
\begin{equation}
	\label{eq:Fincreasing}
r \mapsto  F(t,x,r)+ \lambda_F r \ \text{is increasing in }   \mathbb{R}. 
\end{equation}
\item $(t,r) \mapsto F(t,\cdot,r)$  is locally uniformly  Lipschitz, i.e. for all $J\Subset [0,T[ \times \mathbb{R}$ there is $\kappa_J>0$ such that
 for every $x\in X$, $(t,r)$, $(t',r') \in J$,  
\begin{equation}
	\label{eq:FLip}
	|F(t,x,r) -F(t',x,r')| \leq \kappa_J( |t-t'| + |r-r'|); 
\end{equation}
 \item $(t,r) \mapsto F(t,x,r)$  is locally uniformly  semi-convex, i.e. for every compact $J\Subset [0,T[\times \mathbb{R}$ there exists   $C_J>0$ such that
 for every $x\in X$,  
\begin{equation} \label{eq:FconvIntro}
(t,r) \mapsto  F(t,x,r)+ C_J (t^2+r^2) \ \text{is convex in} \ J. 
\end{equation}
 \end{itemize}
 
 Note that if $F$ is ${\mathcal C}^2$-smooth then the local conditions \eqref{eq:FLip} and \eqref{eq:FconvIntro}  are automatically satisfied, while \eqref{eq:Fincreasing} is a global assumption.

 \subsubsection*{Invariance properties of the set of assumptions}
 
We check in section \ref{sec:checkgeom} that the above conditions 
 are   satisfied for the parabolic equations that describe the evolution of
 the normalized (as well as the non-normalized) K\"ahler-Ricci flow on
 a mildly singular K\"ahler variety.

 The family of parabolic complex Monge-Amp\`ere equations we consider
  enjoy several useful invariance properties. We refer the reader to section \ref{sec:invariance}
  for more details.

\subsection*{Organization of the paper} 
We describe the class of potentials we are using in 
Section \ref{sec:recap} and define parabolic complex Monge-Amp\`ere operators in 
Section \ref{sec:operator}.
We establish fundamental a priori estimates in Section \ref{sec:apriori},
which are then used to prove \ref{main Thm A} in Section \ref{sec:existence}.
We study uniqueness and stability of pluripotential solutions in Section \ref{sec: uniqueness}, establishing  \ref{main Thm B}
and \ref{main Thm C}.
In Section \ref{sec:lt} we use these tools to study the long term behavior of the
normalized K\"ahler-Ricci flow on varieties with log terminal singularities and non-negative Kodaira dimension,
proving \ref{main Thm D} and several other convergence results.

\begin{ackn} 
This work is a natural continuation of \cite{EGZ16,EGZ16b}. 
We thank Philippe Eyssidieux for many useful discussions.
\end{ackn}

\section{Parabolic potentials and Monge-Amp\`ere operators}

\subsection{Families of quasi-plurisubharmonic functions} \label{sec:recap}

\subsubsection{Compactness properties}

 Recall that a function $u:X \rightarrow [- \infty , + \infty[$ is $\omega_t$-plurisubharmonic
 ($\omega_t$-psh for short), if it is locally given as the sum of a smooth and a 
 plurisubharmonic function
 and the current
 $$
 \omega_t+dd^c u \geq 0
 $$
is positive on $X$. Here $d=\partial+\overline{\partial}$
and  $d^c=i(\overline{\partial}-\partial)$ are both real operators.
 
 \begin{defi} \label{defi:parabpot}
The set of parabolic potentials  $ \mathcal P (X_T, \omega)$ is the set of functions  
$ \f :  ]0,T[ \times  X \longrightarrow [- \infty , + \infty[$  such that
\begin{itemize}
 \item $x \mapsto \f (t,x)$ is $\omega_t$-plurisubharmonic on $X$, for all $t  \in ]0,T[$,
 \item  $\f$ is locally  uniformly  Lipschitz in $]0,T[$.
\end{itemize}  
\end{defi}

The last condition means that for any compact subset $J \subset ]0,T[$ there exists  $\kappa = \kappa_J(\f)> 0$
  such that 
 \begin{equation} \label{eq:Lip}
  \f (t,x)  \leq  \f (s,x)  + \kappa \vert t - s\vert,
\text{  for all } s, t \in J
\text{ and } x \in {X}.
 \end{equation}

We  say that a family $\Phi \subset \mathcal P (X_T, \omega)$ is locally uniformly Lipschitz in $]0,T[$ if the inequality 
(\ref{eq:Lip}) is satisfied for all $\f \in \Phi$ with a uniform constant $\kappa = \kappa (J,\Phi) > 0$ 
which only depends  on $J$ and $\Phi$.

\smallskip

A parabolic potential $\varphi \in \Pc(X_T,\omega)$ can be extended as an upper semicontinuous function on $[0,T[\times X$ with $\omega_t$-psh slices. 

\begin{pro}
	\label{pro: usc}
	Assume $\f_0$ is $\omega_0$-psh and 
	$\f \in \Pc(X_T,\omega)$ satisfies $\f_t {\rightarrow} \f_0$ in $L^1$, as $t \rightarrow 0$.
 Then the extension $\f: [0,T[ \times X \rightarrow [-\infty,+\infty[$ is upper semi-continuous. 
\end{pro}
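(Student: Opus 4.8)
The plan is to establish upper semicontinuity of the extension separately on the open part $]0,T[\times X$, where it follows from soft considerations, and at the parabolic boundary $\{0\}\times X$, which is where all the content lies.

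For an interior point $(t_0,x_0)$ with $t_0>0$, I would pick a compact interval $J\subset\,]0,T[$ having $t_0$ in its interior, let $\kappa=\kappa_J(\f)$ be the Lipschitz constant from \eqref{eq:Lip}, so that $|\f(t,x)-\f(t_0,x)|\le\kappa|t-t_0|$ for $t\in J$ and all $x$, and then combine this with the upper semicontinuity of the $\omega_{t_0}$-psh slice $x\mapsto\f(t_0,x)$: given $\e>0$ there is a neighbourhood $U\ni x_0$ on which $\f(t_0,\cdot)<\f(t_0,x_0)+\e/2$ (an arbitrarily negative constant when $\f(t_0,x_0)=-\infty$), whence $\f<\f(t_0,x_0)+\e$ on a neighbourhood of $(t_0,x_0)$ in $]0,T[\times X$. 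It then remains to prove, for each $x_0\in X$, that
\[
\limsup_{(t,x)\to(0,x_0)}\f(t,x)\le\f_0(x_0).
\]
I would reduce this to estimating $\limsup_j\f(t_j,x_j)$ along a sequence $(t_j,x_j)\to(0,x_0)$; the case $t_j=0$ (along a subsequence) is handled by upper semicontinuity of the $\omega_0$-psh function $\f_0$, so one may assume $t_j>0$ and $t_j\downarrow0$. Since $\omega_{t_j}\le\Theta$ and $\omega_0\le\Theta$ for a fixed K\"ahler form $\Theta$, all the functions $\f_{t_j}$ and $\f_0$ lie in $\PSH(X,\Theta)$, and $\f_{t_j}\to\f_0$ in $L^1(X,dV)$ by hypothesis; the problem is thus a Hartogs-type statement for $\PSH(X,\Theta)$.

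To prove it, I would work in a fixed holomorphic chart about $x_0$, write $\Theta=dd^c\rho$ there with $\rho$ smooth so that each $\f_{t_j}+\rho$ is plurisubharmonic, and use the sub-mean-value inequality over balls together with $\rho\in\mathcal{C}^2$ to obtain a constant $C$ independent of $j$ with
\[
\f_{t_j}(x_j)\le\frac{1}{|B(x_j,r)|}\int_{B(x_j,r)}\f_{t_j}\,dV+Cr^2
\]
for all small $r>0$ and all large $j$. Splitting the average as $\tfrac{1}{|B(x_j,r)|}\int_{B(x_j,r)}(\f_{t_j}-\f_0)\,dV+\tfrac{1}{|B(x_j,r)|}\int_{B(x_j,r)}\f_0\,dV$, the first term tends to $0$ as $j\to\infty$ for fixed $r$ (since $\|\f_{t_j}-\f_0\|_{L^1(X)}\to0$ and $|B(x_j,r)|\to|B(x_0,r)|>0$) and the second tends to $\tfrac{1}{|B(x_0,r)|}\int_{B(x_0,r)}\f_0\,dV$ by dominated convergence; letting $j\to\infty$ and then $r\to0$, and using that $\tfrac{1}{|B(x_0,r)|}\int_{B(x_0,r)}\f_0\,dV\to\f_0(x_0)$ (sub-mean-value property of $\f_0+\rho$), one concludes $\limsup_j\f_{t_j}(x_j)\le\f_0(x_0)$, as required.

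The main obstacle is exactly the passage $t\to0^+$: there the Lipschitz-in-$t$ constant $\kappa_J(\f)$ is allowed to blow up and provides no control, so the lost time-regularity must be compensated by the quasi-plurisubharmonicity of the spatial slices via the Hartogs-type comparison above — and this is precisely the step that uses the hypothesis $\f_t\to\f_0$ in $L^1$. Everything else is routine, and one could alternatively package the argument by showing that the upper semicontinuous regularization $\f^*$ of the extended function coincides with $\f$.
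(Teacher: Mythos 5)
Your proof is correct, and it is built on the same essential idea as the paper's — the sub-mean-value inequality for the spatial slices near $t=0$ — but it implements that idea differently. The paper first reduces to a more general statement about the double limsup $u_0(x):=\limsup_{y\to x}\limsup_{t\to 0}u_t(y)$ (declaring as classical the identity $\f_0=u_0$ under the $L^1$ hypothesis), then works with a \emph{fixed} ball $B(x_0,r)$ enlarged to $B(x_j,r+\delta)$: after normalizing $u_t\le 0$ in the chart, the negativity lets one drop the integral over the excess annulus, and the pointwise bound $\limsup_j u_{t_j}(x)\le u_0(x)$ together with reverse Fatou passes the limit inside the integral, after which one lets $\delta\to0$, then $\e\to0$. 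You instead use the $L^1$ hypothesis head-on: you average $\f_{t_j}$ over the \emph{moving} ball $B(x_j,r)$, subtract off the smooth local potential $\rho$ with a Taylor-type $O(r^2)$ correction, split the average into $\f_{t_j}-\f_0$ (controlled by $\|\f_{t_j}-\f_0\|_{L^1}\to0$ with $|B(x_j,r)|$ bounded below) and $\f_0$ (controlled by dominated convergence), and then send $r\to0$. Your version is slightly more self-contained in that it derives the Hartogs-type inequality $\limsup_j\f_{t_j}(x_j)\le\f_0(x_0)$ directly from $L^1$ convergence rather than citing it; the paper's version is slightly more general in that its intermediate statement (u.s.c. of the extension by $u_0$) holds for any $u\in\mathcal{P}(X_T,\omega)$ bounded above near $t=0$, without an $L^1$ convergence hypothesis. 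The interior part (Lipschitz in $t$ plus u.s.c. of slices) is identical in both.
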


 \begin{proof}
 It is classical that for all $x \in X$, $\f_0(x)=\limsup_{y \rightarrow x} \limsup_{t \rightarrow 0} \f_t(y)$.
 It therefore suffices to prove the following more general result : 
 assume that $u\in \Pc(X_T,\omega)$  is  bounded from above near $t=0$ and define 
	$$
	u_0(x) := \limsup_{y\to x} \left(\limsup_{t\to 0} u_t(y)\right).
	$$
 Then the extension $u: [0,T[ \times X \rightarrow [-\infty,+\infty[$ is upper semicontinuous.

 The upper semi-continuity inside $X_T$ follows from the semi-continuity in space and Lipschitz regularity in time.
 Assume that $(t_j,x_j)$ is a sequence in $X_T$ converging to $(0,x_0)$ with $x_0\in X$. We want to prove that $$\limsup_j u(t_j,x_j) \leq u_0(x_0).$$
 	 Since the problem is local we can assume that the functions $u_{t_j}$ are psh and negative in a neighborhood $B\subset \mathbb{C}^n$ of $x_0$. 
 	 
 	 Since $u_0$ is psh in $B$ there exists $r>0$ such that $B(x_0,2r) \subset B$ and 
 	 \begin{equation}\label{eq: basic psh}
 	 	\frac{1}{\Vol(B(x_0,r))}\int_{B(x_0,r)} u_0(z)dV(x) \leq u_0(x_0) +\varepsilon.
 	 \end{equation}
Fix $\delta \in ]0,r[$. For $j$ large enough,  $x_j\in B(x_0,\delta)$ hence $B(x_0,r) \subset B(x_j,r+\delta)$ and since $u_{t_j}\leq 0$ in $B$ we have, 
\begin{flalign*}
	u(t_j,x_j)&  \leq \frac{1}{\Vol(B(x_j,r+\delta))}\int_{B(x_j,r+\delta)} u(t_j,x)dV(x)\\
	&  \leq \frac{1}{\Vol(B(x_j,r+\delta))}\int_{B(x_0,r)} u(t_j,x)dV(x)\\
	& = \frac{\Vol(B(x_0,r))}{\Vol(B(x_j,r+\delta))}  \frac{1}{\Vol(B(x_0,r))} \int_{B(x_0,r)} u(t_j,x)dV(x).
\end{flalign*} 
Since $\limsup_{j}u_{t_j}(x) \leq u_0(x)$, for all $x\in X$,  letting $j\to +\infty$ and using \eqref{eq: basic psh}  we obtain 
$$\limsup_{j}u(t_j,x_j) \leq  \frac{\Vol(B(z_0,r))}{\Vol(B(z,r+\delta))}  (u_0(x_0)+ \varepsilon).$$
Now, we first let $\delta\to 0$ and then $\varepsilon\to 0$ to obtain the result.   
 \end{proof}

We next prove a compactness result for this class of functions.
 
\begin{thm} \label{thm:Montel}   
Let $(\f_j) \subset \mathcal P (X_T,\omega)$ be a sequence which 
\begin{itemize}
\item is locally uniformly bounded from above in $X_T$;
\item is locally uniformly Lipschitz in $]0,T[$;
\item does not converge locally uniformly to $- \infty$ in $X_T$. 
\end{itemize}

Then  $(\f_j)$ is bounded in $L^1_{\loc} (X_T)$ and there exists a subsequence which converges to some function 
$\f \in  \mathcal P (X_T)$ in the $L^1_{\loc} (X_T)$-topology.

If  $(\f_j)$ converges weakly (in the sense of distributions) to $\f$ in $X_T$, then it converges in $L^p_{\loc} (X_T)$  for all $p \geq 1$. 
\end{thm}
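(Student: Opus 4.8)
The plan is to reduce the statement to two classical facts about quasi-plurisubharmonic functions applied slicewise, combined with the Lipschitz-in-time control to get uniformity in $t$. First I would fix a compact $K = [a,b] \times L \Subset X_T$ (with $0 < a < b < T$ and $L \Subset X$ contained in a coordinate chart, after a partition of unity). Using the uniform upper bound on $X_T$ together with the hypothesis that $(\f_j)$ does not tend to $-\infty$ locally uniformly, I would argue that the sup-norms $M_j := \sup_{[a,b]\times L'} \f_j$ (for a slightly larger $L'$) cannot all tend to $-\infty$ along any subsequence on a \emph{fixed} compact set; here the local uniform Lipschitz bound in time is essential — it lets one transfer a bound at one time level $t_0$ to all nearby times with a loss of $\kappa_J |t-t_0|$. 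Then the standard compactness theorem for $\omega_t$-psh functions (uniformly bounded above, not uniformly $-\infty$) applied slicewise gives, for each fixed $t$, an $L^1(L)$ bound on $\f_j(t,\cdot)$ with constant depending on the sup; integrating in $t$ over $[a,b]$ and using the time-Lipschitz control to make the slicewise sup bounds uniform in $t \in [a,b]$ yields $\sup_j \|\f_j\|_{L^1(K)} < \infty$.

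Next, for the existence of a convergent subsequence: I would extract, by a diagonal argument over a countable exhaustion of $X_T$ by such compacts $K$, a subsequence converging in $L^1_{\loc}(X_T)$ to some $\f$. To see $\f \in \mathcal P(X_T,\omega)$ I need two things. First, $\omega_t$-pshness of the slices $\f_t$: pass to a further subsequence so that for a.e. $t$ the slices $\f_j(t,\cdot)$ converge in $L^1(X)$; the slicewise limit is then $\omega_t$-psh (after correction on a null set) by the Bedford--Taylor/Hörmander compactness for quasi-psh functions, and one defines $\f_t$ as the usc regularization. Second, the local uniform Lipschitz bound in time: since \eqref{eq:Lip} holds for each $\f_j$ with a uniform constant $\kappa_J$ on each $J \Subset ]0,T[$, the inequality $\f_j(t,x) \le \f_j(s,x) + \kappa_J|t-s|$ passes to the $L^1_{\loc}$ limit (it is preserved under a.e.-convergent subsequences of the slices and then under usc regularization, since adding a constant commutes with $\sup^*$), so $\f$ satisfies the same Lipschitz estimate. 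This gives $\f \in \mathcal P(X_T,\omega)$.

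For the last sentence, assuming $(\f_j) \to \f$ weakly in $X_T$: the upgrade from $L^1_{\loc}$ to $L^p_{\loc}$ for all $p \ge 1$ is the standard argument for quasi-psh functions — a sequence of $\omega_t$-psh functions that is locally uniformly bounded above and $L^1_{\loc}$-bounded is in fact locally uniformly bounded in $L^p$ for every $p$, via the sub-mean-value inequality and Hölder / a Chebyshev-type estimate on sublevel sets (equivalently, uniform integrability to any power), and $L^1_{\loc}$-convergence plus a uniform $L^{p+1}_{\loc}$ bound forces $L^p_{\loc}$-convergence by interpolation. One applies this slicewise and integrates in $t$ over compact time intervals, again using the time-Lipschitz control to keep the constants uniform in $t$; weak convergence together with the already-established $L^1_{\loc}$ compactness pins down the limit to be $\f$, so the whole sequence converges.

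The main obstacle I anticipate is not any single step in isolation — each is either classical pluripotential theory or a soft limiting argument — but rather the bookkeeping needed to make all estimates \emph{uniform in the time variable} on compact time intervals. The classical quasi-psh compactness and $L^p$-boundedness results are stated for functions on $X$; here one must run them slicewise and then check that the constants (the $L^1 \to L^p$ constants, the sup-bounds coming from non-degeneracy) do not blow up as $t$ ranges over $[a,b]$, which is exactly where the local uniform Lipschitz-in-time hypothesis \eqref{eq:Lip} does the work. A secondary technical point is the interchange of usc regularization in $x$ with limits and with the time-Lipschitz inequality, which must be handled carefully but is routine once one recalls that $(\sup_j u_j)^* = \sup_j^* u_j^*$ for locally bounded families.
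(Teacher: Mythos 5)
Your proposal is correct in outline and essentially unwinds what the paper outsources to its local companion \cite{GLZparab1}. The paper's own proof is a short reduction: localize to a pseudoconvex chart in $\C^n$, write $\omega_t = dd^c\rho_t$ with $\rho_t$ plurisubharmonic and Lipschitz in $t$, absorb $\rho_t$ into $\f_t$ so that one may take $\omega_t\equiv 0$, and then cite the companion paper's compactness theorem for time-Lipschitz families of plain psh functions. You instead keep the forms $\omega_t$ in play and run the slicewise quasi-psh compactness and $L^p$-boundedness directly, tracking uniformity in $t$ via the Lipschitz hypothesis --- which is in substance what the cited local result must do anyway, so the mathematical content is the same, just made visible. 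The paper's reduction to $\omega_t\equiv 0$ is slicker in one respect: after it the slices live in a single fixed cone $\PSH(\Omega)$, so one never needs to check, as you carefully do, that limiting slices at exceptional $t$'s are psh with respect to the correct $t$-dependent form; on the other hand your route is more self-contained and exposes where each hypothesis is used. One precision worth flagging, which concerns the theorem statement itself rather than your argument: taken literally, ``$(\f_j)$ does not converge locally uniformly to $-\infty$'' does not preclude a sparse subsequence from doing so (e.g.\ $\f_{2j}\equiv -j$, $\f_{2j+1}\equiv 0$), and in that case $(\f_j)$ is certainly not bounded in $L^1_{\loc}$; the intended --- and necessary --- reading is that no subsequence tends to $-\infty$ locally uniformly, equivalently $\liminf_j \sup_K \f_j > -\infty$ for some compact $K\Subset X_T$, which is the form you tacitly invoke when propagating a lower bound from a point to all compacts via Hartogs and the Lipschitz-in-time control.
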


The classes $L^p$ are here defined with respect to the $(2 n + 1)$-dimensional  Lebesgue measure 
 associated to a fixed volume form $d t \wedge d V$. For convenience we normalize $dV$ so that $\int_X dV=1$.

 \begin{proof}
  The proof of this result is local in nature and follows closely the classical proof of the analogous result for 
  quasi-plurisubharmonic functions, once we have a substitute for the sub-mean value inequality.
  
  We can thus assume here that $X=\Omega \subset \C^n$ is a bounded strictly pseudoconvex domain.
The Poincar\'e lemma insures that $\omega_t=dd^c \rho_t$ for a
family of plurisubharmonic functions $\rho_t$ which is Lipschitz in $t$.
Changing  $\f_t$ in $\f_t+\rho_t$, we reduce further to the case when $\omega_t=0$.
The corresponding compactness and convergence properties 
have then been obtained in \cite{GLZparab1}.
 \end{proof}
 
 \begin{cor}
The class $\mathcal P (X_T,\omega)$ is a  subset of $ L^p_{\loc} (X_T)$ for all $1 \leq p$, and the inclusions
$\mathcal P (X_T,\omega) \hookrightarrow  L^p_{\loc} (X_T)$ are continuous.
\end{cor}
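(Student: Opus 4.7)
The plan splits into two parts: establishing the set-theoretic inclusion, then verifying continuity with respect to the natural topology. For the inclusion $\mathcal{P}(X_T,\omega) \subset L^p_{\loc}(X_T)$, I would fix $\varphi \in \mathcal{P}(X_T,\omega)$ and a compact subset $K \Subset X_T$; enlarging $K$ if necessary I may assume $K = J \times X$ with $J \Subset ]0,T[$. Picking any $t_0 \in J$, the local uniform Lipschitz regularity in time yields
$$
\varphi(t,x) \leq \varphi_{t_0}(x) + \kappa_J \,\mathrm{diam}(J) \leq \sup_X \varphi_{t_0} + \kappa_J \,\mathrm{diam}(J) =: M,
$$
which is finite since $\varphi_{t_0}$ is $\omega_{t_0}$-psh on the compact manifold $X$. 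The $\mathcal{C}^2$-regularity of $t \mapsto \omega_t$ on the compact interval $J$ allows me to fix a K\"ahler form $\Theta$ with $\omega_t \leq \Theta$ for every $t \in J$, so each slice $\varphi_t$ is $\Theta$-psh with $\sup_X \varphi_t \leq M$. The classical fact that the set of $\Theta$-psh functions with supremum bounded by $M$ is bounded in $L^p(X,dV)$ for each $p \in [1,\infty)$ then yields a constant $C = C(M,\Theta,p)$ with $\|\varphi_t\|_{L^p(X,dV)} \leq C$ uniformly in $t \in J$, and Fubini gives
$$
\int_K |\varphi|^p \, dt \wedge dV \leq C^p \,|J| < +\infty.
$$

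For continuity of the inclusions, I would equip $\mathcal{P}(X_T,\omega)$ with the topology induced by $L^1_{\loc}(X_T)$, which is the natural topology inherited from the first assertion of Theorem~\ref{thm:Montel}. If a sequence $\varphi_j \to \varphi$ converges in this topology, it converges in particular weakly in the sense of distributions on $X_T$. The second assertion of Theorem~\ref{thm:Montel} then applies directly and yields $\varphi_j \to \varphi$ in $L^p_{\loc}(X_T)$ for every $p \geq 1$, which is exactly continuity of the inclusion $\mathcal{P}(X_T,\omega) \hookrightarrow L^p_{\loc}(X_T)$.

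The main obstacle lies in the first part, specifically in invoking the uniform $L^p$-integrability of $\Theta$-psh functions with bounded supremum; this rests on the classical compactness of quasi-psh classes in $L^1(X)$ combined with the Hartogs-type equivalence of all $L^p$-norms on such classes, both standard in pluripotential theory. Once this is set up, the remainder of the argument is a routine application of Fubini and of the second assertion of Theorem~\ref{thm:Montel}.
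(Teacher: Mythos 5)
The ``classical fact'' you invoke in the first part is false as stated: the set of $\Theta$-psh functions $u$ with $\sup_X u \leq M$ is \emph{not} bounded in $L^p(X,dV)$ --- the constant functions $u \equiv -N$ with $N \to +\infty$ belong to this set yet have unbounded $L^p$-norms. The genuine compactness statement requires two-sided control on $\sup_X u$ (say $-N \leq \sup_X u \leq M$). Fortunately your own set-up supplies exactly this: the Lipschitz inequality \eqref{eq:Lip} is symmetric in $s$ and $t$, so in fact $|\varphi(t,x)-\varphi(t_0,x)| \leq \kappa_J\,\mathrm{diam}(J)$ for all $x\in X$ and $t\in J$, not merely the one-sided upper bound you wrote. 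Since $\varphi_{t_0}$ is a single quasi-psh function with finite $L^p(X)$-norm, the triangle inequality gives
$$
\|\varphi_t\|_{L^p(X,dV)} \;\leq\; \|\varphi_{t_0}\|_{L^p(X,dV)} + \kappa_J\,\mathrm{diam}(J)\,\mathrm{Vol}(X)^{1/p}
$$
uniformly in $t\in J$, after which Fubini concludes as you intended. With this repair the inclusion part is sound; it differs from what the paper implicitly does --- feeding the constant sequence $\varphi_j \equiv \varphi$ into Theorem~\ref{thm:Montel} --- by carrying out the slice-by-slice $L^p$-estimate by hand.

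For the continuity assertion, bear in mind that the second conclusion of Theorem~\ref{thm:Montel} is proved \emph{under the theorem's hypotheses}, in particular locally uniform Lipschitz control in $t$. Convergence in $L^1_{\loc}(X_T)$ alone does not furnish this, so as written your argument applies only to sequences for which the Lipschitz constants on each $J\Subset\,]0,T[$ are uniformly bounded. This is precisely the information carried by the semi-norms $\rho_J$ that the paper uses to topologize $\mathcal P(X_T,\omega)$, so if you interpret ``the topology on $\mathcal P(X_T,\omega)$'' in that sense (rather than the raw $L^1_{\loc}$ topology), the appeal to the second half of Theorem~\ref{thm:Montel} is correct and the continuity of the inclusions follows.
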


The topologies induced by the classes $L^p$ are thus all equivalent when restricted to the class $\mathcal P (X_T,\omega)$.

 \subsubsection{Slices and time-derivatives}
 
We now estimate the $L^1$-norm on slices. 

 \begin{lem} \label{lem:L1Slice-L1} 
Fix $u, v \in \mathcal P (X_T,\omega)$ and  $0 < T_0 < T_1  < T$. 
Then  
$$
 \Vert u (t,\cdot) - v (t,\cdot)\Vert_{L^1 (X)} \leq 2  M \max \left\{\Vert u - v\Vert_{L^1 (X_T)}^{1 \slash2}, \Vert u - v\Vert_{L^1 (X_T)} \right\},
$$
for all $T_0 \leq t \leq T_1$, 
 where 
 $
 M :=\max \{\sqrt{ \kappa}, (T - T_1)^{- 1}\},
 $
 and $\kappa$ is the uniform Lipschitz constant of $u-v$ in $[T_0,T]$.
 \end{lem}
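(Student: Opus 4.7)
The plan is to exploit the Lipschitz regularity in time to bound the slice $L^1$-norm by the full $L^1$-norm on $X_T$, using a Fubini-type argument: if $w(t,x) := u(t,x)-v(t,x)$ is close in $L^1(X)$ to nearby slices $w(s,x)$, then the slice norm cannot be much larger than an average of neighbouring slice norms, which is controlled by the full norm.

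First I would observe that the one-sided inequality in Definition~\ref{defi:parabpot}, applied with $(t,s)$ and $(s,t)$ swapped, upgrades automatically to a two-sided Lipschitz bound, so on the compact subinterval $[T_0,T]$ we have
\[
|w(t,x)-w(s,x)| \leq \kappa |t-s|, \qquad s,t\in[T_0,T],\ x\in X,
\]
with $\kappa$ the stated Lipschitz constant. Fix $t\in[T_0,T_1]$ and a parameter $h\in(0,T-T_1]$; then for every $s\in[t,t+h]\subset[T_0,T]$,
\[
|w(t,x)| \leq |w(s,x)| + \kappa(s-t).
\]
Integrating this inequality in $s$ over $[t,t+h]$ and in $x$ over $X$ (recalling that $dV$ is normalized so $\int_X dV=1$) yields, after dividing by $h$,
\[
\|w(t,\cdot)\|_{L^1(X)} \;\leq\; \frac{\|u-v\|_{L^1(X_T)}}{h} + \frac{\kappa h}{2}.
\]

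The remaining step is to optimize this elementary bound in $h\in(0,T-T_1]$. Writing $A:=\|u-v\|_{L^1(X_T)}$, the minimum on $\mathbb R_+$ is achieved at $h^\ast=\sqrt{2A/\kappa}$ with value $\sqrt{2A\kappa}$. If $h^\ast\leq T-T_1$ we take $h=h^\ast$ and obtain $\|w(t,\cdot)\|_{L^1(X)}\leq\sqrt{2A\kappa}\leq 2\sqrt{\kappa}\,\sqrt{A}\leq 2M\sqrt{A}$; otherwise $A>\kappa(T-T_1)^2/2$, and choosing $h=T-T_1$ makes $\kappa h/2<A/h$, so $\|w(t,\cdot)\|_{L^1(X)}\leq 2A/(T-T_1)\leq 2MA$. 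Combining the two cases gives the announced bound $2M\max\{\sqrt{A},A\}$.

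There is no real obstacle here; the argument is essentially a one-variable optimization. The only point that deserves care is that one must choose the interval of integration $[t,t+h]$ entirely inside $[T_0,T]$ (hence the constraint $h\leq T-T_1$), which is exactly what forces the appearance of $(T-T_1)^{-1}$ in the constant $M$ and gives rise to the two regimes $\sqrt{A}$ versus $A$ in the final estimate.
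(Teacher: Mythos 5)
Your proof is correct. The paper does not spell out an argument here (it defers to the local result in \cite{GLZparab1}), but your Fubini-plus-optimization argument is exactly what the shape of the constants calls for: the term $\sqrt{\kappa}$ comes from optimizing $h\mapsto A/h+\kappa h/2$ freely, the term $(T-T_1)^{-1}$ comes from the constraint $h\le T-T_1$, and the dichotomy $\max\{\sqrt A,A\}$ matches the two regimes $h^\ast\le T-T_1$ versus $h^\ast>T-T_1$. One small point worth making explicit: the Lipschitz condition \eqref{eq:Lip} is stated as a one-sided inequality but is symmetric in $(s,t)$, hence genuinely two-sided, so the constant $\kappa$ for $u-v$ on $[T_0,T]$ is finite (bounded by the sum of the Lipschitz constants of $u$ and $v$ on that interval, which exist because $[T_0,T]$ is a compact subset of the slightly enlarged domain $]0,T+\varepsilon[$ that the paper implicitly assumes). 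With that noted, the integration in $s$ over $[t,t+h]$ and in $x$ over the normalized volume $dV$, followed by division by $h$ and the case split on whether $\sqrt{2A/\kappa}\le T-T_1$, gives exactly the stated bound.
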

 
 This lemma expresses in a quantitative way the following fact : for functions in $\mathcal P (X_T,\omega) $, the convergence in $L^1 (X_T)$ 
 implies the local uniform convergence of their slices in $L^1 (X)$ : if $(u_j) \subset {\mathcal P}(X_T,\omega)$ converges to $u$
in $L^1(X_T)$ and is locally uniformly Lipschitz in $]0,T[$,
then $u_j(t,\cdot)$ converges to $u(t,\cdot)$ in $L^1(X)$ for each slice $t$.
 
 \begin{proof} 
 The proof is identical to the corresponding one in the local context, we refer the reader to \cite{GLZparab1}.
\end{proof}

 Fix $\mu$ a (finite) Borel measure on $X$, and let $\ell$ denote the Lebesgue measure on $\R^+$.
%

\begin{lem} \label{lem: existence of time derivative}
Fix $\f \in \mathcal P (X_T,\omega)$. 
Then $\partial_t \f (t,x)$ exists for all $(t,x) \notin E$, where
 $E \subset X_T$ is $\ell \otimes \mu$-negligible.

 In particular  $\partial_t \f \in L_{\loc}^{\infty} (X_T)$ 
 and for any continuous function 
 $h \in {\mathcal C}^0(\R,\R)$, \, $ h (\partial_t \f) \, \ell \otimes \mu$ is a well defined Borel measure on $X_T$.  
\end{lem}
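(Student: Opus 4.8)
The plan is to exploit the local uniform Lipschitz property in time to reduce the statement to a one-variable fact about Lipschitz functions, namely Rademacher's theorem in dimension one. First I would fix a compact interval $J = [T_0,T_1] \Subset ]0,T[$ and work on $J \times X$; by Definition \ref{defi:parabpot} there is a constant $\kappa = \kappa_J(\f)$ such that, for each fixed $x$, the function $t \mapsto \f(t,x) + \kappa t$ is nondecreasing on $J$ (the inequality \eqref{eq:Lip} says precisely that $\f(t,x) - \f(s,x) \leq \kappa(t-s)$ for $t \geq s$, and by symmetry the reverse bound, so $t \mapsto \f(t,x)$ is $\kappa$-Lipschitz on $J$). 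Hence for each fixed $x$ the set $N_x \subset J$ of points where $t \mapsto \f(t,x)$ fails to be differentiable is Lebesgue-negligible, i.e. $\ell(N_x) = 0$.

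Next I would assemble these fibrewise null sets. Put $E := \{(t,x) \in X_T : \partial_t\f(t,x) \text{ does not exist}\}$; exhausting $]0,T[$ by countably many compact intervals, it suffices to treat $E \cap (J \times X)$. For this we need $E \cap (J \times X)$ to be $\ell \otimes \mu$-measurable, so that Fubini's theorem applies and gives
$$
(\ell\otimes\mu)\bigl(E \cap (J\times X)\bigr) = \int_X \ell(N_x)\, d\mu(x) = 0.
$$
Measurability of $E$ is the point that needs a small argument: I would write the difference quotients
$$
D_h\f(t,x) := \frac{\f(t+h,x) - \f(t,x)}{h},
$$
which are Borel in $(t,x)$ for each fixed rational $h \neq 0$ (since $\f$ itself, as an element of $\Pc(X_T,\omega)$, is Borel — indeed upper semicontinuous after extension, but Borel is all we need), and express the existence of $\partial_t\f(t,x)$ as the statement that $\lim_{h\to 0,\, h\in\Q} D_h\f(t,x)$ exists and is finite; the set where a countable family of Borel functions has a convergent limit is Borel, so $E$ is Borel, hence $\ell\otimes\mu$-measurable. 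Taking the union over the countable exhaustion shows $E$ is $\ell\otimes\mu$-negligible. I expect this measurability bookkeeping — pinning down that $\f$ is jointly measurable and that the bad set is Borel — to be the main (if modest) obstacle; everything else is one-dimensional Rademacher plus Fubini.

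Finally, for the two consequences: on any compact $J \times K \Subset X_T$ the difference quotients $D_h\f$ are uniformly bounded by $\kappa_{J'}(\f)$ for a slightly larger interval $J'$, so their pointwise limit $\partial_t\f$, which exists off the null set $E$, satisfies $|\partial_t\f| \leq \kappa_{J'}(\f)$ a.e. on $J\times K$; this gives $\partial_t\f \in L^\infty_{\loc}(X_T)$. Since $\partial_t\f$ is defined off an $\ell\otimes\mu$-null set, is locally bounded, and is a pointwise limit of the Borel functions $D_{1/m}\f$ (off $E$), it is $\ell\otimes\mu$-measurable, hence (being locally bounded) locally $\ell\otimes\mu$-integrable; composing with any $h \in {\mathcal C}^0(\R,\R)$ preserves measurability and local boundedness, so $h(\partial_t\f)$ is locally integrable against $\ell\otimes\mu$ and therefore defines a Borel (Radon) measure $h(\partial_t\f)\,\ell\otimes\mu$ on $X_T$, completing the proof.
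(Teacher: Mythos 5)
Your proof is correct and follows the approach the authors have in mind (they simply defer to the local companion paper \cite{GLZparab1} here): reduce to the one-dimensional Rademacher theorem on the Lipschitz slices $t \mapsto \f(t,x)$, then apply Fubini after checking that the exceptional set is Borel via the countable family of difference quotients $D_h\f$ at rational $h$. The only step you leave implicit — that the limit of $D_h\f(t,x)$ along rational $h\to 0$ agrees with the unrestricted derivative — follows at once from the continuity of $h \mapsto \f(t+h,x)$; also note $\f$ is already upper semicontinuous on the open set $]0,T[\times X$ (slice usc plus time Lipschitz), with no extension to $t=0$ needed for the Borel measurability you invoke.
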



\begin{proof}
 The proof is identical to the corresponding one in the local context, we refer the reader to \cite{GLZparab1}.
\end{proof}  

 When $\f$ is semi-convex or semi-concave in $t$, we can improve  this result.
 
 \begin{defi}
 We say that  $\f : X_T \longrightarrow \R $ is  uniformly semi-concave in $]0,T[$ if for any compact $J \Subset ]0,T[$, there exists  
 $\kappa = \kappa (J,\f) > 0$ such that for all $x \in X$, the function  $t \longmapsto \f (t,x) - \kappa t^ 2$ is concave in $J$. 
 \end{defi}
 
 The definition of uniformly semi-convex functions is analogous.
 Note that such functions are automatically locally uniformly Lipschitz.
 
 \begin{lem} 
 Let $\f : X_T \longrightarrow \R $ be a continuous function which is uniformly semi-convex 
 in $]0,T[$. Then 
 $$
 \partial_t^+ \f (t,x) = \lim_{s \to 0^+} \frac{\f (t + s,x) - \f (t,x)}{s} 
 $$ 
 is upper semi-continuous in $X_T$, while 
 $$
 \partial_t^- \f (t,x):= \lim_{s \to 0^-} \frac{\f (t + s,x) - \f (t,x)}{s} 
 $$
 is lower semi-continuous in $X_T$.
 In particular, $\partial_t^+ \f$ and $\partial_t^- \f$ coincide and are continuous $\ell \otimes \mu$-almost everywhere in $X_T$.
 \end{lem}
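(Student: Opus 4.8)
The plan is to exploit the one-sided convexity (semi-convexity) of $t \mapsto \f(t,x)$ to control the one-sided difference quotients, which are monotone in the increment $s$ precisely because of convexity. Fix a compact $J \Subset ]0,T[$ and $\kappa=\kappa(J,\f)$ so that $g(t,x):=\f(t,x)-\kappa t^2$ is convex in $t$ on $J$ for every $x$. For a convex function of one real variable, the slopes $\frac{g(t+s,x)-g(t,x)}{s}$ are non-decreasing in $s$; hence $\partial_t^+ g(t,x)$ is the infimum over $s>0$ of the continuous functions $(t,x)\mapsto \frac{g(t+s,x)-g(t,x)}{s}$, so it is upper semi-continuous as an infimum of continuous functions, and similarly $\partial_t^- g(t,x)$ is the supremum over $s<0$ of continuous functions, hence lower semi-continuous. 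Adding back $2\kappa t$ (which is jointly continuous) transfers these semi-continuity properties to $\partial_t^+\f$ and $\partial_t^-\f$. The semi-concave case is identical after a sign change (or by applying the semi-convex result to $-\f$).

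First I would record the elementary one-variable fact about monotonicity of secant slopes of a convex function, then write $\partial_t^+\f(t,x) = \inf_{s>0}\bigl[\tfrac{\f(t+s,x)-\f(t,x)}{s} + \kappa s\bigr]$ — wait, more carefully, $\partial_t^+\f(t,x)=\partial_t^+g(t,x)+2\kappa t$ and $\partial_t^+g(t,x)=\inf_{s>0}\tfrac{g(t+s,x)-g(t,x)}{s}$ — and similarly express $\partial_t^-\f$ as a supremum. Each difference quotient of $g$ is continuous on the (open) set where $t,t+s\in J$, so the infimum is u.s.c.\ and the supremum is l.s.c.; since $J$ was an arbitrary compact subexhausting $]0,T[$, the conclusions hold on all of $X_T$. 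Convexity also guarantees $\partial_t^+\f \geq \partial_t^-\f$ pointwise everywhere (again a standard fact for convex functions: left derivative $\leq$ right derivative).

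For the last assertion, combine the two semi-continuity statements: on the set $\{\partial_t^+\f = \partial_t^-\f\}$ the common value is both u.s.c.\ and l.s.c., hence continuous there; and this set is exactly where $t\mapsto\f(t,x)$ is differentiable, which by Lemma~\ref{lem: existence of time derivative} (applied with $\mu$ the given measure, noting $\f\in\Pc(X_T,\omega)$ since semi-convex functions are locally uniformly Lipschitz) is the complement of an $\ell\otimes\mu$-negligible set. Thus $\partial_t^+\f$ and $\partial_t^-\f$ agree and are continuous $\ell\otimes\mu$-a.e.

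I do not expect a serious obstacle here; the only point requiring a little care is making sure the infimum/supremum representation is over a family of genuinely \emph{continuous} functions on the relevant domain — this needs $\f$ itself to be continuous, which is part of the hypothesis, and it needs the increment range to be handled so that $t+s$ stays in $J$; restricting first to a slightly smaller compact interval and taking the exhaustion handles this cleanly. The delicate bookkeeping is purely the domain-of-definition issue for the difference quotients, not anything conceptual.
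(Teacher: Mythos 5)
Your approach is the natural one and is essentially correct: passing from $\f$ to the convex normalization $g(t,x)=\f(t,x)-\kappa t^2$, writing $\partial_t^+g$ as an infimum of continuous difference quotients (hence u.s.c.) and $\partial_t^-g$ as a supremum (hence l.s.c.), and transferring back by adding $2\kappa t$. The monotonicity of secant slopes for convex one-variable functions, the remark that $\partial_t^- \leq \partial_t^+$ pointwise, and the domain-of-definition care with the compact exhaustion are all handled properly. The paper itself defers the proof to the companion local paper \cite{GLZparab1}, so this is precisely the argument one expects.

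One small misstep: for the final assertion you appeal to Lemma~\ref{lem: existence of time derivative}, asserting that ``$\f\in\Pc(X_T,\omega)$ since semi-convex functions are locally uniformly Lipschitz.'' That is not quite right: membership in $\Pc(X_T,\omega)$ also requires $x\mapsto\f(t,x)$ to be $\omega_t$-psh for each $t$, and the present lemma does not assume that. But the appeal to Lemma~\ref{lem: existence of time derivative} is unnecessary here. For each fixed $x$, the convex function $t\mapsto g(t,x)$ is differentiable off a countable set of $t$'s (a classical fact for one-variable convex functions), so $\{\,\partial_t^+\f\neq\partial_t^-\f\,\}$ has $\ell$-measure zero on every slice $\{x=\mathrm{const}\}$; Fubini then gives $\ell\otimes\mu$-negligibility directly, with no Lipschitz/psh hypotheses at all. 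Replacing the invocation of Lemma~\ref{lem: existence of time derivative} by this direct argument makes the proof self-contained and removes the gap.
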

 
 \begin{proof} 
  The proof is identical to the corresponding one in the local context, we refer the reader to \cite{GLZparab1}.
 \end{proof}

\subsubsection{Topology on ${\mathcal P}(X_T,\omega)$}

 We introduce a natural complete metrizable topology on the convex set $\mathcal P (X_T,\omega)$.

We first consider a partial Sobolev space $W^{(1,0), \infty}_{\loc} (X_T)$ : this is the set of functions $u \in L^1_{\loc} (X_T)$ 
whose  partial time derivative  (in the sense of distribution) satisfies  $\dot u = \partial_t u \in L^{\infty}_{\loc} (X_T)$. 
It follows from Lemma~\ref{lem: existence of time derivative} that
  $$
  \mathcal P (X_T,\omega)  \subset W^{(1,0), \infty}_{\loc} (X_T).
  $$
  
The local uniform Lipschitz constant of $\f \in \mathcal P (X_T,\omega) $  on a  compact subset $J \Subset ]0,T[$ is given by
  $$
  {\sup_{t, s \in J, s \neq t} \sup_{x \in X}}^* \frac{\vert \f (s,x) - \f (t,x)\vert}{\vert s - t\vert} = \Vert \dot \f \Vert_{L^{\infty} (J \times X)},
  $$
  where $\sup^*$ is the essential sup with respect to a volume form $d V$ on $X$.

   We can therefore  consider the following semi-norms on  $W^{(1,0), \infty}_{\loc} (X_T)$:
given a compact subset $J \Sub ]0,T[$ and $u \in W^{(1,0), \infty}_{\loc} (X_T)$, we set
$$
 \rho_{J} (u) := \Vert \dot \f \Vert_{L^{\infty} (J \times X)}
 + \int_J \int_X \vert u (t,x) \vert d V (x) d t.
$$

\begin{pro}
The space $ W^{(1,0), \infty}_{\loc} (X_T)$ endowed with the semi-norms $( \rho_{J})$
is a complete metrizable space and  $\mathcal P (X_T,\omega) $ is a closed subset.
\end{pro}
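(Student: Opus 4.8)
The plan is to verify two things in sequence: first that $W^{(1,0),\infty}_{\loc}(X_T)$ equipped with the countable family of semi-norms $(\rho_J)$, $J$ ranging over an exhaustion of $]0,T[$ by compact intervals, is a Fr\'echet space (in particular complete metrizable), and then that $\mathcal{P}(X_T,\omega)$ sits inside it as a closed subset. The metrizability is standard: choosing an increasing exhaustion $J_1 \Subset J_2 \Subset \cdots$ of $]0,T[$ with $\bigcup_k J_k = ]0,T[$, the metric $d(u,v) := \sum_k 2^{-k}\frac{\rho_{J_k}(u-v)}{1+\rho_{J_k}(u-v)}$ induces the same topology, and $\rho_J(u) = 0$ for all $J$ forces $u = 0$ a.e., so the topology is Hausdorff. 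The only genuine content is completeness.

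For completeness, I would take a Cauchy sequence $(u_j)$ in $W^{(1,0),\infty}_{\loc}(X_T)$. Cauchyness with respect to each $\rho_{J_k}$ splits into: $(u_j)$ is Cauchy in $L^1(J_k\times X)$, and $(\dot u_j)$ is Cauchy in $L^\infty(J_k\times X)$. By completeness of $L^1$ and $L^\infty$ on each $J_k\times X$ and a diagonal argument, there is $u\in L^1_{\loc}(X_T)$ with $u_j\to u$ in $L^1_{\loc}$, and a $w\in L^\infty_{\loc}(X_T)$ with $\dot u_j\to w$ in $L^\infty_{\loc}$. The point is that $w=\dot u$ in the sense of distributions: this follows because distributional differentiation is continuous for the $L^1_{\loc}$ (hence distributional) convergence, so $\dot u_j\to \dot u$ as distributions, while $\dot u_j\to w$ in $L^\infty_{\loc}$ also gives distributional convergence; uniqueness of distributional limits yields $\dot u = w \in L^\infty_{\loc}$. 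Hence $u\in W^{(1,0),\infty}_{\loc}(X_T)$ and $\rho_{J_k}(u_j-u)\to 0$ for each $k$, i.e. $u_j\to u$ in the topology.

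Next I would show $\mathcal{P}(X_T,\omega)$ is closed. Suppose $u_j\in\mathcal{P}(X_T,\omega)$ and $u_j\to u$ in $W^{(1,0),\infty}_{\loc}(X_T)$. I must recover the two defining properties of a parabolic potential: that each slice $x\mapsto u(t,x)$ is $\omega_t$-psh, and that $u$ is locally uniformly Lipschitz in $t$. For the Lipschitz bound, fix $J\Subset]0,T[$; the uniform Lipschitz constant of $u_j$ on $J$ equals $\|\dot u_j\|_{L^\infty(J\times X)}$ by the identity recorded just before the proposition, and these are bounded (a convergent sequence is bounded in each semi-norm), so after passing to a representative the limit $u$ inherits $\|\dot u\|_{L^\infty(J\times X)} \le \liminf_j \|\dot u_j\|_{L^\infty(J\times X)} < \infty$, which translates back into inequality \eqref{eq:Lip} for (a version of) $u$ with a uniform constant. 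For the $\omega_t$-psh property of slices, I would use that $L^1_{\loc}(X_T)$ convergence, combined with the uniform Lipschitz-in-$t$ bound, forces $u_j(t,\cdot)\to u(t,\cdot)$ in $L^1(X)$ for every $t$ — this is exactly the content of Lemma \ref{lem:L1Slice-L1} and the remark following it — and $\omega_t$-psh functions form a closed subset of $L^1(X)$, so each slice $u(t,\cdot)$ agrees a.e. with an $\omega_t$-psh function; choosing the correct (usc, slice-$\omega_t$-psh) representative of $u$, guaranteed by Proposition \ref{pro: usc}, then places $u$ in $\mathcal{P}(X_T,\omega)$.

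The main obstacle, and the step that needs the most care, is the reconciliation of representatives: elements of $W^{(1,0),\infty}_{\loc}$ are equivalence classes mod null sets, whereas membership in $\mathcal{P}(X_T,\omega)$ is a statement about a specific, everywhere-defined usc function with $\omega_t$-psh slices and a genuine (not merely a.e.) Lipschitz estimate in $t$. So I would be careful to first produce the canonical representative of the limit — upper semicontinuous in $(t,x)$, with $\omega_t$-psh slices — via the slicewise $L^1$ convergence and the usc-regularization/Proposition \ref{pro: usc} machinery, and only then check that this canonical representative satisfies \eqref{eq:Lip} pointwise with the uniform constant inherited from $\|\dot u_j\|_{L^\infty(J\times X)}$. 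Everything else is a routine repackaging of completeness of $L^1$ and $L^\infty$ together with the continuity of distributional differentiation.
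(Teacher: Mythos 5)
The paper states this proposition without proof, so there is no authorial argument to compare against; I will evaluate your outline on its own terms.

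Your metrizability and completeness argument is correct and standard. The closedness argument also follows the right structure, and you correctly identify the one genuinely delicate point — the passage from a.e. information to a bona-fide element of $\Pc(X_T,\omega)$, whose defining properties (pointwise Lipschitz estimate \eqref{eq:Lip}, genuinely $\omega_t$-psh slices) are not stable under changing the function on a null set. But you flag this step without actually carrying it out, and this is where your proposal falls short of a complete proof. Two concrete points. First, the citation of Proposition~\ref{pro: usc} is misplaced: that result \emph{assumes} $\f\in\Pc(X_T,\omega)$ and concerns upper semi-continuous extension to $t=0$; it is not a device for manufacturing the canonical representative, which is exactly what you still have to construct. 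Second, the missing argument is short but must be supplied: (i) since $\dot u\in L^{\infty}_{\loc}$, Fubini gives that for a.e.\ $x$ the map $t\mapsto u(t,x)$ is absolutely continuous with $\partial_t u(\cdot,x)=\dot u(\cdot,x)$, hence $\kappa$-Lipschitz with $\kappa=\Vert\dot u\Vert_{L^{\infty}(J\times X)}$; (ii) replacing each slice $u(t,\cdot)$ by its $\omega_t$-psh usc representative $\tilde u_t$ (provided by slicewise $L^1$-convergence and closedness of $\PSH(X,\omega_t)$ in $L^1(X)$), one then propagates the a.e. bound $|u(t,\cdot)-u(s,\cdot)|\leq\kappa|t-s|$ to the everywhere bound $|\tilde u_t-\tilde u_s|\leq\kappa|t-s|$ by averaging over shrinking balls and using the submean-value property and upper semi-continuity of quasi-psh functions. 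Without (ii) the $\sup^*$/essential-sup identity you quote — which is stated for functions already in $\Pc(X_T,\omega)$ — does not yet apply to the limit.

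A route you might find shorter: a Cauchy sequence in the $(\rho_J)$-topology is convergent in $L^1_{\loc}(X_T)$ and locally uniformly Lipschitz in $t$ (Cauchy sequences are bounded in each seminorm), so Theorem~\ref{thm:Montel} applies verbatim and hands you a limit already lying in $\Pc(X_T,\omega)$; completeness of $L^{\infty}_{\loc}$ for the derivatives, together with uniqueness of distributional limits, then gives $\rho_J$-convergence. This avoids reproducing the representative-reconciliation argument by hand, since it is already baked into Theorem~\ref{thm:Montel}.
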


\subsection{Parabolic complex Monge-Amp\`ere operators}  \label{sec:operator}

As explained in the introduction, we assume in this section (without loss of generality) that
$\theta \leq \omega_t \leq \Theta$, where
$\theta$ is a semi-positive and big $(1,1)$-form and $\Theta$ is a K\"ahler form.

\subsubsection{Parabolic Chern-Levine-Nirenberg inequalities}

We assume here that $\f \in \mathcal P (X_T,\omega) \cap L^{\infty}_{\loc} (X_T)$. For all $t \in ]0,T[$, the  function 
$$
X \ni x  \mapsto \f_t(x) = \f (t,x) \in \R
$$
 is $\omega_t$-psh and bounded, hence  $(\omega_t+dd^c \varphi_t)^n$ is well defined as a positive Borel measure on $X$ as follows from the works of Bedford and Taylor \cite{BT76,BT82}. 
  
Since $0 \leq \omega_t \leq \Theta$ for  $0 \leq t \leq T$, 
the  positive Borel measures $ ( \omega_t+ dd^c \f_t)^n$ have uniformly bounded masses on $X$ :
  $$
  \int_X  (\omega_t+ dd^c \f_t)^n \leq \int_X (\Theta + dd^c \f_t)^n \leq \int_X \Theta^n.
  $$
These can be considered, alternatively, as a family of currents of degree $2n$ on the real $(2n+1)$-dimensional manifold 
 $X_T= ]0,T[ \times X $.  It follows from Bedord-Taylor's convergence theorem \cite{BT76,BT82} that  $t \longmapsto  (\omega_t+dd^c \varphi_t)^n$ is continuous 
 as a map from $]0,T[$ to the space $\mathcal M (X)$ of positive Radon measures on $X$ endowed with the weak$^*$-topology. More generally we have

 \begin{lem}  \label{lem:weak-cont}  
 Fix $\f \in \mathcal P (X_T,\omega)\cap L^{\infty}_{\loc} (X_T)$ and   
 $\chi $  a continuous test function in $X_T$. The function
$
 t \longmapsto \int_{X} \chi (t,\cdot)  (\omega_t+ dd^c \f_t)^n,
$
is  continuous  in  $]0,T[$ and bounded, with
$$
\sup_{0 < t < T} \left \vert \int_{X} \chi (t,\cdot)  (\omega_t+ dd^c \varphi_t)^n \right \vert \leq   (\max_{X_T} \vert \chi \vert) \int_X \Theta^n.
$$  
 \end{lem}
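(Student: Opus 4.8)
\textbf{Proof plan for Lemma \ref{lem:weak-cont}.}

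The plan is to deduce this from the already-recalled Bedford--Taylor continuity theorem together with the uniform mass bound, by a standard $\varepsilon$-argument. First I would fix a compact interval $[a,b] \Subset ]0,T[$ containing a point $t_0$ at which we wish to prove continuity, and observe that on $[a,b]$ the slices $\f_t$ are uniformly bounded (since $\f \in L^\infty_{\loc}(X_T)$) and uniformly $\omega_t$-psh with $0 \le \omega_t \le \Theta$. Writing $I(t) := \int_X \chi(t,\cdot)\,(\omega_t+dd^c\f_t)^n$, I split $I(t)-I(t_0)$ into two pieces:
\[
I(t)-I(t_0) = \int_X \bigl(\chi(t,\cdot)-\chi(t_0,\cdot)\bigr)(\omega_t+dd^c\f_t)^n + \int_X \chi(t_0,\cdot)\,\bigl[(\omega_t+dd^c\f_t)^n-(\omega_{t_0}+dd^c\f_{t_0})^n\bigr].
\]
The first term is bounded by $\bigl(\sup_{x}|\chi(t,x)-\chi(t_0,x)|\bigr)\int_X\Theta^n$, which tends to $0$ as $t\to t_0$ by uniform continuity of $\chi$ on the compact set $[a,b]\times X$ and the uniform mass bound $\int_X(\omega_t+dd^c\f_t)^n \le \int_X\Theta^n$.

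For the second term, the key point is the weak$^*$-convergence $(\omega_t+dd^c\f_t)^n \to (\omega_{t_0}+dd^c\f_{t_0})^n$ as $t\to t_0$, tested against the fixed continuous function $\chi(t_0,\cdot)$ on $X$. This is exactly the content of the Bedford--Taylor convergence theorem \cite{BT76,BT82}: as $t\to t_0$ the functions $\f_t$ converge to $\f_{t_0}$ in $L^1(X)$ (by the Lipschitz-in-time bound \eqref{eq:Lip}, one even has $|\f_t-\f_{t_0}|\le \kappa|t-t_0|$ pointwise on the compact time interval), they are uniformly bounded, and the forms $\omega_t$ converge smoothly to $\omega_{t_0}$; hence the mixed Monge--Amp\`ere measures converge weakly. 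Since $\chi(t_0,\cdot)$ is continuous on the compact manifold $X$, testing against it gives that the second term tends to $0$. This shows $I$ is continuous at $t_0$, and since $t_0 \in ]0,T[$ was arbitrary, $I$ is continuous on $]0,T[$. The bound $\sup_{0<t<T}|I(t)| \le (\max_{X_T}|\chi|)\int_X\Theta^n$ is then immediate from $|I(t)| \le (\max_{X}|\chi(t,\cdot)|)\int_X(\omega_t+dd^c\f_t)^n \le (\max_{X_T}|\chi|)\int_X\Theta^n$.

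The only mild subtlety -- and the place I would be most careful -- is justifying that the slices $\omega_t$ really do vary continuously (indeed smoothly, by the $\mathcal C^2$ hypothesis on $t\mapsto\omega_t$), so that the version of the Bedford--Taylor theorem one invokes allows the reference forms to vary; this is handled by writing $\omega_t = \omega_{t_0} + (\omega_t - \omega_{t_0})$ and absorbing the smooth, uniformly small correction $\omega_t - \omega_{t_0}$, together with expanding $(\omega_t + dd^c\f_t)^n$ and applying the convergence slice-wise to each mixed term $\omega_{t_0}^{k}\wedge\omega_t^{\cdot}\wedge(dd^c\f_t)^{\cdot}$. Everything else is routine. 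The boundedness statement needs no regularity in $t$ at all; only the continuity statement uses the Lipschitz-in-time hypothesis and Bedford--Taylor.
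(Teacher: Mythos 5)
Your proof is correct and follows essentially the same route as the paper's: Lipschitz-in-time gives uniform convergence of the slices $\f_t \to \f_{t_0}$, Bedford--Taylor plus the smooth dependence of $\omega_t$ give weak convergence of the Monge--Amp\`ere measures, uniform continuity of $\chi$ handles the test function, and the mass bound $\int_X(\omega_t+dd^c\f_t)^n \le \int_X\Theta^n$ gives boundedness. The only difference is that you spell out the two-term decomposition and the $\omega_t$-varying subtlety explicitly, whereas the paper states these steps more compactly.
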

 
 More generally if $\chi$ is upper semi-continuous (resp. lower semi-continuous, resp. Borel) on $X_T$, then so is the function $t \mapsto \int_X \chi(t,\cdot) (\omega_t+dd^c \varphi_t)^n$.

 \begin{proof}
 Fix a continuous test function $\chi$ on $X_T$ and fix a compact interval $J\Subset ]0,T[$ such that $J\times X$ contains the support of $\chi$. 

Fix $t_0\in ]0,T[$. The Lipschitz property of $\varphi$ ensures that $\varphi_t$ uniformly converges on $X$ to $\varphi_{t_0}$ as $t\to t_0$. The continuity of $t\mapsto \omega_t$ and Bedford-Taylor's convergence theorem then ensure that $(\omega_t+dd^c \varphi_t)^n$ converges to $(\omega_{t_0}+dd^c \varphi_{t_0})^n$ as $t\to t_0$. Since $\chi_t$ uniformly converges on $X$ to $\chi_{t_0}$, the first statement follows. The second statement follows from the fact that $\int_X (\omega_t+dd^c \varphi_t)^n \leq \int_X \Theta^n$, for all $t\in ]0,T[$. 
\end{proof}

 \begin{defi}   \label{def:LHS}
 Let $\f \in \mathcal P (X_T,\omega)\cap L^{\infty}_{\loc} (X_T)$. 
The map
 \begin{equation} \label{eq:current}
\chi \mapsto  \int_{X_T} \chi d t\wedge  (\omega_t+dd^c \varphi_t)^n  := \int_0^T d t \left(\int_{X} \chi (t,\cdot)  (\omega_t+dd^c \varphi_t)^n \right).
 \end{equation}
defines a $(2 n + 1)$-current on $X_T$ denoted by $ d t \wedge  (\omega_t+dd^c \varphi_t)^n$, which
can be  identified with  a positive Radon measure on $X_T$.
 \end{defi}
 
That   (\ref{eq:current})  is well defined  for continuous test  (or Borel) functions $\chi$ follows from Lemma \ref{lem:weak-cont}.
The operator can also be defined by approximation  in the spirit of Bedford and Taylor convergence results \cite{BT76,BT82} :

 \begin{pro}   
 Fix $\f \in \mathcal P (X_T,\omega)\cap L^{\infty}_{\loc} (X_T)$ and let 
 $\f_j$ be a monotone sequence of  functions $(\f_j)$ in $\mathcal P (X_T,\omega)\cap L^{\infty}_{\loc} (X_T)$  
converging to $\f$ almost everywhere in $X_T$. Then 
$$
d t \wedge  (\omega_t+ dd^c\f^j_t)^n \to d t \wedge  (\omega_t+dd^c \varphi_t)^n,
$$
in the  sense of measures on $X_T$. 
\end{pro}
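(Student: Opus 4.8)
The plan is to reduce the statement to the continuity result of Lemma~\ref{lem:weak-cont} applied on each time slice, combined with the Bedford--Taylor monotone convergence theorem for the complex Monge--Amp\`ere operator on a compact K\"ahler manifold, and then to pass to the integral in $t$ by dominated convergence. Concretely, fix a continuous test function $\chi$ on $X_T$ and a compact interval $J\Subset\,]0,T[$ containing the projection of $\operatorname{supp}\chi$ onto the time axis. We must show
$$
\int_0^T\!\! dt \left(\int_X \chi(t,\cdot)\,(\omega_t+dd^c\f^j_t)^n\right)\;\longrightarrow\;\int_0^T\!\! dt\left(\int_X \chi(t,\cdot)\,(\omega_t+dd^c\f_t)^n\right).
$$

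First I would fix $t\in\,]0,T[$ and treat the inner slice integral. For such $t$, the functions $x\mapsto\f^j(t,x)$ are $\omega_t$-psh, uniformly bounded (on the relevant compact $t$-range, by local boundedness of $\f$ together with monotonicity of $\f_j$), and converge to $\f_t$ monotonically almost everywhere on $X$; since a monotone sequence of $\omega_t$-psh functions that is bounded and converges a.e.\ converges in capacity (or one invokes the classical Bedford--Taylor convergence theorem for decreasing sequences, and for increasing sequences the usc regularization $(\lim\f^j_t)^*$ equals $\f_t$ a.e.\ so the measures still converge), Bedford--Taylor's theorem gives $(\omega_t+dd^c\f^j_t)^n\to(\omega_t+dd^c\f_t)^n$ weakly on $X$. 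As $\chi(t,\cdot)$ is continuous on $X$, this yields
$$
g_j(t):=\int_X\chi(t,\cdot)\,(\omega_t+dd^c\f^j_t)^n\;\longrightarrow\;g(t):=\int_X\chi(t,\cdot)\,(\omega_t+dd^c\f_t)^n
$$
for every $t\in\,]0,T[$.

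Next I would set up the dominated convergence in $t$. By the mass bound in Lemma~\ref{lem:weak-cont}, $|g_j(t)|\le(\max_{X_T}|\chi|)\int_X\Theta^n$ uniformly in $j$ and $t$, and $g_j$ is supported in $J$; the constant function $(\max_{X_T}|\chi|)\int_X\Theta^n\cdot\mathbf 1_J$ is integrable on $]0,T[$ with respect to $dt$. Each $g_j$ is continuous on $]0,T[$ (again Lemma~\ref{lem:weak-cont}), hence measurable, so Lebesgue's dominated convergence theorem applies and gives $\int_0^T g_j(t)\,dt\to\int_0^T g(t)\,dt$, which is exactly the asserted convergence of $dt\wedge(\omega_t+dd^c\f^j_t)^n$ to $dt\wedge(\omega_t+dd^c\f_t)^n$ against $\chi$. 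Since $\chi$ is an arbitrary continuous test function and all the measures involved are positive with uniformly bounded total mass on $J\times X$, weak convergence against continuous test functions is exactly convergence in the sense of measures on $X_T$.

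The only genuinely delicate point is the slicewise Bedford--Taylor convergence for the \emph{increasing} case: the a.e.\ limit $\widetilde\f_t$ of an increasing sequence of $\omega_t$-psh functions need not itself be usc, and one works with $\f_t=(\widetilde\f_t)^*$, which agrees with $\widetilde\f_t$ outside a pluripolar (hence Lebesgue-null) set; I would note that the hypothesis "$\f_j\to\f$ a.e.\ in $X_T$ with $\f\in\mathcal P(X_T,\omega)\cap L^\infty_{\loc}$" forces $\f_t=(\widetilde\f_t)^*$ for a.e.\ $t$ (Fubini), and Bedford--Taylor's theorem for increasing sequences bounded in $L^\infty$ then yields $(\omega_t+dd^c\f^j_t)^n\to(\omega_t+dd^c\f_t)^n$ for those $t$ — which suffices for the $dt$-integral. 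Everything else is a routine application of Lemma~\ref{lem:weak-cont} and dominated convergence.
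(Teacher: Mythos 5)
Your argument is correct and follows the same route as the paper's: slicewise Bedford--Taylor convergence for each fixed $t$, the uniform mass bound from Lemma~\ref{lem:weak-cont}, and Lebesgue dominated convergence in $t$. The extra care you take for the increasing case (passing through the usc regularization and settling for almost every $t$, which suffices for the $dt$-integral) is a valid refinement that the paper leaves implicit.
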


\begin{proof} 
Let $\chi$ be a continuous test function in $X_T$. By definition, for all $j$, we have
$$
 \int_{X_T} \chi d t \wedge  {\rm MA} (\f^j) := \int_0^T d t\left(\int_{X} \chi (t,\cdot)  {\rm MA} ( \f^j_t) \right).
$$

We can apply Bedford and Taylor convergence theorems \cite{BT82} to conclude that, for all $t \in ]0,T[$, 
$$
\int_X \chi(t,\cdot) (\omega_t+dd^c \varphi_t^j)^n  \to   \int_X \chi(t,\cdot) (\omega_t+dd^c \varphi_t)^n.
$$ 
Since  $\int_X \chi (t,\cdot)  (\omega_t+ dd^c \f^j_t)^n$ is  a uniformly bounded   (Lemma \ref{lem:weak-cont}), the conclusion follows from Lebesgue convergence theorem. 
\end{proof}

It is classical that one can then define similarly mixed parabolic Monge-Amp\`ere operators
$$
dt \wedge (\omega_t+dd^c \f_t^1) \wedge \cdots \wedge (\omega_t+dd^c \f_t^n)
$$ 
whenever $\f^1,\ldots,\f^n \in \mathcal P (X_T,\omega)\cap L^{\infty}_{\loc} (X_T)$.
We note, for later use, the following stronger version of Chern-Levine-Nirenberg inequalities:

\begin{pro}
Assume $\f^1,\ldots,\f^n \in \mathcal P (X_T,\omega)\cap L^{\infty}_{\loc} (X_T)$ and
$\p \in \mathcal P (X_T,\omega)$. Then, for all $J \Subset ]0,T[$,
\begin{multline*}
	\int_{J \times X}   |\psi| dt \wedge (\omega_t+dd^c \f_t^1)  \wedge \cdots \wedge (\omega_t+dd^c \f_t^n) \leq \\
	 \leq   \Vol(\Theta)  \int_J \left(|\sup_X \psi_t| + \sum_{j=1}^n{\rm osc} (\f^j_t)\right) dt 
	 +\int_{J\times X} |\psi| dt \wedge \Theta^n.
\end{multline*}
In particular, $\p \in L^1_{\loc} (X_T, dt \wedge (\omega_t+dd^c \f_t^1) \wedge \cdots \wedge (\omega_t+dd^c \f_t^n))$. 
\end{pro}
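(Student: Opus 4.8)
The plan is to prove the inequality one time-slice at a time, using the classical static Chern--Levine--Nirenberg estimates on the compact K\"ahler manifold $(X,\Theta)$, and then to integrate in the $t$-variable. The only genuinely non-formal point will be the measurability in $t$ of the slice-integrals of $|\psi|$ against the mixed Monge--Amp\`ere measures; the rest is the static Bedford--Taylor theory applied slice by slice plus an elementary finiteness check.

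\emph{Step 1 (the slicewise estimate).} Fix $t\in\,]0,T[$. Since $0\le\omega_t\le\Theta$, the $\omega_t$-psh functions $\f^1_t,\dots,\f^n_t,\psi_t$ are also $\Theta$-psh, the $\f^j_t$ being moreover bounded. I would invoke the sharp mixed Chern--Levine--Nirenberg inequality on $(X,\Theta)$ (following Bedford--Taylor \cite{BT76,BT82}), which yields
\[
\int_X|\psi_t|\,(\omega_t+dd^c\f^1_t)\we\cdots\we(\omega_t+dd^c\f^n_t)\ \le\ \Vol(\Theta)\Big(|\sup_X\psi_t|+\sum_{j=1}^n{\rm osc}(\f^j_t)\Big)+\int_X|\psi_t|\,\Theta^n .
\]
Its (standard) proof: subtract $\sup_X\f^j_t$ from each $\f^j_t$, which alters neither $\omega_t+dd^c\f^j_t$ nor ${\rm osc}(\f^j_t)$ and makes $-{\rm osc}(\f^j_t)\le\f^j_t\le 0$; use monotonicity of mixed Monge--Amp\`ere products under $\omega_t\le\Theta$ to replace each $\omega_t+dd^c\f^j_t$ by $\Theta+dd^c\f^j_t$; telescope $\bigwedge_j(\Theta+dd^c\f^j_t)-\Theta^n$ into a sum of terms $dd^c\f^k_t\we S_k$ with $S_k$ positive closed of bidegree $(n-1,n-1)$ and $\int_X\Theta\we S_k=\Vol(\Theta)$; then integrate by parts against $\psi_t$, bounding by $0\le-\f^k_t\le{\rm osc}(\f^k_t)$ and using $\int_X(\omega_t+dd^c\f^1_t)\we\cdots\we(\omega_t+dd^c\f^n_t)=\int_X\omega_t^n\le\Vol(\Theta)$. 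As the integration by parts is legitimate only for bounded $\Theta$-psh potentials, one first treats bounded $\psi_t$ and then passes to general $\psi_t$ via its truncations $\max(\psi_t,-\ell)$ and monotone convergence, using that the mixed Monge--Amp\`ere measure of bounded potentials charges no pluripolar set (the precise constant in front of $|\sup_X\psi_t|$ is immaterial and easily adjusted).

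\emph{Step 2 (integrating in $t$).} I would then integrate the slice inequality over $t\in J$. On the right-hand side, $t\mapsto\sup_X\psi_t$ and $t\mapsto{\rm osc}(\f^j_t)=\sup_X\f^j_t-\inf_X\f^j_t$ are locally Lipschitz in $t$, hence continuous, because the parabolic potentials are locally uniformly Lipschitz in $t$; and $t\mapsto\int_X|\psi_t|\,\Theta^n$ is measurable by Fubini since $\psi\in L^1_{\loc}(X_T)$. On the left-hand side, $\psi$ is upper semicontinuous on $X_T$ (semicontinuity in $x$ plus Lipschitz continuity in $t$); writing $|\psi|=2\psi^+-\psi$, the function $\psi^+$ is upper semicontinuous and $-\psi$ is lower semicontinuous, so by the semicontinuous-test-function form of (the mixed version of) Lemma~\ref{lem:weak-cont} the maps $t\mapsto\int_X\psi^+_t\,(\omega_t+dd^c\f^1_t)\we\cdots$ and $t\mapsto\int_X(-\psi_t)\,(\omega_t+dd^c\f^1_t)\we\cdots$ are respectively upper and lower semicontinuous; hence $t\mapsto\int_X|\psi_t|\,(\omega_t+dd^c\f^1_t)\we\cdots\we(\omega_t+dd^c\f^n_t)$ is a $[0,+\infty]$-valued Borel function. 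Integrating over $J$ and unwinding Definition~\ref{def:LHS} produces exactly the asserted estimate.

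\emph{Step 3 (conclusion).} The right-hand side is finite: $\int_J\big(|\sup_X\psi_t|+\sum_j{\rm osc}(\f^j_t)\big)dt<+\infty$ as the integral of a continuous function over the compact set $J$, and $\int_{J\times X}|\psi|\,dt\we\Theta^n<+\infty$ because $\psi\in L^1_{\loc}(X_T)$; hence $\psi\in L^1_{\loc}\big(X_T,dt\we(\omega_t+dd^c\f^1_t)\we\cdots\we(\omega_t+dd^c\f^n_t)\big)$. I expect the main obstacle to be precisely Step 2, i.e.\ the measurability in $t$ of the slice-integrals of $|\psi|$ against the mixed Monge--Amp\`ere measures: one must have at hand the semicontinuous-test-function version of the mixed analogue of Lemma~\ref{lem:weak-cont} and decompose the parabolic potential $\psi$ into its u.s.c.\ and l.s.c.\ parts; by contrast, the slicewise estimate of Step 1 is classical and the finiteness in Step 3 is immediate.
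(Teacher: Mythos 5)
Your proposal follows essentially the same route as the paper's: a slicewise Chern--Levine--Nirenberg estimate (which the paper simply invokes from \cite[Corollary 3.3]{GZ05} rather than reproving, after first splitting off the $\sup_X\psi_t$ term by the triangle inequality), followed by integration over $J$. The only genuine divergence is one of emphasis: you devote Step~2 to the Borel measurability in $t$ of $t\mapsto\int_X|\psi_t|\,(\omega_t+dd^c\f^1_t)\we\cdots\we(\omega_t+dd^c\f^n_t)$ via the usc/lsc decomposition $|\psi|=2\psi^+-\psi$ and the semicontinuous-test-function version of the mixed analogue of Lemma~\ref{lem:weak-cont}, a point the paper's proof passes over in silence --- this is sound and a worthwhile clarification, but it is additional care within the same method rather than a different argument.
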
 

\begin{proof}
Fix $\psi \in \mathcal{P}(X_T,\omega)$ and $J\Subset ]0,T[$. Setting $\psi_t= \tilde{\psi}_t + \sup_X \psi_t$ and using the triangle inequality we can write
\begin{multline*}
	\int_{J\times X} |\psi| dt  \wedge (\omega_t+dd^c \f_t^1) \wedge \cdots \wedge (\omega_t+dd^c \f_t^n)  \leq \\\leq \int_{J\times X} |\tilde{\psi}| dt  \wedge (\omega_t+dd^c \f_t^1) \wedge \cdots \wedge (\omega_t+dd^c \f_t^n) 
	 + \Vol(\Theta) \int_J |\sup_X \psi_t|dt. 
\end{multline*}
We can thus assume that $\sup_X \psi_t =0$ for all $t\in J$.   
A series of integration by parts as in \cite[Corollary 3.3]{GZ05} yields 
\begin{flalign*}
\int_X |\psi_t|  (\omega_t+dd^c \f_t^1) \wedge \cdots \wedge (\omega_t+dd^c \f_t^n) &\leq \sum_{j=1}^n {\rm osc}_X (\f^j_t)   \Vol(\omega_t) + \int_X |\psi_t| \omega_t^n\\
& \leq \Vol(\Theta)\sum_{j=1}^n {\rm osc}_X (\f^j_t)  +  \int_X |\psi_t| \Theta^n. 
\end{flalign*}
Integrating on $J$ yields the desired estimate.

\end{proof}

\subsubsection{Convergence results}

 \begin{defi} 
 A family $\Phi \subset \mc P (X_T,\omega)$ is  uniformly semi-concave in  $]0,T[$, if for any compact subset $J \Subset ]0,T[$, 
 there exists a constant  $\kappa = \kappa (J,\Phi) > 0$ such that any $\f \in \Phi$  is uniformly $\kappa$-concave in $J$.
 \end{defi}

 Fix $\mu$ a  Borel measure on $X$ and let $\ell$ denote the Lebesgue measure on $\R$. 
 
\begin{thm} \label{thm:conv1} 
Let $(f_j)$ be  a sequence of positive functions which 
converge to  $f$ in $L^1 (X_T,\ell \otimes \mu)$. 
 Let $(\f^j)$ be  a sequence of functions in ${\mathcal P} (X_T,\omega)$ which
 \begin{itemize}
 \item converge $\ell \otimes \mu$-almost everywhere in $X_T$ 
 to a function $\f \in  {\mathcal P} (X_T,\omega)$;
 \item   is uniformly semi-concave in $]0,T[$. 
 \end{itemize}
   
  Then $\lim_{j \to + \infty} \dot{\f}^j (t,x) = \dot{\f} (t,x)$ for $\ell \otimes \mu$-almost any $(t,x)  \in X_T$, and 
 $$
  h (\dot{\f}^j) \, f_j \, \ell \otimes \mu \to h (\dot{\f}) \, f \, \ell \otimes \mu, 
 $$
 in the weak sense of Radon measures on $X_T$,
 for all $h \in {\mathcal C}^0(\R,\R)$.
\end{thm}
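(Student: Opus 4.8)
The plan is to reduce the statement to three independent assertions and treat them in order: (i) almost-everywhere convergence of the time-derivatives $\dot\varphi^j \to \dot\varphi$; (ii) a uniform $L^\infty_{\loc}$ bound on the $\dot\varphi^j$, so that $h(\dot\varphi^j)$ stays in a fixed bounded set; (iii) the weak convergence of the measures $h(\dot\varphi^j) f_j\, \ell\otimes\mu$. The key structural input is uniform semi-concavity in $t$: for a fixed compact $J\Subset]0,T[$ each $t\mapsto \varphi^j(t,x)-\kappa t^2$ is concave, so its one-sided difference quotients are monotone in the step size. This is exactly the hypothesis under which the local analogue in \cite{GLZparab1} was established, and for (i) I would simply invoke that local result: semi-concavity forces $\dot\varphi^j(t,x)$ to equal the infimum over $s>0$ of $(\varphi^j(t+s,x)-\varphi^j(t,x))/s + \kappa s$, a monotone limit of functions that converge $\ell\otimes\mu$-a.e. by the a.e. convergence of $\varphi^j\to\varphi$; standard measure theory (applied slice-wise, using Fubini and the fact that monotone convergence of concave functions implies convergence of derivatives off a countable set) then gives $\dot\varphi^j\to\dot\varphi$ $\ell\otimes\mu$-a.e. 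For (ii), the local uniform Lipschitz constant of each $\varphi^j$ on $J\times X$ is controlled by the semi-concavity constant $\kappa(J,\Phi)$ together with the upper/lower bounds on the slices coming from Lemma~\ref{lem:L1Slice-L1} (the $L^1_{\loc}$-bounded family $(\varphi^j)$ has slices uniformly bounded in $L^1(X)$, hence, being $\omega_t$-psh and semi-concave, uniformly bounded above and below on compacta, which bounds $|\dot\varphi^j|$ uniformly on $J\times X$). Since $(\dot\varphi^j)$ is uniformly bounded on $J\times X$, its values lie in a fixed compact interval $I\subset\R$, and $h$ is uniformly continuous on $I$.

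For (iii) I would test against an arbitrary $\chi\in\mathcal C^0_c(X_T)$ and split
$$
\int_{X_T}\chi\, h(\dot\varphi^j) f_j - \int_{X_T}\chi\, h(\dot\varphi) f
= \int_{X_T}\chi\, h(\dot\varphi^j)(f_j-f) + \int_{X_T}\chi\bigl(h(\dot\varphi^j)-h(\dot\varphi)\bigr) f.
$$
The first term is bounded by $\|\chi\|_\infty \sup_I|h|\,\|f_j-f\|_{L^1(X_T)}\to 0$ by the $L^1$-convergence hypothesis. For the second term, $h(\dot\varphi^j)\to h(\dot\varphi)$ $\ell\otimes\mu$-a.e. by (i) and continuity of $h$, the integrand is dominated by the $f$-integrable function $2\sup_I|h|\,\|\chi\|_\infty\,\mathbf 1_{\mathrm{supp}\,\chi}\cdot f$, so dominated convergence (with respect to $\ell\otimes\mu$, using that $f\in L^1(X_T,\ell\otimes\mu)$) kills it. Combining the two estimates gives the claimed weak convergence.

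The main obstacle is item (i): one must be careful that $\ell\otimes\mu$-a.e. convergence of $\varphi^j$ genuinely upgrades to $\ell\otimes\mu$-a.e. convergence of $\dot\varphi^j$. This is not a pointwise fact about individual sequences of concave functions (derivatives of a.e.-convergent concave functions need not converge at a given point), but it holds off a null set: by Fubini one reduces to fixed $x$ (for $\mu$-a.e.\ $x$), where $t\mapsto\varphi^j(t,x)$ are uniformly semi-concave and converge for $\ell$-a.e.\ $t$; semi-concavity then forces the convergence to be locally uniform in $t$, hence the left (or right) derivatives converge at every point of differentiability of the limit, i.e.\ $\ell$-a.e.\ in $t$. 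Since this delicate slice-wise argument has already been carried out in the Euclidean setting, I would state it as "identical to the corresponding result in \cite{GLZparab1}", the reduction to that setting being via a local chart and the Poincaré-lemma trick $\omega_t=dd^c\rho_t$ used in the proof of Theorem~\ref{thm:Montel}; everything else is the elementary splitting above.
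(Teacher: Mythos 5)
The paper's own proof of Theorem \ref{thm:conv1} consists of a single line delegating everything to the local analogue in \cite{GLZparab1}, so your decision to invoke that reference for the hard step (i) matches the paper exactly, and your overall three-step skeleton is a reasonable reconstruction of what that local argument must contain.

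However, step (ii) of your sketch contains a genuine gap. You assert that the family $(\f^j)$ is $L^1_{\loc}(X_T)$-bounded and then invoke Lemma~\ref{lem:L1Slice-L1} to get uniform slice bounds, concluding that $|\dot{\f}^j|$ is uniformly bounded on $J\times X$. But the hypotheses of Theorem~\ref{thm:conv1} do \emph{not} give $L^1_{\loc}$-boundedness with respect to Lebesgue measure: the only convergence assumed is $\ell\otimes\mu$-a.e., where $\mu$ is an arbitrary Borel measure, possibly singular (e.g.\ a Dirac mass). Moreover Lemma~\ref{lem:L1Slice-L1} is stated for the Lebesgue $L^1(X_T)$- and $L^1(X)$-norms and controls the difference $u-v$ of two parabolic potentials, so it is not the right tool here. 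Without a uniform bound on $\dot{\f}^j$, your first term $\int\chi\,h(\dot{\f}^j)(f_j-f)$ does not vanish by the stated estimate, because $h\in\mathcal{C}^0(\R,\R)$ need not be bounded. What semi-concavity plus $\ell\otimes\mu$-a.e.\ convergence actually gives you is a bound on $\dot{\f}^j(\cdot,x)$ whose constant depends on the slice $x$ (pointwise convergence at a.e.\ $t$ plus $\kappa$-concavity pins down $\f^j(\cdot,x)$ on compacta, hence its Lipschitz norm, but separately for each $x$). Turning this slice-wise control into the weak convergence of $h(\dot{\f}^j)f_j\,\ell\otimes\mu$ requires a more careful Fubini/uniform-integrability argument (or the local statement in \cite{GLZparab1} carries an extra boundedness hypothesis that is always satisfied where the theorem is applied, as in Theorem~\ref{thm:conv2}); it does not follow from the crude splitting as written.
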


\begin{proof}
 The proof is identical to the corresponding one in the local context, we refer the reader to \cite{GLZparab1}.
\end{proof}

\section{A priori estimates} \label{sec:apriori}

In this section we assume that
$\f(t,x)=\f_t(x)$ is a  smooth $\omega_t$-psh solution to \eqref{eq:CMAF},  where 
 $t \mapsto \omega_t$ is a smooth family of K\"ahler forms, and $F,g$ are smooth with $g$ positive. 
 Our aim is to establish various a priori estimates that will allow us to construct weak solutions
 to the corresponding degenerate equations.
 
 For convenience we will also assume that $\theta$ is K\"ahler, $\varphi_0$ is smooth and strictly $\omega_0$-psh. 
It follows however from \cite{EGZ09,GZ13,Dat1} that
all the  a priori bounds below remain valid when, $\theta$ is semipositive and big,  $\varphi_0$ is merely $\omega_0$-psh and bounded.
 
We will make various extra assumptions, depending on the a priori estimates that we are interested in.

 \subsection{Controlling the oscillation of $\f_t$}  \label{subsect: osc}
 
Recall that $\theta(x) \leq \omega_t(x) \leq \Theta(x)$,
 where $\theta, \Theta$ are K\"ahler forms.
 We let $V_1$ (resp. $V_2$) denote the volume of $\{\theta\}$ (resp. $\{\Theta\}$),
 $$
 V_1=\int_X \theta^n
 \text{ and } 
 V_2=\int_X \Theta^n.
 $$
 We fix $c_1,c_2 \in \R$ normalizing constants such that $V_i=e^{c_i} \mu(X)$, where $\mu=gdV$.
 It follows from \cite{Kol98,EGZ09} that there exists $\rho_1$ a bounded $\theta$-psh function (respectively $\rho_2$ a bounded
 $\Theta$-psh function) such that 
 $$
 (\theta+dd^c \rho_1)^n=e^{c_1} \mu 
 \text{ and }
  (\Theta+dd^c \rho_2)^n=e^{c_2} \mu.
 $$
 The functions $\rho_1,\rho_2$ are moreover unique once normalized by
 $$
 \sup_X \rho_1=\inf_X \rho_2=0.
 $$
 Note that in proving existence of solutions the form $\theta$ will no longer be K\"ahler but merely semipositive and big. But the $L^{\infty}$ bound on $\rho_1$ remains uniform thanks to \cite[Proposition 2.6]{EGZ09}. 
 
 \begin{pro} \label{pro:bdd} 
 The following uniform a priori bound on  $\f_t$ holds :
 $$
 |\varphi_t(x)|  \leq C_0:= C \left ( e^{\lambda_F T} + \frac{e^{\lambda_F T}-1}{\lambda_F} \right),
 $$
 where $C$ is the following uniform constant 
 $$
 C= \sup_{X_T} |F(t,x,0)| + (\lambda_F+1)  \sup_X( |\rho_1| + |\rho_2| ) + \sup_X |\varphi_0| + \max(-c_1,c_2). 
 $$
 \end{pro}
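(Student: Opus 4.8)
The plan is to establish the bound by the standard barrier/maximum principle method for parabolic Monge-Amp\`ere equations, comparing $\f_t$ with suitable time-dependent upper and lower barriers built from the stationary solutions $\rho_1,\rho_2$. First I would set up the upper bound. Consider $\psi_t := a(t)\rho_2 + b(t)$ for functions $a,b$ to be chosen, with $a(0)$, $b(0)$ arranged so that $\psi_0 \geq \f_0$; since $\Theta \geq \omega_t$ and $\rho_2$ is $\Theta$-psh, for suitable $a$ the function $\psi_t$ is $\omega_t$-psh (here one uses $0 < a \leq 1$ together with semi-positivity of $\omega_t$). The quantity to control is $H(\psi) := \dot\psi_t + F(t,x,\psi_t) - \log\bigl[(\omega_t + dd^c\psi_t)^n / (g\,dV)\bigr]$: one wants $H(\psi) \geq 0$ so that $\psi$ is a supersolution. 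Using $(\omega_t + dd^c\psi_t)^n \leq (\Theta + a\,dd^c\rho_2)^n$ and the homogeneity under $a \mapsto 1$, one estimates $\log[(\omega_t+dd^c\psi_t)^n/(g dV)]$ from above by $c_2 + (\text{error from }a<1)$; then choosing $b(t)$ to solve a linear ODE $\dot b = \lambda_F b + (\text{constant})$ and using the quasi-increasing property \eqref{eq:Fincreasing} (which gives $F(t,x,\psi_t) \leq F(t,x,0) + \lambda_F|\psi_t| \leq \sup|F(\cdot,\cdot,0)| + \lambda_F(\|\rho_2\|_\infty + |b(t)|)$), the Gr\"onwall-type solution of this ODE produces exactly the factor $e^{\lambda_F T} + (e^{\lambda_F T}-1)/\lambda_F$. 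The parabolic maximum principle applied at a point where $\f_t - \psi_t$ attains its maximum over $\overline{X_T}$ then forces $\f \leq \psi$, hence $\f_t \leq C_0$.

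For the lower bound the argument is symmetric, using $\rho_1$ (the $\theta$-psh solution) in place of $\rho_2$: set $\phi_t := \rho_1 + b_1(t)$ with $b_1(0) \leq \inf(\f_0 - \rho_1)$ and note that since $\theta \leq \omega_t$, $\phi_t$ is automatically $\omega_t$-psh with $(\omega_t + dd^c\phi_t)^n \geq (\theta + dd^c\rho_1)^n = e^{c_1}\mu$. One checks that for $b_1$ solving the appropriate linear ODE with the right sign, $\phi$ is a subsolution, and the maximum principle gives $\f \geq \phi_t \geq -C_0$. The constant $\max(-c_1,c_2)$ in the statement is precisely what bookkeeping of the $c_i$ normalizations contributes, and the $(\lambda_F+1)(\|\rho_1\|_\infty + \|\rho_2\|_\infty)$ term collects both the $\omega_t$-psh correction and the $\lambda_F$-amplification through Gr\"onwall.

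The main obstacle I anticipate is the clean handling of two technical points. First, making $\psi_t = a(t)\rho_2 + b(t)$ genuinely $\omega_t$-psh requires $0 < a(t) \leq 1$, but then $\psi_0 \geq \f_0$ may fail unless $b(0)$ absorbs $\|\rho_2\|_\infty$; one must juggle the ODE for $b$ and the initial condition so that the final constant does not degrade, which is why the statement carries the extra $\sup_X|\rho_2|$ with coefficient $\lambda_F+1$ rather than $1$. Second, when $\lambda_F = 0$ the expression $(e^{\lambda_F T}-1)/\lambda_F$ must be read as its limit $T$, so the ODE $\dot b = \lambda_F b + K$ has to be solved uniformly in $\lambda_F \geq 0$; this is routine but should be stated carefully. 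A subsidiary point is that the maximum principle is being invoked on $X_T$ with parabolic boundary $\{0\}\times X$: since $\f$ is assumed smooth here, at an interior maximum of $\f - \psi$ one has $\partial_t(\f-\psi) \geq 0$ and $dd^c(\f - \psi) \leq 0$, and comparing the determinants (using that $\omega_t + dd^c\f_t > 0$ and concavity of $\log\det$) yields the contradiction unless the maximum is attained on the parabolic boundary, where $\f_0 \leq \psi_0$ by construction.
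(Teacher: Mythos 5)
Your lower-bound argument is essentially the paper's: you take the barrier $\rho_1 + b_1(t)$, which is $\theta$-psh hence $\omega_t$-psh since $\theta \leq \omega_t$, observe $(\omega_t + dd^c \rho_1)^n \geq (\theta + dd^c\rho_1)^n = e^{c_1}\mu$, and close the estimate with the quasi-increasing property of $F$ and a Gr\"onwall-type ODE. That part is sound, and the function $\gamma(t) = \sup_X|\varphi_0|e^{\lambda_F t} + C(e^{\lambda_F t}-1)/\lambda_F$ you solve for is exactly the one the paper uses.

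The upper bound, however, contains a genuine gap. You set $\psi_t := a(t)\rho_2 + b(t)$ and claim that for $0 < a(t) \leq 1$ this is $\omega_t$-psh, using semi-positivity of $\omega_t$. This is not correct: since $\rho_2$ is only $\Theta$-psh, all one gets is $\omega_t + a\,dd^c\rho_2 \geq \omega_t - a\Theta$, and with $\omega_t \leq \Theta$ (rather than the reverse) this right-hand side need not be semipositive. There is no uniform lower bound $\omega_t \geq \e\Theta$ available here — indeed $\theta$ is only semipositive and big in the end — so no choice of $a(t) > 0$ rescues the construction, and the final constant $C_0$ must not depend on any such $\e$. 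Without $\psi_t$ being $\omega_t$-psh, the maximum-principle step (comparing $(\omega_t + dd^c\varphi_t)^n$ with $(\omega_t + dd^c\psi_t)^n$ at an interior max of $\varphi - \psi$) breaks down, because the latter is not a well-defined nonnegative quantity. The paper avoids this entirely by a change of viewpoint: it does not attempt to produce an $\omega_t$-psh supersolution. Instead it takes $v_t := \rho_2 + \gamma(t)$, which is $\Theta$-psh by construction, checks that $v$ is a supersolution of the flow with background form $\Theta$, and then observes that $\varphi$ is automatically a \emph{subsolution} of that same $\Theta$-flow because $(\Theta + dd^c\varphi_t)^n \geq (\omega_t + dd^c\varphi_t)^n$ when $\omega_t \leq \Theta$. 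The maximum principle for the $\Theta$-equation then yields $\varphi \leq v$. You should replace your $a(t)$-scaling construction with this comparison against the $\Theta$-flow; once you do so, the rest of your bookkeeping (the ODE for $b(t)$, the $(\lambda_F+1)\sup|\rho_2|$ contribution, and the limiting interpretation at $\lambda_F = 0$) goes through as you describe.
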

 
 Recall that $\lambda_F\geq 0$ is a constant such that, for all $(t,x)\in X_T$,  the function $r\mapsto F(t,x,r)+\lambda_F r$ is increasing on $\mathbb{R}$.
 
 \begin{proof}
 Set, for $t\in \mathbb{R}$,
 $$
 \gamma(t):= \sup_X |\varphi_0| e^{\lambda_F t} + \frac{C(e^{\lambda_F t }-1)}{\lambda_F},
 $$
  where $C$ is as in the statement of the proposition.
 A direct computation shows that $\gamma(0) =\sup_X |\varphi_0|$ and $\gamma'(t) -\lambda_F \gamma(t) = C$. 
 
 Set, for $(t,x) \in X_T$, $u(t,x): =\rho_1(x)-\gamma(t)$.
 Observe that $u_t$ is $\theta$-psh hence $\omega_t$-psh, that $u_0 \leq \f_0$
 and 
 \begin{eqnarray*}
 (\omega_t+dd^c u_t)^n  \geq  (\theta+dd^c \rho_1)^n=e^{c_1} \mu \geq  e^{\dot{u}_t +F(t,x,u_t)} \mu.
 \end{eqnarray*}
 The last inequality follows from our choice of $C$: since 
 $r \mapsto F(\cdot,\cdot,r)+\lambda_F r$ is increasing and $\rho_1 \leq 0, u_t\leq 0$, we obtain
 \begin{eqnarray*}
 	F(t,x,u_t(x))+ \dot{u}_t &=& F(t,x,u_t(x)) -\gamma'(t)\\
  	&\leq & F(t,x,0) -\lambda_F (\rho_1 -\gamma(t)) -\gamma'(t)  \\
  	&\leq & F(t,x,0) + \lambda_F |\rho_1| -C  \\
 	&\leq & c_1.  
 \end{eqnarray*}
 It thus follows from the maximum principle that $\varphi \geq u$ on $X_T$.  
 
  Set now $v(t,x)=\rho_2(x)+\gamma(t)$, $(t,x) \in X_T$. 
 We let the reader check similarly  that $v_t$ is $\Theta$-psh, it satisfies
 $$
 (\Theta+dd^c v_t)^n \leq  e^{\partial_t v(t,x) + F(t,x,v_t)} gdV,
 $$
 and $v_0\geq \varphi_0$.  Now $\varphi_t$ is a subsolution
  to this new parabolic equation since $\omega_t\leq \Theta$. It  follows therefore from the maximum principle that $\varphi \leq v$ on $X_T$, and the desired estimates follow.
 \end{proof}
 
 The following construction of subbarrier will be useful in showing that the 
 pluripotential solution to \eqref{eq: MAF as measures} has the right value at $t=0$. 
 
 \begin{pro}
 	\label{pr:sbarrier}
 	For all  $0 \leq t \leq 1$,
 	$$
 	\varphi_t \geq (1-t) e^{-At} \varphi_0 + t \rho_1 + n(t\log t -t) - C\frac{e^{\lambda_F t}-1}{\lambda_F},
 	$$
 	where $C$ is the following uniform constant,
 	$$
 	C:= \sup_{X_T} F(t,x,0) + (A+\lambda_F+1) \left (\sup_X |\varphi_0|  + \sup_X |\rho_1| + n\right) -c_1.
 	$$ 
 \end{pro}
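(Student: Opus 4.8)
The plan is to construct an explicit subbarrier $w(t,x) = a(t)\varphi_0(x) + t\rho_1(x) + b(t)$, where $a(t) = (1-t)e^{-At}$ and $b(t) = n(t\log t - t) - C(e^{\lambda_F t}-1)/\lambda_F$, and verify that it is a subsolution to \eqref{eq:CMAF} with $w_0 \le \varphi_0$, so that the maximum principle (exactly as in Proposition \ref{pro:bdd}) yields $\varphi \ge w$ on $[0,1]\times X$. First I would check that $w_t$ is $\omega_t$-psh for $t \in [0,1]$: since $\theta \le \omega_t$ and $e^{At}\omega_t$ is increasing (the lower bound in \eqref{eq:omegatLip}, giving $\omega_t \ge e^{-At}\omega_0$ and hence $(1-t)\omega_0 \le e^{At}\omega_t$ up to the decreasing factor), the convex-type combination $a(t)\omega_0 + t\theta \le \omega_t$ should hold for $0\le t\le 1$ after absorbing constants; then $a(t)\varphi_0$ is $a(t)\omega_0$-psh, $t\rho_1$ is $t\theta$-psh, and the sum is $\omega_t$-psh because $dd^c$-subharmonicity is additive and $a(t)\omega_0 + t\theta \le \omega_t$. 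The boundary condition is immediate: $a(0)=1$, $b(0)=0$, so $w_0 = \varphi_0 \ge \varphi_0$.

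Next I would estimate the Monge–Amp\`ere mass from below. Using $\omega_t + dd^c w_t \ge a(t)(\omega_0 + dd^c\varphi_0) + t(\theta + dd^c\rho_1) + (\text{a positive remainder from }\omega_t - a(t)\omega_0 - t\theta)$ and discarding the semipositive terms except the $t(\theta+dd^c\rho_1)$ one, superadditivity of $(\cdot)^n$ on positive $(1,1)$-forms gives $(\omega_t + dd^c w_t)^n \ge t^n(\theta+dd^c\rho_1)^n = t^n e^{c_1}\mu = t^n e^{c_1} g\,dV$. So it suffices to show $t^n e^{c_1} \ge e^{\dot w_t + F(t,x,w_t)}$, i.e. $n\log t + c_1 \ge \dot w_t + F(t,x,w_t)$. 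Now $\dot w_t = a'(t)\varphi_0 + \rho_1 + n\log t - C e^{\lambda_F t}$, and the $n\log t$ terms cancel, leaving the requirement $c_1 \ge a'(t)\varphi_0 + \rho_1 + F(t,x,w_t) - C e^{\lambda_F t}$. Using the quasi-increasing property \eqref{eq:Fincreasing} to bound $F(t,x,w_t) \le F(t,x,0) + \lambda_F|w_t|$ (valid once one checks $w_t \le 0$, which follows from $\varphi_0,\rho_1 \le 0$ after normalizing, the log term being $\le 0$ on $[0,1]$, and the last term being $\le 0$), together with $|a'(t)| \le A+1$ on $[0,1]$ and a crude bound $|w_t| \le \sup_X|\varphi_0| + \sup_X|\rho_1| + n$, one sees that the displayed choice of $C$ is exactly what makes the inequality hold.

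The main obstacle I anticipate is the psh/positivity bookkeeping in the first step: one must be careful that $a(t)\omega_0 + t\theta \le \omega_t$ genuinely holds for all $t\in[0,1]$ — the factor $e^{-At}$ in $a(t)$ and the precise interplay with \eqref{eq:omegatLip} ($\omega_{t+s}\ge e^{-As}\omega_t$) is what is designed to guarantee this, but getting the constants to line up (and deciding whether $\rho_1$ must be rescaled, since its $dd^c$ only controls $\theta$, not $t\theta$, which is weaker — so $t\rho_1$ is $t\theta$-psh, fine) requires attention. A secondary technical point is justifying the use of the maximum principle on $[0,1]\times X$ rather than all of $X_T$, and handling the mild singularity of $b(t)$ at $t=0$ (where $b(0)=0$ but $\dot b \to +\infty$); since $w$ is continuous up to $t=0$ and the comparison is with the smooth solution $\varphi$, the argument of Proposition \ref{pro:bdd} applies after noting $w$ is a genuine subsolution on $]0,1]\times X$ with the correct value at $t=0$.
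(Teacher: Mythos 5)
Your proposal follows the paper's own proof essentially line by line: the barrier $w$ is identical to the paper's $u_t$, the Monge--Amp\`ere lower bound $(\omega_t+dd^c w_t)^n \geq t^n(\theta+dd^c\rho_1)^n = t^n e^{c_1}g\,dV$ is obtained by exactly the same decomposition $\omega_t + dd^c u_t = (1-t)(\omega_t + e^{-At}dd^c\varphi_0) + t(\omega_t+dd^c\rho_1)$ together with $\omega_t \geq e^{-At}\omega_0$ and $\omega_t\geq\theta$, and the conclusion is drawn from the maximum principle as in Proposition \ref{pro:bdd}. So this is the same route, and both structurally and in the choice of constant $C$ it is right.

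The one concrete flaw is the ``crude bound'' $|w_t| \leq \sup_X|\varphi_0| + \sup_X|\rho_1| + n$. This is false: $w_t$ contains the term $-C(e^{\lambda_F t}-1)/\lambda_F$, which is not bounded by $\sup|\varphi_0|+\sup|\rho_1|+n$ once $t$ is away from $0$ (and $C$ is typically large). The correct bound is $|w_t| \leq \sup|\varphi_0|+\sup|\rho_1|+n + C(e^{\lambda_F t}-1)/\lambda_F$. The paper handles this by writing $F(t,x,u_t) = F(t,x,u_t)+\lambda_F u_t - \lambda_F u_t$ and keeping $-\lambda_F u_t$ in full; the piece $+C(e^{\lambda_F t}-1)$ it contributes then cancels against the $-Ce^{\lambda_F t}$ coming from $\dot u_t$, leaving exactly $-C$. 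Your plan reaches the same $-C$ only by coincidence, because you separately discard $-Ce^{\lambda_F t}$ down to $-C$; the intermediate claim as you wrote it is not true. When you write this out you should either keep $-\lambda_F w_t$ in full and exhibit the cancellation, or replace the crude bound on $|w_t|$ by a correct one. With that patch, the argument is the paper's.

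Two minor remarks on the worries you flagged: there is no issue with restricting to $[0,1]\times X$ (the proposition is stated only for $t\leq 1$, and the paper's standing convention is that the data extend past $T$, so $T>1$ can be assumed); and the blow-up of $\dot b$ at $t=0^+$ is not a problem, since $\dot u_t \to -\infty$ only makes the right-hand side $e^{\dot u_t + F}$ tend to $0$, which helps the subsolution inequality rather than threatening it.
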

 
 
 \begin{proof}
 Recall that $A$ denotes a positive constant such that $\dot{\omega}_t \geq -A\omega_t$ for all $t \in ]0,T[$.
In particular $\omega_t \geq e^{-At} \omega_0$ and $\omega_t \geq \theta$.  Recall also that $\lambda_F\geq 0$ is a constant so that $r\mapsto F(t,x,r)+\lambda_F r$ is increasing in $\mathbb{R}$ for all $(t,x) \in X_T$. 

Consider the function
 	$$
 	u_t(x) := (1-t) e^{-At} \varphi_0 + t \rho_1 + n(t\log t -t) - C \frac{e^{\lambda_F t} -1}{\lambda_F}, 
 	$$
 	where $C$ is the uniform constant defined in the proposition.
 
 Using that  $\omega_t \geq e^{-At}\omega_0$ and $\omega_t \geq \theta$, we have 
 	 \begin{flalign*}
 	 	(\omega_t +dd^c u_t)^n & = \left ( (1-t)\omega_t + (1-t) e^{-At} dd^c \varphi_0 + 
 	 	 t(\omega_t+dd^c \rho_1)  \right )^n \\
 	 	& \geq   t^n(\omega_t +dd^c \rho_1)^n\\
 	 	&  \geq t^n e^{c_1}gdV. 
 	 \end{flalign*}
 	 Since $u_t\leq  0$ and $r\mapsto F(t,x,r)+\lambda_F r$ is increasing, a direct computation yields
 	   \begin{flalign*}
 	 	\dot{u}_t + F(t,x,u_t) & = n\log t + \rho_1 + e^{-At}(A(1-t)+1) (-\varphi_0) -C e^{\lambda_F t} \\
 	 	&+ F(t,x,u_t)+\lambda_F u_t -\lambda_F u_t \\
 	 	&\leq n\log t + (A+1) \sup_X |\varphi_0| + \sup_{X_T} F(t,x,0)\\
 	 	&  + \lambda_F \left (\sup_X |\varphi_0| + \sup_X |\rho_1| +n \right ) - C  \\
 	 	&\leq n\log t + c_1. 
 	 \end{flalign*}
 	It thus follows that $u_t$ is a subsolution to \eqref{eq:CMAF} with $u_0\leq \varphi_0$. The desired estimate follows from the classical maximum principle. 
 \end{proof}

 \subsection{Controlling the average}
 
 We establish the following control on the average of $\varphi_t$ which will be useful in proving convergence at zero. 
 
 \begin{pro}
 	\label{prop: average}
 	Set $\mu=gdV$.
 	The following  bound holds
 	\[
 	\int_X \varphi_t \, d\mu \leq \int_X \varphi_0 \, d\mu  + Ct, 
 	\]
 	where $C_3$ is the following uniform constant 
 	$$
 	C:= -\mu(X) \log (\mu(X)/ V_2) -\inf_{X_T\times [-C_0,C_0]} F(t,x,r) \mu(X),
 	$$
 	and $C_0$ is the uniform constant defined in Proposition \ref{pro:bdd}.
  \end{pro}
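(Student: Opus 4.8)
The plan is to integrate the equation \eqref{eq:CMAF} against $d\mu = gdV$ on each slice and control both sides. Writing the equation as $(\omega_t+dd^c\f_t)^n = e^{\dot\f_t + F(t,x,\f_t)} d\mu$, I would first integrate in $x$: by Stokes' theorem the left-hand side has total mass $\int_X (\omega_t+dd^c\f_t)^n = \int_X \omega_t^n \leq \int_X \Theta^n = V_2$. Hence
\[
\int_X e^{\dot\f_t + F(t,x,\f_t)}\, d\mu \leq V_2.
\]
Next, I would apply Jensen's inequality with respect to the probability measure $d\mu/\mu(X)$: since $\exp$ is convex,
\[
\exp\!\left(\frac{1}{\mu(X)}\int_X \big(\dot\f_t + F(t,x,\f_t)\big)\, d\mu\right) \leq \frac{1}{\mu(X)}\int_X e^{\dot\f_t + F(t,x,\f_t)}\, d\mu \leq \frac{V_2}{\mu(X)}.
\]
Taking logarithms gives $\int_X \dot\f_t\, d\mu + \int_X F(t,x,\f_t)\, d\mu \leq \mu(X)\log(V_2/\mu(X))$, so that
\[
\int_X \dot\f_t\, d\mu \leq \mu(X)\log(V_2/\mu(X)) - \int_X F(t,x,\f_t)\, d\mu.
\]

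For the second term, the uniform a priori bound of Proposition \ref{pro:bdd} gives $|\f_t(x)| \leq C_0$, so $F(t,x,\f_t(x)) \geq \inf_{X_T\times[-C_0,C_0]} F$, whence $-\int_X F(t,x,\f_t)\,d\mu \leq -\mu(X)\inf_{X_T\times[-C_0,C_0]} F$. Therefore $\int_X \dot\f_t\, d\mu \leq C$ with $C$ exactly the constant in the statement. Finally I would justify differentiating under the integral sign: since $\f$ is smooth (we are in the a priori estimates section), $\frac{d}{dt}\int_X \f_t\, d\mu = \int_X \dot\f_t\, d\mu \leq C$, and integrating this inequality from $0$ to $t$ yields $\int_X \f_t\, d\mu \leq \int_X \f_0\, d\mu + Ct$, as desired.

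The main point requiring a little care is the sign and orientation in the Jensen step — one must make sure the inequality goes the right way and that the normalization $V_i = e^{c_i}\mu(X)$ is being used consistently (indeed $\mu(X)\log(V_2/\mu(X)) = c_2\,\mu(X)$, which is how the constant can alternatively be phrased). Everything else — the mass bound via Stokes, the use of Proposition \ref{pro:bdd} for the lower bound on $F$, and the differentiation under the integral — is routine given that $\f$ is assumed smooth in this section.
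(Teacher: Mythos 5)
Your proof is correct and follows essentially the same route as the paper's: mass bound via cohomological invariance, Jensen's inequality with respect to $d\mu/\mu(X)$, the a priori bound $|\f_t|\leq C_0$ to control $F$, and integration in time. The only (immaterial) difference is that you apply Jensen to the full exponent $\dot\f_t+F$ and bound the $F$-term afterwards, whereas the paper first discards $F$ using $F\geq -C'$ and applies Jensen to $\dot\f_t$ alone; both yield the identical constant.
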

  
 \begin{proof}
Set $-C':= \inf_{X_T \times [-C_0,C_0]} F(t,x,r) >-\infty$. It follows from the flow equation  that 
\[
\int_X  e^{\dot{\varphi}_t-C'} d\mu  \leq \int_X \omega_t^n \leq V_2. 
\] 	
On the other hand it  follows from Jensen's inequality that 
\[
\int_X e^{\dot{\varphi}_t} \frac{d\mu }{\mu(X)}  \geq \exp \left ( \int_X \dot{\varphi}_t \frac{d\mu }{\mu(X)}\right).
\]
Combining these two estimates we arrive at 
\[
 \int_X \dot{\varphi}_t gdV \leq C:= C' \mu(X) +  \mu(X) \log  V_2 - \mu(X) \log  \mu(X). 
\]
The function $t\mapsto \int_X \varphi_t d\mu  -Ct$ is therefore non-increasing, hence 
\[
\int_X \varphi_t d\mu \leq  \int_X \varphi_0 d\mu+ Ct.
\]

 \end{proof}

\subsection{Lipschitz control in time} \label{sec:lipapriori}
 
 We now establish an a priori bound which will allow us to show that
 the solutions $\f_t$ to degenerate complex Monge-Amp\`ere flows
 are  locally uniformly Lipschitz in time, away from zero.
 
  For the convenience of the reader we first state and prove our theorem in the simpler case
 when $t \mapsto \omega_t$ is affine and $r\mapsto F(t,x,r)$ is increasing. A more technical statement follows, together with its proof.

 \subsubsection{Affine dependence on time}

\begin{thm} \label{thm: apriori-est C1 particular} 
Assume $t \mapsto \omega_t=\omega_0+t \chi$ is affine and 
$r \mapsto F(\cdot,\cdot,r)$ is increasing. Then for all $(t,x) \in X_T$,
$$
n \log t -C \leq \dot{\f}_t(x) \leq \frac{C}{t},
$$
where $C$ depends explicitly on $T,  ||\partial F/\partial r||_{L^{\infty}}, ||\partial F/\partial t||_{L^{\infty}}$, $\|g\|_p$, and  $C_0$. 
\end{thm}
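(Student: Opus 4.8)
The plan is to differentiate the flow equation in $t$ and apply the parabolic maximum principle to a well-chosen auxiliary function. Writing $\MAm(\f_t) := (\omega_t+dd^c\f_t)^n = e^{\dot\f_t + F(t,x,\f_t)} g\, dV$, and taking $\partial_t \log$ of both sides, we get a linear parabolic equation for $\dot\f_t$ of the form
\[
\partial_t \dot\f_t = \Delta_{\omega_t} \dot\f_t - \Tr_{\omega_t}(\dot\omega_t) + \partial_t F(t,x,\f_t) + \partial_r F(t,x,\f_t)\, \dot\f_t,
\]
where $\Delta_{\omega_t}$ is the Laplacian of the (unknown, but smooth by hypothesis) metric $\omega_t + dd^c\f_t$. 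Since $\dot\omega_t = \chi$ is fixed and $r \mapsto F$ is increasing so that $\partial_r F \geq 0$, the zeroth-order coefficient has the right sign to run a maximum principle after inserting a suitable barrier.

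\textbf{Lower bound.} For the lower bound $\dot\f_t \geq n\log t - C$, I would apply the maximum principle to $H(t,x) := t\dot\f_t - n t\log t - (A' + C)t$ or, following the sub-barrier philosophy already used in Proposition~\ref{pr:sbarrier}, directly exploit the comparison $\varphi_t \geq (1-t)e^{-At}\varphi_0 + t\rho_1 + n(t\log t - t) - C\tfrac{e^{\lambda_F t}-1}{\lambda_F}$ together with the uniform two-sided bound $|\f_t| \leq C_0$ from Proposition~\ref{pro:bdd}: since $\f_0 \leq \f_t$ and the barrier has the asserted $n(t\log t - t)$ behaviour, a convexity/difference-quotient argument gives $\dot\f_t \gtrsim n\log t - C$. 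Alternatively, compute $\partial_t(t\dot\f_t)$ and check that at an interior minimum of $H$ the parabolic inequality forces $H \geq -(A'+C)t$, using $\|g\|_p$ only through the uniform bound on $\sup_X |\f_t|$ (via Kołodziej's estimate, i.e.\ \cite{Kol98,EGZ09}) to control constants.

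\textbf{Upper bound.} This is the main obstacle: the bound $\dot\f_t \leq C/t$ blows up at $t=0$, so the barrier must be tuned to absorb the degeneration. I would consider $G(t,x) := t\dot\f_t - (\f_t - \rho_2) - n t - Bt$ (or a variant with the Aubin--Yau quantity), differentiate, and show $\partial_t G \leq \Delta_{\omega_t} G + (\text{good sign terms})$ on $X_T$. The key cancellation is that $\partial_t(t\dot\f_t) = \dot\f_t + t\ddot\f_t$ and $\partial_t(\f_t - \rho_2) = \dot\f_t$, so the dangerous $\dot\f_t$ terms cancel, while $\Delta_{\omega_t}(\f_t) = n - \Tr_{\omega_t}(\omega_t) \leq n$ produces the $-nt$ correction; the term $t\,\partial_t F$ is controlled by $t\|\partial F/\partial t\|_{L^\infty} \leq T\|\partial F/\partial t\|_{L^\infty}$, and $\partial_r F \geq 0$ with $\dot\f_t$ of unknown sign requires care — here one uses that $\partial_r F$ is bounded and that $|\f_t - \rho_2|$ is bounded to keep the zeroth-order contribution under control. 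At $t=0$ we have $G(0,\cdot) = -(\f_0 - \rho_2) \leq C_0 + \sup|\rho_2| =: C$, so the maximum principle yields $G \leq C$ on $X_T$, i.e.\ $t\dot\f_t \leq C + (\f_t - \rho_2) + (n+B)t \leq C'$, which is the claim after possibly enlarging the constant; the explicit dependence on $T$, $\|\partial F/\partial r\|_\infty$, $\|\partial F/\partial t\|_\infty$, $\|g\|_p$ and $C_0$ is read off from the barrier coefficients and from Proposition~\ref{pro:bdd}.
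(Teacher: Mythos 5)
Your overall strategy — linearize the flow, choose an auxiliary function involving $t\dot\f_t$, and run a parabolic maximum principle with respect to the evolving metric $S_t=\omega_t+dd^c\f_t$ — is the same as the paper's, but the execution in both directions has real gaps.

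A preliminary remark: your linearized equation has the wrong signs on all three non-Laplacian terms; differentiating $\dot\f_t+F(t,x,\f_t)=\log\bigl((\omega_t+dd^c\f_t)^n/(g\,dV)\bigr)$ in $t$ gives
$\ddot\f_t=\Delta_{S_t}\dot\f_t+\Tr_{S_t}(\dot\omega_t)-\partial_tF-\dot\f_t\,\partial_rF$,
i.e.\ $+\Tr_{S_t}\dot\omega_t$ and $-\partial_tF$, $-\dot\f_t\partial_rF$. This is just a slip, but it propagates.

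\textbf{Lower bound.} Neither of your two alternatives is correct as stated. The sub-barrier of Proposition~\ref{pr:sbarrier} controls $\f_t-\f_0$ near $t=0$; it does not yield a pointwise derivative bound $\dot\f_t\geq n\log t-C$ for $t>0$, and restarting the barrier at time $s>0$ only produces $\f_{s+\tau}-\f_s\geq n(\tau\log\tau-\tau)+O(\tau)$, whose difference quotient diverges as $\tau\to0$. Your other suggestion $H=t\dot\f_t-nt\log t-\text{const}\cdot t$ does not set up the needed structure either: at an interior minimum the term $t\,\Tr_{S_t}(\dot\omega_t)=t\,\Tr_{S_t}(\chi)$ can be negative and uncontrolled. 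The actual argument uses a different auxiliary function, $G=\dot\f_t+D\f_t-n\log t$ with $D$ large, exploiting the affine hypothesis to get $\chi+D\omega_t=D\,\omega_{t+1/D}\geq D\theta$, so that $\Tr_{S_t}(\chi+D\omega_t)\geq D\,\Tr_{S_t}\theta\geq nD\bigl(\theta^n/S_t^n\bigr)^{1/n}$ by AM--GM; the flow equation converts this into a bound on $e^{-\dot\f_t/n}$ at the interior minimum. There is also a reduction (via the Ko{\l}odziej solution $\rho_1$) to the case $\mu=h\theta^n$ with $h$ bounded. None of this appears in your sketch.

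\textbf{Upper bound.} Two issues. First, replacing $\f_0$ by $\rho_2$ in the barrier breaks the sign structure: with $H=t\dot\f_t-(\f_t-\f_0)-Bt$ the affine assumption produces the term $-\Tr_{S_t}(\omega_0+dd^c\f_0)=-\Tr_{S_t}(S_0)\leq 0$, using that $\f_0$ is $\omega_0$-psh. With $\rho_2$ (only $\Theta$-psh, $\Theta\geq\omega_t$) the analogous term is $\Tr_{S_t}(-\omega_0-dd^c\rho_2)$, and since $dd^c\rho_2\geq-\Theta$ does \emph{not} imply $dd^c\rho_2\geq-\omega_0$, this trace has no sign and can be as large as $\Tr_{S_t}(\Theta-\omega_0)$, which is a priori unbounded. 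Second, and more importantly, your treatment of the first-order term $t\dot\f_t\,\partial_rF$ is not right: you cannot use "$\partial_rF$ bounded and $|\f_t-\rho_2|$ bounded" to control it, since $\dot\f_t$ is exactly the unbounded quantity under scrutiny. The key step in the actual proof is a \emph{sign} argument: at an interior maximum of $H$, the inequality $0\leq(\partial_t-\Delta_{S_t})H\leq -1-t_0\dot\f_{t_0}(x_0)\partial_rF$ together with $\partial_rF\geq 0$ and $t_0>0$ forces $\dot\f_{t_0}(x_0)<0$, whence $H_{\max}\leq -(\f_{t_0}-\f_0)(x_0)\leq C$ by Proposition~\ref{pro:bdd}. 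This is the mechanism you are missing.
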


Here and below $C_0$ denotes the constant from Proposition \ref{pro:bdd}, and the Lipschitz constants 
$||\partial F/\partial r||_{L^{\infty}}, ||\partial F/\partial t||_{L^{\infty}}$
are computed on  $ X_T   \times [-C_0,C_0]$. 

\begin{proof}
For notational convenience we set $\mu =gdV$. We first establish the bound from above.
Consider
$$
H(t,x)=t \dot{\f}_t(x)-(\f_t-\f_0)-Bt,
$$
where
$$
B=n+1-\inf_{ X_T   \times [-C_0,C_0]} \left[ t \frac{\partial F}{\partial t}(t,x,r) \right].
$$

Set $S_t :=\omega_t+dd^c \f_t$ and observe that
$$
\frac{\partial H}{\partial t} =t \ddot{\f}_t -B,
$$
with
$$
\dot{\f}_t=\log \left( S_t^n/ \mu \right)-F(t,x,\f_t)
$$
hence
$$
\ddot{\f}_t=\Delta_{S_t} (\dot{\f}_t) 
+{\rm tr}_{S_t}(\dot{\omega}_t)
-\frac{\partial F}{\partial t}(x,t,\f_t)-\dot{\f}_t \frac{\partial F}{\partial r}(x,t,\f_t),
$$
where
$$
\Delta_{S_t} f:=n \frac{dd^c f \wedge S_t^{n-1}}{S_t^n}
\; \text{ and } \; 
{\rm tr}_{S_t}(\eta):=n \frac{\eta \wedge S_t^{n-1}}{S_t^n}.
$$

On the other hand
$$
\Delta_{S_t}(H)=t \Delta_{S_t}(\dot{\f}_t)-n+ {\rm tr}_{S_t}(\omega_t+dd^c \f_0),
$$
therefore
{\small
$$
\left( \frac{\partial}{\partial t} -{\Delta}_{S_t} \right)(H)=\left\{ 
-t\frac{\partial F}{\partial t}-t\dot{\f}_t \frac{\partial F}{\partial r}+n-B \right\}
-{\rm tr}_{S_t}(S_0)+{\rm tr}_{S_t}(\omega_0+t \dot{\omega}_t-\omega_t).
$$
}

The assumption that $t \mapsto \omega_t$ is affine insures ${\rm tr}_{S_t}(\omega_0+t \dot{\omega}_t-\omega_t)=0$,
while our choice of $B$ yields
$$
\left( \frac{\partial}{\partial t} -{\Delta}_{S_t} \right)(H) \leq -1-t\dot{\f}_t \frac{\partial F}{\partial r}(t,x,\f_t).
$$

If $H$ realizes its maximum $H_{max}$ along $(t=0)$, we obtain
$$
H(t,x) \leq H_{max}=\sup_{x \in X} H(0,x)=0
$$
which yields the desired upper-bound for $\dot{\f}_t$.

If $H$ realizes its maximum $H_{max}$ at some point $(t_0,x_0)$ with $t_0 >0$, then
$$
0 \leq \left( \frac{\partial}{\partial t} -{\Delta}_{S_t} \right)(H)(t_0,x_0) 
$$
hence
$$
t_0\dot{\f}_{t_0}(x_0) \frac{\partial F}{\partial r}(t_0,x_0,\f_{t_0}(x_0)) \leq -1<0.
$$
Since $\frac{\partial F}{\partial r} \geq 0$ and $t_0 >0$, we infer 
$
\dot{\f}_{t_0} (x_0) <0.
$
hence
$$
H_{max} \leq -(\f_{t_0}-\f_0)(x_0) \leq C,
$$
where the last inequality follows from Proposition \ref{pro:bdd}.
This yields again the desired upper-bound.

\medskip

We now take care of the lower bound.
We first deal with the particular case when
$$
\mu(x)=h(x) \theta^n(x),
$$
where $h \geq 0$ is a bounded density. Fix $D>>1$ so large that all our quantities 
are well defined and under control on $[0,T+1/D] \times X$. Observe that
$$
\chi+D\omega_t=D \omega_{t+1/D} \geq D \theta.
$$

We set 
$$
G(t,x)=\dot{\f}_t(x)+D \f_t(x)-n \log t,
$$ 
and compute
\begin{eqnarray*}
\left( \frac{\partial}{\partial t} -{\Delta}_{S_t} \right)(G)
&=&{\rm tr}_{S_t}(\chi+D \omega_t)-Dn -\frac{\partial F}{\partial t} +\left[D -\frac{\partial F}{\partial r} \right]\dot{\f}_t -\frac{n}{t} \\
&\geq & \frac{f_t^{-1/n}}{C_1}-C_2-\frac{n}{t},
\end{eqnarray*}
where $f_t=e^{\dot{\f}_t}$. We have used here
\begin{eqnarray*}
{\rm tr}_{S_t}(\chi+D \omega_t) &\geq&  D {\rm tr}_{S_t}(\theta) \geq nD \left( \frac{\theta^n}{S_t^n} \right)^{1/n} \\
&=& nD f_t^{-1/n} \left( \frac{\theta^n(x)}{e^{F(t,x,\f_t)} \mu(x)} \right)^{1/n} \geq \frac{f_t^{-1/n}}{C_1'},
\end{eqnarray*}
(the last inequality uses our assumption that $\mu=h \theta^n$ with $h$ bounded), the
fact that $\frac{\partial F}{\partial t} \leq c$, $\frac{\partial F}{\partial r} \leq c'$, and
the inequality
$$
\dot{\f}_t=\log f_t \geq -n \e f_t^{-1/n}-n C_{\e},
$$
valid for $\e>0$ arbitrarily small (since $\e x >\log x-C_\e$ for $x>0$).
 
The function $G$ attains its minimum on $]0,T]\times X$ at a point $(t_0,x_0)$ with $t_0>0$. At this point we
therefore have a control on the density $f_t$, namely
$$
f_{t_0}^{-1/n}(x_0)  \leq C_1C_2+ \frac{nC_2}{t_0}
$$
hence
$$
\dot{\f}_{t_0}(x_0)=\log f_{t_0}(x_0) \geq -n \log \left[ C_1C_2+ \frac{nC_2}{t_0} \right].
$$
We infer
$$
G(t_0,x_0) \geq D \f_{t_0}(x_0) -n \log \left[ C_1C_2t_0+ nC_2 \right] \geq -C_3,
$$
using Proposition \ref{pro:bdd}. The desired lower bound follows.
  
  We now get rid of the extra assumption made on $\mu$.
  We fix as earlier $\rho_1$ a smooth $\theta$-psh function such that 
  $$
  (\theta+dd^c \rho_1)^n=e^{c_1} \mu
  $$
  and $\sup_X \rho_1=0$. 
  We set
  $$
  \tilde{\omega}_t=\omega_t+dd^c \rho_1, \;
   \;
  \tilde{\f}_t:=\f_t-\rho_1 \in {\rm PSH}(X,\tilde{\omega}_t)
  $$
  and
  $$
  \tilde{F}(t,x,r)=F(t,x,r+\rho_1(x)).
  $$
  Observe that $\dot{\tilde{\f}}_t=\dot{\f}_t$, $\partial_t \tilde{\omega}_t=\partial_t {\omega}_t$ and
  $$
   \tilde{\omega}_t \geq \tilde{\theta}=\theta+dd^c \rho_1.
  $$
  Moreover $\tilde{F}$ has the same Lipschitz constant (in $t$ and $r$) as  that of $F$ and
  $$
  (\tilde{\omega}_t+dd^c \tilde{\f}_t)^n=e^{\dot{\tilde{\f}}_t+\tilde{F}(t,x,\tilde{\f}_t)} \mu(x)
  $$
  with $\tilde{\theta}^n=e^{c_1} \mu$, hence $\mu=\tilde{h} \tilde{\theta}^n$ with bounded density $\tilde{h}=e^{-c_1}$. 
  We can thus use the same reasoning as above to conclude.
\end{proof}

\subsubsection{Refining the hypotheses}

  We now establish similar uniform bounds on $\dot{\f}_t$ under less restrictive assumptions on 
  $t \mapsto \omega_t$ and $r \mapsto F(\cdot, \cdot,r)$. Recall that $r\mapsto F(t,x,r)+ \lambda_F r$ is increasing on $\mathbb{R}$. 

\begin{thm}  \label{thm: apriori-est C1 general}
Assume $\dot{\omega}_t \geq -A \omega_t$.
Then for all $(t,x) \in X_T$,
$$
n \log t -C \leq \dot{\f}_t(x).
$$

If there exists $A\geq 0$ with $\dot{\omega}_t \leq A \omega_t$, then for all $(t,x) \in X_T$,
$$
\dot{\f}_t(x) \leq \frac{C}{t}.
$$
\end{thm}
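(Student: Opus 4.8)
The plan is to adapt the maximum principle arguments from Theorem \ref{thm: apriori-est C1 particular} to the more general situation, absorbing the two new features: (i) $r \mapsto F(\cdot,\cdot,r)$ is only quasi-increasing (i.e. $F(t,x,r) + \lambda_F r$ is increasing), and (ii) $t \mapsto \omega_t$ is no longer affine but merely satisfies $-A\omega_t \le \dot\omega_t \le A\omega_t$ (together with $\ddot\omega_t \le A\omega_t$, which is available since we are in the final part of the section). Throughout I write $S_t := \omega_t + dd^c\f_t$, $\Delta_{S_t}f := n\, dd^c f \wedge S_t^{n-1}/S_t^n$, and ${\rm tr}_{S_t}(\eta) := n\,\eta\wedge S_t^{n-1}/S_t^n$, and I use that all relevant quantities are controlled on $[-C_0,C_0]$ by Proposition \ref{pro:bdd}.

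\emph{Lower bound.} First I would establish $\dot\f_t \ge n\log t - C$ assuming only $\dot\omega_t \ge -A\omega_t$. As in the affine case I reduce via $\rho_1$ to the situation $\mu = \tilde h\,\tilde\theta^n$ with $\tilde h$ bounded, so that $\omega_t \ge \theta$ with $\theta^n = e^{c_1}\mu$ (here $\tilde\f_t = \f_t - \rho_1$, $\dot{\tilde\f}_t = \dot\f_t$). I consider the auxiliary function
$$
G(t,x) = \dot\f_t(x) + D\f_t(x) - n\log t
$$
for a large constant $D$ with $D \ge A$ and $D \ge \lambda_F$. The point of the condition $D \ge A$ is that $\dot\omega_t + D\omega_t \ge 0$ (this replaces the computation $\chi + D\omega_t = D\omega_{t+1/D} \ge D\theta$ from the affine case), so that ${\rm tr}_{S_t}(\dot\omega_t + D\omega_t) \ge 0$; and the condition $D \ge \lambda_F$ makes the coefficient $D - \partial F/\partial r \ge D - \lambda_F$ of $\dot\f_t$ have a sign that is harmless in the parabolic inequality. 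Computing $(\partial_t - \Delta_{S_t})(G)$ and using ${\rm tr}_{S_t}(\omega_t) \ge nD^{-1}({\rm stuff}) \cdot f_t^{-1/n}$ together with the elementary inequality $\log f_t \ge -n\e f_t^{-1/n} - nC_\e$, one gets a bound of the form $(\partial_t - \Delta_{S_t})(G) \ge c_1^{-1}f_t^{-1/n} - c_2 - n/t$. Evaluating at the minimum point $(t_0,x_0)$ with $t_0 > 0$ gives $f_{t_0}^{-1/n}(x_0) \le c_1 c_2 + nc_1/t_0$, hence $\dot\f_{t_0}(x_0) \ge -n\log(c_1c_2 + nc_1/t_0)$, and then $G(t_0,x_0) \ge D\f_{t_0}(x_0) - n\log(c_1c_2 t_0 + nc_1) \ge -C$ by Proposition \ref{pro:bdd}.

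\emph{Upper bound.} For $\dot\f_t \le C/t$, assuming additionally $\dot\omega_t \le A\omega_t$ and $\ddot\omega_t \le A\omega_t$, I mimic the function
$$
H(t,x) = t\dot\f_t(x) - (\f_t - \f_0)(x) - Bt
$$
but now I must control the term ${\rm tr}_{S_t}(\omega_0 + t\dot\omega_t - \omega_t)$, which vanished in the affine case but no longer does. The key observation is that $t \mapsto t\dot\omega_t - \omega_t$ has derivative $t\ddot\omega_t \le tA\omega_t$, and since it equals $-\omega_0$ at $t=0$ we get $\omega_0 + t\dot\omega_t - \omega_t \le C t\,\omega_t \le C'\,S_t$ in the sense that its trace against $S_t^{n-1}/S_t^n$ is bounded by a constant; combined with ${\rm tr}_{S_t}(S_0) \ge 0$ this produces an error term bounded by a constant in the parabolic inequality for $H$. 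Choosing $B$ large enough to dominate $n$, this constant, and $\inf(t\,\partial F/\partial t)$, and using that the coefficient of $\dot\f_t$ in the remaining term, $-t\dot\f_t\,\partial F/\partial r$, vanishes at any interior maximum where $t_0 > 0$ together with quasi-monotonicity forcing $\dot\f_{t_0}(x_0) < 0$ there, one concludes exactly as before: either the maximum is at $t=0$ and $H \le H_{\max} = 0$, or it is interior and $H_{\max} \le -(\f_{t_0} - \f_0)(x_0) \le C$ by Proposition \ref{pro:bdd}. In both cases $t\dot\f_t \le C$.

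\textbf{Main obstacle.} The genuinely new difficulty, compared with the affine case, is handling the curvature error term ${\rm tr}_{S_t}(\omega_0 + t\dot\omega_t - \omega_t)$ in the upper-bound argument — one must exploit the second-order bound $\ddot\omega_t \le A\omega_t$ to see that this is $O(1)$ after tracing against $S_t$, and be careful that the constant is uniform on $[0,T]$. The quasi-monotonicity of $F$ is comparatively benign: in the lower bound it only forces the harmless sign condition on the coefficient of $\dot\f_t$ (handled by taking $D \ge \lambda_F$), and in the upper bound the sign of $\partial F/\partial r$ still makes $-t\dot\f_t\,\partial F/\partial r \le 0$ at the critical point once we know $\dot\f_{t_0}(x_0) < 0$, which follows from quasi-monotonicity exactly as monotonicity was used before.
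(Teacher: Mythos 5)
Your proposal keeps the affine-case auxiliary functions and tries to bound the resulting error terms; the paper instead changes the auxiliary functions, which both simplifies the argument and removes the need for $\ddot\omega_t \le A\omega_t$. As written your upper-bound step has a genuine gap, and your lower-bound reduction has a structural problem.

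\emph{Upper bound.} With $H = t\dot\f_t - (\f_t - \f_0) - Bt$ the extra term is ${\rm tr}_{S_t}(\omega_0 + t\dot\omega_t - \omega_t)$. You claim this is $O(1)$ by arguing $\omega_0 + t\dot\omega_t - \omega_t \le Ct\,\omega_t \le C'S_t$. The first inequality is fine (write the left side as $\int_0^t s\,\ddot\omega_s\,ds$ and use $\ddot\omega_s \le A\omega_s$, $\omega_s \le e^{AT}\omega_t$), but the second is false: $S_t = \omega_t + dd^c\f_t$ is merely $\ge 0$ and can degenerate, so there is no inequality $\omega_t \le C'S_t$ and ${\rm tr}_{S_t}(\omega_t)$ is unbounded. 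The available term $-{\rm tr}_{S_t}(S_0)\le 0$ has the wrong sign to absorb $+Ct\,{\rm tr}_{S_t}(\omega_t)$, and $S_0$ is not bounded below by a positive multiple of $\omega_t$. The paper sidesteps this by using $H = t\dot\f_t - C\f_t$ with $C := (A+\lambda_F)T + 2$. Then $\Delta_{S_t}H = t\Delta_{S_t}\dot\f_t - Cn + C\,{\rm tr}_{S_t}\omega_t$, so the trace contribution is ${\rm tr}_{S_t}(t\dot\omega_t - C\omega_t) \le (tA - C)\,{\rm tr}_{S_t}\omega_t \le 0$ outright, and $(\partial_t - \Delta_{S_t})H \le C' - (C-1+t\partial_r F)\dot\f_t$ with $C-1+t\partial_r F \ge C-1-T\lambda_F \ge 1>0$; the maximum principle together with $|\f_t|\le C_0$ then finishes. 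Only $\dot\omega_t\le A\omega_t$ is used, consistently with the theorem statement, whereas your version additionally requires $\ddot\omega_t\le A\omega_t$.

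\emph{Lower bound.} You substitute $\tilde\omega_t := \omega_t + dd^c\rho_1$, $\tilde\f_t := \f_t - \rho_1$ and then apply $G = \dot{\tilde\f}_t + D\tilde\f_t - n\log t$ with $D\ge A$, needing ${\rm tr}_{S_t}(\dot{\tilde\omega}_t + D\tilde\omega_t)$ bounded below by $c\,{\rm tr}_{S_t}\tilde\theta$ to run the AM--GM step. But $\dot{\tilde\omega}_t = \dot\omega_t \ge -A\omega_t = -A\tilde\omega_t + A\,dd^c\rho_1$, and $dd^c\rho_1$ is only $\ge -\theta$ while $\tilde\theta = \theta + dd^c\rho_1$ may vanish at points where $\theta>0$; hence one cannot conclude $\dot{\tilde\omega}_t + D\tilde\omega_t \ge c\,\tilde\theta$. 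The substitution trick works in the affine case because there $\chi + D\omega_t = D\omega_{t+1/D}$ and adding $dd^c\rho_1$ preserves the pointwise inequality $\omega_s\ge\theta$, but in general the relation $\dot\omega_t\ge -A\omega_t$ is not inherited by $\tilde\omega_t$. The paper instead leaves $\omega_t$ unchanged and puts $\rho_1$ into the test function, $G = \dot\f_t + A(2\f_t - \rho_1) - n\log t$: the term $-A\Delta_{S_t}\rho_1 = -A\,{\rm tr}_{S_t}(\theta + dd^c\rho_1) + A\,{\rm tr}_{S_t}\theta$ then gives the good positive contribution $A\,{\rm tr}_{S_t}(\theta+dd^c\rho_1) \ge nA\, e^{(c_1 - \dot\f_t - F)/n}$ in $(\partial_t - \Delta_{S_t})G$, while the $2A\f_t$ piece yields $2A\,{\rm tr}_{S_t}\omega_t$ which, together with $\dot\omega_t\ge -A\omega_t$ and $\theta\le\omega_t$, absorbs the other trace terms.
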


Here again $C$ depends explicitly on $T,  ||\partial F/\partial r||_{L^{\infty}}, ||\partial F/\partial t||_{L^{\infty}}$, $||g||_{L^p}$ 
and  $C_0$ (defined in Proposition \ref{pro:bdd}). The norms $ ||\partial F/\partial r||_{L^{\infty}}$, $||\partial F/\partial t||_{L^{\infty}}$ are computed on $[0,T[ \times X \times [-C_0,C_0]$. 

\begin{proof}
%
Consider
$$
H(t,x)=t \dot{\varphi}_t(x) -C \varphi_t,
$$
where $C:= (A+\lambda_F)T+2$. We  let the reader check that
$$
\left( \frac{\partial}{\partial t} -{\Delta}_{S_t} \right)(H) \leq C' - \left [ t \partial_r F + C-1 \right] \dot{\f}_t. 
$$
The upper bound then follows just as  in the proof of Theorem \ref{thm: apriori-est C1 particular}.

\smallskip

We now establish the lower bound.
Consider, for $(t,x) \in  ]0,T] \times  X$,
$$
G(t,x) := \dot{\varphi}_t + A(2\varphi_t  - \rho_1) - n\log t,
$$
where $\rho_1\in \PSH(X,\theta)$ is the unique normalized solution to 
$$
(\theta+dd^c \rho_1)^n=e^{c_1}gdV.
$$ 
Using the same notations as in the proof of Theorem \ref{thm: apriori-est C1 particular} we obtain
\begin{flalign*}
	\Delta_{S_t} G & = \Delta_{S_t} \dot{\varphi}_t + A \left ( \Tr_{S_t}(2\omega_t+2dd^c \varphi_t - \omega_t - (\omega_t+dd^c \rho_1) \right )\\
	& \leq \Delta_{S_t} \dot{\varphi}_t + 2n A- A \Tr_{S_t}(\omega_t)  - \Tr_{S_t}(\theta+dd^c \rho_1)\\
	& \leq \Delta_{S_t} \dot{\varphi}_t + 2n A + \Tr_{S_t} (\dot{\omega}_t) - n e^{(c_1-\dot{\varphi}_t -F(t,\cdot, \varphi_t))/n}\\
	& \leq \Tr_{S_t} (\dot{\omega}_t +dd^c \dot{\varphi}_t) + 2n A  - \frac{ f_t^{-1/n}}{C_1}. 
\end{flalign*}
It thus follows that 
\begin{flalign*}
&\left( \frac{\partial}{\partial t} -{\Delta}_{S_t} \right)(G)
= \ddot{\varphi}_t  -2A \dot{\varphi}_t  - \frac{n}{t} -  {\Delta}_{S_t} G \\
&\geq  - \frac{\partial F}{\partial t}  -  \left (2 A + \frac{\partial F}{\partial r} \right) \dot{\varphi}_t   +  \frac{f_t^{-1/n}}{C_1} - \frac{n}{t} - 2n A.
\end{flalign*}
%
We can then conclude  as in the proof of Theorem \ref{thm: apriori-est C1 particular}.
\end{proof} 

\begin{rem}
	The lower bound for $\dot{\varphi}_t$ ensures that
	$$
	\varphi_t \geq \varphi_0 + n(t\log t -t) -Ct,
	$$
	which is a similar lower bound than the one provided by  Proposition \ref{pr:sbarrier}.
\end{rem}

\subsection{Semi-concavity in time} 
 
Our goal in this section is to establish that
$\omega_t$-psh solutions to \eqref{eq:CMAF} are $\kappa$-concave in time away from zero, with a
 uniform a priori constant $\kappa$.

\subsubsection{A particular case}

\begin{thm} \label{thm: apriori-est C2 particular} 
Assume that $t \mapsto \omega_t$ is affine and $r \mapsto F(\cdot,\cdot,r)$ is convex, increasing.
 Let $\f_t$ be a smooth solution to \eqref{eq:CMAF}. Then
 there exists $C>0$ such that
$$
\ddot{\f}_t(x) \leq \frac{C}{t}
\; \;
\text{ for all }
(t,x) \in X_T,
$$
where $C$ depends explicitly on 
$T,  ||\partial F/\partial r||_{L^{\infty}}, ||\partial F/\partial t||_{L^{\infty}},||\partial^2 F/\partial r \partial t||_{L^{\infty}}$, 
$||\partial^2 F/\partial t^2||_{L^{\infty}}$, $\|g\|_p$
and  $C_0$.
\end{thm}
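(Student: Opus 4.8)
The plan is to differentiate the flow equation twice in $t$ and apply the parabolic maximum principle to a well-chosen auxiliary quantity, exactly in the spirit of the proof of Theorem \ref{thm: apriori-est C1 particular}. Writing $S_t = \omega_t + dd^c \f_t$ and recalling
$$
\dot{\f}_t = \log\left( S_t^n / \mu \right) - F(t,x,\f_t),
$$
a first differentiation in $t$ gives an evolution equation for $\dot\f_t$:
$$
\left(\frac{\partial}{\partial t} - \Delta_{S_t}\right)(\dot\f_t) = {\rm tr}_{S_t}(\dot\omega_t) - \frac{\partial F}{\partial t} - \dot\f_t \frac{\partial F}{\partial r},
$$
where, because $t\mapsto\omega_t$ is affine, $\dot\omega_t=\chi$ is constant. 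Differentiating once more produces an evolution equation for $\ddot\f_t$ of the form
$$
\left(\frac{\partial}{\partial t} - \Delta_{S_t}\right)(\ddot\f_t) = -\left| dd^c \dot\f_t \right|^2_{S_t} - {\rm tr}_{S_t}\!\left((\dd^c\dot\f_t)\wedge(\dd^c\dot\f_t)\text{-type terms}\right) - \frac{\partial^2 F}{\partial t^2} - 2\dot\f_t\frac{\partial^2 F}{\partial t\partial r} - \ddot\f_t\frac{\partial F}{\partial r} - \dot\f_t^2\frac{\partial^2 F}{\partial r^2} + (\text{controlled terms}),
$$
the point being that the genuinely dangerous second-order term coming from differentiating $\log S_t^n$ twice has a favorable sign (it is $\leq 0$, being essentially $-\|dd^c\dot\f_t\|^2_{S_t}$), the term $-\dot\f_t^2\,\partial_r^2 F \leq 0$ by convexity of $F$ in $r$, and $-\ddot\f_t\,\partial_r F \leq 0$ wherever $\ddot\f_t \geq 0$ since $\partial_r F\geq 0$.

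**The auxiliary function.** I would then set something like
$$
H(t,x) = t^2 \ddot\f_t(x) - B\, t\, \dot\f_t(x) + D\,\f_t(x) - E\,t,
$$
or a mild variant, with $B, D, E$ large constants chosen (depending on the listed norms and $C_0$) so that $\left(\partial_t - \Delta_{S_t}\right)(H) \leq 0$ at an interior maximum, using the sign information above together with the already-established bounds $n\log t - C \leq \dot\f_t \leq C/t$ from Theorem \ref{thm: apriori-est C1 particular} to absorb the error terms (e.g. the $-Bt\dot\f_t$ piece is used to kill the indefinite $\dot\f_t$-linear contributions, and the $t^2$ weight, rather than $t$, is needed because $\ddot\f_t$ can be of size $1/t$). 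At $t=0$ the first two terms vanish and $H = D\f_0$ is bounded, while at an interior maximum the maximum principle forces a bound $H \leq C'$; in either case one reads off $t^2\ddot\f_t \leq C''$, hence $\ddot\f_t \leq C/t$ after using $|\f_t|\leq C_0$ and the Lipschitz bound on $\dot\f_t$.

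**Main obstacle.** The delicate point is the bookkeeping of the second differentiation: one must verify carefully that, after expanding $\left(\partial_t-\Delta_{S_t}\right)$ applied to $\log\det S_t$ twice, the only uncontrolled contributions are sign-definite (negative) or else linear in $\dot\f_t$ and thus absorbable via the $-Bt\dot\f_t$ term, and that the affineness of $\omega_t$ really does make the ${\rm tr}_{S_t}$-correction terms (analogues of ${\rm tr}_{S_t}(\omega_0 + t\dot\omega_t - \omega_t) = 0$ in the $C^1$ proof, and its $t$-derivative $\ddot\omega_t = 0$) vanish or stay bounded. A secondary technical nuisance is choosing the powers of $t$ in $H$ correctly so that the boundary term at $t=0$ is harmless while the weight is still strong enough to dominate $\ddot\f_t$; I expect getting the constants and the sign of $\left(\partial_t-\Delta_{S_t}\right)(H)$ to line up simultaneously to be where most of the work lies, but there is no conceptual difficulty beyond what is already present in Theorem \ref{thm: apriori-est C1 particular}.
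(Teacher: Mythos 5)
Your high-level strategy matches the paper's: differentiate the flow equation twice, observe that the dissipative term coming from differentiating $\log\det S_t$ twice has a favorable sign, note that $-(\dot\f_t)^2\partial_r^2 F \leq 0$ by convexity of $F$ in $r$ and that affineness of $t\mapsto\omega_t$ kills the $\ddot\omega_t$ and $\mathrm{tr}_{S_t}(\omega_0 + t\dot\omega_t - \omega_t)$ corrections, and then invoke a parabolic maximum principle for a weighted version of $\ddot\f_t$. However, there are two genuine gaps.

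First, the weight on $\ddot\f_t$ should be $t$, not $t^2$: the paper takes $H = t\ddot\f_t - Bt$. With your $t^2$ weight, a bound $H \leq C$ on the auxiliary function would only give $\ddot\f_t \leq C/t^2$ (after using $t|\dot\f_t|\leq C$ and $|\f_t|\leq C_0$ to control the other terms), which is the weaker conclusion of Theorem \ref{thm:apriori-est}, not the $C/t$ claimed here. Moreover, the extra terms you propose are not only unnecessary but problematic: the term $+D\f_t$ contributes $D\,\mathrm{tr}_{S_t}\omega_t$ to $(\partial_t-\Delta_{S_t})H$, which is nonnegative and a priori unbounded --- exactly the wrong sign --- while the $-Bt\dot\f_t$ term is also not needed because the $\dot\f_t$-linear contributions $-2t\dot\f_t\,\partial_t\partial_r F$ are already bounded by $t|\dot\f_t|\leq C$ and can simply be absorbed into the constant $B$ multiplying $-t$.

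Second, and more fundamentally, you write that ``at an interior maximum the maximum principle forces a bound $H\leq C'$'' but do not say how, and this is precisely where the real work lies: $(\partial_t - \Delta_{S_t})H\geq 0$ at the interior maximum does not directly bound $H$. The paper's closing step is a quadratic-versus-linear comparison. After applying Lemma \ref{lem:inegdiff} (the Cauchy--Schwarz-type inequality $\frac{\eta^2\wedge\omega^{n-2}}{\omega^n} \leq (\frac{\eta\wedge\omega^{n-1}}{\omega^n})^2$, which you only allude to as ``sign-definite'') one obtains at the interior maximum
\begin{equation*}
\frac{1}{n}\bigl(t\,\mathrm{tr}_{S_t}(\chi+dd^c\dot\f_t)\bigr)^2 \;\leq\; \bigl[1 - t\,\partial_r F\bigr]\,t\,\ddot\f_t ,
\end{equation*}
and then one must combine this with the first-derivative identity $t\ddot\f_t = t\,\mathrm{tr}_{S_t}(\chi+dd^c\dot\f_t) + O(1)$ (which again relies on $t|\dot\f_t|\leq C$ from Theorem \ref{thm: apriori-est C1 particular}) to see that a quadratic in $Q:=t\,\mathrm{tr}_{S_t}(\chi+dd^c\dot\f_t)$ is dominated by something linear in $Q$, hence $Q$, and therefore $t_0\ddot\f_{t_0}(x_0)$, is bounded. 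Without spelling out this step the argument does not close.
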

Recall that $C_0$ is an upper bound for $|\varphi_t|$ established in Proposition  \ref{pro:bdd}, and the norms on the partial derivatives of $F$ are computed on $X_T\times [-C_0,C_0]$. 

We will establish a similar (though less precise) control under less restrictive assumptions on $t \mapsto \omega_t$ and $F$.
We postpone this to the next subsection, as the a priori estimates are already quite involved.

\begin{proof}
Set $\omega_t=\omega+t \chi$ so that $\dot{\omega}_t=\chi$.
 Writing
 $$ 
\dot{\f}_t=\log \left[ (\omega_t +dd^c \f_t)^n/ g(x)dV(x) \right] - F (t,x,\f),
 $$
 we differentiate in time to obtain
 $$
 \ddot{\f}_t=\Delta_{S_t} (\dot{\f}_t) 
+{\rm tr}_{S_t}(\dot{\omega}_t)
-\frac{\partial F}{\partial t}(t,x,\f_t)-\dot{\f}_t \frac{\partial F}{\partial r}(t,x,\f_t),
$$
where $S_t=\omega_t+dd^c \f_t$ ,
$$
\Delta_{S_t} f:=n \frac{dd^c f \wedge S_t^{n-1}}{S_t^n}
\; \text{ and } \; 
{\rm tr}_{S_t}(\eta):=n \frac{\eta \wedge S_t^{n-1}}{S_t^n}.
$$

It follows from the Lipschitz a priori estimate (Theorem \ref{thm: apriori-est C1 particular}) that
$$
-C \leq t\dot{\f}_t \frac{\partial F}{\partial r}(t,x,\f_t) \leq C
$$ 
is uniformly bounded on $X_T$, hence
$$
t   \ddot{\f}_t=t \, {\rm tr}_{S_t}(\chi+dd^c \dot{\f}_t)+O(1).
$$

Differentiating again yields
{\small
 \begin{eqnarray*}
 \lefteqn{
 \dddot{\f}_t=\Delta_{S_t} (\ddot{\f}_t) 
-n^2\left(\frac{(\chi+dd^c \dot{\f}_t) \wedge S_t^{n-1}}{S_t^n} \right)^2
+n(n-1) \frac{(\chi+dd^c \dot{\f}_t)^2 \wedge S_t^{n-2}}{S_t^n}
} \\
&&
-\frac{\partial^2 F}{\partial t^2}(x,t,\f_t)
-2\dot{\f}_t \frac{\partial^2 F}{\partial r \partial t}(x,t,\f_t)
-\ddot{\f}_t \frac{\partial F}{\partial r}(x,t,\f_t)
-(\dot{\f}_t)^2 \frac{\partial^2 F}{\partial r^2}(x,t,\f_t).
\end{eqnarray*}
}

Set $H(t,x)=t \ddot{\f}_t-Bt$ where $B>0$. It follows from
the Lipschitz control $t |\dot{\f}_t| \leq C$ and Lemma \ref{lem:inegdiff} below that
$$
\left( \frac{\partial }{\partial t}-\Delta_{S_t} \right)H
\leq \left[1-t\frac{\partial F}{\partial r} \right]\ddot{\f}_t
-nt\left(\frac{(\chi+dd^c \dot{\f}_t) \wedge S_t^{n-1}}{S_t^n} \right)^2
$$
if we choose $B>0$ so large that
$$
t\frac{\partial^2 F}{\partial t^2}(t,x,\f_t)
+2t\dot{\f}_t \frac{\partial^2 F}{\partial r \partial t}(t,x,\f_t)+B \geq 0.
$$
We  use here the simplifying assumption that $r \mapsto F(\cdot,\cdot,r)$ is convex
so that $-(\dot{\f}_t)^2 \frac{\partial^2 F}{\partial r^2}(t,x,\f_t) \leq 0$.
We will remove this assumption in the next subsection.

Let $(t_0,x_0) \in X_T$ be a point at which $H$ realizes its maximum.  
If $t_0=0$ then $H \leq 0$ hence $\ddot{\f}_t \leq B$ and we are done. 
If $t_0>0$, then 
$
0 \leq \left( \frac{\partial }{\partial t}-\Delta_{S_t} \right)H
$
at the point $(t_0,x_0)$ thus for $(t,x)=(t_0,x_0)$, 
$$
\frac{1}{n} \left(t \, {\rm tr}_{S_t}(\chi+dd^c \dot{\f}_t)\right)^2 
\leq 
\left[1-t\frac{\partial F}{\partial r} \right] t \ddot{\f}_t
$$
with
\begin{eqnarray*}
t\ddot{\f}_t&=& t \, {\rm tr}_{S_t}(\chi+dd^c \dot{\f}_t)
-t\frac{\partial F}{\partial t}(t,x,\f_t)-t\dot{\f}_t \frac{\partial F}{\partial r}(t,x,\f_t) \\
&=& t \, {\rm tr}_{S_t}(\chi+dd^c \dot{\f}_t)+O(1).
\end{eqnarray*}

It follows that $t_0 \ddot{\f}_{t_0}(x_0)$ is uniformly bounded from above,
hence so is $H \leq C$. Thus $t \ddot{\f}_t \leq Bt+C \leq C'$ on $X_T$.
\end{proof}

We have used the following differential inequality which is probably well known. We include a proof for the reader's convenience.

\begin{lem} \label{lem:inegdiff}
Assume $n \geq 2$.
 Let $\omega$ be a K\"ahler form and let $\eta$ be a closed $(1,1)$-differential form.
 Then
 $$
 \frac{\eta^2 \wedge \omega^{n-2}}{\omega^n} \leq 
 \left(  \frac{\eta \wedge \omega^{n-1}}{\omega^n} \right)^2.
 $$
\end{lem}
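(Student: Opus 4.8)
The plan is to diagonalize. First I would fix a point $p \in X$ and choose local coordinates in which $\omega(p) = \sum_{j=1}^n i\, dz_j \wedge d\bar z_j$ and, simultaneously, $\eta(p) = \sum_{j=1}^n \lambda_j\, i\, dz_j \wedge d\bar z_j$ is diagonal with real eigenvalues $\lambda_1,\dots,\lambda_n$; this simultaneous diagonalization of a Hermitian form against a positive one is standard linear algebra. The inequality is pointwise and invariant under this change of basis (both sides are ratios of volume forms), so it suffices to check it at $p$ in these coordinates.

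In the diagonalizing coordinates one computes directly that
$$
\frac{\eta \wedge \omega^{n-1}}{\omega^n} = \frac{1}{n}\sum_{j=1}^n \lambda_j,
\qquad
\frac{\eta^2 \wedge \omega^{n-2}}{\omega^n} = \frac{2}{n(n-1)}\sum_{i<j}\lambda_i\lambda_j,
$$
using that $\omega^n$ picks out the full wedge of all $i\,dz_j\wedge d\bar z_j$, $\eta\wedge\omega^{n-1}$ picks out the sum over $j$ of the terms where $\eta$ contributes the $j$-th factor, and $\eta^2\wedge\omega^{n-2}$ picks out the sum over pairs $i<j$ where $\eta$ contributes those two factors (each counted once by antisymmetry). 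Thus the claimed inequality becomes
$$
\frac{2}{n(n-1)}\sum_{i<j}\lambda_i\lambda_j \le \frac{1}{n^2}\Big(\sum_{j=1}^n \lambda_j\Big)^2,
$$
which is a purely elementary statement about real numbers.

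The remaining step is to verify this scalar inequality. Writing $\big(\sum_j \lambda_j\big)^2 = \sum_j \lambda_j^2 + 2\sum_{i<j}\lambda_i\lambda_j$, the inequality is equivalent to
$$
2n^2\sum_{i<j}\lambda_i\lambda_j \le n(n-1)\Big(\sum_j\lambda_j^2 + 2\sum_{i<j}\lambda_i\lambda_j\Big),
$$
i.e. $2\sum_{i<j}\lambda_i\lambda_j \le (n-1)\sum_j \lambda_j^2$, which follows at once from $\sum_{i<j}(\lambda_i-\lambda_j)^2 \ge 0$ after expansion. I do not expect any genuine obstacle here; the only point requiring a little care is the bookkeeping of the combinatorial constants in the wedge-product computation, so I would spell those out rather than asserting them. (If one prefers to avoid even that, one can instead note that both sides are continuous in $\eta$ and quadratic-homogeneous, reduce to $\eta$ a real $(1,1)$-form, and argue as above; the diagonalization route is cleanest.)
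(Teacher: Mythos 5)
Your proof is correct and follows essentially the same route as the paper: diagonalize $\eta$ against $\omega$ at a point, express both sides in terms of the eigenvalues $\lambda_1,\dots,\lambda_n$, and reduce to the elementary inequality that comes from $\sum_{i<j}(\lambda_i-\lambda_j)^2 \ge 0$. The paper's computation is the same, written with $\frac{1}{n(n-1)}\sum_{\alpha\neq\beta}\lambda_\alpha\lambda_\beta$ in place of your $\frac{2}{n(n-1)}\sum_{i<j}\lambda_i\lambda_j$.
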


\begin{proof}
 This is a pointwise inequality, hence it reduces to linear algebra.
 Since $\omega$ is a K\"ahler form, we can assume that $\omega(x)$ is
 the euclidean K\"ahler metric. Perturbing $\eta(x)$ if necessary, we can also
 make a change of local coordinates so that $\eta(x)$ is given by a diagonal
 matrix with diagonal entries $\lambda_1,\ldots,\lambda_n$.
We infer
 $$
  \frac{\eta^2 \wedge \omega^{n-2}}{\omega^n}(x)=\frac{1}{n(n-1)}\sum_{\alpha \neq \beta} \lambda_{\alpha} \lambda_{\beta}
 $$
 while 
 $$
  \left(  \frac{\eta \wedge \omega^{n-1}}{\omega^n} \right)^2(x)=
  \left(\frac{1}{n} \sum_{\alpha} \lambda_{\alpha} \right)^2. 
 $$
 The desired inequality follows from the following elementary computation
 $$
 \left (\frac{1}{n}\sum_{\alpha=1}^n \lambda_{\alpha} \right )^2 - \frac{1}{n(n-1)} \sum_{\alpha\neq \beta} \lambda_{\alpha}\lambda_{\beta} = \frac{1}{n^2(n-1)} \sum_{\alpha<\beta} (\lambda_{\alpha}-\lambda_{\beta})^2\geq 0. 
 $$
\end{proof}

\subsubsection{More general bounds}

 We assume in this subsection that there exists a constant $A>0$ such that
 $\forall t\in ]0,T[$
 \begin{equation}
 	\label{eq:omegatconcave}
 	 -A \omega_t \leq 	\dot{\omega}_t \leq +A \omega_t
 	\; \; \ \text{and}\ 	\; \; 
\ddot{\omega}_t \leq A \omega_t .
 \end{equation}

 We also assume that $(t,r) \mapsto F(t,x,r)$  is uniformly  semi-convex, i.e.  there exists a  constant $C_F>0$ 
 such that for every $x\in X$, the function 
\begin{equation}
	\label{eq:Fconv}
(t,r) \mapsto  F(t,x,r)+ C_F (t^2+r^2) \ \text{is convex in} \ [0,T[ \times [-C_0,C_0].
\end{equation}

\begin{thm} \label{thm:apriori-est} 
Assume that $\omega_t$ and $F(\cdot,\cdot,r)$ are as above.
 Let $\f_t$ be a solution of the above parabolic Monge-Amp\`ere equation. Then
 there exists $C>0$ such that
$$
\ddot{\f}_t(x) \leq \frac{C}{t^2}
\; \;
\text{ for all }
(t,x) \in X_T,
$$
where $C$ depends explicitly on 
$A, T, C_F,\lambda_F$,  $||\partial F/\partial r||_{L^{\infty}}, ||\partial F/\partial t||_{L^{\infty}}$, 
$\|g\|_p$, and $C_0$.
\end{thm}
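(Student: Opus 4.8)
The plan is to follow the same Bochner/maximum-principle strategy used in the proof of Theorem \ref{thm: apriori-est C2 particular}, but now carrying along the extra error terms produced by the non-affine dependence of $\omega_t$ and the non-increasing, merely semi-convex nature of $F$. First I would recall that by the Lipschitz estimates (Theorem \ref{thm: apriori-est C1 general}) one has $n\log t - C \leq \dot\f_t \leq C/t$, so that $t|\dot\f_t| \leq C$ and in particular $t\dot\f_t\,\partial_r F$ is uniformly bounded on $X_T$. Setting $S_t = \omega_t + dd^c\f_t$, differentiating the flow equation once gives
$$
\ddot\f_t = \Delta_{S_t}(\dot\f_t) + \Tr_{S_t}(\dot\omega_t) - \partial_t F - \dot\f_t\,\partial_r F,
$$
and differentiating a second time produces $\dddot\f_t = \Delta_{S_t}(\ddot\f_t) - n^2(\cdots)^2 + n(n-1)(\cdots) + \Tr_{S_t}(\ddot\omega_t) - \Tr_{S_t}\big((\dot\omega_t+dd^c\dot\f_t)\wedge(\text{stuff})\big) - \partial_t^2 F - 2\dot\f_t\,\partial_{rt}^2 F - \ddot\f_t\,\partial_r F - (\dot\f_t)^2\,\partial_r^2 F$, where the quadratic gradient terms are controlled by Lemma \ref{lem:inegdiff} exactly as before.

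The key new point is the choice of test function. Because $\ddot\omega_t \leq A\omega_t$ only up to a factor that does not improve with $t$, and because the analogue of the miraculous cancellation $\Tr_{S_t}(\omega_0 + t\dot\omega_t - \omega_t)=0$ (used in the affine case) now leaves a remainder term of size $O(\Tr_{S_t}\omega_t)$, one cannot hope for a $C/t$ bound and must settle for $C/t^2$. Concretely I would try a test function of the form
$$
H(t,x) = t^2\,\ddot\f_t(x) + a t^2\,\dot\f_t(x) + b\,t\,\dot\f_t(x) + c\,\f_t(x) - B t,
$$
or more simply $H = t^2\ddot\f_t - (\text{lower order in }t)$, with the lower-order corrections designed to absorb: (i) the term $-(\dot\f_t)^2\partial_r^2 F$, which by semi-convexity \eqref{eq:Fconv} satisfies $\partial_r^2 F \geq -2C_F$, hence $-(\dot\f_t)^2\partial_r^2 F \leq 2C_F(\dot\f_t)^2$ — this is the term that forces the $t^2$ scaling since $t^2(\dot\f_t)^2 \leq (t|\dot\f_t|)^2 \leq C$; (ii) the terms $t^2\partial_t^2 F$ and $t^2\dot\f_t\,\partial_{rt}^2 F$, again controlled using semi-convexity of $F$ in $(t,r)$ and $t|\dot\f_t|\leq C$; (iii) the remainder $\Tr_{S_t}$-terms coming from $\omega_t$ not being affine, which I would dominate using $-A\omega_t \leq \dot\omega_t \leq A\omega_t$ and $\ddot\omega_t \leq A\omega_t$ together with the elementary inequality $\Tr_{S_t}(\omega_t)\cdot(\text{coeff}) \leq \varepsilon(\Tr_{S_t}\omega_t)^2 + C_\varepsilon$ balanced against the negative quadratic term supplied by Lemma \ref{lem:inegdiff}. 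After this bookkeeping one arrives at a differential inequality of the shape
$$
\Big(\tfrac{\partial}{\partial t} - \Delta_{S_t}\Big) H \leq C' - \big[\text{nonneg. factor}\big]\cdot(\text{something}) + (\text{bounded}),
$$
and then evaluating at an interior maximum $(t_0,x_0)$ of $H$ (the boundary case $t_0=0$ giving $H\leq 0$ trivially) yields $t_0^2\ddot\f_{t_0}(x_0) \leq C$, hence $H \leq C$ everywhere, hence $t^2\ddot\f_t \leq Bt + C \leq C''$ on $X_T$.

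The main obstacle, as in the affine case but more acute here, will be finding the precise linear combination of $\dot\f_t$, $\f_t$, and powers of $t$ in the test function $H$ that makes every uncontrolled term either negative or bounded at an interior maximum — in particular reconciling the term $-(\dot\f_t)^2\partial_r^2 F$ (which only admits the bound $O((\dot\f_t)^2)$, not $O(\dot\f_t)$) with the need to keep the "good" negative term $-n t^2\big((\dot\omega_t+dd^c\dot\f_t)\wedge S_t^{n-1}/S_t^n\big)^2$ from Lemma \ref{lem:inegdiff} large enough to swallow the $\Tr_{S_t}(\omega_t)^2$-type errors generated by the non-affine $\omega_t$. A secondary technical annoyance is that $\partial_r^2 F$ and $\partial_t^2 F$ need not exist pointwise, only in the distributional/semi-convex sense, so the computation should really be run on a smooth approximation of $F$ (as is already implicit in this section's standing smoothness assumption) and the constant $C$ tracked to depend only on $C_F$, $\lambda_F$, $A$, $T$, $\|g\|_p$, $C_0$ and the first-order Lipschitz norms of $F$, so that it survives the approximation.
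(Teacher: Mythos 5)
Your outline is aimed in the right direction and correctly identifies several of the paper's ingredients: differentiate the flow equation twice, exploit the Lipschitz bound $t|\dot\f_t|\leq C$ from Theorem \ref{thm: apriori-est C1 general}, use Lemma \ref{lem:inegdiff} to replace the second-order gradient terms by $-\tfrac{1}{n}(\Tr_{t}\alpha_t)^2$ where $\alpha_t=\dot\omega_t+dd^c\dot\f_t$, and invoke the semi-convexity hypothesis \eqref{eq:Fconv} to bound $-\partial_t^2F-2\dot\f_t\partial^2_{rt}F-(\dot\f_t)^2\partial_r^2F\leq C_F((\dot\f_t)^2+1)$, which is precisely what forces the $t^2$ scaling.

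However, the mechanism you propose for absorbing the non-affine error from $\omega_t$ would not work. After differentiating twice the uncontrolled term is $-t^2\Tr_t(\ddot\omega_t)\geq -At^2\Tr_t\omega_t$ in $\Delta_t\ddot\f_t$, which shows up on the right of $(\partial_t-\Delta_t)H$ as $+At^2\Tr_t\omega_t$. You suggest dominating it by a Young inequality $\Tr_t\omega_t\cdot(\text{coeff})\leq\e(\Tr_t\omega_t)^2+C_\e$ balanced against the negative quadratic from Lemma \ref{lem:inegdiff}, but that quadratic is $-\tfrac{t^2}{n}(\Tr_t\alpha_t)^2$, and $\alpha_t$ and $\omega_t$ are not comparable (indeed $dd^c\dot\f_t$ has no sign), so there is no $-(\Tr_t\omega_t)^2$ term available to absorb the $\e(\Tr_t\omega_t)^2$ you generate. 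The paper's resolution is simpler and different: it takes $H(t,x)=t^2\ddot\f_t(x)-ATt\,\f_t(x)$, a $t$-\emph{dependent} coefficient on $\f_t$. Then $-ATt\,\Delta_t\f_t=-ATtn+ATt\,\Tr_t\omega_t$ contributes a good term $+ATt\,\Tr_t\omega_t$ to $\Delta_t H$ that dominates $-At^2\Tr_t\omega_t$ because $t\leq T$, i.e.\ $At(T-t)\Tr_t\omega_t\geq 0$, with no Young inequality required. Your ansatz $H=t^2\ddot\f_t+at^2\dot\f_t+bt\dot\f_t+c\f_t-Bt$ with constant $c$ does not include this: to kill $At^2\Tr_t\omega_t$ you would need $c\leq -AT^2$, but then $\partial_t(c\f_t)=c\dot\f_t\leq -AT^2(n\log t-C)\to+\infty$ as $t\to0^+$, which ruins the parabolic inequality near $t=0$; the factor $t$ in $-ATt\f_t$ is exactly what keeps $\partial_t(-ATt\f_t)=-AT\f_t-ATt\dot\f_t$ bounded. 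So the strategic outline is correct but the specific mechanism for handling the non-affine $\omega_t$ is a genuine gap, and the needed $t$-weighted correction to the test function is not in the family you consider.
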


Here $C_0$ is the constant given in Proposition \ref{pro:bdd} and the norms $||\partial F/\partial r||_{L^{\infty}}$, 
$||\partial F/\partial t||_{L^{\infty}}$ are computed on $X_T\times [-C_0,C_0]$.

\begin{proof}
In the proof below we use $C$ to denote various constants under control. 
Set $\alpha_t:= \dot{\omega}_t +dd^c \dot{\varphi}_t$, $S_t:= \omega_t +dd^c \varphi_t$, and for $h \in \mathcal{C}^{\infty}(X,\mathbb{R})$, 
$$\Delta_t h: = \Tr_t (dd^c h):= \Tr_{S_t} (dd^c h) = n \frac{dd^c h \wedge S_t^{n-1}}{S_t^n}.$$ 
 
 Writing
 $$ 
\dot{\f}_t=\log \left[ (\omega_t +dd^c \f_t)^n/ g(x)dV(x) \right] - F (t,x,\f),
 $$
 we differentiate twice in time to obtain, as in  the proof of Theorem \ref{thm: apriori-est C2 particular}, that
\begin{equation}
	\label{eq:ddot0}
	t   \ddot{\f}_t=t \, \Tr_t \alpha_t -t \partial_t F -t \dot{\varphi}_t\partial_r F = t \, \Tr_t \alpha_t+ O(1), 
\end{equation}
where we use the uniform bound $t |\dot{\f}_t| \leq C$ (thanks to Theorem \ref{thm: apriori-est C1 general}), and
\begin{flalign*}
	 \dddot{\f}_t & =\Tr_t (\dot{\alpha}_t)  + n(n-1) \frac{\alpha_t^2 \wedge S_t^{n-2}}{ S_t^n} - n^2\left( \frac{\alpha_t\wedge S_t^{n-1}}{S_t^n} \right )^2 - \ddot{\f}_t \frac{\partial F}{\partial r}(t,x,\f_t)  \\
	 & -\frac{\partial^2 F}{\partial t^2}(t,x,\f_t)
-2\dot{\f}_t \frac{\partial^2 F}{\partial r \partial t}(t,x,\f_t)
-(\dot{\f}_t)^2 \frac{\partial^2 F}{\partial r^2}(t,x,\f_t)\\
& \leq \Tr_t (\dot{\alpha}_t)  - \frac{1}{n} (\Tr_{t} \alpha_t)^2  -\ddot{\f}_t \frac{\partial F}{\partial r}(t,x,\f_t)  + C_F((\dot{\varphi}_t)^2 +1),
\end{flalign*}
using the convexity condition \eqref{eq:Fconv}  and Lemma  \ref{lem:inegdiff}.
The Lipschitz control $t |\dot{\f}_t| \leq C$ (provided by Theorem \ref{thm: apriori-est C1 general}) yields
\begin{equation}
	\label{eq:ddot1}
	 t^2\dddot{\f}_t  \leq  t^2\Tr_t \dot{\alpha}_t -t^2n^{-1} [\Tr_t (\alpha_t)]^2 - t^2 \ddot{\f}_t 
	 \frac{\partial F}{\partial r}(t,x,\f_t) +C. 
\end{equation}

Set $H(t,x)=t^2\ddot{\f}_t-ATt\varphi_t$. It follows from \eqref{eq:omegatconcave} and a direct computation that 
\begin{equation}
	\label{eq:ddot2}
	\Delta_t H \geq t^2 \Tr_t \dot{\alpha}_t  + (ATt-At^2)\Tr_t(\omega_t)  - AnTt \geq t^2 \Tr_t \dot{\alpha}_t - C.
\end{equation}
 It follows therefore from \eqref{eq:ddot1} and \eqref{eq:ddot2} that  
$$
\left( \frac{\partial }{\partial t}-\Delta_{t} \right)H
\leq t \ddot{\varphi}_t (2- t\partial_r F) - t^2n^{-1} (\Tr_t \alpha_t)^2  +C.
$$

Let $(t_0,x_0) \in X_T$ be a point at which the function $H$ realizes its maximum.  
If $t_0=0$ then $H \leq 0$ hence $t^2\ddot{\f}_t \leq C$ and we are done. 
If $t_0>0$, then 
$
0 \leq \left( \frac{\partial }{\partial t}-\Delta_{S_t} \right)H
$
at the point $(t_0,x_0)$ thus for $(t,x)=(t_0,x_0)$, 
$$
 t \ddot{\varphi}_t (t\partial_r F-2) + t^2 n^{-1} (\Tr_t \alpha_t)^2  \leq C. 
$$
Using \eqref{eq:ddot0} we conclude that $t^2 \ddot{\f}_t \leq C$ on $X_T$, finishing the proof.
\end{proof}

\subsection{Conclusion}

\subsubsection{The estimates}

We  summarize here the   a priori estimates we have obtained so far.
We assume that the forms and densities are {\it smooth} and satisfy the uniform
bounds listed in the introduction, involving the constants $A,p,\lambda_F,C_F$.

\begin{thm} \label{thm:sumup}
There exists $C_0,C_1,C_2>0$ such that
for all $(t,x) \in X_T$,
\begin{enumerate}
\item $-C_0  \leq \f_t(x) \leq C_0$;
\item $n\log t -C_1 \leq \dot{\f}_t(x) \leq C_1/t$;
\item $\ddot{\f}_t(x) \leq C_2/t^2$;
\end{enumerate}
\noindent where the $C_j$'s depend on $A,B,p,C_F,\lambda_F$ and
\begin{itemize}
\item $C_0$ explicitly depends on $T,\theta,\Theta,\inf_X \f_0, \sup_X \f_0$
and $\sup_{X_T} |F(t,x,0)|$;
\item $C_1$ explicitly depends on $C_0,T,||\partial F/\partial r||_{L^{\infty}}, ||\partial F/\partial t||_{L^{\infty}}$
and $||g||_{L^p}$;
\item $C_2$ explicitly depends on $C_0,C_1,T$.
\end{itemize}
\end{thm}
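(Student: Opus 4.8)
The plan is to recognize that Theorem \ref{thm:sumup} is purely a bookkeeping statement: it collates the three a priori bounds already established in this section, so the proof amounts to quoting them, checking that the standing hypotheses of Section \ref{sec:apriori} supply the assumptions each one needs, and recording the (non‑circular) chain of dependencies $C_0 \to C_1 \to C_2$.

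First I would dispose of (1): this is exactly Proposition \ref{pro:bdd}, which gives $-C_0 \le \f_t(x) \le C_0$ with $C_0$ written in terms of $\sup_{X_T}|F(t,x,0)|$, $\sup_X|\rho_1|+\sup_X|\rho_2|$, $\sup_X|\f_0|$, $\max(-c_1,c_2)$ and $\lambda_F,T$. The only thing to observe is that $\sup_X|\rho_i|$, the constants $c_i$ and the volumes $V_i$ are determined by $\theta,\Theta$ and $g$ (via \cite{Kol98,EGZ09}, with the $L^\infty$ bound on $\rho_1$ staying uniform by \cite[Proposition 2.6]{EGZ09} when $\theta$ is only semipositive and big), so the dependence is as stated, namely $C_0$ depends explicitly on $T,\theta,\Theta,\inf_X\f_0,\sup_X\f_0,\sup_{X_T}|F(\cdot,\cdot,0)|$ and on the structural constants $A,B,p,C_F,\lambda_F$.

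Next, for (2) I would invoke Theorem \ref{thm: apriori-est C1 general}. The standing assumptions include $-A\omega_t \le \dot\omega_t \le A\omega_t$, so both halves of that theorem apply and yield $n\log t - C_1 \le \dot\f_t(x) \le C_1/t$. Here is the only place where (1) is genuinely used: the constant $C_1$ involves $\|\partial F/\partial r\|_{L^\infty}$ and $\|\partial F/\partial t\|_{L^\infty}$ computed on $X_T \times [-C_0,C_0]$, and it is precisely the bound from (1) that confines the values $\f_t(x)$ to that compact interval, making these norms finite; so $C_1$ depends explicitly on $C_0,T,\|\partial F/\partial r\|_{L^\infty},\|\partial F/\partial t\|_{L^\infty},\|g\|_{L^p}$. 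Finally, (3) is Theorem \ref{thm:apriori-est}, whose hypotheses \eqref{eq:omegatconcave} and \eqref{eq:Fconv} are again part of the standing assumptions (with constants $A$ and $C_F$); it produces $\ddot\f_t(x) \le C_2/t^2$ with $C_2$ controlled by $A,T,C_F,\lambda_F,\|\partial F/\partial r\|_{L^\infty},\|\partial F/\partial t\|_{L^\infty},\|g\|_{L^p}$ and $C_0$ — equivalently, by $C_0,C_1,T$ and the structural constants. Concatenating the three bounds finishes the proof.

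There is essentially no obstacle: the substance is entirely contained in Proposition \ref{pro:bdd}, Theorem \ref{thm: apriori-est C1 general} and Theorem \ref{thm:apriori-est}. The only care needed is to present the dependence of the constants in the correct order $C_0 \rightsquigarrow C_1 \rightsquigarrow C_2$, so that the "explicitly depends on" clauses are manifestly non‑circular, and to note that all quantities are well defined because the forms, densities and initial data are assumed smooth in this section (the degenerate case being handled later by approximation, the bounds being uniform by \cite{EGZ09,GZ13,Dat1}).
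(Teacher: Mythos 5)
Your proposal is correct and mirrors exactly what the paper intends: Theorem \ref{thm:sumup} is a summary statement (the paper itself introduces it with ``We summarize here the a priori estimates we have obtained so far'') and carries no separate proof. You correctly identify (1), (2), (3) with Proposition \ref{pro:bdd}, Theorem \ref{thm: apriori-est C1 general} and Theorem \ref{thm:apriori-est} respectively, verify that the standing hypotheses of the section supply exactly what those results require, and record the non-circular dependency chain $C_0\rightsquigarrow C_1\rightsquigarrow C_2$ that makes the Lipschitz norms of $F$ on $X_T\times[-C_0,C_0]$ finite before they are invoked.
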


The norms $||\partial F/\partial r||_{L^{\infty}}, ||\partial F/\partial t||_{L^{\infty}}$ are computed on $X_T \times [-C_0,C_0]$.

\subsubsection{Convergence of semi-concave functions}

It is useful to know when a sequence of $\omega_t$-psh functions is uniformly semi-concave.
It allows one to obtain the convergence of the associated parabolic Monge-Amp\`ere operators
as the following result shows:

\begin{thm} \label{thm:conv2}   
Let $g_j(t,x)$ be a family of $L^1 (X_T)$-densities such that $g_j \rightarrow g$ in $L^1 (X_T)$.
Let $F_j(t,x,r)$ be continuous densities which uniformly converge towards $F$.
Let $\f_j(t,x)$  be a family of  $\omega_t$-psh functions such that
\begin{itemize}
\item $(\f_j)$ is uniformly bounded;
\item $\ddot{\f}_j \leq C/t^2$ for some uniform constant $C>0$.
\end{itemize}

Then there exists a bounded function $\f \in \mathcal P (X_T,\omega)$ such that, up to extracting and relabelling, 
$\f_j \rightarrow \f$ in $L^1_{\loc} (X_T)$ and
$$
e^{\dot{\f}_j+F_j(t,x,\f_j(t,x))} g_j(t,x) \, d t \wedge dV(x)  \longrightarrow e^{\dot{\f}+F(t,x,\f(t,x))} g(t,x) \, d t \wedge dV(x), 
$$
in the weak sense of Radon measures on $X_T$.
\end{thm}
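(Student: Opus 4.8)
The plan is to reduce everything to the two convergence theorems already at our disposal: the compactness statement Theorem~\ref{thm:Montel} for the class $\mathcal P(X_T,\omega)$, and the convergence statement Theorem~\ref{thm:conv1} for time derivatives of uniformly semi-concave families. The only gap to fill is that Theorem~\ref{thm:conv1} is phrased with a fixed density $f_j$ against the product measure $\ell\otimes\mu$ with $\mu$ a measure on $X$, whereas here the densities $g_j$ depend on $(t,x)$ and we test against $dt\wedge dV$; also the exponentials $e^{F_j(t,x,\f_j)}$ must be absorbed.

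\textbf{Step 1 (Extraction of a limit potential).} The family $(\f_j)$ is uniformly bounded, hence locally uniformly bounded from above and not converging to $-\infty$, and the bound $\ddot\f_j\le C/t^2$ together with uniform boundedness makes $(\f_j)$ locally uniformly Lipschitz in $t$ on each $J\Subset]0,T[$ (integrating the one-sided second-derivative bound, as in the definition of uniform semi-concavity). So $(\f_j)\subset\mathcal P(X_T,\omega)$ is locally uniformly Lipschitz and Theorem~\ref{thm:Montel} gives a subsequence converging in $L^1_{\loc}(X_T)$ to some $\f\in\mathcal P(X_T,\omega)$, which is bounded by the uniform $L^\infty$ bound. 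Passing to a further subsequence we may assume convergence $\ell\otimes dV$-a.e., and the semi-concavity bound $\ddot\f_j\le C/t^2$ passes to the limit so $(\f_j)$ is uniformly semi-concave in the sense required by Theorem~\ref{thm:conv1} (the constant $C/t^2$ is uniform on each compact $J\Subset]0,T[$).

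\textbf{Step 2 (Convergence of the RHS measures).} Apply Theorem~\ref{thm:conv1} with $\mu=dV$, with the test function $h=\exp$, and with the densities $f_j(t,x):=e^{F_j(t,x,\f_j(t,x))}g_j(t,x)$. First, $\dot\f_j\to\dot\f$ $\ell\otimes dV$-a.e. by the first conclusion of Theorem~\ref{thm:conv1}. Next one must check $f_j\to f:=e^{F(t,x,\f(t,x))}g(t,x)$ in $L^1(X_T)$: since $F_j\to F$ uniformly and the $\f_j$ are uniformly bounded, $e^{F_j(t,x,\f_j)}$ is uniformly bounded and, along the a.e.-convergent subsequence, $e^{F_j(t,x,\f_j(t,x))}\to e^{F(t,x,\f(t,x))}$ $\ell\otimes dV$-a.e.\ by continuity of $F$; combined with $g_j\to g$ in $L^1$ and the uniform bound on the exponential factors, a standard generalized-dominated-convergence argument (or Egorov plus $L^1$-convergence of $g_j$) gives $f_j\to f$ in $L^1(X_T)$. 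Theorem~\ref{thm:conv1} then yields $e^{\dot\f_j}f_j\,\ell\otimes dV\to e^{\dot\f}f\,\ell\otimes dV$ weakly as Radon measures on $X_T$, which is exactly the asserted convergence. Finally, since the limit $\f$ is independent of the subsequence only up to the usual diagonal argument, the statement is stated with "up to extracting and relabelling", so no further uniqueness is needed here.

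\textbf{Main obstacle.} The delicate point is Step~2's verification that $f_j\to f$ in $L^1(X_T)$: the density $g_j$ converges only in $L^1$ (so possibly along a further subsequence a.e.), while the exponential factor converges a.e.\ but must be controlled uniformly — here the uniform $L^\infty$ bound on $(\f_j)$ from Theorem~\ref{thm:sumup} and the uniform convergence $F_j\to F$ are exactly what is needed, and one should be slightly careful that the product of an $L^1$-convergent sequence with a uniformly bounded a.e.-convergent sequence converges in $L^1$. Everything else is a direct citation of Theorem~\ref{thm:Montel} and Theorem~\ref{thm:conv1}.
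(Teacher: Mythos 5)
Your proof is correct and follows essentially the same route as the paper: extract a limit $\f \in \mathcal P(X_T,\omega)$ via Theorem~\ref{thm:Montel} (the paper first obtains a weak $L^2$ limit and then invokes the second part of Theorem~\ref{thm:Montel}, which amounts to the same thing), observe that $(\f_j)$ is uniformly semi-concave, and conclude by Theorem~\ref{thm:conv1} with $h=\exp$ and $f_j = e^{F_j(\cdot,\cdot,\f_j)} g_j$. Your Step~2 helpfully makes explicit the $L^1$-convergence check $f_j \to f$ that the paper leaves to the reader, and that check is correct.
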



\begin{proof} 
Since $(\f_j)$ is bounded in $L^2 (X_T)$, it is weakly compact. Extracting and relabelling, we 
assume that $(\f_j)$ weakly converges   to   $\f \in L^2 (X_T)$. 

Fix a compact sub-interval $J\Subset ]0,T[$. There exists a constant $C=C_J>0$ such that the functions $t\mapsto \varphi_j(t,x) -Ct^2$ are concave in $J$ for all $x\in X$ fixed. The same propertie holds for the limiting function $\varphi(t,x)$ by letting $j\to +\infty$. 
For  $t$ fixed, the functions $x \mapsto \f_j(t,x)$ are $\omega_t$-psh and uniformly bounded, hence 
 $x \mapsto \f(t,x)$ is $\omega_t$-psh and uniformly bounded in $X_T$.

It follows from Theorem \ref{thm:Montel}  that $\f_j \to \f$ in $L^1_{\loc} (X_T)$ and $\f_j (t,x) \to \f (t,x)$ 
almost everywhere in $X_T$ with respect to the Lebesgue measure.
The conclusion follows by applying Theorem~\ref{thm:conv1}.
\end{proof}

 \section{Existence and properties of sub/super/solutions}  \label{sec:existence}
 
 From now on we assume that $t \mapsto \omega_t$ and the denstities $g,F$ satisfy the conditions listed in the introduction. 
 
For bounded parabolic potentials $\varphi \in \mathcal{P}(X_T) \cap L^{\infty}(X_T)$, 
 the equation \eqref{eq:CMAF} should be understood in the sense of measures on $]0,T[ \times X$: 
 \begin{equation}\tag{CMAF}
 	\label{eq: MAF as measures}
 	(\omega_t +dd^c \varphi_t)^n \wedge dt =e^{\dot{\varphi}_t + F(t,x,\varphi_t)}g(x) dV(x)\wedge dt. 
 \end{equation}
 It follows from Definition \ref{def:LHS} that the left hand side is a well defined   Radon measure, while
Lemma \ref{lem: existence of time derivative} ensures that so is the right hand side.

 \subsection{Stability estimates}

We establish in this section  uniform  $L^{\infty}$-$L^1$ stability estimates 
needed in the proof of the existence theorem.

\begin{pro} \label{pro:stab} 
Fix $0 \leq g_1, g_2 \in L^ p (X)$ with $p > 1$, and $0 < T_0 < T_1 < T$.
Assume $\f^1, \f^ 2 \in {\mathcal P} (X_T,\omega) \cap {\mathcal C}^{\infty} (X_T)$ both satisify
$$
 d t \wedge (\omega_t + dd^c \f^i)^n  = e^{\dot{\f}^i (t,\cdot) + F_i (t, \cdot, \f^i)} g_i  d t \wedge d V .
$$
Then for all  $(t,x) \in [T_0,T_1] \times  X$, 
\begin{equation} \label{eq:stab}
   \vert\f^1 (t,x) -  \f^2 (t,x)\vert  \leq B  \Vert \f^1 - \f^2 \Vert_{L^1 (X_T)}^{\alpha},
\end{equation}
where  $0< \alpha=\alpha (n,p)$    
while  $0< B $  depends   on $T_0, T_1,\theta,\Theta$, 
and upper bounds for  $\Vert g^i \Vert_{L^p (X)}$, $\Vert F_i\Vert_{L^{\infty}(X_T)}$,
and $\Vert \partial_t F_i\Vert_{L^{\infty}(X_T)}, \Vert \partial_r F_i\Vert_{L^{\infty}(X_T)}$.
\end{pro}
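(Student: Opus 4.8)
The plan is to reduce the parabolic stability estimate to an elliptic one, slice by slice, using the structure of the equation. The key observation is that on each time-slice $t$, the function $\varphi^i_t$ satisfies a complex Monge-Amp\`ere equation $(\omega_t+dd^c\varphi^i_t)^n = e^{\dot\varphi^i_t + F_i(t,\cdot,\varphi^i_t)}g_i\,dV$, so one can apply the elliptic $L^\infty$--$L^1$ stability theory (as in \cite{GLZstability} or \cite{EGZ09,Kol98}) to get, for each fixed $t \in [T_0,T_1]$,
$$
\|\varphi^1_t - \varphi^2_t\|_{L^\infty(X)} \leq C_t \left( \|\varphi^1_t-\varphi^2_t\|_{L^1(X)}^{\alpha_0} + \|(e^{\dot\varphi^1_t+F_1}g_1 - e^{\dot\varphi^2_t+F_2}g_2)\|_{?}^{\alpha_1} \right),
$$
where the constant $C_t$ is uniform for $t \in [T_0,T_1]$ thanks to the a priori bounds of Theorem \ref{thm:sumup} (which control $\varphi^i$, $\dot\varphi^i$ uniformly away from zero) and the uniform bounds on $F_i$ and $g_i$.

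**Handling the right-hand side.** The difficulty is that the densities $e^{\dot\varphi^i_t + F_i(t,\cdot,\varphi^i_t)}g_i$ differ not just through $g_i$ and $F_i$ but through the \emph{time derivatives} $\dot\varphi^i_t$, which are a priori only in $L^\infty_{\loc}$, not obviously close in any norm. To control this I would use the maximum-principle / comparison technique directly on $\varphi^1 - \varphi^2$ rather than a pure slicing argument: set $u = \varphi^1 - \varphi^2$ and exploit that $\dot u_t = \dot\varphi^1_t - \dot\varphi^2_t$ enters the equation through an exponential, so that where $u$ is large the quantity $\dot\varphi^1_t - \dot\varphi^2_t - \lambda_F u_t$ has a favorable sign (using the quasi-increasing property \eqref{eq:Fincreasing}). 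Concretely, one compares $\varphi^1$ with a perturbation of $\varphi^2$ of the form $\varphi^2 + \psi_t + \epsilon(t)$, where $\psi_t$ solves an auxiliary elliptic Monge-Amp\`ere equation absorbing the $\|g_1-g_2\|_{L^p}$ and $F_1-F_2$ discrepancies (its $L^\infty$ norm being controlled by Ko\l odziej-type estimates), and $\epsilon(t)$ is an explicit exponential profile chosen so that $\varphi^2 + \psi_t + \epsilon(t)$ becomes a supersolution. Running the maximum principle on $X_T$ (noting the parabolic boundary contributes via $\|u_0\|$, which in turn is controlled by $\|u\|_{L^1(X_T)}$ through Lemma \ref{lem:L1Slice-L1}) yields $\varphi^1 \leq \varphi^2 + \psi_t + \epsilon(t)$, and symmetrically the reverse. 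Since the whole estimate is to be phrased in terms of $\|\varphi^1-\varphi^2\|_{L^1(X_T)}$ rather than of $\|g_1-g_2\|_{L^p}$, one can in fact take $g_1=g_2$, $F_1=F_2$ for the core of the argument (the general case of distinct data being what Theorem C will need, but Proposition \ref{pro:stab} as stated bounds everything by $\|\varphi^1-\varphi^2\|_{L^1}$).

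**Extracting the exponent.** Finally I would feed back the slice-wise $L^1(X)$ control: by Lemma \ref{lem:L1Slice-L1}, $\|\varphi^1_t - \varphi^2_t\|_{L^1(X)} \leq 2M \max\{\|\varphi^1-\varphi^2\|_{L^1(X_T)}^{1/2}, \|\varphi^1-\varphi^2\|_{L^1(X_T)}\}$ for $t \in [T_0,T_1]$, with $M$ depending on $T_0,T_1,T$ and the (uniform) Lipschitz constant in time coming again from Theorem \ref{thm:sumup}. Combining this with the slice-wise $L^\infty$--$L^1$ inequality gives \eqref{eq:stab} with $\alpha = \alpha_0/2$ (or $\alpha_0$, whichever is smaller, to absorb the $\max$), where $\alpha_0 = \alpha_0(n,p)$ is the exponent from the elliptic stability theorem, and with $B$ depending only on the listed quantities.

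**Main obstacle.** The delicate point is the coupling through $\dot\varphi^i_t$ on the right-hand side: one cannot treat the equation as purely elliptic on each slice because the source term involves the unknown's time derivative. The resolution — using the sign of $\dot\varphi^1 - \dot\varphi^2$ against the exponential when $\varphi^1-\varphi^2$ is extremal, together with the $\lambda_F$-quasimonotonicity of $F$ — is what makes the global maximum principle on $X_T$ close, and getting the exponential profile $\epsilon(t)$ and the auxiliary potential $\psi_t$ to fit together with uniform constants (independent of $t\in[T_0,T_1]$) is where the bulk of the careful bookkeeping lies.
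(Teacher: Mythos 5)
Your opening gambit is the right one: slice in time, apply an elliptic $L^\infty$--$L^1$ stability estimate on each slice (uniform in $t\in[T_0,T_1]$ thanks to the a priori bounds of Section~\ref{sec:apriori}), and then relate the slice-wise $L^1(X)$ norms to the global $L^1(X_T)$ norm via Lemma~\ref{lem:L1Slice-L1}. That is exactly the paper's route. But you then talk yourself out of it, because you believe the elliptic stability lemma requires control of the \emph{difference} of the right-hand densities -- including the uncontrollable $\dot{\varphi}^i$ terms -- and this pushes you into a much more elaborate maximum-principle construction. The elliptic result actually invoked, \cite[Proposition 3.3]{EGZ09}, is a \emph{semi-stability} estimate: for two bounded $\omega_t$-psh functions $u,v$ whose Monge--Amp\`ere measures have densities with $L^p$ norms both bounded by $A$, one has $\|u-v\|_{L^\infty(X)}\leq C\,\|u-v\|_{L^1(X)}^\gamma$ with $C=C(n,p,A,\ldots)$ and $\gamma=\gamma(n,p)$, and \emph{with no hypothesis that the two densities be close to one another}. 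It is a quantitative compactness statement proved by capacity comparison. Since Theorem~\ref{thm: apriori-est C1 general} gives $|t\,\dot{\varphi}^i_t|\leq C_1$ and Proposition~\ref{pro:bdd} bounds $\varphi^i$, the densities $e^{\dot{\varphi}^i_t + F_i(t,\cdot,\varphi^i_t)}g_i$ are uniformly bounded in $L^p(X)$ for $t\in[T_0,T_1]$, and that is all the elliptic lemma needs. The ``main obstacle'' you single out therefore evaporates once the correct elliptic ingredient is identified.

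Your alternative maximum-principle route, as sketched, also contains a genuine gap. You propose to control the parabolic-boundary contribution $\|u_0\|$ through $\|u\|_{L^1(X_T)}$ by Lemma~\ref{lem:L1Slice-L1}; but that lemma applies only to slices $t\in[T_0,T_1]$ with $T_0>0$, and its constant involves the uniform Lipschitz constant of $u=\varphi^1-\varphi^2$ on $[T_0,T]$, which blows up as $T_0\to 0$ (one only has $|\dot{\varphi}^i_t|\leq C/t$ near the origin). So the lemma cannot reach the slice at $t=0$, and the feedback loop you describe does not close. In the paper's argument this issue never arises, since the estimate is claimed and proved only on $[T_0,T_1]\times X$ with $T_0>0$, and the comparison principle is not needed for this proposition at all.
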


\begin{proof}
We  are going to use the stability results of \cite{EGZ09,GZ12}. 
These rely on important estimates which we recall for the convenience of the reader.
The uniform bounds $\theta \leq \omega_t \leq \Theta$  and  \cite[Lemma 2.2]{EGZ08} show
that there exists a uniform constant $A_1 > 0$ such that 
$$
\Vol (K) \leq A_1 \capacity_{\omega_t} (K)^2,
$$
for all  $t \in [0,T]$ and all compact sets $K \subset X$, where
$$
\capacity_{\omega_t} (K):=\sup \left\{ \int_K (\omega_t+dd^c u)^n  \setdef u \in \PSH(X,\omega_t) 
\text{ with } 0 \leq u \leq 1 \right\}
$$
is the Monge-Amp\`ere capacity associated to the form $\omega_t$.

Fix $T_0 < T_1 < T$ and consider the densities 
 $$
 f^i_t :=  e^{\dot{\f}^i (t,\cdot) + F_i (t, x, \f^i)} g_i (x), \, i = 1, 2.
 $$
 It follows from Theorem~\ref{thm: apriori-est C1 general} that $t \dot{\f}^i (t,x)$ is uniformly bounded by $C_1 $,
 while  Proposition~\ref{pro:bdd} ensures that the $\f^i$'s are uniformly bounded.
The $L^p$ norms of the densities $f^i_t$ are thus uniformly bounded from above  by 
$$
 A_2 := e^{C_1 \slash T_0 +C_2} (\Vert g_0 \Vert_{L^p (X)} + \Vert g_1 \Vert_{L^p (X)})
 $$
  when $t \in [T_0,T_1]$. It follows therefore from \cite[Proposition 3.3]{EGZ09} that
 $$
 \max_X \vert \f^1 (t,\cdot) - \f^2 (t,\cdot)\vert \leq C \Vert \f^1 (t,\cdot) - \f^2 (t,\cdot)\Vert_{L^1 (X)}^{\gamma},
 $$
 for all  $t \in [T_0,T_1]$,
 where 
 $\gamma \in ]0, 1[$ only depends on $p, n$.
  Lemma~\ref{lem:L1Slice-L1} yields
$$
 \Vert \f^1 (t,\cdot) - \f^2 (t,\cdot)\Vert_{L^1 (X)} \leq A \max 
 \{  \Vert \f^1 - \f^2 \Vert_{L^1(X_T)},  \Vert \f^1 - \f^2 \Vert_{L^1(X_T)}^{1 \slash 2} \},
$$
where $ A := 2 \max \{ \sqrt{ \kappa}, (T - T_1)^{- 1}\}.$

The proof is completed by combining the last two inequalities.
\end{proof}

 In practice, this proposition yields the following useful information: 
 
 \begin{cor}
 Assume $||g_j||_{L^p},||F_j||_{L^{\infty}},||\partial_t F_j||_{L^{\infty}}$ and $||\partial_r F_j||_{L^{\infty}}$ are uniformly bounded.
 If a sequence $(\f^j)$ of solutions to $\eqref{eq:CMAF}_{F_j,g_j}$ converges
 in $L^1(X_T)$ to   $\f$, then it uniformly converges 
 on compact subsets of $]0,T[ \times X$.  
 \end{cor}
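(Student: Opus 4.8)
The corollary is essentially a packaging of Proposition \ref{pro:stab}, so the plan is short. First I would fix a compact set $K \Subset ]0,T[ \times X$ and choose $0 < T_0 < T_1 < T$ so that $K \subset [T_0,T_1] \times X$. The hypotheses of the corollary guarantee precisely the uniform bounds that Proposition \ref{pro:stab} requires on $\|g_j\|_{L^p}$, $\|F_j\|_{L^\infty(X_T)}$, $\|\partial_t F_j\|_{L^\infty(X_T)}$, $\|\partial_r F_j\|_{L^\infty(X_T)}$, so the constants $B$ and $\alpha$ in \eqref{eq:stab} can be taken uniform in $j$ (and in a second index, comparing $\f^j$ with $\f^k$).

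Next I would apply \eqref{eq:stab} to the pair $(\f^j,\f^k)$: for all $(t,x) \in [T_0,T_1]\times X$,
$$
|\f^j(t,x) - \f^k(t,x)| \leq B \, \|\f^j - \f^k\|_{L^1(X_T)}^{\alpha}.
$$
Since $(\f^j)$ converges in $L^1(X_T)$ to $\f$, the right-hand side tends to $0$ as $j,k \to \infty$; hence $(\f^j)$ is uniformly Cauchy on $[T_0,T_1]\times X$, and therefore converges uniformly there to a continuous limit. That uniform limit must agree with $\f$ almost everywhere (since $L^1(X_T)$ convergence forces a subsequence to converge a.e.), so $\f^j \to \f$ uniformly on $[T_0,T_1]\times X$, and a fortiori on $K$. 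As $K$ was an arbitrary compact subset of $]0,T[\times X$, this is exactly the claimed locally uniform convergence.

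There is no real obstacle here: the only mild point to be careful about is that Proposition \ref{pro:stab} is stated for two solutions with its constant $B$ depending on upper bounds for the relevant norms, so one should note that the same $B,\alpha$ work simultaneously for every pair $(\f^j,\f^k)$ under the uniform bound hypotheses, which is what lets one run the Cauchy argument. One could equally apply \eqref{eq:stab} directly to $(\f^j,\f)$ once it is known that $\f$ is (a.e. equal to) a smooth solution, but running it on pairs $(\f^j,\f^k)$ avoids any regularity discussion of the limit and is the cleanest route.
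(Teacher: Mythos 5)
Your argument is correct and matches the paper's intended reading of the corollary as an immediate consequence of Proposition~\ref{pro:stab}: indeed the paper spells out exactly this Cauchy-in-uniform-norm argument (applying \eqref{eq:stab} to pairs $(\f^j,\f^k)$ with uniform $B,\alpha$) inside the proof of Theorem~\ref{thm:existence}. The one thing worth making explicit, which you gloss over, is that the uniformity of the constant $B$ across pairs also requires a uniform Lipschitz constant for $\f^j-\f^k$ on $[T_0,T_1]$, which follows from Theorem~\ref{thm: apriori-est C1 general} under the stated uniform bounds on $\|g_j\|_{L^p}$, $\|\partial_t F_j\|_{L^\infty}$, $\|\partial_r F_j\|_{L^\infty}$; otherwise the $M$ entering Lemma~\ref{lem:L1Slice-L1} could drift with $j,k$.
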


\subsection{The Cauchy problem}
We are now in position to prove \ref{main Thm A} of the introduction.

\begin{defi}
	A parabolic potential $\varphi\in \Pc(X_T,\omega)$ is a pluripotential solution (sub/super solution respectively) to \eqref{eq: MAF as measures} with initial values $\varphi_0 \in \PSH(X,\omega_0)\cap L^{\infty}(X)$ if $\varphi$ satisfies \eqref{eq: MAF as measures} (or the inequality $\geq/ \leq$ respectively) in the sense of measures on $X_T$ and $\varphi_t\to \varphi_0$ in $L^1(X)$ as $t\to 0^+$. 
\end{defi}







 \begin{thm} \label{thm:existence} 
Assume that  $\f_0$  a bounded $\omega_0$-psh function in $X$, and $(\omega,F,g)$ is as in the introduction. There exists  $\f \in \mathcal P (X_T,\omega)$  solving  \eqref{eq: MAF as measures} such that for all $0 <T'<T$,
 \begin{itemize}
\item $(t,x) \mapsto \f(t,x)$ is uniformly bounded in $]0,T'] \times X$;
\item $(t,x) \mapsto \f(t,x)$ is continuous in $]0,T[ \times \Omega$;
\item $t \mapsto \f(t,x) -n(t \log t -t) +C_1t$ is increasing on $]0,T'[$ for some $C_1>0$;
\item $t \mapsto \f(t,x) +C_2 \log t$ is concave on $]0,T'[$  for some $C_2>0$;
\item $\f_t \rightarrow \f_0$ as $t \rightarrow 0^+$ in $L^1(X)$ and pointwise. 
\end{itemize}
 \end{thm}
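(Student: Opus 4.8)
The plan is to produce $\f$ as a limit of smooth solutions to regularized flows, the a priori bounds of Section~\ref{sec:apriori} supplying the compactness. First I would regularize the data: replace $\omega_t$ by a $t$-mollification of $(1-\e_j)\omega_t+\e_j\Theta$, which is a smooth family of K\"ahler forms dominating $(1-\e_j)\theta$ and still satisfying \eqref{eq:omegatLip} and $\ddot\omega_t\le A\omega_t$ with a uniform $A$; replace $g$ by smooth densities $g_j>0$ with $g_j\to g$ in $L^p$; replace $F$ by mollifications $F_j$, which are smooth, converge to $F$ locally uniformly, and retain the constants of \eqref{eq:Fincreasing}--\eqref{eq:FconvIntro} since those conditions are preserved under averaging against a nonnegative kernel; and replace $\f_0$ by smooth strictly $\omega_{0,j}$-psh functions $\f_{0,j}\searrow\f_0$ with $\sup_X|\f_{0,j}|\le\sup_X|\f_0|+1$. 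Classical parabolic theory (cf.\ \cite{ST17,EGZ16}) then furnishes smooth solutions $\f^j$ on $[0,T)\times X$ of \eqref{eq:CMAF} for the data $(\omega_{t,j},g_j,F_j,\f_{0,j})$.

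Because the regularized data satisfy the hypotheses of Theorem~\ref{thm:sumup} with constants $C_0,C_1,C_2$ \emph{independent of $j$}, we get $|\f^j_t|\le C_0$, $n\log t-C_1\le\dot\f^j_t\le C_1/t$ and $\ddot\f^j_t\le C_2/t^2$ on $X_T$. In particular $(\f^j)$ is locally uniformly bounded, locally uniformly Lipschitz in $]0,T[$, and locally uniformly semi-concave in $]0,T[$, so by Theorem~\ref{thm:Montel} a subsequence converges in $L^1_{\loc}(X_T)$ and $dt\wedge dV$-a.e.\ to some $\f\in\mathcal P(X_T,\omega)$ inheriting the bound $C_0$ and the uniform semi-concavity. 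On the ample locus the estimates of \cite{EGZ09} (cf.\ Proposition~\ref{pro:stab}) apply on compact subsets, improving this to local uniform convergence on compact subsets of $]0,T[\times\Om$; hence $\f$ is continuous there, and being also uniformly Lipschitz in $t$, continuous jointly on $]0,T[\times\Om$.

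Next I would check that $\f$ solves \eqref{eq: MAF as measures}. On the right, the second-order bound $\ddot\f^j\le C_2/t^2$ is exactly what makes Theorem~\ref{thm:conv2} applicable (its proof carries over to the varying forms $\omega_{t,j}\to\omega_t$): $\dot\f^j\to\dot\f$ a.e.\ and $e^{\dot\f^j_t+F_j(t,x,\f^j)}g_j\,dt\wedge dV\to e^{\dot\f_t+F(t,x,\f)}g\,dt\wedge dV$. On the left, the measures $dt\wedge(\omega_{t,j}+dd^c\f^j_t)^n$ are absolutely continuous and $X\setminus\Om$ is an analytic (hence Lebesgue-null) set, so these measures do not charge $X\setminus\Om$; since $\f^j\to\f$ locally uniformly on $]0,T[\times\Om$ they converge there to $dt\wedge(\omega_t+dd^c\f_t)^n$, and hence globally. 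As the equation holds at each level $j$, the two limits agree and $\f$ solves \eqref{eq: MAF as measures}. The monotonicity statements now follow by passing to the limit: $\dot\f^j_t\ge n\log t-C_1$ gives that $t\mapsto\f^j_t(x)-n(t\log t-t)+C_1 t$ is nondecreasing, and $\ddot\f^j_t\le C_2/t^2$ that $t\mapsto\f^j_t(x)+C_2\log t$ is concave; both pass to the limit (a.e.\ in $x$, then for every $(t,x)$ by continuity of $\f$ in $t$).

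Finally, for $t\to0^+$: by the monotonicity just obtained the pointwise limit $\f^*(x):=\lim_{t\to0^+}\f_t(x)$ exists for every $x$; letting $t\to0$ in the subbarrier of Proposition~\ref{pr:sbarrier} gives $\f^*\ge\f_0$, while Proposition~\ref{prop: average} together with dominated convergence gives $\int_X\f^*\,g\,dV\le\int_X\f_0\,g\,dV$, so $g>0$ a.e.\ forces $\f^*=\f_0$ $dV$-a.e.; since $\f^*$, as a decreasing limit (as $t\searrow0$) of functions that are $e^{At}\omega_0$-psh with $e^{At}\to1$, is $\omega_0$-psh, it equals $\f_0$ everywhere, and dominated convergence upgrades this to $\f_t\to\f_0$ in $L^1(X)$ (Proposition~\ref{pro: usc} then yields the u.s.c.\ extension to $[0,T[\times X$). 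I expect the main difficulties to be, first, arranging the regularization so that \emph{every} structural constant ($A$, $p$, $\lambda_F$, $C_F$ and the Lipschitz data of $F$) stays uniform in $j$, and, more substantively, the right-hand side convergence in the third paragraph, which genuinely relies on the semi-concavity estimate of Theorem~\ref{thm:apriori-est}: mere $L^1_{\loc}$ convergence of the $\f^j$ would not imply $e^{\dot\f^j}\to e^{\dot\f}$.
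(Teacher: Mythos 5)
Your proposal follows the same overall scheme as the paper's proof: regularize the data, invoke the a priori estimates of Theorem~\ref{thm:sumup}, extract a limit by the compactness results, and conclude with the subbarrier/average argument for the behaviour at $t=0$. There is, however, a genuine soft spot in your treatment of the left-hand-side convergence. The stability estimate of Proposition~\ref{pro:stab} is in fact a statement on $[T_0,T_1]\times X$, \emph{not merely on compact subsets of the ample locus}: it gives $\sup_{[T_0,T_1]\times X}\vert\f^j-\f^k\vert\leq C\Vert\f^j-\f^k\Vert_{L^1(X_T)}^{\alpha}$ once the $L^p$ norms of $g_j$ and the time derivatives are uniformly controlled. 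The paper uses this to get $\f^j\to\f$ uniformly on each $[T_0,T_1]\times X$, from which $dt\wedge(\omega_t+dd^c\f^j_t)^n\to dt\wedge(\omega_t+dd^c\f_t)^n$ follows slice by slice by the Bedford--Taylor convergence theorem, with no need to discuss the exceptional locus at all. By restricting yourself to $\Om$ you have created work for yourself, and your one-line ``hence globally'' does not close the gap: from weak convergence of $\mu_j:=dt\wedge(\omega_{t,j}+dd^c\f^j_t)^n$ on $]0,T[\times\Om$ alone one cannot conclude global weak convergence, because a priori mass could escape to $X\setminus\Om$ in the limit, and the limit measure $dt\wedge(\omega_t+dd^c\f_t)^n$ must separately be shown not to charge $]0,T[\times(X\setminus\Om)$. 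Both points can be repaired (the limit measure does not charge the pluripolar set $X\setminus\Om$ since $\f_t$ is bounded, and no mass escapes because $\int_X(\omega_{t,j}+dd^c\f^j_t)^n=\int_X\omega_{t,j}^n\to\int_X\omega_t^n=\int_X(\omega_t+dd^c\f_t)^n$), but you need to say so explicitly; or, better, simply invoke the uniform convergence on all of $X$ and avoid the detour entirely.

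A second, smaller issue: Proposition~\ref{pro:stab} is stated for two solutions of the flow with the \emph{same} family $\omega_t$, whereas you have regularized $\omega_t$ to $\omega_{t,j}$. The elliptic ingredients of its proof (the capacity bound and the $L^{\infty}$--$L^1$ stability of \cite{EGZ09}) are uniform over all families pinched between $\theta$ and $\Theta$, so the estimate does extend, but this should be remarked rather than assumed. The remainder of your argument --- regularizing $F$ by a kernel so as to preserve the structural constants $\lambda_F,\kappa_J,C_J$, using Theorem~\ref{thm:conv2} for the right-hand side, passing the monotonicity and semi-concavity to the limit, and using Proposition~\ref{pr:sbarrier} together with Proposition~\ref{prop: average} to pin down the limit at $t=0$ --- is in line with the paper (which uses the lower bound on $\dot\f_t$ rather than the subbarrier for the $u_0\geq\f_0$ step, an inessential variant).
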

 
 Recall that $\Omega$ is the ample locus of $\theta$.
 The solution we provide is in particular locally uniformly semi-concave in $t \in ]0,T[$.
  We will study the uniqueness issue in the next section.
 
 \begin{proof}  
 Fix $0<T'<T$. We prove the existence of a solution on $]0,T'[\times X$. The uniqueness result (Corollary \ref{cor:unique1}) then ensures that a solution exists in $]0,T[\times X$. 
 
 We  approximate 
 \begin{itemize}
 	\item  $g$ by smooth densities $g_j > 0$ in $L^p(X)$; 
 	\item   $F$ by smooth densities $F_j$ with uniform constants $\kappa_{F_j}, C_{F_j}, \lambda_{F_j}$ ;
 	\item $\varphi_0$ from above on $X$ by smooth $(\omega_0+2^{-j}\Omega)$-psh functions $\varphi_{0,j}$. 
 \end{itemize}
It is well-known (see e.g. \cite{TosAFST}) that there exists a unique smooth solution $\f^j \in \mathcal P (X_T,\omega)$ 
to  $\eqref{eq:CMAF}_{F_j,g_j}$, i.e. 
\begin{equation} \label{eq:approxsol}
dt \wedge (\omega_t + dd^c \f^j_t)^n = e^{\dot{\f}^j(t,x)+F_j(t,x,\f^j(t,x))} g_j d t \wedge dV(x). 
\end{equation}
 
It follows from Theorem \ref{thm:sumup} that the $\f^j$'s are
uniformly bounded and  the derivatives $\ddot{\f}^j$ are locally uniformly bounded from above
in $X_T$. Extracting and relabelling 
Theorem~\ref{thm:conv2} ensures that
 there exists  $\f \in \mathcal P  (X_T) \cap L^{\infty}_{\loc}  (X_T) $ such that  $\f^j \rightarrow \f$ in $L^1 (X_T)$ and
$$
e^{\dot{\f}^j(t,x)+F_j(t,x,\f^j(t,x))} g_j(t,x) d t \wedge dV(x) \longrightarrow e^{\dot{\f}(t,x)+F(t,x,\f(t,x))} g(x) d t \wedge dV(x),
$$
in the sense of currents on $X_T$.

We claim that $\f^j \to \f$ locally uniformly in $X_T$. This follows indeed from the stability estimates established in Proposition \ref{pro:stab} above.
 Fix $0 < T_0 < T_1  <  T$. Since the densities $g_j$ have uniform $L^p$ norms, Theorem \ref{thm: apriori-est C1 general} ensures that the sequence $(\dot{\f}^j)$ is uniformly bounded in $[T_0,T] \times X$. By \eqref{eq:stab}, for all $j, k$ large enough, $t \in [T_0,T_1]$, $x \in X$, we have
$$
 \vert  \f^j (t,x) - \f^k (t,x)  \vert \leq C \Vert \f^j - \f^k \Vert_{L^1 (X_T)}^{\alpha},
$$
where $C > 0$ and $0 < \alpha < 1$ are uniform constants which do not depend on $j, k,$ and $t \in [T_0,T_1]$.
This proves our claim. 

Therefore  $ d t \wedge (\omega_t + dd^c \f^j_t)^n \to d t \wedge (\omega_t+dd^c \varphi_t)^n$ in the sense of measures on $X_T$,
hence $\varphi$ solves \eqref{eq: MAF as measures}.

One shows similarly that  $\f$ is uniformly semi-concave in $]0,T[$ :
 the densities $g_j$ in (\ref{eq:approxsol}) are uniformly bounded in $L^p (X)$, hence Theorem~\ref{thm:apriori-est} 
insures the existence of a uniform constant $C > 0$ such that 
$$
 \ddot \varphi^j (t,x) \leq C/ t^2
$$
for all $j \in \N, (t,x) \in X_T$. 
Thus, for each compact subinterval $J\Subset ]0,T[$ there exists a constant $C_J>0$ such that the functions $t \longmapsto \varphi^j (t) - C_J t^2 $ are concave  in $J$, and the same property holds for $\varphi$ by letting   $j \to \infty$.

The continuity of $\f$ on $]0,T[\times \Omega$ follows from the elliptic theory as will be shown in Proposition \ref{pro: continuity of solution elliptic EGZ} below.
The lower bound
$\dot{\f}_t \geq n \log t -C_1$, provided by Theorem \ref{thm: apriori-est C1 general}, ensures that
$t \mapsto \f_t-n(t \log t-t)+C_1 t $ is increasing, hence any cluster point (in $L^1$-topology) of $\varphi_t$ (as $t\to 0^+$) is greater than $\varphi_0$. 
%
%
 On the other hand, it follows from Proposition \ref{prop: average} and Lemma \ref{lem:L1Slice-L1} that 
 \[
 \int_X \varphi_t gdV \leq \int_X \varphi_0 gdV + Ct,
 \]
 for a uniform constant $C>0$. Let $u_0$ be any cluster point of $(\varphi_t)$ as $t\to 0^+$. Then as explained above, $u_0 \geq \varphi_0$. On the other hand, the average control above ensures   that
 \[
 \int_X u_0 g dV \leq \int_X \varphi_0 g dV.
 \]
 Since the set $\{g=0\}$ has Lebesgue measure zero we infer $u_0= \varphi_0$ almost everywhere, hence everywhere. 
  \end{proof}
  
  \begin{rem}
  Proposition \ref{pro: usc} ensures that the pluripotential solution constructed above is upper semi-continuous on $[0,T[\times X$. 
The functions 
 $\f_t$ quasi-decrease to $\f_0$ as $t \searrow 0$. The convergence at time zero is thus quite strong : if $\f_0$ is continuous, it follows for instance that the convergence is uniform
 (for non continuous initial $\f_0$, there is convergence in capacity). 	
  \end{rem}

\begin{rem}  
The way the density is allowed to vanish is crucial.
\ref{main Thm A} does not hold for an arbitrary  density $g \geq 0$ :
if $g$ vanishes in a non empty open set $D \subset X$ then  \eqref{eq: MAF as measures} has no solution with initial  value 
$\f_0$ unless $\f_0$ is a maximal $\omega_0$-psh function in $D$.
Indeed the complex Monge-Amp\`ere operator is continuous for the convergence in capacity,
so $(\omega_t+dd^c \f_t)^n=0$ would converge to $(\omega_0+dd^c \f_0)^n=0$ in $D$.
 \end{rem}

\subsection{Invariance properties of the set of assumptions} \label{sec:invariance}
 
 
 
 The family of parabolic complex Monge-Amp\`ere equations we consider
 $$
  (\omega_t +dd^c \f_t)^n=e^{\dot{\f}_t + F (t,x,\f) } g(x) dV (x), 
 $$
  has several invariance properties, as we now briefly explain.
 
 \subsubsection{Translations}
 
 We can replace $\f_t(x)$ by $\p_t(x)=\f_t(x)+C(t)$ without changing the Monge-Amp\`ere term, while the
 density $F$ is modified into
 $$
 \tilde{F}(t,x,r)=F(t,x,r-C(t))-C'(t).
 $$
 We let the reader check that $\tilde{F}$ satisfies the same set of assumptions as $F$.
 
 More generally we can  replace $\omega_t$ by $\eta_t=\omega_t-dd^c \rho_t$, changing
 $\f_t(x)$ in $\f_t(x)+\rho_t(x)$. The density $g$ remains unchanged while the new density $F$ is
 $$
  \tilde{F}(t,x,r)=F(t,x,r-\rho(t,x))-\partial_t \rho(t,x).
  $$

  \subsubsection{Scaling}
  
  A more involved transformation consists in scaling in space and renormalizing in time, so that the equation keeps the same shape.
  Namely we replace
  $\omega_t$ by $\gamma(s) \omega_{t(s)}$
  as well as $\f_t(x)$ by $\p_s(x)=\gamma(s) \f_{t(s)}(x)$, where
  $s \mapsto \gamma(s)>0$ is smooth and positive, $t(0)=0$ and $t'(s)=1/\gamma(s)$, so that 
 $$
 \partial_s \p_s=\frac{\gamma'(s)}{\gamma(s)} \p_s+\partial_t \f_{t(s)}.
 $$
 The density $g$ remains unchanged while the density $F$ is transformed into
 $$
 \tilde{F}(s,x,R)=F\left(t(s),x,r(s,R) \right)+n \log \gamma(s)-\frac{\gamma'(s)}{\gamma(s)} R,
$$
where $r(s,R)=\frac{R}{\gamma(s)}$.  

A classical example of such a transformation is when $\gamma(s)=e^s$ and $t(s)=1-e^{-s}$, allowing one to pass from the
K\"ahler-Ricci flow to the {\it normalized} K\"ahler-Ricci flow.

We let the reader check that $ \tilde{F}$ remains quasi-increasing in $R$ and locally uniformly Lipschitz in $(s,R)$. 
It is slightly more involved to keep track of the semi-convexity property :

\begin{lemma} \label{lem: invariance property}
	The function $(s,R) \mapsto \tilde{F}(s,x,R)$ is locally  uniformly semi-convex in $(s,R)$.
\end{lemma}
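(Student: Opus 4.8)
The strategy is to trace through the three pieces defining $\tilde F$ separately and recombine, using that a sum of finitely many functions which are locally uniformly semi-convex in $(s,R)$ (with constants independent of $x$) is again locally uniformly semi-convex. Fix a compact set $K$ in the $(s,R)$-domain; I will produce a constant $C_K>0$, independent of $x$, with $(s,R)\mapsto\tilde F(s,x,R)+C_K(s^2+R^2)$ convex on $K$. Since the claim is local it costs nothing to assume $K$ convex. The two elementary terms $n\log\gamma(s)$ and $-\frac{\gamma'(s)}{\gamma(s)}R$ are $\mathcal C^2$ functions of $(s,R)$ on $K$ — here one uses that $\gamma$ is smooth and strictly positive, so $\log\gamma$ and $\gamma'/\gamma$ are smooth — hence their Hessians are bounded on $K$; a $\mathcal C^2$ function whose Hessian is $\ge -2C\,\mathrm{Id}$ is $C$-semi-convex, and since neither term involves $x$ this bound is automatically uniform in $x$.

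The real content is the term $(s,R)\mapsto F(t(s),x,r(s,R))$, i.e. the composition of $F$ with the map $\Psi(s,R):=(t(s),R/\gamma(s))$; the scaling is arranged so that $\Psi(K)$ is a compact subset of $[0,T[\times\R$. The obstacle is that $R\mapsto R/\gamma(s)$ is affine in $R$ only for fixed $s$, with an $s$-dependent slope, so $\Psi$ is a genuinely nonlinear change of variables and composition could a priori destroy semi-convexity. To get around this I would use the decomposition afforded by the hypotheses on $F$. Pick a compact convex neighbourhood $J\Subset[0,T[\times\R$ of $\Psi(K)$. By \eqref{eq:Fconv} the function $H_x(t,r):=F(t,x,r)+C_J(t^2+r^2)$ is convex on $J$, and since $F$ is locally uniformly Lipschitz in $(t,r)$ and $C_J(t^2+r^2)$ is smooth, $H_x$ is $L_J$-Lipschitz on $J$ with $L_J$ independent of $x$. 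A convex $L_J$-Lipschitz function on a convex set is the supremum of the affine functions lying below it, and these can be taken with gradient of norm $\le L_J$.

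Now $\Psi$ is $\mathcal C^\infty$ on $K$ (again using that $\gamma$ is smooth and positive and $t'=1/\gamma$), so there is $M_K$, depending only on $K$ and $\gamma$, bounding $\|D^2\Psi\|$ on $K$. For each affine $\ell$ with $|\nabla\ell|\le L_J$ the composition $\ell\circ\Psi$ is $\mathcal C^2$ on $K$ with Hessian bounded in norm by $L_JM_K$ (since $D^2(\ell\circ\Psi)=\sum_i(\partial_i\ell)\,D^2\Psi_i$, the second-order term in $\ell$ vanishing), hence $\ell\circ\Psi$ is $(L_JM_K/2)$-semi-convex; as a supremum of functions that are all $\kappa$-semi-convex is $\kappa$-semi-convex, $H_x\circ\Psi$ is $(L_JM_K/2)$-semi-convex on $K$, uniformly in $x$. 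Writing $F(t(s),x,r(s,R))=H_x(\Psi(s,R))-C_J\bigl(t(s)^2+(R/\gamma(s))^2\bigr)$ and noting that the subtracted term is smooth with bounded Hessian on $K$, this term is locally uniformly semi-convex in $(s,R)$ as well. Adding back the two elementary terms from the first paragraph yields the asserted $C_K$ and finishes the proof. I expect the only delicate points to be the passage from ``convex and Lipschitz'' to ``supremum of uniformly Lipschitz affine functions'' and bookkeeping that every constant ($L_J$, $M_K$, $C_J$, and the Hessian bounds of the elementary terms) is visibly independent of $x$.
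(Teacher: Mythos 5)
Your proof is correct, but it takes a genuinely different route from the paper's. The paper mollifies $F$, assumes it smooth, and computes the full Hessian of $\tilde F$ via the chain rule; it then observes that the second-order part is exactly the congruence $D\Psi^{T}(D^2F)D\Psi$ of the (nonnegative, after adding $C_J(t^2+r^2)$) Hessian of $F$ by the Jacobian of $\Psi(s,R)=(t(s),R/\gamma(s))$, hence a $2\times 2$ PSD matrix — that is the content of the ``$a,c\ge 0$, $ac-b^2\ge 0$'' check — while all terms coming from first derivatives of $F$ times second derivatives of $\Psi$ and from $n\log\gamma$, $-(\gamma'/\gamma)R$ are $O(1)$ on a compact. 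You sidestep the mollification entirely: you split $F$ as $H_x-C_J(t^2+r^2)$ with $H_x$ convex and uniformly Lipschitz, write $H_x$ as a supremum of affine functions with uniformly bounded gradient, push each affine piece through the smooth change of variables $\Psi$ (where the Hessian of $\ell\circ\Psi$ is controlled by $\|\nabla\ell\|\,\|D^2\Psi\|$ because the affine piece has no second-order term), and use that a supremum of $\kappa$-semi-convex functions is $\kappa$-semi-convex. Both arguments land on the same conclusion with comparable effort; the paper's gives slightly more explicit constants and sticks to calculus, while yours avoids having to justify that regularization preserves the semi-convexity and Lipschitz constants, is purely ``pointwise-free'' (no second derivatives of $F$ ever appear), and would extend unchanged to any smooth reparametrization $\Psi$ rather than just the specific scaling used here. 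The one thing worth double-checking in your write-up is the uniformity-in-$x$ of the Lipschitz constant $L_J$ — you do note it follows from \eqref{eq:FLip} plus the smoothness of $C_J(t^2+r^2)$, and that is exactly right.
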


\begin{proof}
Fix $0<S_0<S$, $T_0=t(S_0)$, and a compact interval $J \Subset \mathbb{R}$. We want to prove that $(s,R) \mapsto \tilde{F}(s,R)$ is semi-convex in $[0,S_0] \times J$.  We omit in the sequel the dependence on $x$ as it is not affected by  the transformation.

We can assume that $F$ is smooth and proceed by approximation.
The goal is to prove that the Hessian matrix $H(s,R)$ of $(s,R) \mapsto \tilde{F}(s,R)$ satisfies 
$$
H(s,R) + C I_2 \geq 0,
$$
where $I_2$ is the identity matrix in $M_2(\mathbb{R})$, and the constant $C$ is under control. 
Increasing   $C$ we can also assume that $F$ is convex in $[0,S_0] \times J$. 
Recall that $F$ is Lipschitz on $[0,T_0] \times J$ and $s\mapsto \gamma(s)>0$ is smooth. Using this we can write 
\begin{flalign*}
\frac{\partial^2 \tilde{F}}{\partial s^2} &=  \frac{\partial^2 F}{\partial t^2 } \left ( \frac{\partial t}{\partial s} \right )^2  + 2 \frac{\partial^2 F}{\partial t \partial r} \frac{\partial r}{\partial s} \frac{\partial t}{\partial s}+  \frac{\partial^2 F}{\partial r^2 } \left ( \frac{\partial r}{\partial s} \right )^2 + O(1),\\
\frac{\partial^2 \tilde{F}}{\partial R^2} &= \frac{\partial^2 F}{\partial r^2 } \left ( \frac{\partial r}{\partial R} \right )^2  + O(1),\\
\frac{\partial^2 \tilde{F}}{\partial R \partial s} &=  \frac{\partial^2 F}{\partial r^2 } \frac{\partial r}{\partial R} \frac{\partial r}{\partial s}+  \frac{\partial^2 F}{\partial r \partial t} \frac{\partial r}{\partial R} \frac{\partial t}{\partial s} +  O(1). 
\end{flalign*}
It remains to check that 
$$
	\left[ {\begin{array}{cc}
   a & b \\
   b & c \\
  \end{array} } \right] \geq 0,
$$
where

\begin{flalign*}
a &=  \frac{\partial^2 F}{\partial t^2 } \left ( \frac{\partial t}{\partial s} \right )^2  + 2 \frac{\partial^2 F}{\partial t \partial r} \frac{\partial r}{\partial s} \frac{\partial t}{\partial s}+  \frac{\partial^2 F}{\partial r^2 } \left ( \frac{\partial r}{\partial s} \right )^2,\\
c &= \frac{\partial^2 F}{\partial r^2 } \left ( \frac{\partial r}{\partial R} \right )^2, \\
b &=  \frac{\partial^2 F}{\partial r^2 } \frac{\partial r}{\partial R} \frac{\partial r}{\partial s}+  \frac{\partial^2 F}{\partial r \partial t} \frac{\partial r}{\partial R} \frac{\partial t}{\partial s}. 
\end{flalign*}
The convexity of $F$ and a direct computation ensure that $a,c\geq 0$ and $ac-b^2\geq 0$.  
\end{proof}

We note, for later use, that such a transformation allows one to reduce to the case when 
$r \mapsto F(\cdot,\cdot,r)$ is increasing :

\begin{lemma}
	\label{lem quasi-increasing vs increasing}
	Assume that $r \mapsto F(\cdot,r)$ is quasi-increasing  with $\lambda_F >0$. 
	Consider $\gamma: s \in [0,S[ \mapsto 1-\lambda_F s \in [0,T[$, 
	where $S<\lambda_F^{-1}$ is defined by $\int_0^S (1-\lambda_F r)^{-1} dr=T$.
	The function $R \mapsto \tilde{F}(s,R)$ is increasing, 
	  $\forall s\in [0,S[$.
\end{lemma}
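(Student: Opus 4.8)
The plan is to unwind the definition of $\tilde F$ given in the scaling paragraph, specialise it to the particular choice $\gamma(s)=1-\lambda_F s$, and then read off the monotonicity in $R$ directly from the quasi-increasing hypothesis \eqref{eq:Fincreasing}. There is no real difficulty here: once the transformation is written out, the statement reduces to a one-line algebraic identity.

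First I would check that the transformation is legitimate. Since $S<\lambda_F^{-1}$, the function $\gamma(s)=1-\lambda_F s$ is positive on $[0,S[$; moreover $t(s)=\int_0^s\gamma(r)^{-1}\,dr$ is strictly increasing, with $t(0)=0$ and $t(s)\to T$ as $s\to S$ by the very definition of $S$, so $s\mapsto t(s)$ is an increasing bijection of $[0,S[$ onto $[0,T[$. Hence $\gamma$ and $t$ satisfy the requirements of the scaling construction and $\tilde F$ is well defined on $[0,S[\times X\times\R$.

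Next I would substitute $\gamma'(s)=-\lambda_F$, so that $\gamma'(s)/\gamma(s)=-\lambda_F/\gamma(s)$ and the transformed density becomes
$$
\tilde F(s,x,R)=F\!\left(t(s),x,\tfrac{R}{\gamma(s)}\right)+n\log\gamma(s)+\frac{\lambda_F}{\gamma(s)}\,R .
$$
Fix $s\in[0,S[$, $x\in X$ and reals $R_1<R_2$, and set $r_i:=R_i/\gamma(s)$, so that $r_1<r_2$ because $\gamma(s)>0$. Since $\tfrac{\lambda_F}{\gamma(s)}(R_2-R_1)=\lambda_F(r_2-r_1)$, the terms $n\log\gamma(s)$ cancel and
$$
\tilde F(s,x,R_2)-\tilde F(s,x,R_1)=\bigl[F(t(s),x,r_2)+\lambda_F r_2\bigr]-\bigl[F(t(s),x,r_1)+\lambda_F r_1\bigr].
$$
By \eqref{eq:Fincreasing} applied at $(t(s),x)\in[0,T[\times X$, the function $r\mapsto F(t(s),x,r)+\lambda_F r$ is increasing, so the right-hand side is nonnegative; as $R_1<R_2$ were arbitrary, $R\mapsto\tilde F(s,x,R)$ is increasing for every $s\in[0,S[$.

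The only point requiring any care — and the reason I would argue via monotonicity rather than by differentiating — is that $F$ is merely continuous (not necessarily $\mathcal C^1$) in $r$; but the substitution $R\mapsto r=R/\gamma(s)$ is order preserving as soon as $\gamma(s)>0$, so the quasi-monotonicity of $F$ in its last variable passes verbatim through the change of variable and no regularity is needed. In short, there is no genuine obstacle: the content of the lemma is exactly the cancellation displayed above.
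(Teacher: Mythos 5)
Your proof is correct and follows the same route as the paper: both substitute $\gamma(s)=1-\lambda_F s$ into the scaling formula for $\tilde F$ and then invoke the quasi-increasing hypothesis. The paper simply labels the final monotonicity check "straightforward," whereas you spell out the order-preserving substitution $R\mapsto R/\gamma(s)$ and the cancellation that reduces the claim to the monotonicity of $r\mapsto F(t,x,r)+\lambda_F r$ — a useful explicitation, but not a different argument.
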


\begin{proof}
	The function $\tilde{F}$ is given, for $(s,R) \in [0,S[ \times \mathbb{R}$, by
	$$
	\tilde{F}(s,R) = F(t(s), R/\gamma(s)) + n\log \gamma(s) + \frac{\lambda R}{ \gamma(s)}.
	$$ 
Using that $r\mapsto F(t,x,r)+ \lambda r$ is increasing, it is straightforward to check that $\tilde{F}$ is increasing in $R$. 
\end{proof}

 \subsection{Pluripotential sub/supersolutions}
 

 \subsubsection{Definitions}
 
 Our plan is to establish a  pluripotential parabolic comparison principle. The latter is easier to obtain under an extra regularity
assumption in the time variable, so we  introduce the following terminology for convenience :
 
 \begin{definition}
 	A parabolic potential $u\in \mathcal{P}(X_T,\omega)$ is called of class $\mathcal{C}^{1/0}$  
 	 if for every $t\in ]0,T[$ fixed,   $\partial_t u(t,x)$ exists and is continuous in $\Omega$. 
 \end{definition}

 \begin{definition}
 	A parabolic potential $\f \in \mathcal{P}(X_T) \cap L^{\infty}(X_T)$ is called a pluripotential subsolution 
 	of \eqref{eq: MAF as measures} if 
 	$$
 	(\omega_t +dd^c \varphi_t)^n \wedge dt  \geq e^{\dot{\varphi}_t + F(t,x,\varphi)}g(x) dV(x)\wedge dt
 	$$
 	holds in the sense of measures in $]0,T[ \times X$.  
 	
 	Similarly  a parabolic potential $\f\in \mathcal{P}(X_T) \cap L^{\infty}(X_T)$ is called a pluripotential supersolution 
 	of \eqref{eq: MAF as measures} if 
 	$$
 	(\omega_t +dd^c \varphi_t)^n \wedge dt \leq e^{\dot{\varphi}_t + F(t,x,\varphi)}g(x) dV(x)\wedge dt
 	$$
 	holds in the sense of measures in $]0,T[ \times X$.  
 \end{definition}
 
 In many cases one can interpret these notions by considering a family of inequalities on slices :

 \begin{lemma} \label{lem: sub super solution pointwise}
 	Fix $u\in \Pc(X_T) \cap L^{\infty}(X_T)$.
 	
 1) If $u$	is a pluripotential subsolution of \eqref{eq: MAF as measures} such that
 $\partial_t^{+} u$ exists and  is lower semi-continuous in $t\in ]0,T[$, then for all $t \in ]0,T[$,
 $$
 (\omega_t+dd^c u_t)^n \geq e^{\partial^+_t u + F} gdV
 \text{  in the sense of measures in } X.
 $$

 2) If $u$	is a pluripotential supersolution of \eqref{eq: MAF as measures} such that
 $\partial_t^{-} u$ exists and  is upper semi-continuous in $t\in ]0,T[$, then for all $t \in ]0,T[$, 
 $$
 (\omega_t+dd^c u_t)^n \leq e^{\partial^-_t u + F} gdV
  \text{ in the sense of measures in } X.
 $$
 \end{lemma}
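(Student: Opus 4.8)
The strategy is to disintegrate the $(2n+1)$-dimensional measure inequality over the time variable and then use the extra semicontinuity hypothesis on $\partial_t^{\pm}u$ to pass from an ``integrated in $t$'' inequality to a ``for every $t$'' inequality. I would treat only part 1); part 2) is symmetric (replace $u$ by $-u$ and reverse inequalities, or run the same argument verbatim with liminf replaced by limsup and $\partial_t^+$ by $\partial_t^-$).

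First I would fix a nonnegative continuous test function $\chi$ on $X$ and a nonnegative $\eta\in\mathcal C^0_c(]0,T[)$, and test the subsolution inequality in $X_T$ against $\eta(t)\chi(x)$. Using Definition~\ref{def:LHS} on the left and Lemma~\ref{lem: existence of time derivative} on the right, this reads
\[
\int_0^T \eta(t)\Big(\int_X \chi\,(\omega_t+dd^c u_t)^n\Big)\,dt
\;\geq\;
\int_0^T \eta(t)\Big(\int_X \chi\, e^{\dot u_t+F(t,\cdot,u_t)}g\,dV\Big)\,dt .
\]
Since this holds for all such $\eta\geq 0$, and since by Lemma~\ref{lem:weak-cont} the function $t\mapsto \int_X\chi\,(\omega_t+dd^c u_t)^n$ is continuous (hence measurable and locally bounded), we deduce that for \emph{almost every} $t\in ]0,T[$,
\[
\int_X \chi\,(\omega_t+dd^c u_t)^n \;\geq\; \int_X \chi\, e^{\dot u_t+F(t,\cdot,u_t)}g\,dV ,
\tag{$\ast$}
\]
where at a.e.\ $t$ one has $\dot u_t=\partial_t u(t,\cdot)=\partial_t^+u(t,\cdot)$ pointwise a.e.\ by Lemma~\ref{lem: existence of time derivative}. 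The remaining task is to upgrade ($\ast$) from a.e.\ $t$ to every $t$.

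For this I would fix an arbitrary $t_0\in ]0,T[$ and choose a sequence $t_j\to t_0$ along which ($\ast$) holds, with $t_j>t_0$ if one wishes (the a.e.\ set is dense). On the left, the Lipschitz-in-time regularity of $u\in\mathcal P(X_T,\omega)$ gives $u_{t_j}\to u_{t_0}$ uniformly on $X$, and the $\mathcal C^2$-regularity of $t\mapsto\omega_t$ together with the Bedford--Taylor continuity theorem gives $\int_X\chi\,(\omega_{t_j}+dd^c u_{t_j})^n\to\int_X\chi\,(\omega_{t_0}+dd^c u_{t_0})^n$ (this is exactly the content used in the proof of Lemma~\ref{lem:weak-cont}). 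On the right, $F$ is continuous and $u_{t_j}\to u_{t_0}$ uniformly, so $e^{F(t_j,\cdot,u_{t_j})}g\to e^{F(t_0,\cdot,u_{t_0})}g$ in $L^1(X)$ (dominated convergence, using $g\in L^p\subset L^1$ and the local boundedness of $F$ on the compact range of $u$); and the hypothesis that $\partial_t^+u(\cdot,x)$ is lower semicontinuous in $t$ gives $\liminf_j \dot u_{t_j}(x)=\liminf_j\partial_t^+u(t_j,x)\geq \partial_t^+u(t_0,x)$ for every $x$. By Fatou's lemma applied to $\chi\,e^{F}g\geq 0$,
\[
\liminf_{j\to\infty}\int_X \chi\, e^{\dot u_{t_j}+F(t_j,\cdot,u_{t_j})}g\,dV
\;\geq\;
\int_X \chi\, e^{\partial_t^+u(t_0,\cdot)+F(t_0,\cdot,u_{t_0})}g\,dV .
\]
Combining with the left-hand convergence yields $\int_X\chi\,(\omega_{t_0}+dd^c u_{t_0})^n\geq\int_X\chi\,e^{\partial_t^+u(t_0,\cdot)+F(t_0,\cdot,u_{t_0})}g\,dV$ for all $\chi\geq 0$ continuous, which is the asserted measure inequality on the slice $\{t_0\}\times X$.

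\textbf{Main obstacle.} The delicate point is the one-sided semicontinuity bookkeeping: ($\ast$) only holds a.e., the exponential $e^{\dot u_t}$ is merely $L^1$ in $x$ (its exponent is bounded below by $n\log t-C$ via Theorem~\ref{thm: apriori-est C1 general} but has no upper bound a priori), and one must make sure the Fatou step goes in the direction compatible with the $\geq$ in the subsolution definition --- which is precisely why the hypothesis is phrased as lower semicontinuity of $\partial_t^+u$ for subsolutions (and upper semicontinuity of $\partial_t^-u$ for supersolutions). One should also note that the limit inequality is a priori only at points $t_0$ where $\partial_t^+u(t_0,\cdot)$ is defined for a.e.\ $x$, which holds for every $t_0$ by the monotone-difference-quotient structure implicit in $u\in\mathcal P(X_T,\omega)$ (cf.\ Lemma~\ref{lem: existence of time derivative}); combined with $g>0$ a.e., the ``a.e.\ $x$'' versus ``everywhere'' distinction is harmless.
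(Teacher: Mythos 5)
Your proof is correct and follows essentially the same route as the paper's: first establish the slice inequality $(\ast)$ for almost every $t$, then upgrade to every $t$ by the Fatou argument exploiting the one-sided semicontinuity of $\partial_t^{\pm}u$ in $t$. The only difference is that the paper cites \cite[Proposition 3.2]{GLZparab1} for the a.e.\ $t$ step, whereas you derive it directly from the definitions via the test-function/Fubini argument; both then run the identical $t_j\to t_0$ Fatou argument.
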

 
 \begin{proof}
 	We will prove the result for subsolutions. The corresponding result for supersolutions follows similarly. 
 	 Assume that the right-derivative $\partial^+_t u$ exists for all $(t,x)\in X_T$ and  is lower semi-continuous in $t$ 
 	 for $x$ fixed. It follows from \cite[Proposition 3.2]{GLZparab1} that for almost every $t\in ]0,T[$,
 	\[
 	(\omega_t+dd^c u_t)^n \geq e^{\partial^+_t u + F} gdV,
 	\]
 	in the sense of measures on $X$. 
 	Any $t \in ]0,T[$ can be approximated by a sequence $(t_j)_{j\in \mathbb{N}}$ for which the inequality above holds. The limiting inequality follows from the lower semi-continuity of $\partial_t^+ u(t,x)$ in $t$ and Fatou's lemma.  
 \end{proof}

\subsubsection{Properties of supersolutions}

We use  properties of solutions to  complex  Monge-Amp\`ere equations to show that
parabolic supersolutions automatically have continuity properties.

\begin{pro}
	\label{pro: continuity of solution elliptic EGZ}
	Assume that  $\psi \in \mathcal P (X_T) \cap L^{\infty} (X_T)$ is a supersolution to \eqref{eq:CMAF}. Then $\psi$ is continuous in $]0,T[ \times \Omega$.
\end{pro}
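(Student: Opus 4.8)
The plan is to reduce the parabolic statement to an elliptic one, slice by slice. Fix $t_0 \in\, ]0,T[$ and a relatively compact time interval $J = [a,b] \Subset\, ]0,T[$ containing $t_0$ in its interior. Since $\psi$ is a pluripotential supersolution, the defining inequality $(\omega_t+dd^c\psi_t)^n\wedge dt \le e^{\dot\psi_t + F(t,x,\psi)}g\,dV\wedge dt$ holds as measures on $X_T$; by Lemma~\ref{lem: existence of time derivative} the time derivative $\dot\psi$ is in $L^\infty_{\loc}(X_T)$, so for a.e.\ $t\in J$ the slice inequality $(\omega_t+dd^c\psi_t)^n \le e^{\dot\psi_t+F(t,x,\psi_t)}g\,dV$ holds as measures on $X$. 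First I would record that the right-hand density belongs to $L^p(X,dV)$ with a locally uniform bound: $|\psi|$ is bounded, $F$ is continuous hence bounded on the compact range of $(t,x,\psi)$, and $\dot\psi$ is bounded on $J\times X$; so the $L^p$ norm of $e^{\dot\psi_t+F(t,x,\psi_t)}g$ is controlled uniformly in $t\in J$.

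Next I would invoke the elliptic theory of Kolodziej / Eguchi--Guedj--Zeriahi (\cite{Kol98,EGZ09}), together with the big-and-semipositive version (\cite[Proposition 2.6]{EGZ09} and the results quoted in Section~\ref{sec:apriori}): given $0\le h\in L^p(X,dV)$ and the form $\omega_t$ with $\theta\le\omega_t\le\Theta$, the unique normalized $\omega_t$-psh solution $v$ of $(\omega_t+dd^c v)^n = h\,dV$ (after matching masses) is continuous on $\Omega = \Amp(\theta)$, with a modulus of continuity that is locally uniform in the data. The comparison principle for the elliptic complex Monge--Amp\`ere operator then gives, for each fixed $t$, $\psi_t \le v_t + $ const, i.e.\ $\psi_t$ is dominated by a continuous $\omega_t$-psh function; more importantly, a standard Kolodziej-type argument shows that a bounded $\omega_t$-psh function whose Monge--Amp\`ere measure is \emph{bounded above} by an $L^p$ density is itself \emph{continuous} on $\Omega$ (continuity is one-sided-stable under such bounds: the obstruction to continuity is a jump down, which forces a Dirac-type mass that the $L^p$ bound excludes). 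I would phrase this as: for each $t$ in the full-measure set where the slice inequality holds, $x\mapsto\psi_t(x)$ is continuous on $\Omega$, with a modulus of continuity uniform for $t\in J$.

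Finally I would upgrade joint continuity on $]0,T[\times\Omega$ by combining the uniform-in-$t$ spatial modulus of continuity with the local uniform Lipschitz regularity of $\psi$ in $t$ (built into the definition of $\mathcal P(X_T,\omega)$): for $(t,x),(t',x')$ near $(t_0,x_0)$ with $t,t'$ in the good full-measure set,
\[
|\psi(t,x)-\psi(t',x')| \le |\psi(t,x)-\psi(t,x')| + |\psi(t,x')-\psi(t',x')| \le \varpi_J(|x-x'|) + \kappa_J |t-t'|,
\]
where $\varpi_J$ is the uniform spatial modulus; then a density argument in $t$ (using that $\psi$ is usc, or Proposition~\ref{pro: usc}, and Lipschitz in $t$) removes the restriction to the good set of times and gives genuine joint continuity on a neighborhood of $(t_0,x_0)$, hence on all of $]0,T[\times\Omega$.

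\textbf{Main obstacle.} The delicate point is the purely elliptic input: establishing that a \emph{bounded} $\omega_t$-psh function $u$ with $(\omega_t+dd^c u)^n \le h\,dV$, $h\in L^p$, is continuous on the ample locus $\Omega$, \emph{with a modulus of continuity uniform in $t\in J$}. One must be careful that this is a genuine supersolution statement (only an upper bound on the Monge--Amp\`ere mass), so one cannot just quote the continuity of the solution of an equation; the right tool is Kolodziej's capacity estimates, which bound the sublevel sets $\{u < \sup_\Omega u - \varepsilon\}$ in terms of the Monge--Amp\`ere capacity and thereby of $\|h\|_{L^p}$, yielding a quantitative modulus depending only on $\|h\|_{L^p}$, $\Theta$, $\theta$ and $\Omega$. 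Checking that these constants are indeed uniform as $t$ ranges over the compact interval $J$ — which reduces to the uniform bounds $\theta\le\omega_t\le\Theta$ and the uniform $L^p$ control of the slice densities obtained in the first step — is where I expect the real work to lie.
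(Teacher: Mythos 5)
Your proposal is correct in outline and reaches the same conclusion, but it is organized so that the hardest ingredient you flag — a \emph{quantitatively uniform} modulus of continuity for the slices $\psi_t$, uniform in $t\in J$ — is in fact unnecessary, and the paper avoids it with a small rearrangement of the steps.

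The paper first observes, exactly as you do, that the slice inequality $(\omega_t+dd^c\psi_t)^n\le e^{\dot\psi_t+F(t,\cdot,\psi_t)}g\,dV$ holds for a.e.\ $t$, and then bounds the exponent by a constant $M$ on $[T_0,T_1]$ so that $(\omega_t+dd^c\psi_t)^n\le e^{M}g\,dV$ a.e.\ in $t$. The key shortcut you miss is then to upgrade this from a.e.\ $t$ to \emph{every} $t\in[T_0,T_1]$ \emph{before} invoking any elliptic regularity: since $\psi$ is uniformly Lipschitz in $t$, the slices $\psi_t$ converge uniformly on $X$ as $t\to t_0$, and $\omega_t\to\omega_{t_0}$ smoothly, so Bedford--Taylor convergence (Lemma~\ref{lem:weak-cont}) gives weak convergence $(\omega_t+dd^c\psi_t)^n\to(\omega_{t_0}+dd^c\psi_{t_0})^n$; an upper bound by a fixed measure $e^Mg\,dV$ on a dense set of $t$'s therefore persists in the limit. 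With the slice inequality valid at \emph{every} $t$, the elliptic theory (the paper cites \cite[Theorem 12.23]{GZ16}) yields continuity of $\psi_t$ on $\Omega$ for \emph{every} $t\in[T_0,T_1]$ — only qualitatively, no uniform modulus needed. The final triangle inequality is then applied with the second term anchored at the \emph{target} time $t_0$:
$$
|\psi(s,x)-\psi(t_0,y)| \le \kappa|s-t_0| + |\psi(t_0,x)-\psi(t_0,y)|,
$$
and both terms tend to $0$ as $(s,x)\to(t_0,y)$, the first by the Lipschitz bound and the second by the (already established) continuity of $\psi_{t_0}$. Your version of the triangle inequality anchors the spatial term at a nearby \emph{variable} time $t$ (from the good set), which is what forces you to ask for a modulus $\varpi_J$ uniform in $t$ and then run a density argument. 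Both routes are sound — and the uniform modulus is indeed available through Ko\l odziej's capacity estimates with the uniform data $\theta\le\omega_t\le\Theta$ and uniformly bounded $L^p$ densities — but the paper's order of operations (upgrade to all $t$ first, then one qualitative elliptic continuity per slice, then the Lipschitz triangle inequality) removes the obstacle you identify as the ``real work'' entirely.
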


\begin{proof}
Fix $0 < T_0 < T_1 < T$. For almost every $t \in ]0,T[$ we have
$$
(\omega_t + dd^c \psi_t)^n \leq e^{\dot{\psi} (t,\cdot) + F (t,\cdot,\psi_t)} g d V
$$
in the weak sense on $X$.

Since $\psi$ is locally uniformly Lipschitz in $t$ and $F$ is bounded,   there exists  $M > 0$ 
such that $\dot{\psi} (t,\cdot) + F (t,\cdot,\psi_t) \leq M$ 
for almost any $t \in [T_0,T_1]$. Thus 
$$
(\omega_t + dd^c \psi_t)^n \leq e^{M} g d V,
$$
for almost every $t \in [T_0,T_1]$.
By (weak) continuity (in $t$)  of the LHS, it follows that this inequality  actually 
holds for any $t\in [T_0,T_1]$.

The elliptic theory (see e.g. \cite[Theorem 12.23]{GZ16})
implies that $\psi_t$ is continuous in $\Omega$ for any $t \in [T_0,T_1]$.
Since $\psi$ is uniformly Lipschitz in $[T_0,T_1]$ it follows that $\psi$ is continuous in $[T_0,T_1] \times \Omega$.
Indeed let $\kappa$ be the uniform Lipshitz constant of $\psi$ on $[T_0,T_1]$. Then for any $s, t \in [T_0,T_1]$ and $x, y \in \Omega$ we have
\begin{eqnarray*}
\vert \psi (s,x) - \psi (t,y)\vert & \leq & \vert \psi (s,x) - \psi (t,x)\vert + \vert \psi (t,x) - \psi (t,y)\vert \\
&\leq & \kappa \vert s - t\vert + \vert \psi (t,x) - \psi (t,y)\vert,
\end{eqnarray*}
which implies the continuity of $\psi$ in $[T_0,T_1] \times \Omega$.
\end{proof}

Supersolutions admit uniform bound from below :

\begin{pro}
 	\label{prop: estimate for super solutions}
 Assume that $\psi \in \Pc(X_T) \cap L^{\infty}_{\loc}(X_T)$ is a  pluripotential supersolution to \eqref{eq: MAF as measures}
 which is locally uniformly semi-concave in $t$. 
There exists   $C>0, t_0>0$ such that for all $(t,x) \in ]0,t_0] \times  X$,
 $$
 \psi(t,x) \geq (1-t)e^{-At} \psi_0(x) + C(t\log t -t).
 $$ 
 \end{pro}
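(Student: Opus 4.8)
The plan is to bound $\psi$ from below by an explicit sub-barrier built exactly as in Proposition~\ref{pr:sbarrier}, with the initial datum there replaced by the initial trace of $\psi$. To make sense of that trace: since $\psi$ is locally uniformly semi-concave in $t$, for fixed $x$ the map $t\mapsto\psi(t,x)$ agrees near $t=0$ with a concave function up to an $O(t^{2})$ error, so it is bounded above near $0$ and has a limit there in $[-\infty,+\infty[$; hence $\psi$ is uniformly bounded above on $]0,T'[\times X$ for each $T'<T$, and the upper semicontinuous regularization $\psi_0$ of $\limsup_{t\to0^{+}}\psi_t$ is well defined. If $\psi_0\equiv-\infty$ the asserted inequality is vacuous, so assume otherwise; then $\psi_0$ is $\omega_0$-psh (the $\psi_t$ are $\omega_t$-psh, uniformly bounded above, and $\omega_t\to\omega_0$), $\psi_t\to\psi_0$ in $L^{1}(X)$, and by Proposition~\ref{pro: usc} the extension of $\psi$ is usc on $[0,T'[\times X$. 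In the situations where this proposition is applied $\psi$ is moreover bounded, which I use below.

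Next, mimicking Proposition~\ref{pr:sbarrier} with $\psi_0$ in place of $\varphi_0$, I would set, for $0<t\le t_0:=\min\{1,T'\}$,
$$
u(t,x):=(1-t)e^{-At}\psi_0(x)+t\rho_1(x)+n(t\log t-t)-B\,\frac{e^{\lambda_F t}-1}{\lambda_F},
$$
where $B$ is the constant of Proposition~\ref{pr:sbarrier} computed with $\sup_X|\psi_0|$ in place of $\sup_X|\varphi_0|$. Using $\omega_t\ge e^{-At}\omega_0$, $\omega_t\ge\theta$ and $(\theta+dd^c\rho_1)^n=e^{c_1}g\,dV$ exactly as there, one checks that $u_t$ is $\omega_t$-psh, that $(\omega_t+dd^c u_t)^n\ge t^{n}e^{c_1}g\,dV$, and that $\dot u_t+F(t,x,u_t)\le n\log t+c_1$; hence $u$ is a bounded pluripotential subsolution of \eqref{eq: MAF as measures} on $]0,t_0[\times X$, whose slices decrease to $\psi_0$ as $t\to0^{+}$.

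The core step is to prove $\psi\ge u$ on $]0,t_0]\times X$. Since $\psi$ is semi-concave in $t$, Lemma~\ref{lem: sub super solution pointwise}(2) turns the supersolution property into the slicewise inequality $(\omega_t+dd^c\psi_t)^n\le e^{\partial_t^{-}\psi_t+F(t,x,\psi_t)}g\,dV$ on $X$ for every $t\in]0,t_0[$, with $\partial_t^{-}\psi$ upper semicontinuous. One then argues by contradiction as in the parabolic comparison principle: if $M:=\sup_{]0,t_0]\times X}(u-\psi)>0$, then — using the usc extensions and $u_t-\psi_t\to0$ pointwise as $t\to0^{+}$ — $M$ is attained at some $(t^{*},x^{*})$ with $t^{*}>0$; there $\partial_t u(t^{*},x^{*})=\partial_t^{-}\psi(t^{*},x^{*})$, and comparing the Monge--Amp\`ere masses of $u_{t^{*}}$ and $\psi_{t^{*}}$ over $\{u_{t^{*}}>\psi_{t^{*}}\}$ via the Bedford--Taylor comparison principle, together with the two (sub/super)solution inequalities and the quasi-increasing property of $F$, gives a contradiction. (Equivalently one may compare $\psi$ with the solution $\Phi$ of \eqref{eq: MAF as measures} with initial datum $\psi_0$ furnished by Theorem~\ref{thm:existence}, which satisfies the same lower bound by letting $j\to\infty$ in Proposition~\ref{pr:sbarrier} along the smooth approximants building $\Phi$, and then invoke the comparison principle to get $\Phi\le\psi$.) I expect this contact-point analysis for a merely pluripotential supersolution — in essence the parabolic comparison principle itself — to be the main obstacle, together with the care needed near $t=0$, where the semi-concavity constant of $\psi$ blows up; this is exactly where the semi-concavity hypothesis on $\psi$ is indispensable.

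Finally, granting $\psi\ge u$, I would absorb the auxiliary terms: $t\rho_1\ge-bt$ with $b:=-\inf_X\rho_1\ge0$, $\tfrac{e^{\lambda_F t}-1}{\lambda_F}\le e^{\lambda_F}t$ for $t\le1$, and $t\log t<0$ on $]0,1[$; so with $C:=n+1$ and $t_0$ further shrunk to $\min\{1,\tfrac{T'}{2},e^{\,1-b-Be^{\lambda_F}}\}$ one gets, for all $(t,x)\in\,]0,t_0]\times X$, that $\psi(t,x)\ge u(t,x)\ge(1-t)e^{-At}\psi_0(x)+C(t\log t-t)$, as claimed.
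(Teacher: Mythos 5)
Your construction of the sub-barrier $u$ is the right one (it is the barrier from Proposition~\ref{pr:sbarrier} with $\psi_0$ in place of $\varphi_0$), but the crucial step --- comparing $u$ with the merely pluripotential supersolution $\psi$ on $]0,t_0]\times X$ --- is not actually carried out, and the two ways you suggest to close it both break down.

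The direct contact-point argument fails because you cannot rule out the supremum of $u-\psi$ escaping to $t=0$: you assert $u_t-\psi_t\to 0$ pointwise as $t\to 0^{+}$, but $\psi_t\to\psi_0$ only in $L^1(X)$ (that is all the definition of a supersolution gives), whereas the contact-point analysis at a point $(t^*,x^*)$ with $t^*>0$ needs a genuine uniform/pointwise control near $t=0$. Moreover even at an interior contact point you would still be re-proving the comparison principle for a subsolution against a nonsmooth supersolution from scratch, and you rightly flag this as ``the main obstacle.'' Your parenthetical alternative --- compare $\psi$ with the constructed solution $\Phi$ via the comparison principle --- is \emph{circular}: the proof of Proposition~\ref{prop: weak parabolic comparison principle v1 improved}, and hence of Theorem~\ref{thm: weak comparison principle v2}, explicitly invokes Proposition~\ref{prop: estimate for super solutions} to check that the shifted supersolution has the right value at $t=0$. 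So that route cannot be used to prove the present statement.

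The paper avoids both problems by shifting time. For fixed $s\in]0,t_0]$ one sets
$u_s(t,x):=(1-t)e^{-At}\psi_s(x)+t\rho_1(x)+C(t\log t-t)$ and $v_s(t,x):=\psi(t+s,x)+2\kappa t s$. Both have the \emph{same} initial data $\psi_s$ at $t=0$, which, unlike $\psi_0$, is bounded and continuous on $\Omega$ (by Proposition~\ref{pro: continuity of solution elliptic EGZ}); $u_s$ is a subsolution, and $v_s$ is a supersolution because the added linear term $2\kappa ts$ absorbs the errors coming from the Lipschitz and quasi-increasing properties of $F$. One then applies the \emph{basic} comparison principle Proposition~\ref{thm: weak parabolic comparison principle v1} (whose hypotheses --- $v_s$ continuous on $[0,t_0]\times\Omega$, $u_s$ of class $\mathcal C^{1/0}$ in $t$ with continuous slices --- are now satisfied and which does \emph{not} depend on the present proposition), obtaining $u_s\le v_s$, and finally lets $s\to 0$. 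You should replace your contact-point/comparison step by this shifting argument, or at a minimum state precisely which comparison principle you invoke and verify that its hypotheses are met and that it is available without circularity.
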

 
Here $A$ is a positive constant such that 
$$
-A \omega_t \leq \dot{\omega}_t \leq A \omega_t, \ \text{for all}\ t\in ]0,T[.
$$
This   implies in particular that $\omega_{t+s} \geq e^{-At} \omega_s$, for all $t,s>0$ with $t+s<T$.

\begin{proof}
Set $M:=M_{\psi}:= \sup_{X_{T/2}} |\psi|$.  
The Lipschitz condition on $F$  ensures that there exists a constant $\kappa=\kappa_F$ such that, for all $t,t' \in [0,T/2]$, $x \in X$, $r \in [-M,M]$, 
$$
|F(t,x,r) -F(t',x,r)| \leq \kappa|t-t'|. 
$$
Set $t_0:= \min(1,T/4, 1/2\lambda)$.  As observed above, $\omega_{t+s} \geq e^{-At} \omega_s$ for all $s\in ]0,t_0/2]$ and $t\in ]0,t_0[$. 
	Fix $s\in ]0,t_0]$ and consider, for $t \in ]0,t_0]$,
	$$
	u_s(t,x) := (1-t)e^{-At} \psi_s(x) + t\rho + C (t\log t -t),
	$$
	$$
	v_s(t,x):= \psi(t+s,x) +2\kappa ts,
	$$
where $\rho$ is a $\theta$-psh function on $X$, normalized by $\sup_X \rho =0$, which solves $(\theta+dd^c \rho)^n = e^{c_1}gdV$ with a normalization constant $c_1$, and $C$ is a positive constant to be specified later. 

The existence and boundedness of $\rho$ follows from \cite{EGZ09}.  Observe  that $u_s$ is of class $\mathcal{C}^{1/0}$ in $t$ and for each $t\in ]0,t_0]$ fixed, $u_s(t,\cdot)$ is continuous in $\Omega$ (see Proposition \ref{pro: continuity of solution elliptic EGZ}). 

A direct computation shows that, for $t\in ]0,t_0]$ ,
\begin{flalign*}
	(\omega_{t+s} +dd^c u_s)^n & \geq   \left ((1-t) \omega_{t+s}  + t \omega_{t+s} +dd^c ((1-t) e^{-At} \psi_s) + t dd^c \rho \right )^n\\
	&\geq \left( (1-t)e^{-At} (\omega_s + dd^c \psi_s) + t(\theta+dd^c \rho) \right )^n\\
	& \geq  t^n g dV.  
\end{flalign*}
In the second line above we have used $\omega_{t+s} \geq e^{-At} \omega_s$ while in the last line we have used $\omega_s+dd^c \psi_s \geq 0$. Thus, since $\psi_s$ is uniformly bounded,  by choosing $C>0$ large enough (depending on $M_F$) we obtain
$$
(\omega_{t+s} +dd^c u_s) ^n \geq e^{\partial_t u_s(t,\cdot) + F(t,\cdot,u_s(t,\cdot))} gdV. 
$$
It is also clear from the definition that $u_s(t,\cdot)$ converge in $L^1(X,dV)$ to $u_s(0,\cdot)=\psi_s$ as $t\to 0^+$.  

On the other hand, since $\psi$ is a supersolution to \eqref{eq:CMAF}, by Lemma \ref{lem: sub super solution pointwise} we have 
\begin{eqnarray*}
(\omega_{t+s} +dd^c v_s)^n & \leq e^{\partial_t^- \psi(t+s,\cdot) + F(t+s,\cdot,\psi(t+s,\cdot))}gdV\\
& \leq e^{\partial_t^- v_s(t,\cdot) -2\kappa s + F(t+s,\cdot,\psi(t+s,\cdot))} gdV.
\end{eqnarray*}
The Lipschitz condition on $F$ ensures that, for all $t,s\in [0,t_0], x\in X$,
$$
F(t+s,x, \psi(t+s,x)) \leq F(t,x,\psi(t+s,x)) + \kappa s. 
$$
The quasi-increasing property of $F$ ensures that 
\begin{flalign*}
F(t,x,\psi(t+s,x))  & = F(t,x, v_s(t,x) -2\kappa ts)\\
&   \leq 	F(t,x,v_s(t,x)) + 2 \kappa \lambda ts. 
\end{flalign*}
Thus for $t\leq t_0 \leq 1/2\lambda$, 
$$
(\omega_{t+s} +dd^c v_s)^n \leq e^{\partial_t v_s(t,\cdot) + F(t,\cdot,v_s(t,\cdot))} gdV.
$$

It follows from Proposition \ref{pro: continuity of solution elliptic EGZ} that $v_s$ is continuous on $[0,t_0]\times \Omega$ and $v_s(0,\cdot)= \psi_s$. We can thus apply Proposition \ref{thm: weak parabolic comparison principle v1} below
 and obtain $u_s\leq v_s$ on $]0,t_0[ \times X$. 
 Letting $s\to 0$ we  obtain that  for all $(t,x) \in ]0,t_0] \times X$,
$$
(1-t) e^{-At} \psi_0(x) + t\rho + C(t\log t -t) \leq \psi(t,x).
$$
The result follows since $\rho$ is bounded. 
\end{proof}

\subsubsection{Regularization of subsolutions}

  We introduce a  regularization process for  subsolutions. 
Fix $0<T'<T$ and $\varepsilon_0>0$ such that $(1+\varepsilon_0)T'<T$. 
    It follows from \eqref{eq:omegatLip} that there exists  $A_1>0$ such that 
    for all $t\in ]0,T'[$ and $ s\in [1-\varepsilon_0, 1+\varepsilon_0]$,
 \begin{equation}
 	\label{eq: regularization omega t lip}
 	\omega_{t} \geq (1- A_1|s-1|) \omega_{ts}, 
 \end{equation}
 where $\varepsilon_0>0$ is a fixed small constant. For $|s-1|<\varepsilon_0$ we set
 \[
 \lambda_s: = \frac{|1-s|}{s}, \ \alpha_s := s(1-\lambda_s)(1-A_1|s-1|) \in ]0,1[. 
 \]
 Up to shrinking $\varepsilon_0$ we can also assume that 
 for all $|s-1|\leq \varepsilon_0$,
 \[
\gamma_s:=  \frac{\lambda_s}{1-\alpha_s} \geq \varepsilon_1,
 \]
 where $\varepsilon_1=(5+A_1)^{-1}>0$.
 
 We let $\rho \in {\rm PSH}(X,\theta)$, $\sup_X \rho=0$, be the unique bounded solution to 
 \[
(\varepsilon_1 \theta + dd^c \rho)^n = e^{c_1} gdV,
 \]
 for some normalization constant $c_1\in \mathbb{R}$ (see \cite{EGZ09}).

\begin{lemma}\label{lem: regularization of subsolutions}
	Assume that 
	$u\in \Pc(X_T)$ is a bounded pluripotential subsolution of \eqref{eq: MAF as measures}. Then there exists a uniform constant $C>0$, depending on $M_u:= \sup_{X_T} |u|$ and the data,  such that
	for every $s\in [1-\varepsilon_0,1+\varepsilon_0]$,  
	\[
(t,z) \mapsto 	v_s(t,z) := \frac{\alpha_s}{s} u(ts,x) + (1-\alpha_s) \rho(x) - C|s-1|t
	\]
	is a pluripotential subsolution of \eqref{eq: MAF as measures} in $X_{T'}$. 
\end{lemma}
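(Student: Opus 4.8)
The plan is to check the two defining properties of a pluripotential subsolution of \eqref{eq: MAF as measures} on $X_{T'}$ for the function $v_s$: that every slice $v_s(t,\cdot)$ is $\omega_t$-psh (with $v_s$ a bounded parabolic potential), and that $(\omega_t+dd^c v_s(t,\cdot))^n\wedge dt\ge e^{\dot v_s+F(t,\cdot,v_s)}g\,dV\wedge dt$ on $X_{T'}$. Everything hinges on the decomposition, valid for $t\in\,]0,T'[$ and $|s-1|\le\varepsilon_0$,
$$
\omega_t+dd^c v_s(t,\cdot)=\tfrac{\alpha_s}{s}\bigl(\omega_{ts}+dd^c u_{ts}\bigr)+(1-\alpha_s)\bigl(\varepsilon_1\theta+dd^c\rho\bigr)+R_s(t),\qquad R_s(t):=\omega_t-\tfrac{\alpha_s}{s}\omega_{ts}-(1-\alpha_s)\varepsilon_1\theta .
$$
First I would show $R_s(t)\ge0$ by writing $\omega_t=(1-\lambda_s)\omega_t+\lambda_s\omega_t$: by \eqref{eq: regularization omega t lip}, $(1-\lambda_s)\omega_t\ge(1-\lambda_s)(1-A_1|s-1|)\omega_{ts}=\tfrac{\alpha_s}{s}\omega_{ts}$, while $\theta\le\omega_t$ together with $\gamma_s=\tfrac{\lambda_s}{1-\alpha_s}\ge\varepsilon_1$ yields $\lambda_s\omega_t\ge\lambda_s\theta\ge(1-\alpha_s)\varepsilon_1\theta$. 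Since $\omega_{ts}+dd^c u_{ts}\ge0$ and $\varepsilon_1\theta+dd^c\rho\ge0$, the slices of $v_s$ are $\omega_t$-psh; boundedness of $v_s$ is immediate from $M_u=\sup_{X_T}|u|$, the boundedness of $\rho$ and $|s-1|\le\varepsilon_0$, and local uniform Lipschitz continuity in $t$ follows from that of $u$ (time rescaled by $s$), so $v_s\in\mathcal P(X_{T'})$.

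For the measure inequality I would argue slicewise and then integrate in $t$. Disintegrating the subsolution property of $u$ and using the change of variable $\tau=ts$ gives, for a.e. $t$, the bound $(\omega_{ts}+dd^c u_{ts})^n\ge e^{\dot u(ts,\cdot)+F(ts,\cdot,u_{ts})}g\,dV$ (where $\dot u(ts,\cdot)=(\partial_\tau u)(ts,\cdot)$, so that $\dot v_s=\alpha_s\dot u(ts,\cdot)-C|s-1|$ a.e.). Feeding the decomposition into the mixed Monge--Amp\`ere / Brunn--Minkowski-type inequality for bounded $\omega$-psh functions (see \cite{GLZparab1}), and using $(\varepsilon_1\theta+dd^c\rho)^n=e^{c_1}g\,dV$, the density of $(\omega_t+dd^c v_s(t,\cdot))^n$ relative to $g\,dV$ is at least $\bigl(\tfrac{\alpha_s}{s}e^{(\dot u(ts)+F_1)/n}+(1-\alpha_s)e^{c_1/n}\bigr)^n$ with $F_1:=F(ts,x,u(ts,x))$. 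Hence the whole statement reduces to the scalar inequality
$$
\Bigl(\tfrac{\alpha_s}{s}e^{(d+F_1)/n}+(1-\alpha_s)e^{c_1/n}\Bigr)^n\ \ge\ e^{\alpha_s d-C|s-1|+F_2},\qquad d\in\mathbb R,
$$
where $F_2:=F(t,x,v_s(t,x))$. Minimising $G(d):=n\log\bigl(\tfrac{\alpha_s}{s}e^{(d+F_1)/n}+(1-\alpha_s)e^{c_1/n}\bigr)-\alpha_s d$ over $d$, a direct computation locates the minimiser at $d^{\ast}=n\log s+c_1-F_1$ with $G(d^{\ast})=c_1(1-\alpha_s)-n\alpha_s\log s+\alpha_s F_1$, so it suffices to pick $C$ with $C|s-1|\ge F_2-\alpha_s F_1+n\alpha_s\log s-c_1(1-\alpha_s)$.

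The main obstacle is to make this choice of $C$ uniform over $s\in[1-\varepsilon_0,1+\varepsilon_0]$, with $C$ depending only on $M_u$, on $\|\rho\|_{L^\infty}$ and on the data ($A_1,\varepsilon_1,n,c_1,\lambda_F,T'$ and the local Lipschitz modulus of $F$), and in particular not on the Lipschitz constant of $u$ in time (which may blow up as $t\to0$). The key observation is that $F_1$ and $F_2$ are both evaluated at the same time $ts$ once one writes $v_s(t,x)=\widetilde v_s(t,x)-C|s-1|t$ with $\widetilde v_s:=\tfrac{\alpha_s}{s}u(ts,\cdot)+(1-\alpha_s)\rho$: then $|\widetilde v_s(t,x)-u(ts,x)|\le|\tfrac{\alpha_s}{s}-1|\,M_u+(1-\alpha_s)\|\rho\|_{L^\infty}=O(|s-1|)$ and $|t-ts|=O(|s-1|)$, so the Lipschitz bound for $F$ on the \emph{fixed} range $[0,T']\times X\times[-M_u-\|\rho\|_{L^\infty},\,(1+\varepsilon_0)M_u+\|\rho\|_{L^\infty}]$ controls $|F(t,x,\widetilde v_s)-\alpha_s F_1|$ by a constant times $|s-1|$; the passage from $\widetilde v_s$ to the smaller $v_s$ costs at most $\lambda_F C|s-1|t$ by the quasi-increasing property of $F$, and $n\alpha_s\log s$, $c_1(1-\alpha_s)$ are again $O(|s-1|)$. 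One then chooses $C$ large enough to absorb these terms (the term $\lambda_F CT'|s-1|$ being harmless once $\lambda_F T'<1$, and otherwise after the reduction to $r\mapsto F$ increasing of Lemma \ref{lem quasi-increasing vs increasing} or after subdividing $[0,T']$); at $s=1$ one has $\alpha_1=1$ and $v_1=u$, so there is nothing to prove. A secondary technical point is that $(\omega_{ts}+dd^c u_{ts})^n$ may carry a singular part, but this enters only through a lower bound and is harmless for the Brunn--Minkowski estimate.
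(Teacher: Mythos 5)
Your argument is correct and rests on the same structural idea as the paper's: decompose $\omega_t + dd^c v_s(t,\cdot)$ into a combination of the rescaled subsolution current $\omega_{ts}+dd^c u_{ts}$ and the reference current $\varepsilon_1\theta+dd^c\rho$, both of which carry known lower bounds against $g\,dV$. The difference is purely in how the Monge--Amp\`ere estimate is closed. The paper keeps the weights normalized, writing
$\omega_t + dd^c v_s = \alpha_s\bigl(\beta_s\omega_t + s^{-1}dd^c u_{ts}\bigr) + (1-\alpha_s)\bigl(\gamma_s\omega_t + dd^c\rho\bigr)$
with $\beta_s = (1-\lambda_s)/\alpha_s$, and applies Lemma~\ref{lem: MA mixed} once to obtain the exponent $\alpha_s\bigl(-n\log s + \dot u(ts) + F\bigr) + (1-\alpha_s)c_1$ directly. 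You instead keep the weights $\tfrac{\alpha_s}{s}$ and $1-\alpha_s$ (not summing to $1$), peel off the nonnegative remainder $R_s(t)$, invoke a Brunn--Minkowski/Minkowski-determinant inequality for bounded $\omega$-psh currents, and then optimise the resulting scalar expression in $d=\dot u(ts)$. Your minimisation (minimiser $d^\ast = n\log s + c_1 - F_1$, value $c_1(1-\alpha_s)-n\alpha_s\log s +\alpha_s F_1$) is precisely the weighted AM--GM step hidden inside Lemma~\ref{lem: MA mixed}, so the two decompositions are algebraically equivalent and your route is an elementary unpacking of the same lemma. Two of your remarks are genuine improvements in bookkeeping over what the paper states explicitly: you keep the time arguments honest ($F_1=F(ts,\cdot,u_{ts})$ comes from disintegrating the subsolution inequality at $\tau=ts$ and the passage to $F(t,\cdot,\cdot)$ is handled by the Lipschitz bound), and you flag the circularity in the choice of $C$ coming from the $\lambda_F C|s-1|t$ term (and the enlargement of the $r$-range), proposing the reduction to $F$ increasing via Lemma~\ref{lem quasi-increasing vs increasing} or a subdivision of $[0,T']$; the paper compresses this into ``up to enlarging $C$'' without addressing the self-reference.
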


 \begin{proof}
 For notational convenience we set 
 	\[
 	\beta_{s}: = \frac{1-\lambda_s}{\alpha_s} = 1+ O(|s-1|). 
 	\]
 	Since $u$ is a pluripotential subsolution of \eqref{eq: MAF as measures}, using \eqref{eq: regularization omega t lip} we  can write 
 	\begin{flalign*}
 		(\beta_s \omega_t + s^{-1}dd^c u(st,\cdot))^n & \geq s^{-n} (\omega_{ts}+dd^c u(st,\cdot))^n\\
 		&  \geq e^{-n\log s + \partial_{\tau} u(st,x) + F(ts, x, u(st,x))} g(x) dV.  
 	\end{flalign*}
 	 By the choice of $\rho$ we also have 
 	\begin{flalign*}
 		(\gamma_s \omega_t + dd^c \rho)^n \geq (\varepsilon_1 \theta+dd^c \rho)^n = e^{c_1} gdV. 
 	\end{flalign*}
 	Combining these with Lemma \ref{lem: MA mixed} below we arrive at 
 	\begin{flalign*}
 		(\omega_t +dd^c v_s(t,\cdot))^n &= \left [ (1-\lambda_s)\omega_t + \alpha_s dd^c u(st,\cdot) + \lambda_s \omega_t + (1-\alpha_s) dd^c \rho \right ]^n\\
 		& = \left [ \alpha_s(\beta_s\omega_t + s^{-1}dd^c u(st,\cdot) ) + (1-\alpha_s) (\gamma_s \omega_t +dd^c \rho) \right ]^n\\
 		&\geq e^{ \alpha_s\partial_{\tau} u(st,x) + \alpha_s F(t,x,u(st,x)) + (1-\alpha_s) c_1 - n\alpha_s \log s} g(x) dV. 
 	\end{flalign*}
 	Since $F(t,x,r)$ is uniformly bounded on $[0,T[ \times X \times [-M_u,M_u]$ and $\alpha_s-1=O(|s-1|)$, 
 	up to enlarging $C$  
 	we infer
 	\begin{flalign*}
 		(\omega_t +dd^c v_s(t,\cdot))^n & \geq e^{\partial_t v_s(t,x) + F(t,x,v_s(t,x))} g(x) dV. 
 	\end{flalign*}
 	This concludes the proof. 
 \end{proof}
 
 We have used the following mixed inequalities :
 
 \begin{lemma}\label{lem: MA mixed}
 	Let $\theta_1,\theta_2$ be two closed smooth semi-positive $(1,1)$-forms on $X$.
 	 Let $u_1\in {\rm PSH}(X,\theta_1)$, $u_2\in \PSH(X,\theta_2)$ be bounded and such that 
 	\[
 	(\theta_1+dd^c u_1)^n \geq e^{f_1} \mu, \ \text{and} \ (\theta_2+dd^c u_2)^n \geq e^{f_2} \mu,
 	\]
 	where $f_1,f_2$ are bounded measurable functions and $\mu=hdV \geq 0$.
 	Then, for every $\alpha\in ]0,1[$, 
 	\[
 	(\alpha( \theta_1+ dd^c u_1)  + (1-\alpha)(\theta_2 + dd^c u_2))^n \geq e^{\alpha f_1 + (1-\alpha)f_2} \mu.
 	\]
 \end{lemma}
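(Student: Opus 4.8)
The plan is to reduce everything to the pointwise mixed Monge-Amp\`ere inequality for positive $(1,1)$-currents with bounded potentials, which is a standard consequence of Bedford-Taylor theory. First I would set $S_i := \theta_i + dd^c u_i$, so that both $S_1$ and $S_2$ are closed positive currents with bounded local potentials, hence $S_1^n, S_2^n$ and all mixed products $S_1^k \wedge S_2^{n-k}$ are well-defined positive Radon measures by \cite{BT82}. The convex combination $S := \alpha S_1 + (1-\alpha) S_2$ is again positive with bounded potentials, and by multilinearity of the mixed Monge-Amp\`ere operator one expands
$$
S^n = \sum_{k=0}^{n} \binom{n}{k} \alpha^k (1-\alpha)^{n-k} S_1^k \wedge S_2^{n-k}.
$$

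The key step is then the mixed inequality $S_1^k \wedge S_2^{n-k} \geq e^{(k/n) f_1 + ((n-k)/n) f_2}\,\mu$ for each $k$. Since the statement is about measures, it suffices to prove it locally and, by the usual regularization (decreasing sequences of smooth psh functions) together with the continuity of the mixed Monge-Amp\`ere operator along monotone sequences, one may assume $u_1, u_2$ smooth. Then at each point the inequality becomes a statement about positive Hermitian matrices: writing $S_i = i\sum (a_i)_{j\bar\ell}\, dz_j \wedge d\bar z_\ell$ with $A_i \geq 0$, one needs $\det(A_1)^{k/n}\det(A_2)^{(n-k)/n} \leq D(A_1^{[k]}, A_2^{[n-k]})$ where the right side is the mixed discriminant (normalized so the pure terms give determinants and $\mu = h\,dV$ accounts for the common reference volume). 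This is exactly the Alexandrov-Fenchel / mixed-discriminant inequality, or alternatively follows from concavity of $\log\det$ on positive Hermitian matrices via the super-additivity of mixed discriminants; I would cite it as classical (e.g. the Brunn-Minkowski-type inequality for mixed Monge-Amp\`ere measures).

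Granting the termwise bound, I would finish by a weighted arithmetic-geometric mean argument: from the binomial expansion,
$$
S^n \geq \sum_{k=0}^n \binom{n}{k}\alpha^k(1-\alpha)^{n-k}\, e^{\frac{k}{n}f_1 + \frac{n-k}{n}f_2}\,\mu
\geq \left(\sum_{k=0}^n \binom{n}{k}\alpha^k(1-\alpha)^{n-k} e^{\frac{f_1-f_2}{n}\,k}\right) e^{f_2}\,\mu,
$$
and the bracketed sum is $\bigl(\alpha e^{(f_1-f_2)/n} + (1-\alpha)\bigr)^n \geq e^{(f_1-f_2)\alpha}$ by convexity of $x \mapsto x^n$ on $\R^+$ (or directly by Jensen applied to $t \mapsto e^{t}$ against the binomial weights), which rearranges to $S^n \geq e^{\alpha f_1 + (1-\alpha) f_2}\,\mu$.

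\textbf{Main obstacle.} The only genuine point requiring care is the termwise mixed inequality $S_1^k \wedge S_2^{n-k} \geq e^{(k f_1 + (n-k)f_2)/n}\mu$; everything else is bookkeeping. In the smooth case it is the mixed-discriminant inequality, but one must make sure the passage from smooth to bounded potentials is legitimate — here I would invoke the convergence of mixed Monge-Amp\`ere measures under decreasing sequences of bounded psh functions from \cite{BT82}, noting that the lower bounds $e^{f_i}\mu$ are preserved in the limit since $f_i$ are fixed bounded functions. Since $\mu = h\,dV$ with $h \geq 0$ is common to both hypotheses, no measure-theoretic pathology arises even where $h = 0$.
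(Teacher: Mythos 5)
Your overall strategy — expand $S^n = (\alpha S_1 + (1-\alpha)S_2)^n$ multilinearly via the binomial formula, bound each term $S_1^k \wedge S_2^{n-k}$ from below by $e^{(k f_1 + (n-k)f_2)/n}\mu$, then resum using the convexity of the exponential — is indeed the same structure the paper invokes (it points to \cite{GLZparab1} and says the proof uses the mixed Monge-Amp\`ere inequalities of Ko\l odziej \cite{Kol03} (see also Dinew \cite{Diw09}) together with the convexity of the exponential). The final binomial/Jensen step you wrote out is correct and is exactly the ``convexity of the exponential'' ingredient.

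However, the justification you give for the termwise mixed inequality has a genuine gap. You propose to regularize $u_1, u_2$ by decreasing sequences of smooth $\theta_i$-psh functions, apply the pointwise mixed-discriminant inequality, and pass to the limit, ``noting that the lower bounds $e^{f_i}\mu$ are preserved in the limit since $f_i$ are fixed bounded functions.'' This does not work: the hypotheses $(\theta_i+dd^c u_i)^n \geq e^{f_i}\mu$ are assumptions on the \emph{limit} potentials $u_i$, and they are \emph{not} inherited by the smooth approximants $u_i^{(j)}$. For a decreasing sequence $u_i^{(j)} \searrow u_i$ the measures $(\theta_i+dd^c u_i^{(j)})^n$ converge weakly to $(\theta_i+dd^c u_i)^n$, but nothing forces $(\theta_i+dd^c u_i^{(j)})^n \geq e^{f_i}\mu$ along the way — these can even vanish on open sets where the regularization is pluriharmonic. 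So you cannot apply the pointwise mixed-discriminant inequality to the approximants, and weak convergence of the mixed measures then gives you no lower bound in the limit. This is precisely the obstruction that makes the mixed Monge-Amp\`ere inequality for bounded psh functions a nontrivial theorem rather than a corollary of the smooth case: the correct statement, $S_1^k \wedge S_2^{n-k} \geq e^{k f_1/n + (n-k)f_2/n}\mu$ for bounded $u_i$ with the given hypotheses, is due to Ko\l odziej \cite{Kol03} and Dinew \cite{Diw09}, whose proofs use genuinely different devices (auxiliary Monge-Amp\`ere equations / pluripotential arguments) rather than direct regularization of $u_1,u_2$. If you simply cite that theorem in place of your regularization paragraph, the rest of your argument is sound and coincides with the paper's.
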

 
 \begin{proof}
 	The proof  is identical to that of \cite[Lemma 5.9]{GLZparab1} using the
 	convexity of the exponential
 	together with the mixed Monge-Amp\`ere inequalities due to  S. Ko{\l}odziej  \cite{Kol03} (see also \cite{Diw09}). 
 \end{proof}

 Let $\chi: \mathbb{R} \rightarrow [0,+\infty[$ be a smooth function with compact support in $[-1,1]$ such that $\int_{\mathbb{R}} \chi(s)ds=1$. For $\varepsilon>0$ we set $\chi_{\varepsilon}(s):= \varepsilon^{-1}\chi(s/\varepsilon)$.

 \begin{pro}
 	\label{thm: regularization of subsolutions}
 	Assume that 
 	$u\in \Pc(X_T)$ is a bounded pluripotential subsolution of \eqref{eq: MAF as measures}. 
 	Let  $v_s$ be defined as in Lemma \ref{lem: regularization of subsolutions}. 
 	
 	If $r \mapsto F(\cdot,\cdot,r)$ is convex then there exists a uniform constant $B>0$ such that, for $\varepsilon >0$ small enough, the function 
 	\[
 	u^{\varepsilon}(t,x) := \int_{\mathbb{R}} v_s(t,x) \chi_{\varepsilon}(s-1) ds  - B \varepsilon(t+1)
 	\]
 	is a pluripotential subsolution of \eqref{eq: MAF as measures}
 	which  	is ${\mathcal C}^{1/0}$ in $t$ and such that
 	$$
 	\sup_X [u^{\e}(0,x)-u_0(x)] \stackrel{\e \rightarrow 0}{\longrightarrow} 0.
 	$$
 	
 	When $r \mapsto F(\cdot,\cdot,r)$ is merely semi-convex, the same conclusion holds if we further  assume that 
 	\begin{equation}
 		\label{eq: Lips condition on u}
 		|\partial_t u(t,x)| \leq C/t, \ \forall (t,x)\in ]0,T'] \times X.
 	\end{equation}
  \end{pro}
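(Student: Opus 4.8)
The plan is to view $u^{\varepsilon}$ as a mollification, in the scaling parameter $s$, of the family of subsolutions $v_s$ furnished by Lemma~\ref{lem: regularization of subsolutions}, and to verify that the three required properties survive this averaging. For each fixed $t\in\,]0,T'[$, the map $x\mapsto u^{\varepsilon}(t,x)=\int_{\mathbb{R}}v_s(t,x)\chi_{\varepsilon}(s-1)\,ds-B\varepsilon(t+1)$ is an average of the $\omega_t$-psh functions $x\mapsto v_s(t,x)$ against the probability measure $\chi_{\varepsilon}(s-1)\,ds$, plus a function of $t$ alone; since $u$ and $\rho$ are bounded, $u^{\varepsilon}_t$ is a bounded $\omega_t$-psh function, and the uniform Lipschitz control in $t$ of the family $(v_s)$ passes to $u^{\varepsilon}$, so $u^{\varepsilon}\in\Pc(X_{T'})\cap L^{\infty}(X_{T'})$. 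After the substitution $\tau=st$ the $s$-average becomes a convolution in time, $u^{\varepsilon}(t,x)=\int\kappa_{\varepsilon}(t,\tau)u(\tau,x)\,d\tau+(\text{terms affine in }t)$, with $\kappa_{\varepsilon}$ smooth in $(t,\tau)$ and supported in $\tau\in[(1-\varepsilon)t,(1+\varepsilon)t]$; hence $\partial_t u^{\varepsilon}(t,x)$ exists for every $(t,x)$, equals $\int\partial_t v_s(t,x)\chi_{\varepsilon}(s-1)\,ds-B\varepsilon$, and one checks that it depends continuously on $x\in\Omega$ (it is obtained by integrating the bounded slices $u(\tau,\cdot)$ against a kernel smooth in $t$), so that $u^{\varepsilon}$ is $\mathcal{C}^{1/0}$ in $t$.

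The core step is the subsolution inequality. Disintegrating over $dt$ the measure inequality of Lemma~\ref{lem: regularization of subsolutions} and using Fubini in $(s,t)$, for $\ell$-a.e.\ $t$ one gets, for $\ell$-a.e.\ $s$ in the support of $\chi_{\varepsilon}(\cdot-1)$, the slicewise bound $(\omega_t+dd^c (v_s)_t)^n\ge e^{\partial_t v_s(t,\cdot)+F(t,\cdot,(v_s)_t)}g\,dV$ on $X$. Since $\omega_t+dd^c u^{\varepsilon}_t=\int(\omega_t+dd^c(v_s)_t)\chi_{\varepsilon}(s-1)\,ds$, the integral form of Lemma~\ref{lem: MA mixed} — obtained from the finite mixed inequalities by approximating the $s$-integral with Riemann sums and passing to the limit via Bedford--Taylor continuity, all the relevant forms being equal to $\omega_t$ — gives $(\omega_t+dd^c u^{\varepsilon}_t)^n\ge e^{\int(\partial_t v_s+F(t,\cdot,v_s))\chi_{\varepsilon}(s-1)\,ds}g\,dV$. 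Using $\int\partial_t v_s\,\chi_{\varepsilon}(s-1)\,ds=\partial_t u^{\varepsilon}+B\varepsilon$, the whole matter reduces to the scalar inequality
$$
B\varepsilon+\int_{\mathbb{R}}F(t,x,v_s(t,x))\chi_{\varepsilon}(s-1)\,ds\ \ge\ F\bigl(t,x,u^{\varepsilon}(t,x)\bigr)
$$
holding for a.e.\ $t$, which is exactly the measure-level subsolution inequality for $u^{\varepsilon}$. When $r\mapsto F$ is convex this follows from Jensen's inequality for $F(t,x,\cdot)$ together with the quasi-increasing property of $F$, which controls the cost of the shift by $-B\varepsilon(t+1)$ inside $F$, a suitably large $B$ absorbing the remaining $O(\varepsilon)$ error. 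When $F$ is merely semi-convex, I would apply Jensen instead to $r\mapsto F(t,x,r)+C_F r^2$; the Jensen defect is then $C_F$ times the variance of $s\mapsto v_s(t,x)$, and the hypothesis $|\partial_t u(t,x)|\le C/t$ forces ${\rm osc}_s\, v_s(t,x)=O(\varepsilon)$ (the parameter $s$ runs over an interval of length $O(\varepsilon)$ about $1$, and $u(ts,x)$ then varies by $\int_{(1-\varepsilon)t}^{(1+\varepsilon)t}|\partial_1 u|\,d\tau=O(\varepsilon)$ thanks to the $C/\tau$ bound), so the defect is $O(\varepsilon^2)$ and is again absorbed. This controlled-oscillation estimate is the only place where \eqref{eq: Lips condition on u} is genuinely used.

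Finally, at $t=0$ one has $v_s(0,x)=(\alpha_s/s)u_0(x)+(1-\alpha_s)\rho(x)$, so
$$
u^{\varepsilon}(0,x)-u_0(x)=\int_{\mathbb{R}}\Bigl[\bigl(\tfrac{\alpha_s}{s}-1\bigr)u_0(x)+(1-\alpha_s)\rho(x)\Bigr]\chi_{\varepsilon}(s-1)\,ds-B\varepsilon,
$$
and since $|\alpha_s/s-1|$ and $|1-\alpha_s|$ are $O(|s-1|)=O(\varepsilon)$ on the support while $u_0$ and $\rho$ are bounded, the right-hand side is $O(\varepsilon)$ uniformly in $x$, whence $\sup_X\bigl(u^{\varepsilon}(0,\cdot)-u_0\bigr)\to 0$. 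I expect the main obstacle to be the scalar inequality in the core step: making the Jensen/quasi-increasing estimate on the nonlinearity $F$ hold uniformly in $t$ all the way down to (though not including) $0$, which is precisely why one needs the oscillation bound, and hence \eqref{eq: Lips condition on u}, in the non-convex case.
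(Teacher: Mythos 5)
Your proposal follows the same broad architecture as the paper's proof: establish the averaged Monge--Amp\`ere inequality for $u^\varepsilon$, split into convex and semi-convex cases using Jensen for $F$ (resp.\ for $F+C_F r^2$), bound the Jensen defect via the uniform $s$-Lipschitz constant of $s\mapsto v_s$ (which is exactly where the hypothesis $|\partial_t u|\leq C/t$ enters), and control $u^\varepsilon(0,\cdot)-u_0$ directly. Where the paper invokes the ``Main Theorem'' of the companion article on weak subsolutions and the local model [Theorem 5.7, Step 3, GLZparab1], you sketch an integral version of Lemma~\ref{lem: MA mixed} via Riemann sums and Bedford--Taylor continuity; this is the morally right route, but it is the genuine technical content that the cited theorem packages, so it deserves a full argument rather than a one-line appeal. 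Your variance estimate for the Jensen defect is slightly sharper ($O(\varepsilon^2)$ rather than the paper's $O(\varepsilon)$), but this changes nothing since $B\varepsilon$ absorbs either.

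The one point where your reasoning is actually incorrect is the $\mathcal{C}^{1/0}$ claim. You justify continuity of $x\mapsto\partial_t u^\varepsilon(t,x)$ on $\Omega$ by saying it ``is obtained by integrating the bounded slices $u(\tau,\cdot)$ against a kernel smooth in $t$.'' But smoothness of the kernel in $t$ (or in $\tau$) gives you nothing in the $x$-direction: after the change of variables $\tau=st$ you get $\partial_t u^\varepsilon(t,x)=\int \partial_t K_\varepsilon(t,\tau)\,u(\tau,x)\,d\tau + (\text{lower order})$, and since $\partial_t K_\varepsilon$ changes sign, this is a difference of two averages of uniformly $\Theta$-psh functions, i.e.\ a difference of quasi-psh functions, which has no reason to be continuous when the slices $u(\tau,\cdot)$ are merely bounded $\omega_\tau$-psh. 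To actually get continuity of $\partial_t u^\varepsilon(t,\cdot)$ on $\Omega$ one needs some continuity input on $u$ itself (the applications in the paper supply exactly this: in Theorem~\ref{thm: weak comparison principle v2} the subsolution is assumed to have slices continuous on $\Omega$, and the auxiliary $\rho$ is continuous on $\Omega$ by the elliptic theory, so the averaged slices, and hence $\partial_t u^\varepsilon(t,\cdot)$, do become continuous). As written, your argument claims the $\mathcal{C}^{1/0}$ regularity for an arbitrary bounded subsolution, which does not follow from the mollification structure alone.
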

   
  \begin{proof}
  We first assume that $r \mapsto F(\cdot,\cdot,r)$ is uniformly convex and increasing. In this case we do not need the assumption \eqref{eq: Lips condition on u}. 
  Fix $\varepsilon_0$ as in Lemma \ref{lem: regularization of subsolutions}. 
  For each $|s|\leq \varepsilon_0$ the function $v_s(t,z)$ is a pluripotential subsolution to \eqref{eq: MAF as measures}.

  As in \cite[Theorem 5.7, Step 3]{GLZparab1}, we use \cite[Main Theorem]{Guedj_Lu_Zeriahi_2017subsolution} and Jensen's inequality to show that, for any $t \in ]0,T[$,
\begin{equation} \label{eq:appr-subsol}
(\omega_t + dd^c u^\e)^n \geq \exp \left(\partial_t u^\e (t,x) + B \e +  \int_\R  F (t,x,v_s (t,x)) \chi_\e (s-1) ds\right) g d V,
\end{equation}
in the weak sense on $X$.

If $F (t,x,\cdot)$ is convex for any $(t,x)$, then
$$
\int_\R  F (t,x,v_s (t,x)) \chi_\e (s-1) ds \geq F (t,x,u^\e + B \e) \geq F (t,x,u^\e),
$$
since $F$ is non decreasing in $r$. Pluging this inequality in (\ref{eq:appr-subsol}) we conclude that for any $B \geq 0$, $u^\e$ is a subsolution to the equation $(5.1)$. 

\smallskip

 If $F$ is merely semi-convex,   the function $r \longmapsto F (t,x,r) + \lambda r^2$ is convex for any $(t,x) \in X_T$, 
 for some constant $ \lambda > 0$. Thus  
\begin{equation} \label{eq:qconv}
 \int_\R  F (t,x,v_s (t,x)) \chi_\e (s-1) ds  \geq F (t,x, u^\e (t,x)) \chi_\e (s-1) ds + \lambda Q_\e (t,x),
\end{equation}
where
$$
Q_\e (t,x) :=  \int_\R  v_s (t,x)^2  \chi_\e (s-1) ds - \left(\int_R v_s (t,x)  \chi_\e (s-1) ds\right)^2.
$$

We claim that there is  $C_1 > 0$ such that  $\vert Q_\e (t,x) \vert  \leq C_1 \e$ for all $(t,x) \in [0,T] \times X$. Indeed the family of function $ s \longmapsto v_s$ is uniformly bounded in $[0,T] \times X$ by a constant $M > 0$. Hence for any $(t,x) \in [0,T] \times X$ and $\e > 0$ small enough, we have
$$
\vert Q_\e (t,x) \vert \leq 2 M  \int_\R  \vert v_s (t,x)) - v_s^\e (t,x)\vert \chi_\e (s-1) ds,
$$ 
where $v_s^\e (t,x)) := \int_R v_s (t,x)  \chi_\e (s-1) ds$.

Recall that  $v_s (t,x) := \frac{\alpha_s}{s} u (s\cdot t,x) + (1- \alpha_s) \rho (x) - C \vert s - 1\vert t$.  
The condition $(5.3)$ ensures that the function $\partial_s v_s $ is uniformly bounded in 
$ s\in [1-\e_0,1 +\e_0]$ and $(t,x) \in X_T$. 
Thus the family $ s \longmapsto v_s$ is uniformly $L$-Lipschitz in $ s \in [1-\e_0,1 + \e_0]$, 
which proves our claim with $C_1 := 2 M L$.

By (\ref{eq:qconv}) this implies that for any $(t,x) \in X_T$, 
$$
\int_\R  F (t,x,v_s (t,x)) \chi_\e (s-1) ds  \geq F (t,x, u^\e (t,x)) \chi_\e (s-1) ds  - C_1 \e.
$$
Pluging this inequality in (\ref{eq:appr-subsol}) and taking $B \geq C_1$ we see that $u^\e$ is a subsolution to the equation $(5.1)$.
Taking $B$ large enough we obtain furthermore that $u^\e (0,x) \leq u_0 (x)$ for all $x \in X$.

We let the reader adapt these arguments to the situation when $r \mapsto F(\cdot,\cdot,r)$ is merely quasi-increasing.
  \end{proof}

\section{Uniqueness} \label{sec: uniqueness}

We have shown in the previous section that the Cauchy problem for \eqref{eq:CMAF}
with bounded initial data $\f_0 \in \PSH(X,\omega_0)$
admits a pluripotential solution which is 
 locally uniformly  semi-concave in $t$. We now prove that there is only one such solution.

 \subsection{Comparison principle 1}
 
 Our goal in this section is to establish the following comparison principle :

 \begin{theorem}
	\label{thm: weak comparison principle v2}
	Fix   $\varphi,\psi \in \Pc(X_T,\omega)\cap L^{\infty}(X_T)$ and assume that
	\begin{itemize}
		\item [(a)]$\varphi$ is a pluripotential subsolution to \eqref{eq: MAF as measures};
		\item [(b)] $\psi$ is a pluripotential supersolution to \eqref{eq: MAF as measures};
		\item [(c)] $x \mapsto \varphi(\cdot,x)$ is continuous in $\Omega$ and
		$|\partial_t \f(t,x)| \leq C/t,$ \; $\forall (t,x)\in X_T$;		 
		 \item [(d)] $\psi$  is locally uniformly semi-concave in $t\in ]0,T[$;
		 \item [(e)] $\f_t \rightarrow \f_0$ and $\p_t \rightarrow \p_0$ in $L^1$ as $t \rightarrow 0$.
	\end{itemize}
	If $\varphi_0\leq \psi_0$ 
	then $\varphi\leq \psi$. 
\end{theorem}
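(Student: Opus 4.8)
The plan is to run a slice-by-slice maximum argument, after first cleaning up the hypotheses by three reductions.

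\textbf{Reductions.} Using the scaling invariance of the equation (Lemma~\ref{lem: invariance property} together with Lemma~\ref{lem quasi-increasing vs increasing}), and checking that (a)--(e) are preserved by this change of variables, we may assume $r\mapsto F(\cdot,\cdot,r)$ is increasing. Next, since $\varphi$ satisfies $|\partial_t\varphi(t,x)|\le C/t$ by (c), Proposition~\ref{thm: regularization of subsolutions} produces bounded $\mathcal{C}^{1/0}$ pluripotential subsolutions $u^{\varepsilon}\to\varphi$ in $L^1_{\loc}$ (hence, being uniformly bounded, in every $L^p_{\loc}$) with $\sup_X(u^{\varepsilon}(0,\cdot)-\varphi_0)\to 0$; subtracting the nonnegative constant $\sup_X(u^{\varepsilon}(0,\cdot)-\varphi_0)^{+}$ is harmless for subsolutions when $F$ is increasing, so it suffices to treat a $\mathcal{C}^{1/0}$ subsolution with initial datum $\le\varphi_0\le\psi_0$. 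Finally, fixing $\rho\in\PSH(X,\theta)$ bounded with $(\theta+dd^c\rho)^n=e^{c}g\,dV$ and $\sup_X\rho=0$, and setting $\varphi^{\delta}_t:=(1-\delta)\varphi_t+\delta\rho-\delta Mt-\delta(M+1)$ with $M$ a large uniform constant, the mixed Monge--Amp\`ere inequality (Lemma~\ref{lem: MA mixed}) together with $\omega_t\ge\theta$ shows that $\varphi^{\delta}$ is again a $\mathcal{C}^{1/0}$ pluripotential subsolution, continuous on each slice in $\Omega$, that $\varphi^{\delta}_0\le\psi_0-\delta$, and that $\varphi^{\delta}$ is a \emph{strict} subsolution, $(\omega_t+dd^c\varphi^{\delta}_t)^n\ge e^{\partial_t\varphi^{\delta}_t+F(t,x,\varphi^{\delta}_t)+\delta}\,g\,dV$. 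It is then enough to prove $\varphi^{\delta}\le\psi$ for each small $\delta>0$.

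\textbf{The maximum argument.} Put $h(t):=\sup_X(\varphi^{\delta}_t-\psi_t)$. Since bounded $\omega_t$-psh functions are determined by their restriction to the Zariski open set $\Omega$, this supremum equals $\sup_{\Omega}(\varphi^{\delta}_t-\psi_t)$; it is finite and $h$ is locally Lipschitz on $]0,T[$. First, $\limsup_{t\to 0^{+}}h(t)\le-\delta<0$: this follows by combining the lower barrier for supersolutions $\psi_t\ge(1-t)e^{-At}\psi_0+C(t\log t-t)$ (Proposition~\ref{prop: estimate for super solutions}) with the upper semicontinuity of $\varphi^{\delta}$ on $[0,T[\times X$ (Proposition~\ref{pro: usc}) and $\varphi^{\delta}_0\le\psi_0-\delta$. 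Suppose, for contradiction, that $h>0$ somewhere; since $h<0$ near $t=0$, there is a first time $t_0\in]0,T[$ with $h(t_0)=0$ and $h>0$ immediately afterwards. Now fix $t$ with $h(t)\ge 0$ for which the slicewise inequalities $(\omega_t+dd^c\varphi^{\delta}_t)^n\ge e^{\partial_t^{+}\varphi^{\delta}_t+F(t,\cdot,\varphi^{\delta}_t)+\delta}g\,dV$ and $(\omega_t+dd^c\psi_t)^n\le e^{\partial_t^{-}\psi_t+F(t,\cdot,\psi_t)}g\,dV$ hold (valid for a.e.\ $t$ by Lemma~\ref{lem: sub super solution pointwise} and the semi-concavity of $\psi$). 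For small $\eta>0$ the set $U=\{\psi_t<\varphi^{\delta}_t-h(t)+\eta\}$ is nonempty and open; the elliptic comparison principle gives $\int_U(\omega_t+dd^c\varphi^{\delta}_t)^n\le\int_U(\omega_t+dd^c\psi_t)^n$, and since on $U$ one has $\varphi^{\delta}_t>\psi_t-\eta$, the monotonicity and Lipschitz continuity of $F$ in $r$ yield
$$\int_U e^{\partial_t^{+}\varphi^{\delta}_t+\delta-\kappa_F\eta}\,e^{F(t,\cdot,\psi_t)}g\,dV\le\int_U e^{\partial_t^{-}\psi_t}\,e^{F(t,\cdot,\psi_t)}g\,dV.$$
As $e^{F(t,\cdot,\psi_t)}g\,dV$ has positive mass on $U$, there is a subset of positive measure of $U$ on which $\partial_t^{+}\varphi^{\delta}_t\le\partial_t^{-}\psi_t-\delta/2$ while still $\varphi^{\delta}_t-\psi_t>h(t)-\eta$. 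Using the $\mathcal{C}^{1/0}$-regularity of $\varphi^{\delta}$ (so $\partial_t\varphi^{\delta}(t,\cdot)$ exists and is continuous on $\Omega$) together with the semi-concavity of $\psi$ (so $\psi_t(x)-\psi_{t-s}(x)\ge s\,\partial_t^{-}\psi_t(x)-Cs^2$, and $x\mapsto\partial_t^{-}\psi_t(x)$ is upper semicontinuous on $\Omega$ as an infimum of continuous functions), one lets $\eta\downarrow 0$ to produce a maximizer $x_0$ of $\varphi^{\delta}_t-\psi_t$ with $\partial_t\varphi^{\delta}(t,x_0)\le\partial_t^{-}\psi_t(x_0)-\delta/2$; comparing $h(t)$ with $h(t-s)$ along this point shows $\partial_t^{-}h(t)\le-\delta/2<0$ at a.e.\ $t$ with $h(t)\ge 0$. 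This contradicts $h(t_0)=0$ with $h>0$ just after $t_0$. Hence $h\le 0$, i.e.\ $\varphi^{\delta}\le\psi$; letting $\delta\to 0$ and undoing the two earlier reductions gives $\varphi\le\psi$. For the last assertion, apply the comparison to any two bounded pluripotential solutions that are locally uniformly semi-concave in $t$; the solution built in Theorem~\ref{thm:existence} satisfies both (c) and (d), which supplies the required hypotheses on either side.

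\textbf{Main obstacle.} Two points carry the weight. The first is the boundary analysis at $t=0$: upgrading the mere $L^1$-convergence of the slices in (e) into the one-sided statement $\limsup_{t\to 0}\sup_X(\varphi^{\delta}_t-\psi_t)\le\sup_X(\varphi^{\delta}_0-\psi_0)$, which needs both the upper semicontinuity of parabolic potentials (Proposition~\ref{pro: usc}) and the one-sided barrier for supersolutions (Proposition~\ref{prop: estimate for super solutions}). The second, and the genuinely delicate one, is converting the \emph{integrated} inequality on a slice produced by the elliptic comparison principle into a pointwise one-sided time-derivative estimate at (a limit of) slice maximizers — matching the positive-measure set on which $\partial_t^{+}\varphi^{\delta}_t\le\partial_t^{-}\psi_t-\delta/2$ with the set where $\varphi^{\delta}_t-\psi_t$ is (nearly) maximal, and then differentiating $h$ there. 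This is exactly where hypothesis (c) (spatial continuity plus the $C/t$-bound, used first to regularize and then to make sense of $\partial_t\varphi^{\delta}$ at maximizers) and hypothesis (d) (semi-concavity of $\psi$ in $t$, controlling the increments of $\psi$ from below by its one-sided derivative, and forcing semicontinuity of $x\mapsto\partial_t^{-}\psi_t(x)$) become indispensable. The scheme mirrors that of the local comparison principle in \cite{GLZparab1}.
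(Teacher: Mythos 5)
Your strategy—regularize the subsolution, make $F$ increasing via the scaling invariance, pass to a strict subsolution $\varphi^{\delta}$, then run a slice-by-slice differential inequality for $h(t)=\sup_X(\varphi^{\delta}_t-\psi_t)$—tracks the spirit of the paper's Proposition~\ref{thm: weak parabolic comparison principle v1} and the regularization step in the proof of Theorem~\ref{thm: weak comparison principle v2}, but two places where you assert the argument goes through are genuine gaps, and they are exactly the points the paper devotes separate machinery to.

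First, the maximizer is not localized inside $\Omega$. Your strict-subsolution perturbation replaces $\varphi$ by $(1-\delta)\varphi+\delta\rho-\delta Mt-\delta(M+1)$ with $\rho$ \emph{bounded} (it solves a Monge--Amp\`ere equation, as does $\phi_2$ in the paper). This makes the subsolution strict, but it does nothing to prevent the sup of $\varphi^{\delta}_t-\psi_t$ from being approached only along a sequence $x_\eta\to\partial\Omega$. When you ``let $\eta\downarrow 0$ to produce a maximizer $x_0$,'' the limit $x_0$ may lie in $X\setminus\Omega$, where neither the continuity of $\varphi^{\delta}_t$, $\psi_t$, $\partial_t\varphi^{\delta}(t,\cdot)$ nor the upper semicontinuity of $\partial_t^{-}\psi_t$ is available, so you can neither conclude that $x_0$ is a maximizer nor pass the derivative-gap inequality to the limit. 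In the paper this is precisely why $\phi_1$, a $\theta/2$-psh function with analytic singularities (hence $\phi_1\to-\infty$ on $\partial\Omega$), is added to $\phi_2$ before running the maximum argument: the resulting auxiliary function forces the maximum of $w$ to sit in a compact subset of $\Omega$. Your $\delta\rho$ plays the role of $\phi_2$ only; the $\phi_1$-type term is missing and nothing in your argument replaces it.

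Second, the boundary estimate $\limsup_{t\to0^{+}}h(t)\leq-\delta$ does not follow from the upper semicontinuity of $\varphi^{\delta}$ (Proposition~\ref{pro: usc}) together with the supersolution barrier (Proposition~\ref{prop: estimate for super solutions}). Along a sequence $(t_j,x_j)\to(0,x_0)$ nearly realizing $h(t_j)$, the barrier gives $\psi(t_j,x_j)\geq(1-t_j)e^{-At_j}\psi_0(x_j)+o(1)$, and you need $\liminf_j\psi_0(x_j)\geq\psi_0(x_0)$ to close the estimate; but $\psi_0$ is only upper semicontinuous, which gives the opposite inequality. The same obstruction is what makes hypothesis~(d) in Proposition~\ref{thm: weak parabolic comparison principle v1} (continuity of $\psi$ on $[0,T[\times\Omega$, i.e.\ including $t=0$) necessary there, and the paper removes it not by a direct usc/barrier argument but by the translation-in-time device of Proposition~\ref{prop: weak parabolic comparison principle v1 improved}: replacing $\psi$ by $\psi(t+s,\cdot)$ (plus controlled corrections) shifts the boundary slice to $\psi_s$, which for $s>0$ is continuous on $\Omega$. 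Your proof would need to incorporate either this shift or some other mechanism to upgrade the $L^1$ initial convergence to the one-sided uniform control you assert.

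In short: the reductions and the slice differential-inequality scheme are sound in outline, and the elliptic comparison on each slice is used correctly; but without the analytic-singularity auxiliary function the maximizer argument is not valid on a non-K\"ahler class, and without a time-translation trick the boundary limit at $t=0$ is unjustified. Patching both would essentially reconstruct the content of Propositions~\ref{thm: weak parabolic comparison principle v1} and~\ref{prop: weak parabolic comparison principle v1 improved}.
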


 We first establish this result under extra assumptions :
 
  \begin{pro}\label{thm: weak parabolic comparison principle v1}
 	Fix $\varphi,\psi\in \mathcal{P}(X_T)\cap L^{\infty}(X_T)$. 
 	Assume that $\varphi$ (resp. $\psi$) is a pluripotential subsolution (resp. supersolution) 
 	to \eqref{eq: MAF as measures} such that
 	\begin{itemize}
 		\item[(a)] $\varphi$ is  $\mathcal{C}^{1/0}$ in $t$ 
 		and for any $t>0$, $\varphi(t,\cdot)$ is continuous on $\Omega$;
 		\item[(b)] $\psi$ is locally uniformly semi-concave in $t$;
 		 \item[(c)]  $\f_t \rightarrow \f_0$ and $\p_t \rightarrow \p_0$ in $L^1$ as $t \rightarrow 0$;
 		\item[(d)]  the function $(t,x) \mapsto \psi(t,x)$ is continuous on $[0,T[ \times \Omega$.
 	\end{itemize}
 Then 
 	\[
 	\varphi_0 \leq \psi_0 \Longrightarrow \varphi \leq \psi \ \text{on} \ X_T. 
 	\]
 \end{pro}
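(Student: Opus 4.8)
\emph{Overall approach.} The plan is to run a parabolic maximum principle for the difference $\varphi-\psi$, using the comparison principle for the complex Monge--Amp\`ere operator to control the elliptic part on each time slice and the regularity in time (continuity of $\partial_t\varphi$ from hypothesis~(a), semi-concavity of $\psi$ from hypothesis~(b)) to control the parabolic part. Write $\mu=g\,dV$ and fix $T'\in\,]0,T[$; it suffices to prove $\varphi\le\psi$ on $X_{T'}$ for every such $T'$. \emph{Step 1 (reductions).} Using the scaling invariance of \S\ref{sec:invariance} (Lemma~\ref{lem quasi-increasing vs increasing}), which preserves all of hypotheses (a)--(d) and the relation $\varphi_0\le\psi_0$, I first reduce to the case where $r\mapsto F(t,x,r)$ is increasing. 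Next I concentrate the problem inside the ample locus: adding to $\varphi$ a small multiple $\eta\tau$ of a $\theta$-plurisubharmonic function $\tau$ with analytic singularities exactly along $X\setminus\Omega$, normalised so that $\tau$ is very negative, together with a correction $-C\eta(t+1)$ in which $C$ is a fixed constant under control, Lemma~\ref{lem: MA mixed} shows that $\varphi^\eta:=(1-\eta)\varphi+\eta\tau-C\eta(t+1)$ is again a bounded pluripotential subsolution of \eqref{eq: MAF as measures} on $X_{T'}$, is still $\mathcal{C}^{1/0}$ in $t$ and continuous on the slices over $\Omega$, still satisfies $\varphi^\eta_0\le\psi_0$, and in addition tends to $-\infty$ along $X\setminus\Omega$ uniformly for $t$ in compact subintervals. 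Since $\varphi^\eta\to\varphi$ on $\Omega$ as $\eta\to0$, and since $\varphi\le\psi$ on $\Omega$ implies $\varphi\le\psi$ on $X$ (by upper semi-continuity, Proposition~\ref{pro: usc}, together with the fact that $X\setminus\Omega$ is $\mu$-negligible), it suffices to prove the statement for $\varphi=\varphi^\eta$. So from now on $\varphi(t,\cdot)\to-\infty$ near $X\setminus\Omega$.

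\emph{Step 2 (pointwise inequalities and the function $g$).} By hypotheses (a), (b) and Lemma~\ref{lem: sub super solution pointwise}, for every $t\in\,]0,T[$ one has
$(\omega_t+dd^c\varphi_t)^n\ge e^{\partial_t\varphi_t+F(t,\cdot,\varphi_t)}\mu$ and
$(\omega_t+dd^c\psi_t)^n\le e^{\partial^-_t\psi_t+F(t,\cdot,\psi_t)}\mu$ on $X$;
in particular $(\omega_t+dd^c\psi_t)^n$ is absolutely continuous with respect to $\mu$. Set $g(t):=\sup_X(\varphi_t-\psi_t)$, which by Step~1 equals $\sup_\Omega(\varphi_t-\psi_t)$; it is locally Lipschitz on $]0,T'[$ because $\varphi,\psi$ are locally uniformly Lipschitz in $t$, and because $\varphi_t-\psi_t\to-\infty$ uniformly on $X\setminus\Omega$ there is a \emph{fixed} compact set $K\subset\Omega$ such that, for $t$ in any given compact subinterval of $]0,T']$, the supremum defining $g(t)$ is attained at a point of $K$. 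Using that $\varphi$ extends upper semi-continuously to $[0,T'[\times X$ with $\varphi(0,\cdot)=\varphi_0\le\psi_0$ (Proposition~\ref{pro: usc}), that $\psi$ is continuous on $[0,T'[\times\Omega$ (hypothesis~(d)), and the concentration in $K$, a routine upper semi-continuity argument on the compact set $[0,\delta]\times K$ gives $\limsup_{t\to0^+}g(t)\le0$ — this is where hypotheses (d)--(e) enter, and it is proved directly, without appealing to Proposition~\ref{prop: estimate for super solutions}.

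\emph{Step 3 (contradiction — the heart of the proof).} Assume $g(t_1)>0$ for some $t_1$. Combining $\limsup_{t\to0^+}g(t)\le0<g(t_1)$ with the continuity of $g$, one finds $t_2\in\,]0,T'[$ with $g(t_2)>0$ at which $g$ is differentiable with $g'(t_2)>0$. Choose $x_2\in K\subset\Omega$ realising $g(t_2)=(\varphi_{t_2}-\psi_{t_2})(x_2)$; then $x\mapsto\varphi_{t_2}(x)-\psi_{t_2}(x)$ attains its global maximum at $x_2$. The key point is to deduce
$$
\partial_t\varphi(t_2,x_2)\le\partial^-_t\psi(t_2,x_2).
$$
If the contact set $S=\{\varphi_{t_2}-\psi_{t_2}=g(t_2)\}$ has positive $\mu$-measure, this follows at once from the locality of the Monge--Amp\`ere operator ($\mathbf 1_S(\omega_{t_2}+dd^c\varphi_{t_2})^n=\mathbf 1_S(\omega_{t_2}+dd^c\psi_{t_2})^n$), the two pointwise inequalities of Step~2, and the monotonicity of $F$. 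In general one applies the comparison principle for the complex Monge--Amp\`ere operator to the superlevel sets $V_k=\{\varphi_{t_2}-\psi_{t_2}>g(t_2)-1/k\}$ — which, by the concentration obtained in Step~1, shrink inside $\Omega$ towards the contact set — to get $\int_{V_k}e^{\partial_t\varphi_{t_2}+F(t_2,\cdot,\varphi_{t_2})}\,d\mu\le\int_{V_k}e^{\partial^-_t\psi_{t_2}+F(t_2,\cdot,\psi_{t_2})}\,d\mu$; dividing by $\mu(V_k)$ and letting $k\to\infty$, the continuity of $x\mapsto\partial_t\varphi(t_2,x)$ on $\Omega$ and the upper semi-continuity of $x\mapsto\partial^-_t\psi(t_2,x)$ on $\Omega$ (the latter because, $\psi$ being semi-concave in $t$ and continuous in $(t,x)$ on $]0,T'[\times\Omega$, $\partial^-_t\psi$ is an infimum of continuous functions) force the displayed inequality at $x_2$. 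Finally, since $g(t)\ge\varphi_t(x_2)-\psi_t(x_2)$ for all $t$ with equality at $t_2$, dividing by $t-t_2<0$ and letting $t\uparrow t_2$ yields
$$
g'(t_2)\le\partial_t\varphi(t_2,x_2)-\partial^-_t\psi(t_2,x_2)\le0,
$$
contradicting $g'(t_2)>0$. Hence $g\le0$ on $]0,T'[$, that is $\varphi\le\psi$; undoing the reductions of Step~1 completes the proof.

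\emph{Main obstacle.} The substantive difficulties are in Steps~1 and~3: first, concentrating the maximum of $\varphi_t-\psi_t$ inside the ample locus (so that the argument takes place where $\varphi$, $\psi$ and $\partial_t\varphi$ are all continuous) while keeping the subsolution property and the initial ordering; and second, passing from the integrated Monge--Amp\`ere comparison on the shrinking superlevel sets $V_k$ to the pointwise inequality $\partial_t\varphi(t_2,x_2)\le\partial^-_t\psi(t_2,x_2)$ at the maximum point — in particular dealing with a contact set that is $\mu$-negligible, where one must exploit the space-continuity of $\partial_t\varphi$ and the upper semi-continuity of $\partial^-_t\psi$ rather than the Monge--Amp\`ere locality.
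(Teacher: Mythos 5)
Your strategy is genuinely different from the paper's: the paper works directly with the function $w(t,x)=(1-\delta)\varphi+\delta\phi-\psi-3\varepsilon t$, applies the classical maximum principle at a maximum point $(t_0,x_0)$ to produce the time-derivative inequality, and then uses the domination principle for the Monge--Amp\`ere operator on a neighborhood $D$ of the contact set to derive a contradiction---no scalar quantity $g(t)$ is introduced. You instead reduce everything to the sup-function $g(t)=\sup_X(\varphi_t-\psi_t)$, run the elliptic comparison principle over shrinking superlevel sets at a time $t_2$ where $g$ is differentiable with $g'(t_2)>0$, and contradict this positivity. The two routes are structurally dual: the paper contradicts a spatial (domination) inequality, you contradict a temporal (derivative) one. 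Both are viable, and yours arguably isolates the 1-D calculus step more cleanly. That said, there are two gaps that need to be filled.

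First, in Step 1 the function $\tau$ cannot be an arbitrary $\theta$-psh function with analytic singularities along $X\setminus\Omega$. To apply Lemma~\ref{lem: MA mixed} and conclude that $\varphi^\eta=(1-\eta)\varphi+\eta\tau-C\eta(t+1)$ is again a pluripotential subsolution, you need a uniform lower bound of the form $(\theta+dd^c\tau)^n\ge e^{c}\,g\,dV$ with $c$ finite; otherwise the ``$f_2$'' in Lemma~\ref{lem: MA mixed} is $-\infty$ on an open set and the mixed Monge--Amp\`ere estimate degenerates. This is precisely why the paper works with $\phi=\phi_1+\phi_2$, where $\phi_1$ has the analytic singularities and $\phi_2$ is the bounded solution of $(\theta/2+dd^c\phi_2)^n=e^{c_1}gdV$. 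Your write-up must incorporate a $\phi_2$-type summand, or else the claim that $\varphi^\eta$ is a subsolution does not follow.

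Second, in Step 3 the passage from the integral inequality $\int_{V_k}e^{\partial_t\varphi+F(\varphi)}\,d\mu\le\int_{V_k}e^{\partial_t^-\psi+F(\psi)}\,d\mu$ to the pointwise inequality ``at $x_2$'' is not correct as stated: the sets $V_k$ decrease to the full contact set $S=\{\varphi_{t_2}-\psi_{t_2}=g(t_2)\}$, not to $\{x_2\}$, and the averaging argument only produces a point $x'\in S$ at which $\partial_t\varphi(t_2,x')+F(t_2,x',\varphi_{t_2}(x'))\le\partial_t^-\psi(t_2,x')+F(t_2,x',\psi_{t_2}(x'))$ holds (for instance by noting that the difference of the two sides is lower semicontinuous, so if it were strictly positive on all of $S$ it would be bounded below by a positive constant on $V_k$ for $k$ large, contradicting the integral inequality). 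This is easily repaired, since $\varphi_{t_2}(x')-\psi_{t_2}(x')=g(t_2)>0$ forces $F(t_2,x',\varphi_{t_2}(x'))\ge F(t_2,x',\psi_{t_2}(x'))$, hence $\partial_t\varphi(t_2,x')\le\partial_t^-\psi(t_2,x')$, and then $g(t)\ge\varphi_t(x')-\psi_t(x')$ with equality at $t_2$ yields $g'(t_2)\le 0$ exactly as you intended---but the argument must be run at $x'$, not at $x_2$.
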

 
 A particular case of this result was established in \cite[Theorem 3.1]{GLZstability}.


 \begin{proof}
 	We fix $T'\in ]0,T[$ 	and we prove that $\varphi\leq \psi$ on $[0,T'] \times X$. The result then follows by letting $T'\to T$. 
Using the invariance properties of the family of equations (see Section \ref{sec:invariance}), we can assume without loss of generality that $r \mapsto F(\cdot,\cdot,r)$ is increasing.
 	 	We proceed in several steps. 
 	
 	\smallskip
 	
 	\paragraph{\it Construction of auxiliary functions.}
 	We first introduce two auxiliary functions. 
 	Let $\phi_1 \in \PSH(X,\theta/2)$ be a $\theta/2$-psh function with analytic singularities (in particular $\phi_1$ is smooth in $\Omega$) such that $\phi_1=-\infty$ on $\partial \Omega$. 
 	We need to use this function in order to apply the classical maximum principle in $\Omega$. 
 	The standard strategy is to replace $\varphi$ by $(1-\delta)\varphi_t + \delta \phi_1$. However,   the time derivative $\dot{\varphi}_t$ may blow up as $t\to 0$ so we need to use a second auxiliary  function. 
 	Let $\phi_2 \in \PSH(X,\theta/2)$ be the unique  solution to 
 	\[
 	(\theta/2 + dd^c \phi_2)^n = e^{c_1}gdV,
 	\]
 	normalized by $\sup_X \phi_2=0$, where 
  $c_1\in \bR$ is a normalization constant. 
  Then $\phi:= \phi_1+\phi_2$  is a $\theta$-psh function which is continuous in $\Omega$ and tends to $-\infty$ on $\partial \Omega$. 
 	
 	\medskip
 	
 	\paragraph{\it Adaptation of the arguments in \cite{GLZstability}.}  
 	 Fix $\varepsilon,\delta>0$ small enough and set
 	\[
 	w (t,x):=(1-\delta)\varphi(t,x) + \delta \phi(x)-\psi(t,x)-3\varepsilon t, \ t\in [0,T'], x\in \Omega.
 	\]
 	
	This function is upper semi-continuous on   $[0,T'] \times \Omega$ (see Proposition \ref{pro: usc}) and tends to $-\infty$ on $\partial \Omega$, hence attains a maximum at some  $(t_0,x_0) \in [0,T'] \times \Omega$. 
	
	We claim that $w (t_0,x_0)\leq 0$. Assume by contradiction that $ w (t_0,x_0) >  0$. Then 
	 $t_0>0$ and we set
	 $$
	 K := \{ x \in \Omega \setdef  w (t_0,x) = w (t_0,x_0)\}.
	 $$
 Since $w$ tends to $-\infty$ on $\partial \Omega$ it follows from upper semi-continuity of $\varphi$  that $K$ is a compact subset of $\Omega$. Since $\varphi(\cdot, x_0)$ is differentiable in ]0,T[, 
the classical maximum principle insures that for all $x \in K$,
	$$
		(1-\delta)\partial_t \varphi(t_0,x) \geq \partial_t^{-} \psi(t_0,x) +3\varepsilon.
	$$
By assumption the partial derivative $\partial_t \varphi(t,x)$ is continuous on $\Omega$. Moreover, Proposition \ref{pro: continuity of solution elliptic EGZ} ensures that $\psi_t$ is continuous on $\Omega$, for all $t\in ]0,T[$. By local semi-concavity of $t\mapsto \psi_t$ it then follows that, for $t\in ]0,T[$ fixed, $\partial_t^{-} \psi(t,x)$ is upper semi-continuous in $\Omega$. 
We thus can find $\eta>0$ small enough such that, by introducing the open set containing $K$, $$D:= \{x\in \Omega \setdef w(t_0,x) > w(t_0,x_0) - \eta\}\Subset \Omega,$$ 
we have
	\begin{equation}\label{eq: comparison principle 5.4 1}
	(1-\delta)\partial_t \f (t_0,x) > \partial_t^{-} \psi(t_0,x) +2\varepsilon, \ \forall x\in D.
	\end{equation}
	
Set $u := (1-\delta) \varphi (t_0,\cdot)+ \delta \phi$ and $v := \psi (t_0,\cdot)$. 
Since $\varphi$ is a subsolution  to \eqref{eq: MAF as measures}, using Lemma \ref{lem: MA mixed}  we infer
\begin{eqnarray*}
	(\omega_{t_0} +dd^c u)^n & \geq & \left [ (1-\delta) (\omega_{t_0} +dd^c \varphi_{t_0}) + \delta (\theta/2+ dd^c \phi_2)  \right ]^n \\
	&\geq &e^{(1-\delta) (\partial_t \varphi(t_0,\cdot) + F(t_0,x,\varphi(t_0,\cdot))) + \delta c_1} gdV. 
\end{eqnarray*}
Since $F$ is bounded on $[0,T'] \times X \times [-M,M]$ for each $M>0$ and $\varphi$ is bounded on $[0,T'] \times X$ there exists a constant $C>0$ under control such that  
$$
(\omega_{t_0} +dd^c u)^n \geq e^{(1-\delta) \partial_t \varphi(t_0,\cdot) + F(t_0,x,\varphi(t_0,\cdot)) - \delta C} gdV,
$$
in the weak sense of measures in $\Omega$.
Using  \eqref{eq: comparison principle 5.4 1} and choosing $\delta<C^{-1} \varepsilon$ we then have
	\begin{eqnarray*}
		(\omega_{t_0} + dd^c u)^n &\geq & e^{\partial_t^{-} \psi(t_0,\cdot) + F(t_0,x,\varphi(t_0,\cdot))+ \varepsilon} gdV,
	\end{eqnarray*} 
	in the weak sense of measures in $D$. 
	Since $\psi$ is a pluripotential supersolution, Lemma \ref{lem: sub super solution pointwise} ensures
	$$
	(\omega_{t_0} +dd^c \psi_{t_0})^n \leq e^{\partial_t^{-} \psi(t_0,\cdot) + F(t_0,x,\psi(t_0,\cdot))} gdV,
	$$
	in the weak sense of measures in $D$. 
The last two estimates yield
	\[
	(\omega_{t_0} + dd^c u)^n
		 \geq  e^{F(t_0,\cdot,u(\cdot))-F(t_0,\cdot,v(\cdot))+\varepsilon}(\omega_{t_0} + dd^c v)^n.
	\]
	Recall that $u (x) > v(x)+\varepsilon t_0$ for any $x \in K$. 
Shrinking $D$ if necessary,  we can thus assume that $u(x)>v(x)$ for all $x\in D$. 

Since $r \mapsto F(t,x,r)$ is increasing 
we thus get
	\[
	(\omega_{t_0} + dd^c u)^n \geq  e^{\varepsilon}(\omega_{t_0} + dd^c v)^n,
	\]
	in the sense of positive measures in $D$. 
	
Consider now $\tilde{u}:=u +\min_{\partial D}(v -u)$. 
Since $v \geq \tilde{u}$ on $\partial D$, the comparison principle Proposition \ref{pro: domination principle} below yields
	\begin{flalign*}
		\int_{\{v<\tilde{u}\}\cap D}e^{\varepsilon} (\omega_{t_0} + dd^c v)^n & \leq \int_{\{v<\tilde{u}\}\cap D} (\omega_{t_0} + dd^c u)^n \\
		&\leq \int_{\{v<\tilde{u}\}\cap D} (\omega_{t_0} +dd^c v)^n. 
	\end{flalign*}	
It then follows that $\tilde{u} \leq v$ almost everywhere in $D$ with respect to  the measure  $(\omega_{t_0} + dd^c v)^n$, hence everywhere in $D$ by the domination principle (see Proposition \ref{pro: domination principle}).  
In particular, 
	\begin{equation}
		\label{eq: varphi r less than psi}
		u (x_0)- v(x_0) +\min_{\partial D}(v - u)=\tilde{u}(x) -v (x) \leq 0, 
	\end{equation}
Since $K\cap \partial D =\emptyset$, we infer $w(t_0,x) < w(t_0,x_0)$, for all $x\in \partial D$, hence
	$$
	 u (x) - v (x) < u (x_0) - v (x_0)
	 \text{ for all }
	 x\in \partial D ,
	 $$
	   contradicting \eqref{eq: varphi r less than psi}. 
Altogether this shows that $t_0 = 0$, thus 
\[
(1-\delta) \varphi + \delta \phi - \psi - 3\varepsilon t \leq \delta \sup_X |\varphi_0| 
\]
in $[0,T'] \times \Omega$.  Letting $\delta\to 0$ and then $\varepsilon\to 0$ we arrive at $\varphi\leq \psi$ in $[0,T'] \times \Omega$ hence in $[0,T'] \times X$.
\end{proof}

  We have  used the following semi-local version of the {\it  domination principle}:

 \begin{pro}\label{pro: domination principle}
  Fix a non-empty open subset $D\subset X$
and let $u,v$ be bounded $\theta$-psh functions on $X$ such that 
 	\[
 	\limsup_{D\ni z\to \partial D} (u-v)(z)\geq 0.
 	\]
 	Then 
 	\[
 	\int_{\{u<v\}\cap D}\theta_{v}^n \leq \int_{\{u<v\}\cap D} \theta_u^n. 
 	\]
 	Moreover, if ${\rm MA}_{\theta}(u)(\{u<v\}\cap D)=0$ then $u\geq v$ in $D$. 
 \end{pro}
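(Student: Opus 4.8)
This is a localized (semi-local) version of the domination principle for the complex Monge–Ampère operator associated to a semi-positive big form $\theta$. The strategy is to reduce the comparison of masses on $\{u<v\}\cap D$ to the global domination principle on $X$ (or to the classical Bedford–Taylor argument on a relatively compact piece), using the boundary condition $\limsup_{D\ni z\to\partial D}(u-v)(z)\ge 0$ to glue $u$ and $v$ across $\partial D$.

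\smallskip

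\emph{Step 1: Reduce to the case where $v\le u$ near $\partial D$ and extend $v$ by $u$.} For $\varepsilon>0$ set $v_\varepsilon := v-\varepsilon$. The hypothesis gives that $u > v_\varepsilon$ on a neighborhood of $\partial D$ in $D$, more precisely $\{u < v_\varepsilon\}\cap D \Subset D$. Define
$$
w := \begin{cases} \max(u, v_\varepsilon) & \text{on } D,\\ u & \text{on } X\setminus D.\end{cases}
$$
Because $\max(u,v_\varepsilon) = u$ near $\partial D$ inside $D$, the function $w$ is well defined, $\theta$-psh on $X$, bounded, and agrees with $u$ outside $\{u<v_\varepsilon\}\cap D \Subset D$. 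By the locality of the Monge–Ampère operator on the plurifine-open set $\{u<v_\varepsilon\}\cap D = \{w = v_\varepsilon\}^{\circ}\cap\{u<v\}$ one has $\theta_w^n = \theta_{v_\varepsilon}^n = \theta_v^n$ there, and $\theta_w^n = \theta_u^n$ on the complement.

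\smallskip

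\emph{Step 2: Apply the global comparison/domination principle on $X$.} Since $w \ge u$ on $X$ with equality outside a relatively compact subset of $D$, and both are bounded $\theta$-psh functions on the compact manifold $X$, the total masses agree: $\int_X \theta_w^n = \int_X \theta_u^n = \mathrm{Vol}(\theta)$. Splitting the integral over $\{u<v_\varepsilon\}\cap D$ and its complement, the contributions on the complement coincide, so
$$
\int_{\{u<v_\varepsilon\}\cap D}\theta_v^n \;=\; \int_{\{u<v_\varepsilon\}\cap D}\theta_w^n \;=\; \int_{\{u<v_\varepsilon\}\cap D}\theta_u^n.
$$
In particular $\int_{\{u<v_\varepsilon\}\cap D}\theta_v^n \le \int_{\{u<v_\varepsilon\}\cap D}\theta_u^n$ (indeed equality). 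Letting $\varepsilon\to 0^+$ and using that $\{u<v_\varepsilon\}\cap D \uparrow \{u<v\}\cap D$, monotone convergence yields the desired inequality $\int_{\{u<v\}\cap D}\theta_v^n \le \int_{\{u<v\}\cap D}\theta_u^n$.

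\smallskip

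\emph{Step 3: The domination statement.} Suppose now $\mathrm{MA}_\theta(u)(\{u<v\}\cap D)=0$. The inequality just proved forces $\int_{\{u<v\}\cap D}\theta_v^n = 0$ as well. Apply the same argument with the roles adjusted: for the bounded $\theta$-psh function $v$ one has, on the plurifine-open set $\{u<v\}\cap D$, that $\theta_{\max(u,v-\varepsilon)}^n$ restricted there equals $\theta_v^n$, which has zero mass; combined with the classical fact (Bedford–Taylor, valid on any compact Kähler manifold and extended to the big setting, e.g. via \cite{GZ05} or \cite{EGZ09}) that a bounded $\theta$-psh function $u$ with $u\le v$ outside a set of $\theta_v^n$-measure zero inside an open set must satisfy $u\ge v$ there once $\theta_v^n$ puts no mass on $\{u<v\}$, one concludes $u\ge v$ on $D$. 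Concretely: the set $\{u<v\}\cap D$ carries no $\theta_v^n$ mass, and by the comparison inequality it carries no $\theta_u^n$ mass either; a standard argument (approximating by $v-\varepsilon$ and using that a bounded $\theta$-psh function is $\theta_{u}^n$-a.e. determined, as in the proof of the classical domination principle) then shows this set is empty up to a pluripolar set, hence $u\ge v$ everywhere on $D$ by upper semicontinuity after the usual $\varepsilon$-perturbation.

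\smallskip

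\textbf{Main obstacle.} The delicate point is Step 1: verifying that the gluing $w=\max(u,v_\varepsilon)$ on $D$, extended by $u$ outside, is genuinely $\theta$-psh across $\partial D$ and that the Monge–Ampère measure localizes correctly. This requires the strict inequality $\{u<v_\varepsilon\}\cap D\Subset D$ (which is exactly what the $\limsup$ hypothesis buys, after subtracting $\varepsilon$), plus the locality of $\theta_\bullet^n$ on plurifine-open sets — a Bedford–Taylor fact that must be invoked carefully in the big/semi-positive setting. The passage to the limit $\varepsilon\to 0$ and the final "a.e. implies everywhere" upgrade in Step 3 are routine given the global domination principle for bounded $\theta$-psh functions, which may be cited.
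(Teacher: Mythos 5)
Your Step 1 (gluing $w:=\max(u,v_\varepsilon)$ on $D$ and extending by $u$) is fine, but Step 2 contains a genuine error: the assertion that ``the contributions on the complement coincide'' is false. You have $w=u$ \emph{pointwise} on the complement of $\{u<v_\varepsilon\}\cap D$, but the Monge--Amp\`ere operator is nonlinear and local only in the \emph{plurifine} topology; pointwise equality of $w$ and $u$ on a non-(plurifine-)open set does \emph{not} yield equality of $\theta_w^n$ and $\theta_u^n$ there. Concretely, plurifine locality gives $\theta_w^n=\theta_u^n$ on the plurifine open set $\{u>v_\varepsilon\}\cap D$ and on the euclidean open set $X\setminus \overline{D'}$ (a neighborhood of $X\setminus D$), but says nothing on the contact set $\{u=v_\varepsilon\}\cap D$, where $\theta_{\max(u,v_\varepsilon)}^n$ can in general charge extra mass. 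So your claimed equality $\int_{\{u<v_\varepsilon\}\cap D}\theta_w^n=\int_{\{u<v_\varepsilon\}\cap D}\theta_u^n$ (and hence the parenthetical ``indeed equality'') does not follow; what is true is a one-sided inequality. The correct replacement is the Bedford--Taylor maximum principle
\[
\theta_{\max(u,v_\varepsilon)}^n\;\geq\;\mathbf{1}_{\{u\geq v_\varepsilon\}}\theta_u^n+\mathbf{1}_{\{u<v_\varepsilon\}}\theta_{v}^n \quad\text{on } D,
\]
together with $\theta_w^n=\theta_u^n$ on a neighborhood of $X\setminus D$. Integrating over $X$, using $\int_X\theta_w^n=\int_X\theta_u^n$, and cancelling identical terms gives $\int_{\{u<v_\varepsilon\}\cap D}\theta_u^n\geq\int_{\{u<v_\varepsilon\}\cap D}\theta_v^n$; letting $\varepsilon\downarrow 0$ by monotone convergence yields the stated inequality.

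Step 3 is also too vague to be a proof. Knowing that \emph{both} $\theta_u^n$ and $\theta_v^n$ put zero mass on $\{u<v\}\cap D$ does not, by itself, force that set to be empty or pluripolar; the crucial missing ingredient is the bigness of $\theta$, which supplies a bounded $\theta$-psh $\rho$ with $\theta_\rho^n\geq c\,\omega^n$ for some K\"ahler $\omega$ and $c>0$. Normalizing $\rho\leq\min(u,v)$, set $v_t:=(1-t)v+t\rho$; then $v_t\leq v$ so the boundary hypothesis persists, $\{u<v_t\}\subset\{u<v\}$, and the first part gives $\theta_{v_t}^n(\{u<v_t\}\cap D)\leq\theta_u^n(\{u<v\}\cap D)=0$. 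Since $\theta_{v_t}^n\geq t^n c\,\omega^n$, the set $\{u<v_t\}\cap D$ has Lebesgue measure zero, so $u\geq v_t$ a.e.\ on $D$, hence everywhere on $D$ by the sub-mean value property for quasi-psh functions. Letting $t\to 0$ gives $u\geq v$ on $D$. Your phrasing ``this set is empty up to a pluripolar set, hence $u\geq v$ \ldots after the usual $\varepsilon$-perturbation'' gestures at this but never introduces the function $\rho$ whose Monge--Amp\`ere measure has full support — that is the essential point.
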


We now remove assumption $(d)$ in Proposition \ref{thm: weak parabolic comparison principle v1} :

\begin{pro}
	\label{prop: weak parabolic comparison principle v1 improved}
	Fix $\varphi,\psi \in \Pc(X_T) \cap L^{\infty}(X_T)$ such that $\varphi$ (respectively $\psi$) is a pluripotential subsolution (respectively supersolution) to \eqref{eq: MAF as measures}. 
	If  the assumptions $(a), (b),(c)$ of Proposition \ref{thm: weak parabolic comparison principle v1} are satisfied then 
	$$
	\varphi_0 \leq \psi_0 \Longrightarrow \varphi \leq \psi \ \text{on}\ X_T.
	$$ 
\end{pro}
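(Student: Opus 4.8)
The only hypothesis missing with respect to Proposition~\ref{thm: weak parabolic comparison principle v1} is the continuity of $\psi$ up to $t=0$. The plan is to recover it by a \emph{time translation}: pushing both $\varphi$ and $\psi$ forward by a small $\varepsilon>0$ turns them into a sub/supersolution of a ``shifted'' parabolic Monge--Amp\`ere equation whose structural assumptions are still satisfied, and for which the (shifted) supersolution \emph{is} continuous up to the new initial time $t=0$, since $\psi$ is continuous on $]0,T[\times\Omega$ by Proposition~\ref{pro: continuity of solution elliptic EGZ}. One then applies Proposition~\ref{thm: weak parabolic comparison principle v1} to the shifted data and lets $\varepsilon\to 0$. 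First, using the invariance properties of Section~\ref{sec:invariance} (Lemma~\ref{lem quasi-increasing vs increasing}) we reduce to the case where $r\mapsto F(\cdot,\cdot,r)$ is increasing, and we fix $0<T'<T$; it suffices to prove $\varphi\le\psi$ on $]0,T'[\times X$ and let $T'\to T$.

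\textbf{The shift.} For $\varepsilon>0$ small put $\varphi^{\varepsilon}(t,x):=\varphi(t+\varepsilon,x)$, $\psi^{\varepsilon}(t,x):=\psi(t+\varepsilon,x)$, $\omega^{\varepsilon}_t:=\omega_{t+\varepsilon}$, $F^{\varepsilon}(t,x,r):=F(t+\varepsilon,x,r)$, with $g$ unchanged. The triple $(\omega^{\varepsilon},F^{\varepsilon},g)$ satisfies the same structural bounds as $(\omega,F,g)$ (same constants $A,p,\lambda_F,C_F$, since \eqref{eq:omegatLip} and the assumptions on $F$ are preserved by translation in $t$), and $\varphi^{\varepsilon}$ (resp. $\psi^{\varepsilon}$) is a bounded pluripotential subsolution (resp. supersolution) of the $\varepsilon$-shifted equation on $]0,T'[\times X$. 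Hypotheses (a),(b),(c) of Proposition~\ref{thm: weak parabolic comparison principle v1} for the shifted data are inherited from the corresponding ones for $\varphi,\psi$ (note in particular $|\partial_t\varphi^{\varepsilon}(t,x)|\le C/(t+\varepsilon)$ is \emph{bounded} on $X_{T'}$), while hypothesis (d) now holds: $\psi^{\varepsilon}$ is continuous on $[0,T'[\times\Omega$ because $\psi$ is continuous on $]0,T[\times\Omega$ (Proposition~\ref{pro: continuity of solution elliptic EGZ}) and $t+\varepsilon>0$. Proposition~\ref{thm: weak parabolic comparison principle v1} applied to $(\omega^{\varepsilon},F^{\varepsilon},g)$ therefore gives the implication
$$
\varphi_{\varepsilon}\le\psi_{\varepsilon}\ \text{ on }X\ \Longrightarrow\ \varphi^{\varepsilon}\le\psi^{\varepsilon}\ \text{ on }]0,T'[\times X,\quad\text{i.e. }\varphi\le\psi\ \text{ on }]\varepsilon,T'[\times X.
$$

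\textbf{Closing the gap at $t=0$.} It remains to produce a sequence $\varepsilon_j\downarrow 0$ with $\varphi_{\varepsilon_j}\le\psi_{\varepsilon_j}$ on $X$; letting $j\to\infty$ in the implication above then yields $\varphi\le\psi$ on $]0,T'[\times X$. Here one combines two ingredients. On the one hand the lower barrier for supersolutions, Proposition~\ref{prop: estimate for super solutions}, gives for $\varepsilon$ small
$$
\psi_{\varepsilon}\ \ge\ (1-\varepsilon)e^{-A\varepsilon}\psi_0+C(\varepsilon\log\varepsilon-\varepsilon)\ \ge\ \psi_0-\eta(\varepsilon)\ \ge\ \varphi_0-\eta(\varepsilon),\qquad \eta(\varepsilon)\xrightarrow[\varepsilon\to 0]{}0,
$$
using $\psi_0\ge\varphi_0$ and that $\psi_0$ is bounded. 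On the other hand one needs an upper control of $\varphi$ near $t=0$: that $\varphi_{\varepsilon}\le\varphi_0+o(1)$ (uniformly in $x$, at least along a subsequence $\varepsilon\downarrow0$), which will be deduced from the facts that $\varphi$ extends to an upper semicontinuous function on $[0,T'[\times X$ with $\varphi(0,\cdot)=\varphi_0$ (Proposition~\ref{pro: usc}), that $\varphi_t\to\varphi_0$ in $L^1$, and from the $\mathcal{C}^{1/0}$-regularity, using crucially that the slices of $\varphi$ are $\omega_t$-psh (so $\varphi$ cannot develop upward spikes). Granting this, one gets $\varphi_{\varepsilon}\le\psi_{\varepsilon}+o(1)$, and replacing $\varphi$ by the subsolution $\varphi-\delta$ (which is again a subsolution, $F$ being increasing and $\delta\ge0$) gives $(\varphi-\delta)_{\varepsilon}\le\psi_{\varepsilon}$ for a suitable $\varepsilon=\varepsilon(\delta)\downarrow 0$; running the previous implication for $\varphi-\delta$ and letting $\delta\to 0$ completes the proof.

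\textbf{Main obstacle.} Everything in the first two steps is routine bookkeeping (checking that the shifted data satisfy all hypotheses, and that (a)--(d) pass to the translate). The genuine content is the uniform-in-$x$ upper estimate $\varphi_{\varepsilon}\le\varphi_0+o(1)$ as $\varepsilon\to0$ in the last step: this cannot follow from the a priori bound $|\partial_t\varphi|\le C/t$ alone, which degenerates at $t=0$, but must be extracted from the joint upper semicontinuity of the parabolic extension together with the $L^1$-convergence of the $\omega_t$-psh slices; this, combined with the lower barrier for $\psi$, is what allows one to dispense with assumption (d).
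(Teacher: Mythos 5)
Your strategy of translating in time and invoking Proposition~\ref{thm: weak parabolic comparison principle v1} for the shifted data is in the right spirit, and the use of the lower barrier (Proposition~\ref{prop: estimate for super solutions}) for the supersolution is exactly one of the two ingredients of the paper's proof. But shifting \emph{both} $\varphi$ and $\psi$ makes the argument circular: the initial condition you then need, $\varphi_\varepsilon\leq\psi_\varepsilon$ on $X$, is precisely a slice of the inequality you are trying to prove. You try to break the circularity by establishing the uniform upper control $\sup_X(\varphi_\varepsilon-\varphi_0)\to 0$, and you correctly flag this as ``the genuine content'' of your proof -- but you do not prove it, and it does not follow from the ingredients you list. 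The joint upper semi-continuity of the extension (Proposition~\ref{pro: usc}) is a pointwise statement; the $L^1$-convergence of the slices plus Hartogs' lemma only yields $\limsup_\varepsilon\sup_X(\varphi_\varepsilon-g)\leq 0$ for \emph{continuous} majorants $g\geq\varphi_0$, which reduces to the desired estimate only if $\varphi_0$ is continuous -- and $\varphi_0$ is merely bounded and usc here; the $\mathcal C^{1/0}$ regularity gives no quantitative control of $\partial_t\varphi$ as $t\to 0$ (the $C/t$ bound you invoke is not among hypotheses $(a)$--$(c)$ of Proposition~\ref{thm: weak parabolic comparison principle v1}, and in any case blows up at $t=0$); and the subsolution inequality bounds $\dot\varphi_t$ from above only by a quantity that is not under control. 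So as written the argument has a genuine gap exactly where you say the work is.

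The paper's proof avoids needing any upper barrier for $\varphi$ near $t=0$ by shifting \emph{only} $\psi$. Concretely, set $v_s(t,x):=\psi(t+s,x)+Cst+Cs-Cs\log s$, which is continuous on $[0,T-s[\times\Omega$ (so hypothesis $(d)$ holds), and replace $\varphi$ not by a translate but by the convex perturbation
$u_s(t,x):=\alpha_s\varphi(t,x)+(1-\alpha_s)\rho(x)-Cst-Cs$,
where $\alpha_s=(1-s)(1-As)$ and $\rho$ is a fixed bounded $\varepsilon_1\theta$-psh potential solving a static Monge--Amp\`ere equation. The factor $\alpha_s$ together with $\rho$ absorbs the change of form from $\omega_t$ to $\omega_{t+s}$ (via the mixed Monge--Amp\`ere inequality, Lemma~\ref{lem: MA mixed}), so that $u_s$ is a subsolution and $v_s$ a supersolution for the $s$-shifted flow with the same density $g$. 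Crucially, at $t=0$ the comparison $u_s(0,\cdot)\leq\varphi_0\leq\psi_0\leq v_s(0,\cdot)$ is immediate: the left inequality holds for $C$ large simply because $(1-\alpha_s)=O(s)$, $\rho\leq0$ and $\varphi_0$ is bounded (no regularity of $\varphi$ near $t=0$ is used), while the right inequality is the lower barrier of Proposition~\ref{prop: estimate for super solutions}. Proposition~\ref{thm: weak parabolic comparison principle v1} then gives $u_s\leq v_s$, and letting $s\to0$ yields $\varphi\leq\psi$. In short: same lower barrier as yours for the supersolution, but the subsolution is handled by a multiplicative/convex perturbation that makes the initial comparison trivial, rather than by a time shift that would require the unavailable upper barrier.
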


\begin{proof}
We assume without loss of generality that $r \mapsto F(\cdot,\cdot,r)$ is increasing and
use an argument similar to that of Lemma \ref{lem: regularization of subsolutions}.
	Fix $s>0$ small enough and consider 
\[
v_s(t,x) := \psi(t+s,x) + Cst +Cs -Cs \log s,
\]
and 
\[
u_s(t,x) := \alpha_s \varphi(t,x) + (1-\alpha_s) \rho -Cst -Cs,
\]
where $\alpha_s:= (1-s)(1-As) \in ]0,1[$, $A>0$ is defined in \eqref{eq:omegatLip}, $\rho$ is a quasi-psh function and $C$ is a positive constant to be specified later. The goal is to show that for $C>0$ large enough (under control) $v_s$ is a supersolution while $u_s$ is a subsolution to a parabolic equation and $u_s(0,\cdot) \leq v_s(0,\cdot)$. 
We can then invoke Proposition \ref{thm: weak parabolic comparison principle v1} and let $s\to 0$ to obtain the result. 

By considering $s$ small enough we can assume that 
\[
\beta_s:= \frac{(1-As)s}{(1-\alpha_s)} \geq \varepsilon_1>0
\]
 for some uniform constant $\varepsilon_1$. We let $\rho$ be the unique $\varepsilon_1 \theta$-psh function on $X$ such that $\sup_X \rho =0$ and $(\varepsilon_1\theta+dd^c \rho)^n =e^{c_1}gdV$. By definition one has that $\alpha_s + (1-\alpha_s) \beta_s =1-As$.  Then one can show that 
\begin{flalign*}
(\omega_{t+s} + dd^c u_s)^n & \geq 	 \left [ (1-As) \omega_t + \alpha_s dd^c \varphi_t + (1-\alpha_s)dd^c \rho \right ]^n \\
& = \left [ \alpha_s(\omega_t +dd^c \varphi_t) + (1-\alpha_s) (\beta_s \omega_t +dd^c \rho) \right ]^n\\
& \geq e^{\alpha_s (\partial_t \varphi_t + F(t,\cdot,\varphi(t,\cdot))) + (1-\alpha_s) c_1} gdV.
\end{flalign*}
where in the last line we use Lemma \ref{lem: MA mixed}. Since $\alpha_s= 1+ O(s)$ and $F$ is bounded, by choosing $C>0$ large enough (depending on $M_F,c_1$) we have 
$$
(\omega_{t+s} +dd^c u_s)^n \geq e^{\partial_t u_s + F(t,\cdot,u_s(t,\cdot))} gdV. 
$$
On the other hand for $C>0$ large enough (which depends on $\kappa_F$) we have 
$$
(\omega_{t+s} +dd^c v_s)^n \leq e^{\partial_t v_s -Cs + F(t+s,\cdot,v_s(t,\cdot))}gdV \leq e^{\partial_t v_s + F(t,\cdot,v_s(t,\cdot))}gdV.
$$

Up to increasing $C$ it follows from Proposition \ref{prop: estimate for super solutions} that  $v_s(0,x)\geq \psi_0(x)$. Since $u_s(0,x)\leq v_s(0,x)$ it then follows from Proposition \ref{thm: weak parabolic comparison principle v1} that 
\[
u_s(t,x) \leq  v_s(t,x), \forall (t,x) \in X_{T}. 
\]
Letting $s\to 0$ we arrive at the conclusion, finishing the proof. 
\end{proof}

 \begin{proof}[Proof of Theorem \ref{thm: weak comparison principle v2}]
Fix $T'<T$. We regularize the subsolution $\varphi$ by applying Proposition \ref{thm: regularization of subsolutions}. 
The family of subsolutions obtained  this way is denoted by $u_{\varepsilon}$.  Then $u_{\varepsilon}$ is a pluripotential subsolution to \eqref{eq: MAF as measures} and, up to enlarging the constant $B>0$ in Proposition \ref{thm: regularization of subsolutions} we can also assume that $u_{\varepsilon}(t,x)$ converges to $\varphi_0$ in $L^1(X,dV)$ as $t\to 0^+$. 

It follows moreover from Proposition \ref{pro: continuity of solution elliptic EGZ} that for $t\in ]0,T[$ fixed, $\psi_t$ is continuous in $\Omega$.  We can thus apply Proposition    \ref{thm: weak parabolic comparison principle v1} 
and obtain $u_{\varepsilon} \leq \psi$ on $]0,T'] \times X$. Letting $\varepsilon \to 0$ and then $T'\to T$ we arrive at the conclusion.
\end{proof}

\begin{cor} \label{cor:unique1}
Fix $\varphi_0$ a bounded $\omega_0$-psh function. There exists a unique function 
$\varphi  \in \Pc(X_T)\cap L^{\infty}(X_T)$ such that  
	\begin{itemize}
		\item for each $t\in ]0,T[$, $x \mapsto \varphi(t,x)$ is continuous on $\Omega$ ;
		\item 	$|\partial_t \f(t,x)| \leq C/t$ for all $(t,x)\in ]0,T[ \times X$;
		\item $t \mapsto \f(t,\cdot)$ is locally uniformly semi-concave in $]0,T[$; 
		\item $\varphi_t \to \varphi_0$ in $L^1(X)$ as $t\to 0$; 
		\item $\varphi$ solves \eqref{eq:CMAF} in $X_T$. 
	\end{itemize}
\end{cor}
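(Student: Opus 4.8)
The plan is to derive this statement as a direct consequence of the existence result of this section together with the comparison principle \ref{main Thm B} (Theorem \ref{thm: weak comparison principle v2}). For existence, I would simply take $\varphi$ to be the solution furnished by Theorem \ref{thm:existence}: it is bounded on $]0,T'[\times X$ for every $T'<T$, continuous on $]0,T[\times\Omega$ (so each slice $\varphi_t$ is continuous on $\Omega$), converges to $\varphi_0$ in $L^1(X)$ as $t\to0^+$, and the concavity of $t\mapsto\varphi(t,x)+C_2\log t$ forces $t\mapsto\varphi(t,\cdot)$ to be locally uniformly semi-concave on $]0,T[$ (on a compact $J\Subset]0,T[$ one subtracts $\kappa t^2$ with $\kappa$ depending only on $\min J$). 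The one property still to be recorded is the two-sided bound $|\partial_t\varphi(t,x)|\le C/t$: the smooth approximants $\varphi^j$ satisfy, uniformly in $j$, $n\log t-C_1\le\dot\varphi^j_t\le C_1/t$ by Theorem \ref{thm:sumup} (equivalently Theorem \ref{thm: apriori-est C1 general}), so the same inequalities hold for the $\ell\otimes\mu$-a.e.\ defined $\dot\varphi$; since $t\log t$ is bounded on $]0,T]$ one has $n\log t-C_1\ge -C/t$ there, and combining the two estimates yields $|\partial_t\varphi|\le C/t$ wherever the (one-sided) time derivatives exist, which by semi-concavity is almost everywhere. Thus $\varphi$ lies in the announced regularity class.

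For uniqueness, let $\varphi^1,\varphi^2\in\Pc(X_T)\cap L^\infty(X_T)$ both satisfy the five listed conditions; each is simultaneously a bounded pluripotential subsolution and supersolution of \eqref{eq: MAF as measures}. To obtain $\varphi^1\le\varphi^2$ I would apply Theorem \ref{thm: weak comparison principle v2} with subsolution $\varphi:=\varphi^1$ and supersolution $\psi:=\varphi^2$: hypothesis (a) holds since $\varphi^1$ is a subsolution, (b) since $\varphi^2$ is a supersolution, (c) because the slices $\varphi^1_t$ are continuous on $\Omega$ and $|\partial_t\varphi^1(t,x)|\le C/t$, (d) because $t\mapsto\varphi^2_t$ is locally uniformly semi-concave, and (e) because both families tend to $\varphi_0$ in $L^1(X)$ as $t\to0$. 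Since $\varphi^1_0=\varphi^2_0=\varphi_0$, the comparison principle gives $\varphi^1\le\varphi^2$ on $X_T$; interchanging the roles of $\varphi^1$ and $\varphi^2$ yields the reverse inequality, hence $\varphi^1=\varphi^2$.

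I do not expect any genuine obstacle here: the corollary merely packages the existence statement of Theorem \ref{thm:existence} with the comparison principle \ref{main Thm B}. The only point requiring (minor) care is of a bookkeeping nature — verifying that the solution produced in Theorem \ref{thm:existence} lies in \emph{precisely} the regularity class in which Theorem \ref{thm: weak comparison principle v2} applies, namely assembling the upper bound $\dot\varphi_t\le C_1/t$ and the lower bound $\dot\varphi_t\ge n\log t-C_1$ into $|\partial_t\varphi|\le C/t$, and checking that local uniform semi-concavity in $t$ survives the $L^1_{\loc}$-limit. Both are covered by the uniform a priori estimates of Theorem \ref{thm:sumup}, Theorem \ref{thm: apriori-est C1 general}, and the limiting arguments already used in the proof of Theorem \ref{thm:existence}.
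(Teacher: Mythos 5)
Your argument is correct and is the intended proof: existence is furnished by the construction in Theorem~\ref{thm:existence} (via smooth approximants, using Theorem~\ref{thm:sumup} for the uniform bounds and Theorem~\ref{thm:conv2} for the limit), and uniqueness follows by a two-sided application of the comparison principle. You also correctly handle the bookkeeping points: combining the lower bound $\dot\varphi_t\geq n\log t-C_1$ with the upper bound $\dot\varphi_t\leq C_1/t$ into $|\partial_t\varphi|\leq C/t$ (indeed $-nt\log t+C_1t$ is bounded on $]0,T]$), and observing that the semi-concavity estimate $\ddot\varphi^j\leq C/t^2$ on approximants passes to the limit.

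One small caution on labeling: you identify the comparison principle you invoke with \ref{main Thm B}, but \ref{main Thm B} actually corresponds to the stronger Theorem~\ref{thm: weak comparison principle v3} (which drops hypothesis (c) on the subsolution). Using that version here would be circular, since the paper's proof of Theorem~\ref{thm: weak comparison principle v3} itself calls Corollary~\ref{cor:unique1}. Fortunately, the theorem you actually invoke in your argument is the weaker Theorem~\ref{thm: weak comparison principle v2} --- which requires the subsolution $\varphi^1$ to have continuous slices on $\Omega$ and the $|\partial_t\varphi^1|\leq C/t$ bound --- and you verify exactly those extra hypotheses (c) before applying it. So the substance is right; only the attribution to \ref{main Thm B} should be dropped.
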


In particular any smooth approximants converge towards this solution, hence the latter is independent of the approximants. 

\begin{defi}
Given a data $(F,g,\omega,\varphi_0)$ we let $\Phi(F,g,\omega,\varphi_0)$	 denotes the unique pluripotential  solution to \eqref{eq: MAF as measures} as in Corollary \ref{cor:unique1}. 
\end{defi}

\subsection{Uniqueness and stability}

We now establish a more general uniqueness result. The proof relies on a delicate comparison principle
and yields as well stability results

\subsubsection{Stability 1}

 \begin{pro} \label{thm: stability}
 Assume that $(g,F,\omega_t)$ and  $(g_j,F_j,\omega_{t,j})$ satisfy the assumptions in 
 the introduction  with uniform constants independent of $j$, and
 \begin{itemize}
 \item $g_j,F_j$ are smooth
 \item  $0 < g_j$ converge in $L^p(X)$ to $g \in L^p(X)$;
 \item $\f_{0,j}$ are  uniformly bounded $\omega_0$-psh functions which converge in $L^1(X)$ towards $\f_0 \in L^{\infty}\cap \PSH(X,\omega_0)$;
 \item $F_j$ uniformly converge to $F$  on $[0,T[ \times X \times J$, for each $J\Subset \mathbb{R}$;
 \item $\omega_{t,j}$  uniformly converges  to $\omega_t$.
 \end{itemize}
 
Let $\varphi_j$ be the unique smooth solutions to \eqref{eq: MAF as measures} with data  $(g_j,F_j,\omega_{t,j},\varphi_{0,j})$.
Then $(\varphi_j)$ converges in $L^1_{\loc}(X_T)$
to $\varphi$, where $\f$ is 
the unique solution of \eqref{eq: MAF as measures}  with data $(g,F,\omega_t,\f_0)$
provided by Corollary \ref{cor:unique1}.
 \end{pro}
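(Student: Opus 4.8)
The plan is to combine the uniform a priori estimates of Section~\ref{sec:apriori}, the compactness results of Section~\ref{sec:recap}, and the uniqueness statement of Corollary~\ref{cor:unique1}: extract a limit of (a subsequence of) the $\f_j$, show it is a pluripotential solution with the regularity characterising $\Phi(g,F,\omega_t,\f_0)$, and then invoke uniqueness to identify it and upgrade to convergence of the whole sequence. Since $(g_j,F_j,\omega_{t,j},\f_{0,j})$ satisfy the hypotheses of the introduction with constants independent of $j$, Theorem~\ref{thm:sumup} provides $C_0,C_1,C_2>0$, uniform in $j$, with $|\f_j|\le C_0$, $n\log t-C_1\le\dot\f_j\le C_1/t$ and $\ddot\f_j\le C_2/t^2$ on $X_T$; in particular $(\f_j)$ is uniformly bounded and uniformly semi-concave in $t$ on every $J\Subset\,]0,T[$.

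Applying Theorem~\ref{thm:conv2} (whose proof adapts at once to smoothly varying background forms $\omega_{t,j}\to\omega_t$, which enter only through the constants) I would extract a subsequence with $\f_j\to\psi$ in $L^1_{\loc}(X_T)$ and a.e., $\psi\in\Pc(X_T,\omega)\cap L^\infty_{\loc}(X_T)$, and
\[
e^{\dot\f_j+F_j(t,x,\f_j)}g_j\,dt\wedge dV\longrightarrow e^{\dot\psi+F(t,x,\psi)}g\,dt\wedge dV
\]
weakly on $X_T$ (using $g_j\to g$ in $L^p$, the uniform convergence $F_j\to F$ on compacta, and $\dot\f_j\to\dot\psi$ a.e.\ from Theorem~\ref{thm:conv1}). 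To see that the left-hand sides converge to $(\omega_t+dd^c\psi_t)^n\wedge dt$ I would invoke the $L^\infty$--$L^1$ stability estimate of Proposition~\ref{pro:stab}, extended to the varying forms $\omega_{t,j}$ (they degrade only the constants): it gives $\f_j\to\psi$ locally uniformly on $]0,T[\times X$, so Bedford--Taylor's convergence theorem applies. Hence $\psi$ solves \eqref{eq: MAF as measures} for $(g,F,\omega_t)$. Passing to the limit in the bounds of Theorem~\ref{thm:sumup} moreover yields $|\partial_t\psi|\le C/t$ and local uniform semi-concavity of $\psi$ in $t$, while continuity of $x\mapsto\psi(t,x)$ on $\Omega$ follows from Proposition~\ref{pro: continuity of solution elliptic EGZ} since $\psi$ is a supersolution.

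The delicate point is the behaviour at $t=0$, which I would handle by carrying the two-sided control through the limit. For the lower bound, Proposition~\ref{pr:sbarrier} applied to $\f_j$ gives, for $0\le t\le 1$,
\[
\f_{t,j}\ge(1-t)e^{-At}\f_{0,j}+t\rho_{1,j}+n(t\log t-t)-C\,\tfrac{e^{\lambda_F t}-1}{\lambda_F},
\]
with uniform $C$, where $\rho_{1,j}\in\PSH(X,\theta)$, $\sup_X\rho_{1,j}=0$, solves $(\theta+dd^c\rho_{1,j})^n=e^{c_{1,j}}g_j\,dV$; by the elliptic stability of \cite{EGZ09,GZ12} one has $\rho_{1,j}\to\rho_1$ uniformly as $g_j\to g$ in $L^p$. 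Fixing $0<T'<T$, the $\f_j$ are uniformly bounded on $]0,T'[\times X$, so $L^1_{\loc}$ convergence upgrades to $L^1(]0,T'[\times X)$ and Lemma~\ref{lem:L1Slice-L1} gives $\f_{t,j}\to\psi_t$ in $L^1(X)$ for each $t\in\,]0,T'[$; letting $j\to\infty$ in the barrier shows that any $L^1(X)$-cluster value $u_0$ of $(\psi_t)$ as $t\to0^+$ satisfies $u_0\ge\f_0$. For the upper bound, Proposition~\ref{prop: average} gives $\int_X\f_{t,j}\,g_j\,dV\le\int_X\f_{0,j}\,g_j\,dV+Ct$ with uniform $C$, whence $\int_X u_0\,g\,dV\le\int_X\f_0\,g\,dV$ after letting $j\to\infty$ and then $t\to0$ along the cluster value. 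Since $\Vol(\{g=0\})=0$ and $u_0\ge\f_0$, this forces $u_0=\f_0$ a.e., hence everywhere, so $\psi_t\to\f_0$ in $L^1(X)$ and pointwise as $t\to0^+$.

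Finally $\psi$ has all the properties characterising $\Phi(g,F,\omega_t,\f_0)$ in Corollary~\ref{cor:unique1}, so $\psi=\f$; as the argument applies to every subsequence and always produces the same limit, the full sequence $(\f_j)$ converges to $\f$ in $L^1_{\loc}(X_T)$. I expect the main obstacle to be precisely the $t\to0$ step: one must push the subbarrier of Proposition~\ref{pr:sbarrier} and the average bound of Proposition~\ref{prop: average} through the limit, which relies on the uniformity of all the constants and on the convergence $\rho_{1,j}\to\rho_1$ of the auxiliary elliptic potentials (a secondary nuisance being the extension of Theorems~\ref{thm:conv1}--\ref{thm:conv2} and Proposition~\ref{pro:stab} to the varying forms $\omega_{t,j}$, which is routine).
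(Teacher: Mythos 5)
Your proposal is correct and tracks the paper's own proof very closely: extract a limit via Theorem~\ref{thm:sumup} and Theorem~\ref{thm:conv2}, identify the limit as a solution with the required $t$-regularity, establish the $L^1$ convergence at $t=0$ via a two-sided control (a lower barrier and the average bound of Proposition~\ref{prop: average}), and conclude with Corollary~\ref{cor:unique1}. The only deviation is in the choice of lower barrier: you pass Proposition~\ref{pr:sbarrier} (the subbarrier from Section~\ref{sec:apriori}, with auxiliary potentials $\rho_{1,j}$) through the limit, which obliges you to control $\rho_{1,j}\to\rho_1$; the paper instead applies Proposition~\ref{prop: estimate for super solutions} directly to the supersolutions $\f_j$, which builds the stability of the auxiliary potential into its proof and so avoids that extra step. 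Your version also spells out, more explicitly than the paper does, how Proposition~\ref{pro:stab} upgrades $L^1_{\loc}$ to local uniform convergence so that Bedford--Taylor gives convergence of the left-hand Monge--Amp\`ere measures; the paper compresses this into the invocation of Theorem~\ref{thm:conv2}. These are cosmetic differences; the argument is the same.
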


 \begin{proof}
 	The sequence $(\varphi_j)$ satisfies the conditions of Theorem  \ref{thm:sumup}. 
 	Hence, Theorem \ref{thm:conv2} ensures that  a subsequence of $(\varphi_j)$, still denoted by 
 	$(\varphi_j)$, converges in $L^1_{\loc}(X_T)$ to a function $\varphi \in \Pc(X_T,\omega)$ which 
 	\begin{itemize}
 		\item is a pluripotential solution to \eqref{eq: MAF as measures};
 		\item  is locally uniformly semi-concave in $t \in ]0,T[$; 
 		\item satisfies $|\partial_t \f| \leq C/t$ on $]0,T[\times X$.
 	\end{itemize}
 	
 	 It follows from Proposition \ref{prop: estimate for super solutions} that there exists  uniform constants  $C>0, t_0>0$ 
 	 such that for all  $(t,x) \in ]0,t_0] \times X$, 
 	$$
 	\varphi_{j}(t,x) \geq (1-t)e^{-Ct} \varphi_{0,j}(x) + C(t\log t -t), .
 	$$
 	Letting $j\to +\infty$ we obtain, for all  $(t,x) \in ]0,t_0] \times X$, 
 	$$
 	\varphi(t,x) \geq (1-t)e^{-Ct} \varphi_{0}(x) + C(t\log t -t).
 	$$
 This lower bound and Proposition \ref{prop: average} ensure that  $\varphi_t\to \varphi_0$ in $L^1(X)$ as $t\to 0^+$. 
 It finally follows from Corollary \ref{cor:unique1}  that $\varphi \in \Pc(X_T,\omega)$ is uniquely determined.
 \end{proof}

\subsubsection{Comparison principle 2}
  
We now extend the comparison principle (Theorem \ref{thm: weak comparison principle v2}),
 avoiding the space continuity assumption on the subsolution $\varphi$, as well as the Lipschitz type control at the origin :
  
  \begin{theorem}  \label{thm: weak comparison principle v3}
  	Assume that $\varphi,\psi\in \Pc(X_T)\cap L^{\infty}(X_T)$ are such that
  	\begin{itemize}
  		\item $\varphi$ is a pluripotential subsolution to \eqref{eq: MAF as measures}; 
  		\item $\psi$ is a pluripotential supersolution to \eqref{eq: MAF as measures};
  		 \item $\psi$  is locally uniformly semi-concave in $t$;
  		 \item $\f_t \rightarrow \f_0$ and $\p_t \rightarrow \p_0$ in $L^1$ as $t \rightarrow 0$.
  	\end{itemize}
  	If $\varphi_0 \leq \psi_0$ then $\varphi\leq \psi$. 
  \end{theorem}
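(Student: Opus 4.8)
The plan is to reduce the statement to the comparison principle already available for regular subsolutions (Proposition~\ref{thm: weak parabolic comparison principle v1} and its improvement Proposition~\ref{prop: weak parabolic comparison principle v1 improved}) by regularizing the subsolution $\varphi$. First, using the scaling invariance of the family of equations (Section~\ref{sec:invariance}, Lemma~\ref{lem quasi-increasing vs increasing}), I would assume without loss of generality that $r\mapsto F(\cdot,\cdot,r)$ is increasing, and I would fix $0<T'<T$, it being enough to prove $\varphi\le\psi$ on $[0,T']\times X$ and then let $T'\to T$. Next I would apply the regularization Proposition~\ref{thm: regularization of subsolutions} to $\varphi$: it produces a family $(u^{\varepsilon})_{\varepsilon>0}$ in $\Pc(X_{T'})\cap L^{\infty}(X_{T'})$ of pluripotential subsolutions of \eqref{eq: MAF as measures} which are $\mathcal{C}^{1/0}$ in $t$, satisfy $u^{\varepsilon}_t\to u^{\varepsilon}_0$ in $L^{1}(X)$ as $t\to0^{+}$, converge to $\varphi$ in $L^{1}_{\loc}$ as $\varepsilon\to0$, and obey $\sup_X\bigl(u^{\varepsilon}(0,\cdot)-\varphi_0\bigr)\to0$, so that in particular $u^{\varepsilon}(0,\cdot)\le\psi_0+o_{\varepsilon}(1)$.

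The heart of the argument is then to establish $u^{\varepsilon}\le\psi$ on $[0,T']\times X$ for each $\varepsilon$, which I would do by re-running the localized maximum principle of the proof of Proposition~\ref{thm: weak parabolic comparison principle v1} with $u^{\varepsilon}$ in the role of $\varphi$, with one essential modification: a bounded subsolution --- unlike a supersolution, cf.\ Proposition~\ref{pro: continuity of solution elliptic EGZ} --- need not be continuous in the space variable, so $u^{\varepsilon}$ is only upper semi-continuous on $\Omega$. Concretely, one introduces the auxiliary $\theta$-psh function $\phi$ with analytic singularities (blowing up on $\partial\Omega$) together with the $\theta/2$-Monge-Amp\`ere potential, forms the test function $w=(1-\delta)u^{\varepsilon}+\delta\phi-\psi-3\varepsilon' t$, which is upper semi-continuous on $[0,T']\times\Omega$ and tends to $-\infty$ on $\partial\Omega$, and looks at its maximum at $(t_0,x_0)$. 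If $w(t_0,x_0)>0$ then $t_0>0$, and the classical maximum principle in $t$ --- legitimate because $u^{\varepsilon}$ is $\mathcal{C}^{1/0}$ --- gives $(1-\delta)\partial_t u^{\varepsilon}(t_0,\cdot)\ge\partial_t^{-}\psi(t_0,\cdot)+3\varepsilon'$ on the compact contact set $K=\{w(t_0,\cdot)=w(t_0,x_0)\}$. Since $\partial_t u^{\varepsilon}(t_0,\cdot)$ is continuous and $\partial_t^{-}\psi(t_0,\cdot)$ is upper semi-continuous on $\Omega$ (semi-concavity of $\psi$ in $t$ plus Proposition~\ref{pro: continuity of solution elliptic EGZ}), the set $\{(1-\delta)\partial_t u^{\varepsilon}(t_0,\cdot)>\partial_t^{-}\psi(t_0,\cdot)+2\varepsilon'\}$ is open and contains $K$; I would take for $D$ a small tubular neighbourhood of $K$ lying inside it and relatively compact in $\Omega$ (this replaces the super-level set of $w$ used in the original proof, which relied on spatial continuity). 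On $\partial D$ one has $\sup_{\partial D}w(t_0,\cdot)<w(t_0,x_0)$ by upper semi-continuity and compactness, and from here the mixed Monge-Amp\`ere inequality (Lemma~\ref{lem: MA mixed}), the monotonicity of $F$ in $r$, and the domination principle (Proposition~\ref{pro: domination principle}) give a contradiction exactly as in Proposition~\ref{thm: weak parabolic comparison principle v1}. Hence $w$ is bounded above by $\delta\sup_X|\varphi_0|$ on $[0,T']\times\Omega$; letting $\delta,\varepsilon'\to0$ yields $u^{\varepsilon}\le\psi$ on $[0,T']\times\Omega$, hence on $[0,T']\times X$. To drop the continuity of $\psi$ up to $t=0$ I would, as in Proposition~\ref{prop: weak parabolic comparison principle v1 improved}, first compare $u^{\varepsilon}$ with a small time-translate of $\psi$ corrected by a $\theta$-Monge-Amp\`ere potential, using the lower bound for semi-concave supersolutions (Proposition~\ref{prop: estimate for super solutions}) to control the initial values, and then send the translation parameter to $0$.

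Finally, letting $\varepsilon\to0$ gives $\varphi\le\psi$ on $[0,T']\times X$, and $T'\to T$ yields the conclusion on $X_T$. The main obstacle is the interaction of the two steps above: Proposition~\ref{thm: regularization of subsolutions} produces a subsolution that is regularized \emph{in time} but is still merely upper semi-continuous \emph{in space}, so the localized comparison argument cannot be invoked as a black box and must be re-run with upper semi-continuity only (the tubular-neighbourhood substitution). A secondary technical point is that, when $F$ is only semi-convex in $r$ (not convex), Proposition~\ref{thm: regularization of subsolutions} requires the extra control $|\partial_t\varphi|\le C/t$ on the subsolution, which is not among the hypotheses and must be handled separately --- it is automatic in the geometric applications, where $F$ is affine in $r$.
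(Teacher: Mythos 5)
Your approach is genuinely different from the paper's, and there is a gap in it that you flag but do not close. The paper first reduces to the case where $\psi$ is the canonical semi-concave solution of Corollary~\ref{cor:unique1} (using Theorem~\ref{thm: weak comparison principle v2}), then in Step~2 reduces to smooth data $(g,F,\psi_0)$ \emph{and a uniformly bounded $\partial_t\varphi$} by replacing $\varphi$ with the time-translate $\varphi(\cdot+\varepsilon,\cdot)$ together with an $L^p$-perturbation of $g$ and the stability result Proposition~\ref{thm: stability}, and finally in Step~1.1 handles the lack of spatial continuity of the subsolution by a quasi-continuity device: it mixes $\varphi$ with an auxiliary potential $\rho_\varepsilon$ whose Monge-Amp\`ere measure carries the enormous weight $e^{2M/\varepsilon}$ on a bad open set $U$ outside of which $\varphi_t$ and $\dot\varphi_t$ are continuous, and a viscosity interpretation (Lemma~\ref{lem: towards viscosity almost everywhere}) yields a contradiction at the global maximum point whether or not it lies in $U$. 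You instead regularize only $\varphi$ via Proposition~\ref{thm: regularization of subsolutions} and re-run a tubular-neighbourhood variant of the localized maximum principle of Proposition~\ref{thm: weak parabolic comparison principle v1}.

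The concrete gap is the one you name in your last sentence: when $F$ is merely semi-convex in $r$ --- which is the generality of the theorem --- Proposition~\ref{thm: regularization of subsolutions} requires $|\partial_t\varphi|\le C/t$, which is not among the theorem's hypotheses, and declaring it ``automatic in the geometric applications'' does not prove the theorem as stated. Closing it is exactly what the paper's Step~2 does: the translate $\varphi(\cdot+\varepsilon,\cdot)$ has a uniformly bounded time derivative on $[0,T'-\varepsilon]$ by the local Lipschitz property, and the translation is removed afterwards using Proposition~\ref{prop: estimate for super solutions} and Proposition~\ref{thm: stability}; this nontrivial reduction is simply absent from your plan. A secondary, smaller point: replacing the super-level set $D=\{w(t_0,\cdot)>w(t_0,x_0)-\eta\}$ by a tubular neighbourhood of the contact set $K$ destroys the automatic inclusion $D\subset\{u>v\}$ (which the super-level set gave you for $\eta$ small because $w(t_0,\cdot)$ was continuous there); since monotonicity of $F$ in $r$ is invoked pointwise to pass to $(\omega_{t_0}+dd^c u)^n\ge e^{\varepsilon}(\omega_{t_0}+dd^c v)^n$ on the integration region, one must either restrict the domination argument to $\{v<u\}\cap D$ or replace $\tilde u=u+\min_{\partial D}(v-u)$ by $\tilde u=u+\min\bigl(0,\min_{\partial D}(v-u)\bigr)$ so that $\{v<\tilde u\}\subset\{v<u\}$; the phrase ``exactly as in Proposition~\ref{thm: weak parabolic comparison principle v1}'' hides this adjustment.
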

  
\begin{proof}
We fix $T'<T$ and prove that $\varphi\leq \psi$ on $[0,T']\times X$. 
The result  follows then by letting $T'\to T$. 
Using the invariance properties of our family of equations, we can assume 
that $r \mapsto F(\cdot,\cdot,r) $ is increasing.

 If $\tilde{\psi}$ is the unique pluripotential solution constructed by approximation 
 (see Corollary \ref{cor:unique1}),  Theorem \ref{thm: weak comparison principle v2} ensures that $\tilde{\psi}\leq \psi$ on $X_{T'}$.  We can thus assume without loss of generality that $\psi=\tilde{\psi}$.  We proceed in several steps.

 \medskip
 
 \noindent{\it Step 1.}  Assume  that the data $(g,F,\psi_0)$ is smooth, $g>0$, $\theta$ is K\"ahler, and  the time derivative $\partial_t \varphi(t,x)$ is uniformly bounded in $]0,T'] \times X$.

 Then $\psi$ is smooth since there 
 exists a unique smooth solution  (as follows from \cite{GZ13,DNL14,Dat1} and Corollary \ref{cor:unique1}). 
 
 If $x \mapsto \f(\cdot,x)$ were known to be continuous, we could invoke Theorem \ref{thm: weak comparison principle v2}
 to conclude. In absence of this extra assumption, we take a little detour inspired by viscosity techniques. 
 
 \smallskip
 
 \noindent{\it Step 1.1.}   Assume the following conditions
 \begin{itemize}
 	\item[(1)] for all $x\in X$, $\dot{\varphi}_t(x)$ exists and $t\mapsto \dot{\varphi}_t(x)$ is continuous in $t\in ]0,T']$. 
 	\item[(2)] $\varphi_t, \dot{\varphi}_t$ are uniformly quasi continuous on $  X$. This means, for any $\varepsilon>0$ there exists an open subset $U$ with $\Vol(U)<\varepsilon$ such that on the compact set $X\setminus U$ the functions $x\mapsto \varphi_t(x), \dot{\varphi}_t(x)$  are continuous for all $t\in ]0,T']$. 
 \end{itemize}
 The first condition ensures that the inequality 
 $$
 (\omega_t +dd^c \varphi_t)^n \geq e^{\dot{\varphi}_t + F(t,\cdot, \varphi_t(\cdot))} gdV,
 $$
 holds in the pluripotential sense on $X$ for all $t\in ]0,T']$. As will be shown later, the regularizing family $\varphi_{\varepsilon}(t,x)$ as constructed in Theorem \ref{thm: regularization of subsolutions} satisfies these two conditions. 
 
 We introduce the following constants: $M:=2+M_1+M_2$, where
 $$
 M_1:=  \sup_{[0,T'] \times X} |\varphi(t,x)|;
 $$
 and 
 $$
 M_2:= \sup_{[0,T'] \times X} \left ( |\dot{\varphi}_t(x)| +  |F(t,x,\varphi_t(x))| + |F(t,x,\psi_t(x))| \right). 
 $$
 Fix $\varepsilon>0$ small enough. By uniform quasi-continuity of $\varphi_t$ and $\dot{\varphi}_t$ there exists an open set $U$ such that 
 \begin{itemize}
 	\item  $\Vol(U)<e^{-4M/\varepsilon}$; 
 	\item for all $t\in ]0,T']$, the functions $\varphi_t$ and $\dot{\varphi}_t$ are continuous on $X\setminus U$. 
 \end{itemize}
 Let $\rho_{\varepsilon}$ be the unique bounded $\theta$-psh function on $X$ such that 
 $$(\theta+dd^c \rho_{\varepsilon})^n = e^{2M/\varepsilon} {\bf 1}_U gdV + a_{\varepsilon} g dV, \ \sup_X \rho_{\varepsilon}=0.$$
 Here $0 \leq a_{\varepsilon}$ is a normalization constant. The boundedness of $g$  yields 
 $$
 \int_U e^{2M/\varepsilon} gdV \leq e^{-2M/\varepsilon} \sup_X g,
 $$
 hence for $\varepsilon>0$ small enough,  $a_{\varepsilon}\geq 1/2$.  Also,  the $L^2$-norm of the density of $(\theta+dd^c \rho_{\varepsilon})^n$ is uniformly bounded hence by \cite[Proposition 2.6]{EGZ09},  $\rho_{\varepsilon}$ is uniformly bounded. 
 
Set, for  $(t,x) \in X_{T'}$,
$$
u_{\varepsilon}(t,x) := (1-\varepsilon) \varphi_t + \varepsilon \rho_{\varepsilon}.
$$
We prove that $u_{\varepsilon} - \psi -C\varepsilon t\leq 0$ in $[0,T']\times X$, where $C:= 2M + M_1\kappa_F$. 
 By contradiction assume that it were not the case. Since the function is upper semicontinuous on the compact set $[0,T']\times X$, its maximum is attained at some $(t_0,x_0)\in ]0,T'] \times X$. We then have 
 $$
 u_{\varepsilon}(t_0,x_0) -\psi(t_0,x_0) \geq C\varepsilon t_0>0, 
 $$
 hence 
\begin{equation}
	\label{eq:gen.comp.1}
	\varphi(t_0,x_0) \geq \psi(t_0,x_0) -\varepsilon M_1.
\end{equation}
By the classical maximum principle we have
\begin{equation}
	\label{eq:gen.comp.2}
(1-\varepsilon) \dot{\varphi}_{t_0}(x_0) \geq C\varepsilon + \dot{\psi}_{t_0}(x_0).
\end{equation}
By assumption $(1)$, $t\mapsto \dot{\varphi}_t(x)$  is continuous on $]0,T'[$ for all $x\in X$.  Since $\varphi$ is a subsolution to \eqref{eq: MAF as measures},  Lemma \ref{lem: sub super solution pointwise}  then ensures that 
\begin{equation*}
	(\omega_{t_0}+dd^c \varphi_{t_0})^n \geq e^{\dot{\varphi}_{t_0} + F(t_0,\cdot, \varphi_{t_0}(\cdot))} gdV
\end{equation*}
holds in the sense of measures on $X$. By construction of $\rho_{\varepsilon}$ we also have 
\begin{equation*}
	(\omega_{t_0}+dd^c \rho_{\varepsilon})^n \geq (\theta+dd^c \rho_{\varepsilon})^n \geq e^{f_{\varepsilon}} gdV,
\end{equation*}
where 
$$
f_{\varepsilon}:= \begin{cases}
 \frac{2M}{\varepsilon}	\ , \ x \in U , \\
	-\log 2\ , \ x\in X\setminus U.
\end{cases}
$$
It then follows from Lemma \ref{lem: MA mixed} that 
\begin{equation}
	\label{eq:gen.comp.4}
(\omega_{t_0}+dd^c u_{\varepsilon}(t_0,\cdot))^n \geq e^{h_{\varepsilon}}gdV,
\end{equation}
where 
$$
h_{\varepsilon}:= \begin{cases}
	M\ , \ x\in U,\\
	(1-\varepsilon) \dot{\varphi}_{t_0} + (1-\varepsilon) F(t_0,x,\varphi_{t_0}(x)) -\varepsilon \log 2 \ , \ x \in X\setminus U. 
\end{cases}
$$
The assumption $(2)$ ensures that $h_{\varepsilon}$ is lower semicontinuous on $X\setminus U$. The  choice of $M$ then shows that $h_{\varepsilon}$ is lower semicontinuous on $X$ and 
\begin{equation}
	\label{eq:gen.comp.5}
	h_{\varepsilon}(x) \geq (1-\varepsilon) \dot{\varphi}_{t_0}(x) + (1-\varepsilon) F(t_0,x,\varphi_{t_0}(x)) -\varepsilon \log 2, \ \forall x\in X.
\end{equation}
It then follows from Lemma \ref{lem: towards viscosity almost everywhere} that \eqref{eq:gen.comp.4} holds in the viscosity sense. The function $x\mapsto \psi_{t_0}(x) -\psi_{t_0}(x_0) + u_{\varepsilon}(t_0,x_0)$ is a smooth upper test for $u_{\varepsilon}(t_0,\cdot)$ at $x_0$, hence
$$
(\omega_{t_0}+dd^c \psi_{t_0})^n \geq e^{h_{\varepsilon}} gdV
$$
holds in the classical sense at $x_0$. Now from \eqref{eq:gen.comp.2} and \eqref{eq:gen.comp.5} we have  
\begin{equation}
	\label{eq:gen.comp.6}
	(\omega_{t_0} +dd^c \psi_{t_0})^n(x_0) \geq e^{\dot{\psi}_{t_0}(x_0)+ (C-\log 2)\varepsilon  + F(t_0,x_0,\varphi_{t_0}(x_0)) }g(x_0)dV.
\end{equation}
It follows from \eqref{eq:gen.comp.1} and the monotonicity of $r\mapsto F(t,x,r)$ that 
\begin{eqnarray*}
	F(t_0,x_0,\varphi_{t_0}(x_0)) &\geq &  F(t_0,x_0,\psi_{t_0}(x_0)-M_1\varepsilon) \\
	& \geq &  F(t_0,x_0,\psi_{t_0}(x_0)) -\kappa_F M_1 \varepsilon. 
\end{eqnarray*}
Hence
\begin{eqnarray*}
	(1-\varepsilon) F(t_0,x_0,\varphi_{t_0}(x_0)) &\geq &    F(t_0,x_0,\psi_{t_0}(x_0)) -\kappa_F M_1 \varepsilon - M\varepsilon\\
	&\geq &    F(t_0,x_0,\psi_{t_0}(x_0)) -(C-M) \varepsilon. 
\end{eqnarray*}
This together with \eqref{eq:gen.comp.6} gives a contradiction since $\psi$ is a solution to \eqref{eq: MAF as measures}. 

We thus have that $u_{\varepsilon} \leq \psi + C \varepsilon t$ on $[0,T']\times X$. Letting $\varepsilon\to 0$ we arrive at the conclusion.
%
%
\smallskip

\noindent{\it Step 1.2.} We next remove the assumptions on $\varphi$ in Step 1.1.  

 Using Proposition \ref{thm: regularization of subsolutions} we can find $\varphi^{\varepsilon}$ 
 which are pluripotential subsolutions to  \eqref{eq: MAF as measures} and which satisfy the assumptions in Step 1.1. Indeed, it suffices to check that $\varphi_t$ is uniformly quasi-continuous on $X$. But this holds by quasi-continuity of $\varphi_t$ and by the Lipschitz condition. More precisely,  for fix $\varepsilon>0$, there exists an open subset $U$ with $\Vol(U) <\varepsilon$ such that $\varphi_t$ is continuous on $X\setminus U$ for all $t\in ]0,T[ \cap \mathbb{Q}$. The continuity of $\varphi_t$ on $X\setminus U$ for irrational points $t$ follows from the Lipschitz property of the family $\varphi_t$.

Thus the previous step applies and yields $\varphi^{\varepsilon} \leq \psi+O(\e)$. 
Letting $\varepsilon\to 0$ we arrive at $\varphi\leq \psi$. 

\medskip

\noindent{\it Step 2.}
We finally remove the smoothness assumption on the data and the Lipschitz condition on $\varphi$ by using 
the stability result above together with an argument from \cite{GLZstability}. 

Let $(g_j,F_j)$ be  smooth approximants of $(g,F)$.  Fix  $\varepsilon>0$ small enough  and consider 
\[
\varphi_{\varepsilon,j}(t,x):= (1-\delta_j) \varphi(t+\varepsilon, x) + \delta_j  \rho_j + n \log (1 - \delta_j) - (B \delta_j  + C\varepsilon+ \eta_j)t,
\]
where $\delta_j \in ]0,1/2[$ will be specified later, and  $\rho_j\in \PSH(X,\theta)$ is the unique 
solution to 
\[
(\theta +dd^c \rho_j)^n = \left ( a_j+ \frac{|g_j-g|}{\|g_j-g\|_p}\right ) dV,
\]
normalized by $\sup_X \rho_j=0$
for a normalization constant $a_j\geq 0$.

We are going to prove that, for suitable choices of $B,M_j$, the function $\varphi_{\varepsilon,j}$ is a pluripotential subsolution to \eqref{eq: MAF as measures} with data $(\omega_{\varepsilon,j},g_j,F_{j})$, where 
\begin{itemize}
	\item $\omega_{\varepsilon,j}(t):= \omega(t+\varepsilon) + 2^{-j} \Theta$; 
	\item $0<g_j$ is smooth and $\|g_j-g\|_p\to 0$; 
	\item $F_{j}$ is smooth in $]0,T[ \times X \times \bR$ with the same Lipschitz   and semi-convexity constants as $F$,
	 and $F_j$  locally uniformly converge to $F$.
\end{itemize}

Let $\psi_{\varepsilon,j}$ be the unique smooth solution to \eqref{eq: MAF as measures} with the above data $(\omega_{j,\varepsilon}, g_j, F_j)$ such that $\psi_{\varepsilon,j}(0,\cdot) \geq  (1-\delta_j)\varphi(\varepsilon,\cdot)$ and 
\[
\int_X\psi_{\varepsilon,j}(0,x) dV(x) \leq \int_X (1-\delta_j) \varphi(\varepsilon,x) dV(x) + 2^{-j}.
\]

It follows from Proposition \ref{thm: stability} that $\psi_{\varepsilon,j}$ converge, as $j\to +\infty$ and $\varepsilon\to 0$, to some $\tilde{\psi} \in \Pc(X_{T'})$ which is a pluripotential solution to (CMAF) with data $(\omega_t,g,F)$ and initial value $\varphi_0$. Moreover, $\tilde{\psi},\psi$ satisfy the assumptions of Theorem \ref{thm: weak comparison principle v2}, hence $\tilde{\psi}\leq \psi$. 

We now prove that $\varphi \leq \tilde{\psi}$ by showing that $\varphi_{\varepsilon,j}$ is a subsolution
to an approximate \eqref{eq: MAF as measures}.
 Since $\varphi$ is locally uniformly Lipschitz, there exists a constant $C_1$ (depending also on $\varepsilon$) such that 
\[
\sup_{[\varepsilon,T']\times X} | \dot{\varphi} |\leq C_1,
\]
hence 
\begin{flalign*}
\partial_t \varphi(t+\varepsilon,x)& \geq (1-\delta_j) \partial_t \varphi(t+\varepsilon,x) - C_1 \delta_j\\
& = \partial_t \varphi_{\varepsilon,j} (t,x) 	 + (B-C_1)\delta_j + C\varepsilon+ \eta_j,
\end{flalign*}
A direct computation yields
\begin{flalign*}
(\omega_{\varepsilon,j} + & dd^c \varphi_{\varepsilon,j})^n \geq 	e^{n\log (1-\delta_j) + \partial_t \varphi(t+\varepsilon,x) + F(t+\varepsilon,x,\varphi(t+\varepsilon, x))} gdV + \delta_j^n \frac{|g_j-g|}{\|g_j-g\|_p}dV. 
\end{flalign*}
Set  $M_{\varphi} := \sup_{X_T} |\varphi|$ and $J:= [-M_{\varphi}, M_{\varphi}]$ and
\[
\eta_{j}:= \sup \left \{  |F(t,x,r)-F_j(t,x,r)| \setdef   (t,x,r) \in [0,T'] \times X\times  J  \right \}. 
\]
Then $\eta_j\to 0$ as $j\to +\infty$.  Setting 
\[
\delta_j := e^{C_1 + M_F} \|g_j-g\|_p, \ 
\]
where
\[
M_F:= \sup\{ |F(t,x,r)| \setdef (t,x,r) \in [0,T[ \times J\},
\]
and considering $j$ large enough (so that $\delta_j\leq 1/2$), 
we obtain
\begin{flalign*}
(\omega_{\varepsilon,j} + & dd^c \varphi_{\varepsilon,j})^n \geq 	e^{n\log (1-\delta_j) + \partial_t \varphi(t+\varepsilon,x) + F(t+\varepsilon,x,\varphi(t+\varepsilon, x))} g_jdV. 
\end{flalign*}

The Lipschitz condition on $F$ ensures that
\[
F(t+\varepsilon, x, \varphi(t+\varepsilon,x)) - F(t, x, (1-\delta_j) \varphi(t+\varepsilon,x)) \geq -C_2 (\delta_j+ \varepsilon), 
\]
for a uniform constant $C_2>0$. Choosing $B,C>0$ large enough and using $\log (1-\delta_j) \geq - \delta_j$,  
we conclude that
\begin{flalign*}
(\omega_{\varepsilon,j}(t,x) +  dd^c \varphi_{\varepsilon,j}(t,x))^n & \geq 	e^{ \partial_t \varphi_{\varepsilon,j} + F(t,x,\varphi_{\varepsilon,j})+ \eta_j} g_jdV\\
& \geq e^{ \partial_t \varphi_{\varepsilon,j} + F_j(t,x,\varphi_{\varepsilon,j})} g_jdV. 
\end{flalign*}
Thus $\varphi_{\varepsilon,j}$ is a subsolution
to   \eqref{eq: MAF as measures} for the data $(g_j,F_j,\omega_{\e,j})$.

We can now apply Step 1 to obtain $\varphi_{\varepsilon,j} \leq \psi_{\varepsilon,j}$. 
Letting $j\to +\infty$ and then $\varepsilon\to 0$   eventually  shows that $\f \leq \p$.
\end{proof}

We have used the following straightforward extension 
of \cite[Proposition 1.5]{EGZ11} :

\begin{lemma}
	\label{lem: towards viscosity almost everywhere}
	Assume that $u$ is a psh function in an open set $U\subset \mathbb{C}^n$ and $(dd^c u)^n \geq e^{f} dV$ in the pluripotential sense, where $f$ is lower semicontinuous in $U$. 
	Then the inequality holds in the viscosity sense. 
\end{lemma}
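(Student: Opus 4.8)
The statement is that if $u$ is plurisubharmonic on an open set $U\subset\mathbb{C}^n$ and $(dd^c u)^n\geq e^f\,dV$ holds in the pluripotential (Bedford--Taylor) sense, where $f$ is lower semicontinuous, then the same inequality holds in the viscosity sense, meaning: for every $x_0\in U$ and every $C^2$ function $q$ defined near $x_0$ with $q\geq u$ near $x_0$ and $q(x_0)=u(x_0)$, one has $(dd^c q(x_0))^n\geq e^{f(x_0)}\,dV$ (interpreted as: either $dd^c q(x_0)$ is not positive semidefinite, or its determinant-type density is $\geq e^{f(x_0)}$).

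The plan is to follow the argument of \cite[Proposition 1.5]{EGZ11} essentially verbatim, the only modification being that we replace a continuous (or upper/lower bounded) right-hand side by a lower semicontinuous one, which is exactly the regularity that makes the approximation argument go through. First I would fix $x_0$ and an upper test function $q\in C^2$ touching $u$ from above at $x_0$. If $dd^c q(x_0)$ fails to be positive semidefinite there is nothing to prove, so assume it is $\geq 0$. The standard trick is to replace $q$ by $q_\varepsilon(z):=q(z)+\varepsilon|z-x_0|^2-$ (a small constant), which is now \emph{strictly} plurisubharmonic near $x_0$, still lies above $u$ on a small sphere $\partial B(x_0,r)$, but dips below $u$ near $x_0$; then $v:=\max(u,q_\varepsilon)$ agrees with $u$ outside $B(x_0,r)$ and equals $q_\varepsilon$ in a smaller ball $B(x_0,\delta)$. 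By locality and monotonicity of the complex Monge--Amp\`ere operator under taking maxima, together with the hypothesis $(dd^c u)^n\geq e^f\,dV$, the measure $(dd^c v)^n$ dominates $e^f\,dV$ on $B(x_0,r)$; in particular, on $B(x_0,\delta)$ we get $(dd^c q_\varepsilon)^n\geq e^f\,dV$ as smooth forms. Now comes the point where lower semicontinuity of $f$ is used: evaluating the pointwise inequality $(dd^c q_\varepsilon)^n\geq e^f\,dV$ at points $z\to x_0$ and using $\liminf_{z\to x_0} f(z)\geq f(x_0)$ (lower semicontinuity) together with continuity of $z\mapsto (dd^c q_\varepsilon)^n(z)$, we obtain $(dd^c q_\varepsilon)^n(x_0)\geq e^{f(x_0)}\,dV$. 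Finally letting $\varepsilon\to 0$ and noting $dd^c q_\varepsilon(x_0)\to dd^c q(x_0)$ gives $(dd^c q(x_0))^n\geq e^{f(x_0)}\,dV$, as desired.

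The main technical point — and the only place the argument genuinely differs from the continuous case — is the passage from the almost-everywhere / pointwise inequality on the small ball to the inequality \emph{at the center} $x_0$. With a continuous right-hand side this is immediate; with only lower semicontinuity one must be slightly careful to take the limit on the correct side, i.e. one uses $e^{f(x_0)}\le \liminf_{z\to x_0}e^{f(z)}\le \liminf_{z\to x_0}(dd^c q_\varepsilon)^n(z)=(dd^c q_\varepsilon)^n(x_0)$, which is exactly the inequality that lower semicontinuity supplies. No other part of the \cite{EGZ11} argument needs adjustment, so the proof reduces to invoking that reference with this observation; accordingly one may simply write that the proof is identical to \cite[Proposition 1.5]{EGZ11}, the lower semicontinuity of $f$ being precisely what is needed to pass to the limit at the touching point.
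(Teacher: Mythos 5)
Your top-level conclusion — that the lemma is proved by the same argument as \cite[Proposition~1.5]{EGZ11}, with lower semicontinuity of $f$ being exactly the regularity needed — is correct, and it matches the paper, which offers no independent proof and simply defers to that reference. But the detailed argument you sketch has a genuine flaw and would not compile into a proof.

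First, the gluing step is self-contradictory. You take $q_\varepsilon(z)=q(z)+\varepsilon|z-x_0|^2-\delta$, so that $q_\varepsilon(x_0)<u(x_0)$ (``dips below $u$ near $x_0$''). Then $v:=\max(u,q_\varepsilon)$ necessarily \emph{equals $u$} (not $q_\varepsilon$) near $x_0$, so the assertion that $v=q_\varepsilon$ on a small ball $B(x_0,\delta)$ around the center is false, and one never obtains a pointwise inequality for $(dd^c q_\varepsilon)^n$ near $x_0$. Second, even setting this aside, the claim that $(dd^c v)^n\geq e^f\,dV$ on $B(x_0,r)$ is circular: the standard inequality gives $(dd^c\max(u,q_\varepsilon))^n\geq \mathbf{1}_{\{u\geq q_\varepsilon\}}(dd^c u)^n+\mathbf{1}_{\{u<q_\varepsilon\}}(dd^c q_\varepsilon)^n$, so the domination by $e^f\,dV$ on $\{u<q_\varepsilon\}$ would already require $(dd^c q_\varepsilon)^n\geq e^f\,dV$ there — which is essentially what one is trying to prove. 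A $C^2$ function $q$ touching $u$ from above at a single point carries no a priori pointwise relation between $(dd^c q)^n$ and $(dd^c u)^n$ in a neighborhood, so a direct propagation argument of this kind cannot work.

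The argument in \cite{EGZ11} runs the other way, by contradiction and via the Bedford--Taylor comparison principle. If $q$ touches $u$ from above at $x_0$ (so $dd^c q(x_0)\geq 0$ automatically since $u$ is psh) and $(dd^c q(x_0))^n<e^{f(x_0)}\,dV$, one sets $q_\varepsilon=q+\varepsilon|z-x_0|^2$; for small $\varepsilon$ still $(dd^c q_\varepsilon(x_0))^n<e^{f(x_0)}$, and now \emph{here} lower semicontinuity enters: since $(dd^c q_\varepsilon)^n$ is continuous and $\liminf_{z\to x_0}f(z)\geq f(x_0)$, the strict inequality $(dd^c q_\varepsilon)^n<e^f\,dV\leq(dd^c u)^n$ persists on a small ball $B(x_0,r)$. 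Subtracting a constant $\delta<\varepsilon r^2$ from $q_\varepsilon$ gives a psh function on $B(x_0,r)$ that dominates $u$ on $\partial B(x_0,r)$ and has smaller Monge--Amp\`ere measure; the comparison principle then forces $q_\varepsilon-\delta\geq u$ on $B(x_0,r)$, contradicting $q_\varepsilon(x_0)-\delta<u(x_0)$. So you correctly located where lower semicontinuity is used (propagating an inequality between the continuous $(dd^c q_\varepsilon)^n$ and $e^f$ from $x_0$ to a neighborhood), but the direction of the inequality and the mechanism of the proof are not the ones you describe.
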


  \begin{cor}
  There exists a unique solution to the Cauchy problem for \eqref{eq: MAF as measures} which is locally uniformly semi-concave in $t$.
  It is the envelope of pluripotential subsolutions.
  \end{cor}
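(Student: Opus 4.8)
The plan is to deduce the statement directly from the comparison principle (Theorem~\ref{thm: weak comparison principle v3}) and the existence result (Theorem~\ref{thm:existence}), so that no new analytic input is required.

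\textbf{Existence and uniqueness.} Theorem~\ref{thm:existence} provides a bounded pluripotential solution $\Phi:=\Phi(F,g,\omega,\varphi_0)$ to the Cauchy problem with initial datum $\varphi_0$ which is locally uniformly semi-concave in $t$ and satisfies $\Phi_t\to\varphi_0$ in $L^1(X)$ (and pointwise). Suppose $\psi\in\Pc(X_T,\omega)\cap L^\infty(X_T)$ is any pluripotential solution to \eqref{eq: MAF as measures} with $\psi_t\to\varphi_0$ in $L^1$ which is locally uniformly semi-concave in $t$. Then $\Phi$ is in particular a pluripotential subsolution and $\psi$ a supersolution, both have the same initial value, and $\psi$ is locally uniformly semi-concave in $t$, so Theorem~\ref{thm: weak comparison principle v3} yields $\Phi\le\psi$. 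Exchanging the roles of $\Phi$ and $\psi$ (both are solutions, hence both sub- and supersolutions, and $\Phi$ is locally uniformly semi-concave in $t$) gives $\psi\le\Phi$. Hence $\psi=\Phi$, which proves uniqueness.

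\textbf{Envelope representation.} Let $\mathcal S$ denote the family of all bounded pluripotential subsolutions $u\in\Pc(X_T,\omega)\cap L^\infty(X_T)$ of \eqref{eq: MAF as measures} such that $u_t$ converges in $L^1(X)$, as $t\to0^+$, to some $\omega_0$-psh function $u_0$ with $u_0\le\varphi_0$. On the one hand $\Phi\in\mathcal S$ (with $\Phi_0=\varphi_0$), so $\sup_{u\in\mathcal S}u\ge\Phi$. On the other hand, for every $u\in\mathcal S$ we may apply Theorem~\ref{thm: weak comparison principle v3} with subsolution $u$ and supersolution $\Phi$: indeed $\Phi$ is locally uniformly semi-concave in $t$, $\Phi_t\to\varphi_0$ in $L^1$, and $u_0\le\varphi_0=\Phi_0$; we conclude $u\le\Phi$ on $X_T$. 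Taking the supremum over $u\in\mathcal S$ and then the upper semi-continuous regularization, and using that $\Phi$ is already upper semi-continuous on $[0,T[\times X$ (Proposition~\ref{pro: usc} and the remark following Theorem~\ref{thm:existence}), we get $\bigl(\sup_{u\in\mathcal S}u\bigr)^{*}=\Phi$. Thus the unique locally uniformly semi-concave solution $\Phi$ coincides with the envelope of pluripotential subsolutions of the Cauchy problem.

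There is no genuine obstacle here, as everything is a formal consequence of Theorems~\ref{thm:existence} and~\ref{thm: weak comparison principle v3}; the only point deserving attention is the correct choice of the competitor class $\mathcal S$, which must incorporate both the boundedness and the $L^1$-convergence $u_t\to u_0\le\varphi_0$ at $t=0$ so that the comparison principle is applicable.
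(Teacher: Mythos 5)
Your proof is correct and takes essentially the approach the paper intends: the corollary is a direct consequence of the existence result (Theorem~\ref{thm:existence}) and the comparison principle (Theorem~\ref{thm: weak comparison principle v3}), applied in both directions for uniqueness and one direction for the envelope bound. One small remark: the upper semi-continuous regularization step in your envelope argument is actually superfluous, since $\Phi$ itself belongs to the competitor class $\mathcal{S}$ and dominates every member of it, so $\sup_{u\in\mathcal S}u = \Phi$ pointwise without regularization.
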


  \subsubsection{Stability 2}

  We are now ready to prove \ref{main Thm C} of the introduction.
    We assume here that 
 \begin{itemize}
\item  $ F, G : \Hat{ X}_T := [0,T[ \times X \times \R \rightarrow \R $ are continuous;
\item $F,G$ are increasing in the last variable;
 \item $F,G$ are  uniformly Lipschitz in $r$ with Lipschitz constants $L_F,L_G$. 
 \item $0 \leq f, g \in L^p (X)$  with $p > 1$.
 \end{itemize}
 The Lipschitz assumption on $F$  means that for all $(t,x)\in X_T$,
 $$
 |F(t,x,r) - F(t,x,r')| \leq L_F|r-r'| . 
 $$
  
  \begin{theorem}
  Assume that  $\f\in \Pc(X_T,\omega)$ is a solution to the parabolic equation \eqref{eq: MAF as measures} with 
  admissible data $(F,f)$ and $\psi \in \Pc(X_T,\omega)$ is a bounded solution to   \eqref{eq: MAF as measures} with 
  admissible data $(G,g)$. 
 
  There exists  $\alpha \in ]0,1[$ and for any $\e > 0$ there  exists  $A (\e) > 0$  
such that 
$$
 \sup_{[\e,T[ \times X} \vert \f - \psi\vert  \leq  A (\e) \Vert \f_\e - \psi_\e\Vert_{L^1 (X)}^{\alpha} + T \, \sup_{\Hat{X}_T} \vert F- G \vert +  A (\e)  \,  \Vert g - f \Vert_p^{1 \slash n}.
 $$
 
 In particular if
 \begin{itemize}
\item $(g_j)$ are  densities which converge to $g$ in $L^p(X)$,
\item $F_j$ converges to $F$ locally uniformly.
\item $\f_{0,j}$ are bounded $\omega_{0}$-psh functions converging in $L^1(X)$ to $\f_0$,
\end{itemize}
then  $\Phi(F_j,g_j,\f_{0,j})$ locally uniformly converges to 
$\Phi(F,g,\f_{0})$.
  \end{theorem}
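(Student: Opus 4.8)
The plan is to prove the two‑sided quantitative bound
$$\sup_{[\varepsilon,T[\times X}|\varphi-\psi|\le A(\varepsilon)\|\varphi_\varepsilon-\psi_\varepsilon\|_{L^1(X)}^{\alpha}+T\sup_{\widehat X_T}|F-G|+A(\varepsilon)\|g-f\|_p^{1/n}$$
by a perturbed comparison argument, and then to deduce the convergence statement. Fix $\varepsilon\in\,]0,T[$ and $T'\in\,]\varepsilon,T[\,$; it is enough to bound $\varphi-\psi$ on $[\varepsilon,T']\times X$ and let $T'\to T$. Recall that $\varphi$ and $\psi$ are bounded and locally uniformly semi-concave in $t$ (Theorem~A), and by Section~\ref{sec:invariance} we may assume $r\mapsto F(\cdot,\cdot,r),\,r\mapsto G(\cdot,\cdot,r)$ increasing (already assumed here). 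Theorem~\ref{thm:sumup} gives a bound $|\dot\varphi_t|+|\dot\psi_t|\le A_1(\varepsilon)$ on $[\varepsilon,T']\times X$; hence the densities $e^{\dot\varphi_\varepsilon+F(\varepsilon,\cdot,\varphi_\varepsilon)}f$ and $e^{\dot\psi_\varepsilon+G(\varepsilon,\cdot,\psi_\varepsilon)}g$ have $L^p$-norm $\le C(\varepsilon)$, and $\varphi_\varepsilon,\psi_\varepsilon$ solve elliptic complex Monge--Amp\`ere equations with these densities, so \cite[Proposition~3.3]{EGZ09} yields $\|\varphi_\varepsilon-\psi_\varepsilon\|_{L^\infty(X)}\le C_\varepsilon\|\varphi_\varepsilon-\psi_\varepsilon\|_{L^1(X)}^{\alpha}$ with $\alpha=\alpha(n,p)\in\,]0,1[\,$.

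To compare $\varphi$ against $\psi$ I construct a perturbed subsolution for the data $(G,g)$. Put $c:=\sup_{\widehat X_T}|F-G|$ and, assuming $\|g-f\|_p>0$ (otherwise the last term is absent), $h:=|g-f|/\|g-f\|_p$. Let $\rho\in\PSH(X,\theta)$ be the bounded solution of $(\theta+dd^c\rho)^n=(a+h)\,dV$, $\sup_X\rho=0$, with $\|\rho\|_\infty\le C^\rho$ by \cite{EGZ09} (the big case being \cite[Proposition~2.6]{EGZ09}). Set $\delta:=C(\varepsilon)\|g-f\|_p^{1/n}$; if $\delta\ge\tfrac12$ the claim is trivial for $A(\varepsilon)$ large since $\varphi,\psi$ are bounded, so assume $\delta<\tfrac12$, let $\alpha_\delta:=1-\delta$, $b:=c+\delta\,C(\varepsilon)$, and on $[\varepsilon,T']\times X$
$$u(t,x):=\alpha_\delta\,\varphi(t,x)+(1-\alpha_\delta)\rho(x)-b(t-\varepsilon)\ \in\ \Pc(X_{T'},\omega)\cap L^\infty .$$
Using $\omega_t\ge\theta$ and the superadditivity $(\alpha P+(1-\alpha)Q)^n\ge\alpha^nP^n+(1-\alpha)^nQ^n$ for positive $(1,1)$-forms,
$$(\omega_t+dd^c u_t)^n\ \ge\ \alpha_\delta^{\,n}\,e^{\dot\varphi_t+F(t,x,\varphi_t)}f\,dV+\delta^{\,n}\,h\,dV .$$
Using $g\,dV\le f\,dV+\|g-f\|_p\,h\,dV$, the monotonicity of $G$ (so $G(t,x,u_t)\le G(t,x,\alpha_\delta\varphi_t+(1-\alpha_\delta)\rho)$), the Lipschitz bound $|G(t,x,\alpha_\delta\varphi_t+(1-\alpha_\delta)\rho)-G(t,x,\varphi_t)|\le\delta L_G(\sup_{X_T}|\varphi|+C^\rho)$, $|F-G|\le c$, $n\log\alpha_\delta\ge-2n\delta$ and $\dot u_t=\alpha_\delta\dot\varphi_t-b$, the choices of $\delta,b$ make the right-hand side $\ge e^{\dot u_t+G(t,x,u_t)}\bigl(f\,dV+\|g-f\|_p h\,dV\bigr)\ge e^{\dot u_t+G(t,x,u_t)}g\,dV$, i.e. $u$ is a pluripotential subsolution of \eqref{eq: MAF as measures} for $(G,g)$ on $[\varepsilon,T']\times X$.

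Now $u_\varepsilon=\alpha_\delta\varphi_\varepsilon+(1-\alpha_\delta)\rho\le\psi_\varepsilon+\kappa$ with $\kappa:=\|\varphi_\varepsilon-\psi_\varepsilon\|_{L^\infty(X)}+\delta(\sup_{X_T}|\varphi|+C^\rho)$; since $G$ is increasing, $u-\kappa$ is still a subsolution, with $(u-\kappa)_\varepsilon\le\psi_\varepsilon$. Shifting time by $\varepsilon$ and applying Theorem~\ref{thm: weak comparison principle v3} (with $\psi$ the semi-concave supersolution) gives $u-\kappa\le\psi$, i.e. $u_t\le\psi_t+\kappa$. Solving $\varphi_t=\alpha_\delta^{-1}\bigl(u_t-(1-\alpha_\delta)\rho+b(t-\varepsilon)\bigr)$ and using $\alpha_\delta^{-1}\le1+2\delta$, $b(t-\varepsilon)\le bT=cT+\delta C(\varepsilon)T$ and the uniform bounds yields $\varphi_t\le\psi_t+\|\varphi_\varepsilon-\psi_\varepsilon\|_{L^\infty(X)}+cT+C'(\varepsilon)\|g-f\|_p^{1/n}$ on $[\varepsilon,T']\times X$. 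Interchanging $(\varphi,F,f)$ and $(\psi,G,g)$ gives the reverse inequality; combining the two with the slice stability above and letting $T'\to T$ proves the estimate with $A(\varepsilon):=\max(C_\varepsilon,C'(\varepsilon))$. For the convergence statement, apply the estimate to $\varphi^{(j)}:=\Phi(F_j,g_j,\varphi_{0,j})$ and $\psi:=\Phi(F,g,\varphi_0)$: the last two terms tend to $0$, while $\|\varphi^{(j)}_\varepsilon-\psi_\varepsilon\|_{L^1(X)}\to0$ because $\varphi^{(j)}\to\psi$ in $L^1_{\loc}(X_T)$ (by Theorems~\ref{thm:sumup} and \ref{thm:conv2} a subsequence converges to a bounded, locally uniformly semi-concave pluripotential solution with initial value $\varphi_0$ by Propositions~\ref{pr:sbarrier}, \ref{prop: average}, \ref{prop: estimate for super solutions}, hence equals $\psi$ by Corollary~\ref{cor:unique1}, so the whole sequence converges) together with Lemma~\ref{lem:L1Slice-L1}. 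Thus $\varphi^{(j)}\to\psi$ locally uniformly on $X_T$.

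The main obstacle is the bookkeeping in the perturbed-subsolution step: the weight $\delta$ of $\rho$ and the slope $b$ of the time correction must be tuned so that, after the superadditivity inequality and the replacements of $F$ by $G$, of $f$ by $g$ and of $\varphi$ by $u$, the two terms in the lower bound for $(\omega_t+dd^c u_t)^n$ jointly dominate the target density $e^{\dot u_t+G(t,x,u_t)}g\,dV$, with all intervening constants controlled purely in terms of $\varepsilon$ and the fixed data; the $n$-th root in $\delta$ — and hence in the final estimate — is exactly what lets $\delta^n h\,dV$ absorb $e^{O(1/\varepsilon)}\|g-f\|_p\,h\,dV$.
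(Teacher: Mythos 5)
Your proposal is correct and takes essentially the same approach as the paper: the quantitative estimate is proved by the Ko{\l}odziej-type perturbation, building a subsolution for $(G,g)$ of the form $(1-\delta)\varphi+\delta\rho+\text{(linear time correction)}$ with $\rho$ solving an auxiliary Monge--Amp\`ere equation with density $|g-f|/\|g-f\|_p$, choosing $\delta\sim\|g-f\|_p^{1/n}$, applying the comparison principle (Theorem~\ref{thm: weak comparison principle v3}) on $[\varepsilon,T'[\,$, and converting the $L^1$ slice norm to $L^\infty$ via the elliptic stability of \cite[Proposition~3.3]{EGZ09}; the convergence statement then follows from compactness, uniqueness (Corollary~\ref{cor:unique1}), Lemma~\ref{lem:L1Slice-L1}, and the estimate. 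The only cosmetic differences are that the paper works one-sidedly with $(g-f)_+$ and symmetrizes, and puts the $n\log(1-\delta)$ term directly into $\varphi_\delta$, whereas you use $|g-f|$ and absorb $n\log\alpha_\delta$ into the slope constant; also take care to pick the slope constant in $b$ and the coefficient in the definition of $\delta$ as two \emph{separate} constants (as the paper does with $B$ and $e^{M_3(\varepsilon)/n}$), since using a single $C(\varepsilon)$ for both roles, as written, reads as circular even though the upper bound on $\dot u_t+G$ does not in fact depend on it.
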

  
  We denote here by $\Phi (F, g, \f_0)$ the solution to the Cauchy problem for  the admissible data $(F,g, \f_0)$.

 \begin{proof}
 Set $\Phi^j=\Phi(F_j,g_j,\f_{0,j})$ and $\Phi=\Phi(F,g,\f_{0})$.
 The quantitative estimate is a simple consequence of Proposition \ref{pro:stabestimate} below.  The norm $||\Phi^j_{\e}-\Phi_{\e}||_{L^1(X)}$ is controlled by $\|\Phi^j-\Phi\|_{L^1([\varepsilon,T] \times X)}$ as follows from Lemma \ref{lem:L1Slice-L1}. By Proposition \ref{pro:stab}, $\Phi^j$ converges in $L^{1}_{\loc}(X_T)$ to $\Phi$, hence the last statement of the theorem follows.
 \end{proof}

  The   stability result is a consequence of  the following   quantitative version of the comparison principle :

  \begin{pro} \label{pro:stabestimate}
   Assume  $\f \in \Pc(X_T,\omega)$ is a subsolution to \eqref{eq: MAF as measures} with data $(F,f)$,
  $\psi  \in \Pc(X_T,\omega)$ is a supersolution to \eqref{eq: MAF as measures} with data $(G,g)$.
   
 Fix $\varepsilon > 0$. There exists $\alpha, A,B >0$ such that for all $(t,x) \in [\e,T[ \times X$,
 \begin{flalign*}
\f (t,x) - \psi (t,x)  \leq B  \Vert (\f_\e - \psi_\e)_+ \Vert_{L^1 (X)}^{\alpha} 
 + T \, \sup_{\Hat{X}_T} (G-F)_+  + A \, \Vert (g - f)_+ \Vert_p^{1 \slash n},
 \end{flalign*}
 where $A, B>0$ depend on $X, \theta, n,  p$ and a uniform bound on 
 $\dot \f $, $\dot \psi$, $ \f ,\psi$ on the set $[\e,T[ \times X$,  $\sup_{X_T} G(t,x, \sup_{X_T} \varphi)$ and $L_G$.   
  \end{pro}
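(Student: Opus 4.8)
The plan is to reduce the inequality to the pluripotential comparison principle of Theorem~\ref{thm: weak comparison principle v3}, applied on the interval $[\e,T[$: I will deform $\varphi$ into an honest bounded pluripotential subsolution $\hat\varphi$ of the very equation $\psi$ solves as a supersolution (the data $(G,g)$), the deformation being tailored so that the three error terms of the statement appear as explicit barrier corrections. Since $F$ and $G$ are already assumed increasing in the last variable, no preliminary reduction is needed.

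Set $h:=(g-f)_+$; if $h\equiv0$ the density correction below is omitted. Otherwise let $\rho\in\PSH(X,\theta)$, $\sup_X\rho=0$, be the bounded solution of $(\theta+dd^c\rho)^n=c_\rho(1+h/\Vert h\Vert_p)\,dV$, with $c_\rho>0$ fixed by the total mass $\int_X\theta^n$; it exists and $\Vert\rho\Vert_\infty$ is under control because $c_\rho$ is bounded above and below and the density has controlled $L^p$-norm (\cite{EGZ09}, \cite[Proposition~2.6]{EGZ09}). Let $C_{\dot\varphi}$ bound $|\dot\varphi|$ on $[\e,T[\times X$ and $M:=\max\{\sup_{[\e,T[\times X}(|\varphi|+|\psi|),\Vert\rho\Vert_\infty\}$, and put $\lambda:=C_\rho\Vert h\Vert_p^{1/n}$ for a suitable controlled $C_\rho$ — we may assume $\lambda<1/2$, the estimate being otherwise trivial since $\varphi-\psi$ is bounded. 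Finally set, on $[\e,T[\times X$,
$$
\hat\varphi(t,x):=(1-\lambda)\varphi(t,x)+\lambda\rho(x)-at-b,\qquad a:=\sup_{\hat X_T}(G-F)_+ +(2n+C_{\dot\varphi})\lambda,\quad b:=\sup_X(\varphi_\e-\psi_\e)_+ +\lambda M.
$$

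I would then check that $\hat\varphi$ is a bounded pluripotential subsolution of \eqref{eq: MAF as measures} for the data $(G,g)$ on $[\e,T[\times X$. Indeed $\omega_t\ge\theta$ and the positivity of mixed Monge--Amp\`ere products give, slice-wise for a.e.\ $t$ (recall that the subsolution inequality for $\varphi$ holds slice-wise a.e.\ $t$, cf.\ \cite[Proposition~3.2]{GLZparab1} and Lemma~\ref{lem: sub super solution pointwise}),
$$
(\omega_t+dd^c\hat\varphi_t)^n\ \ge\ (1-\lambda)^n(\omega_t+dd^c\varphi_t)^n+\lambda^n(\theta+dd^c\rho)^n\ \ge\ (1-\lambda)^n e^{\dot\varphi_t+F(t,\cdot,\varphi_t)}f\,dV+\lambda^n c_\rho(1+h/\Vert h\Vert_p)\,dV,
$$
and, since $g\le f+h$, the desired inequality $(\omega_t+dd^c\hat\varphi_t)^n\ge e^{\dot{\hat\varphi}_t+G(t,\cdot,\hat\varphi_t)}g\,dV$ follows from two scalar estimates: for the $f\,dV$ part, using $\hat\varphi_t\le\varphi_t$ (ensured by the choice of $b$) and the monotonicity of $F,G$, it reduces to $n\log(1-\lambda)+\lambda\dot\varphi_t+a\ge\sup_{\hat X_T}(G-F)_+$, true by the choice of $a$; for the $h\,dV$ part, bounding $\dot{\hat\varphi}_t\le C_{\dot\varphi}$ and $G(t,\cdot,\hat\varphi_t)\le\sup_{X_T}G(t,x,\sup_{X_T}\varphi)$, it reduces to the defining inequality of $C_\rho$. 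The restriction $t\ge\e$ enters precisely here, through the bound on $\dot\varphi$. Moreover $\hat\varphi_\e\le\psi_\e$ by the choice of $b$.

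After translating time by $\e$ (the standing assumptions on $(\omega_t)$ are translation invariant), Theorem~\ref{thm: weak comparison principle v3} applies to the pair $(\hat\varphi,\psi)$ on $[\e,T[$ — $\psi$ a locally uniformly semi-concave (in $t$) supersolution, $\hat\varphi$ a subsolution, both converging in $L^1(X)$ to their $t=\e$ slices, and $\hat\varphi_\e\le\psi_\e$ — and yields $\hat\varphi\le\psi$ there, whence, for $(t,x)\in[\e,T[\times X$,
$$
\varphi(t,x)-\psi(t,x)\ \le\ \lambda\big(\varphi(t,x)-\rho(x)\big)+at+b\ \le\ T\sup_{\hat X_T}(G-F)_+ +\sup_X(\varphi_\e-\psi_\e)_+ + A\,\Vert(g-f)_+\Vert_p^{1/n},
$$
with $A$ controlled by $C_\rho$, $C_{\dot\varphi}$, $M$ and $T$, hence by the data. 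It then remains to bound $\sup_X(\varphi_\e-\psi_\e)_+$ by $B\Vert(\varphi_\e-\psi_\e)_+\Vert_{L^1(X)}^{\alpha}$ for some $\alpha=\alpha(n,p)\in\,]0,1[$: on the fibre $t=\e$ the measure $(\omega_\e+dd^c\psi_\e)^n$ has density $e^{\dot\psi_\e+G(\e,\cdot,\psi_\e)}g\in L^p(X)$ with controlled norm (from the bound on $\dot\psi$ on $[\e,T[$, a bound on $G$ over the range of $\psi$ — where $L_G$ and $\sup_{X_T}G(t,x,\sup_{X_T}\varphi)$ enter — and $\Vert g\Vert_p$), while $(\omega_\e+dd^c\varphi_\e)^n\ge e^{-C_{\dot\varphi}}f\,dV$ is bounded below; the $L^\infty$--$L^1$ stability estimate for the complex Monge--Amp\`ere equation (\cite[Proposition~3.3]{EGZ09}, \cite{GZ12}, cf.\ the proof of Proposition~\ref{pro:stab}) then gives this bound. \emph{This slice estimate — having enough Monge--Amp\`ere control on the time-$\e$ fibre, which is exactly what forces the statement to be away from the origin — is the only non-elementary ingredient and the main obstacle to watch}; everything else is the barrier bookkeeping above, and combining the three contributions proves the proposition.
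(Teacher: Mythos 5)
Your proposal is correct and follows essentially the same route as the paper: you build the barrier $\hat\varphi=(1-\lambda)\varphi+\lambda\rho-at-b$ with $\rho$ solving an auxiliary Monge--Amp\`ere equation whose density carries $(g-f)_+/\|(g-f)_+\|_p$ and $\lambda\sim\|(g-f)_+\|_p^{1/n}$, verify via the mixed Monge--Amp\`ere inequality and the monotonicity and Lipschitz bounds on $G$ that it is a subsolution for the data $(G,g)$ on $[\e,T[$, apply the comparison principle, and finally convert $\sup_X(\varphi_\e-\psi_\e)_+$ into an $L^1$-power via the elliptic $L^\infty$--$L^1$ stability estimate of \cite[Proposition 3.3]{EGZ09} on the time-$\e$ slice. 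The only deviations from the paper are cosmetic (your $\rho$-normalization $c_\rho(1+h/\|h\|_p)$ versus the paper's $(a+h/\|h\|_p)$ with $a\in[0,1]$, and absorbing $\sup_X(\varphi_\e-\psi_\e)_+$ into the barrier constant $b$ rather than carrying it through the comparison principle); the structure, the key lemma, and the constants are the same.
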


  \begin{proof}
  We use a perturbation argument as in \cite{GLZstability} which goes back to the work of  Ko{\l}odziej  \cite{Kolodziej_1996Monge}. For convenience we normalize $\theta$ so that $\int_X d V = \int_X \theta^n = 1$. Set 
  $$
  m_0:= \inf_{X_T} \varphi, \; \; 
  m_1(\varepsilon) := \inf_{[\varepsilon,T[} \dot{\varphi}
   \; \; \text{ and } \; \; 
  M:= \sup_{\Hat{X}_T} (G-F)_+.
  $$
  
 We first assume that $\Vert (g - f)_+ \Vert_p > 0$. It follows from  \cite{EGZ09} (see also \cite{Kol98} in the K\"ahler case)
that  there exists  $\rho \in {\rm PSH} (X,\theta) \cap L^{\infty} (X)$, normalized by  $\max_X \rho = 0$, such that
 \begin{equation} \label{eq:yaubis}
 (\theta + dd^c \rho)^n = \left(a +  \frac{(g - f)_+}{\Vert (g - f)_+ \Vert_p}\right) d V, 
 \end{equation}
where $a \geq 0$ is a normalizing constant given by
 $$
 a := 1 - \frac{\Vert (g - f)_+\Vert_1}{\Vert (g - f)_+ \Vert_p} \in [0,1].
 $$
 
 We moreover have a uniform bound on $\rho$  which only depends on the $L^p$ norm of the density
 of $(\theta + dd^c \rho)^n$ which is here bounded from above by $2$,
 \begin{equation} \label{eq:est0}
 \Vert \rho \Vert_{\infty} \leq C_0 (a + 1) \leq 2 C_0,
 \end{equation}
 where $C_0 > 0$ is a uniform constant depending only on $(X,\theta, p)$.

For $0 < \delta < 1 $ and $(t,x) \in X_T$ we set
 $$
 \f_\delta (t,x) := (1 - \delta) \f (t,x) + \delta  \rho + n \log (1 - \delta) - B \delta t - M t.
 $$
 The plan is to choose $B>0$  in such a way that $\f_{\delta}$ is a subsolution to \eqref{eq: MAF as measures} on $[\varepsilon,T[$ with data $(G,g)$. The conclusion will then follow
 from the comparison principle (Theorem \ref{thm: weak comparison principle v3}).

 Observe that for almost all $t \in [\varepsilon,T[$ fixed, $\f_\delta(t,\cdot) $ is $\omega_t$-plurisubharmonic on $X$ and 
 $$
 (\omega_t + dd^c \f_{\delta} (t,\cdot))^n \geq (1 - \delta)^n (\omega_t + dd^c \f_t)^n + \delta^n (\theta + dd^c \rho)^n.
 $$
Using that $\f$ is a subsolution to \eqref{eq: MAF as measures} with data $(F,f)$, we infer
 \begin{equation} \label{eq:subest}
 (\omega_t + dd^c \f_{\delta} (t,\cdot))^n \geq   e^{\dot \f_t + F (t,\cdot,\f_t) + n \log (1- \delta)} f d V + \delta^n \frac{(g - f)_+}{\Vert (g - f)_+ \Vert_p} d V.
 \end{equation}
Noting that $\f \geq \f_{\delta} + \delta \f$ and recalling that $G$ is increasing in the last variable, we obtain
 \begin{eqnarray*}
&& \dot{\f} (t,x)  + F (t,x ,\f (t,x)) + n \log (1-\delta) \\
&\geq & \dot{\f}_\delta (t,x) + \delta \dot{\f} (t,x) + G \left(t,x, \f (t,x)\right) -M +
  n \log (1-\delta) + B \delta +M \\
 &\geq & \dot{\f}_\delta (t,x) + \delta \dot{\f} (t,x) + G \left(t,x,\f_\delta (t,x) + \delta \f (t,x)\right) +
  n \log (1-\delta) + B \delta \\
 & \geq & \dot{\f}_\delta (t,x)  + G \left(t,x,\f_\delta (t,x)  + \delta m_0 \right)  + \delta m_1 (\e) + n \log (1-\delta) + B \delta. 
 \end{eqnarray*}
The Lipschitz condition on $G$ yields, writing $L=L_G$,
 \begin{eqnarray*}
&&\dot{\f} (t,x)  + F (t,x ,\f (t,x)) + n \log (1-\delta) \\
&\geq & \dot{\f}_\delta  (t,x) + G (t,x,\f_\delta (t,x))
+ B \delta - L \delta |m_0| + \delta m_1 (\e) +  n \log (1-\delta).
 \end{eqnarray*}
Using the elementary inequality $\log (1 - \delta) \geq - 2 (\log 2 ) \delta$ for  $0 < \delta \leq 1 \slash 2$, 
it follows that for $0 < \delta \leq 1 \slash 2$,
 $$
 B \delta - L \delta |m_0|  + m_1 (\e) \delta + n \log (1-\delta) \geq  (B  - L  |m_0| + m_1 (\e) - 2 n \log 2) \delta.
 $$
  
\smallskip  
  
We now choose $B :=  L  |m_0|  +  2 n \log 2 - m_1 (\e)$ so that
  $$
  \dot{\f} (t,x)  + F (t,x ,\f (t,x)) + n \log (1-\delta) \geq  \dot{\f}_\delta  (t,x) + G (t,x,\f_\delta (t,x)),
  $$
   which, together with  (\ref{eq:subest}),  yields
\begin{equation}  \label{eq: subest2} 
 (\omega_t + dd^c \f_{\delta} (t,\cdot))^n \geq   e^{\dot{\f}_\delta  (t,\cdot) + G (t,\cdot,\f_\delta (t,\cdot))} f d V + \delta^n \frac{(g - f)_+}{\Vert (g - f)_+ \Vert_p}.
 \end{equation} 
On the other hand,  if we set 
$$
M_1 (\e) := \sup_{[\e,T[ \times X}  \dot\f , \;
M_0 := \sup_{X_T} \f
\text{ and }
M_2 := \sup_{X_T} G (t,x,M_0),
$$
then the Lipschitz property of $G$ ensures, for $(t,x) \in [\e,T[ \times X$, 
\begin{eqnarray*}
\dot{\f}_\delta (t,x) + G (t,x,\f_\delta (t,x)) & \leq & (1- \delta) \sup_{X_T} \dot{\f} + \sup_{X_T} G (t,x, (1-\delta) \f (t,x))  \\
& \leq & (1- \delta) M_1 (\e) + \sup_{X_T} G (t,x, (1-\delta)M_0) \\
& \leq & (1- \delta) M_1 (\e) +  M_0 L \delta + M_2 \\
& \leq & M_2 + \max \{L M_0, M_1 (\e)\}. 
\end{eqnarray*}
Using  \eqref{eq: subest2} we conclude that for $0 < \delta < 1 \slash 2$, $x\in X$, and almost all $t\in [\varepsilon,T[$,
\begin{equation} \label{eq:subest3}
(\omega_t + dd^c \f_{\delta} (t,\cdot))^n \geq   e^{\dot{\f}_\delta  (t,\cdot) + G (t,\cdot,\f_\delta (t,\cdot))} \left(f  + \delta^n e^{ - M_3 (\e)}\frac{(g- f)_+}{\Vert (g - f)_+ \Vert_p}\right) d V,
\end{equation}
 where $M_3 (\e) :=  M_2 + \max \{L M_0, M_1 (\e)\}$.

To conclude that $\f_{\delta}$ is a subsolution, we finally set
 \begin{equation} \label{eq:est1}
 \delta  := \Vert (g - f)_+\Vert^{1 \slash n}_p e^{ M_3 (\e)\slash n}\cdot
 \end{equation}
 Assume first that $\Vert (g - f)_+\Vert_p \leq 2^{- n} e^{ - M_3 (\e)}$  so that  $\delta \leq 1 \slash 2$. 
 It follows from (\ref{eq:subest3}) that, for almost all $t \in [\e,T[$,
 \begin{eqnarray*}
 (\omega_t + dd^c \f_{\delta} (t,\cdot))^n & \geq &   e^{\dot{\f}_\delta  (t,\cdot) + G (t,\cdot,\f_\delta (t,\cdot))} (f  +  (g - f)_+) d V \\
 & \geq & e^{\dot{\f}_\delta  (t,\cdot) + G (t,\cdot,\f_\delta (t,\cdot))} g d V,
 \end{eqnarray*}
 hence  $\f_\delta$ is a subsolution to\eqref{eq: MAF as measures} for the data   $(G,g)$ on $[\varepsilon,T[$.
 The comparison principle 
 ensures that for all $(t,x) \in [\e,T[ \times X$,
 $$
 \f_{\delta} (t,x) -\psi (t,x) \leq \max_X (\f_\delta (\e,\cdot) - \psi (\e,\cdot))_+.
 $$
Together with (\ref{eq:est0}) and (\ref{eq:est1}) we obtain,  for $(t,x) \in [\e,T[ \times X$,
\begin{equation*} 
\f (t,x)- \psi (t,x) \leq \max_X (\f (\e,\cdot) - \psi (\e,\cdot))_+ + T M +  A_1 (\e)
 \Vert (g - f)_+\Vert^{1 \slash n}_p, 
\end{equation*}
where 
$$ 
A_1(\varepsilon) :=  (M_0  + 2 n \log 2 + B  T) e^{M_2 (\e) \slash n}\cdot
$$ 
 
 When $\Vert (g - f)_+\Vert_p >  2^{- n} e^{- M_2 (\e)}$, we   choose  $A_2 (\e)> 0$ so that  
 $$
\sup_{X_T} (\f (t,x)- \psi (t,x) ) \leq \max_X (\f _\e - \psi_\e)_+ + A_2 (\e)2^{- n} e^{- M_2 (\e)}.
 $$ 
We eventually set $A (\e) = \max \{A_1 (\e),A_2 (\e)\}$.

\smallskip
 
Assume finally that $\Vert (g-f)_+\Vert_p = 0$ which means that $g \leq f$ almost everywhere in $X$.
 In this case, the function $(t,x) \mapsto \varphi(t,x) - Mt$ is a subsolution to \eqref{eq: MAF as measures} with data $(G,g)$ and the conclusion follows from  the comparison principle. 

Now observe that $\psi_{\varepsilon}$ is a supersolution to the degenerate elliptic equation
$$
(\omega_\e + dd^c \psi_\e)^n \leq e^{D (\e)} g d V,
$$
where $D (\e)$ is an upper bound of $\dot{\psi} (t,x) + G (t,x,\psi (t,x))$ on $[\e,T[ \times X$.

By the $L^{\infty}$-$L^{1}$ semi-stability theorem of \cite[Proposition 3.3]{EGZ09}, it follows that there exits $\alpha \in ]0,1[$ and a constant $C (\e) > 0$ depending on $D(\varepsilon)$, $p$, $\theta$, and $\|g\|_p$ such that 
$$
\max_X (\f _\e - \psi_\e)_+ \leq C (\e) \Vert (\f _\e - \psi_\e)_+\Vert_{L^1 (X)}^{\alpha},
$$
concluding the proof.
  \end{proof}

  \section{Geometric applications}  \label{sec:lt}
  
  In this section we show that our hypotheses are satisfied when studying the K\"ahler-Ricci flow on a 
  compact K\"ahler variety with 
log terminal singularities. We prove the existence and study the long term behaviour of the normalized K\"ahler-Ricci flow (NKRF for short) on such varieties
starting from an arbitray closed positive current with bounded potential.

The definition and study of the K\"ahler-Ricci flow on mildly singular projective varieties has been undertaken by Song and Tian in \cite{ST12,ST17}. A different viscosity approach has been developed by  Eyssidieux-Guedj-Zeriahi in \cite{EGZ16,EGZ16b}.

Our approach allows one to avoid any projectivity assumption on the varieties, to deal with more general singularities, to avoid any continuity 
assumption on the data, and also provide more general uniqueness and stability results.
The whole discussion extends to the case of Kawamata log terminal pairs but we leave this discussion for later works.

 \subsection{Analytic approach to the Minimal Model Program}
 
  \subsubsection{Log terminal singularities}
 
 Let $Y$ be an irreducible compact K\"ahler normal complex analytic space with only terminal singularities.  
 Let $\pi: X\to Y$
be a log-resolution, i.e.: $X$ is a compact K\"ahler manifold, $\pi$ is a bimeromorphic projective morphism
and $\mathrm{Exc}(\pi)$ is a  divisor with simple normal crossings. Denote by $\{E\}_{E\in \mathcal{E}}$ the family
of the irreducible components of  $\mathrm{Exc}(\pi)$.  With this notation, one has furthermore
$$
K_X\equiv \pi^* K_Y + \sum_E a_E  E  
$$
where $-1< a_E \in \Q$,  $K_Y$ denote the first Chern class in Bott-Chern cohomology of the $\Q$-line bundle  $O_Y(K_Y)$ on $Y$,  whose restriction to the smooth locus is 
the 
line bundle whose sections are holomorphic top dimensional  forms (canonical forms), $K_X$ the canonical class of $X$ 
and $E$ also denotes with a slight abuse of language the cohomology class of $E$ (we refer to \cite{KM} for more details). 

The log terminal condition $a_E>-1$ means that for every non-vanishing locally defined multivalued canonical form $\eta$ defined over $Y$, 
the holomorphic multivalued canonical form $\pi^* \eta$ on $X$ has poles or  zeroes of order $a_E$ along $E$,
so that the corresponding volume form decomposes as 
$$
\pi^*(c_n \eta \wedge \overline{\eta})=e^{w^+-w^-} dV(x),
$$
where $w^+=\sum_{a_E>0} a_E \log |s_E|_{h_E}$ and $w^-=\sum_{0>a_E>-1} a_E \log |s_E|_{h_E}$
are quasi-\psh with $e^{w^+}$ continuous and $e^{-w^{-}} \in L^p$ for some $p>1$ whose precise value
depends on $\min_E (a_E+1)$.

\subsubsection{The (normalized) K\"ahler-Ricci flow}
\label{sec:checkgeom}

The   K\"ahler-Ricci flow  is the following evolution equation of K\"ahler forms on $Y$
$$
\frac{\partial {\theta}_t}{\partial t} =-\Ric({\theta}_t),
$$
starting from an initial K\"ahler form ${\theta}_0$.
These can be written as
$$
{\theta}_t=\chi_t +dd^c \phi_t,
\; \text{ with } \;
\chi_t=  \theta_0+t\chi,
$$
where $\chi \in c_1(K_Y)$.

One can pull-back these forms via a log-resolution of singularities and consider the corresponding 
forms $\omega_t=\pi^* \chi_t$ which are big and semi-positive (they vanish along $\mathrm{Exc}(\pi)$). 
The latter satisfy our main assumptions:

\begin{lem}
Assume that $\pi:X \rightarrow Y$ is a proper holomorphic map onto a compact normal K\"ahler space $Y$,
with $\omega_t=\pi^*\chi_t$ and $\theta=\pi^* \omega_Y$, where 
 $\omega_Y$ is a K\"ahler form on $Y$.
Then there exists $A>0$ such that 
$$
\theta/A \leq \omega_t, \; \; 
-A \omega_t \leq \dot{\omega}_t  \leq A \omega_t ,
\; \;
\text{ and } \; \; 
\ddot{\omega}_t \leq A \omega_t.
$$
\end{lem}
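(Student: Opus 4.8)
The plan is to exploit the fact that both $\omega_t=\pi^*\chi_t$ and $\theta=\pi^*\omega_Y$ are pull-backs of honest K\"ahler forms on the \emph{compact} space $Y$, where comparison of K\"ahler forms is standard, and then simply to pull everything back. The only genuinely non-trivial point is the lower bound $\theta/A\leq\omega_t$: since both $\theta$ and $\omega_t$ vanish along $\mathrm{Exc}(\pi)$, one cannot compare them from below with a K\"ahler form on $X$, so the comparison has to be carried out upstairs on $Y$ and then transported down.

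First I would recall that $\chi_t=\theta_0+t\chi$ is an affine (in particular $\mathcal C^2$) path of K\"ahler forms on $Y$, with each $\chi_t$ K\"ahler on the relatively compact time interval under consideration, and that $\omega_Y$ is a fixed K\"ahler form on $Y$. Since $Y$ is compact and $t\mapsto\chi_t$ is continuous with each $\chi_t$ K\"ahler, a routine compactness argument — working in local charts of $Y$, where $\omega_Y$ has eigenvalues bounded uniformly away from zero — produces a constant $A_0\geq 1$ with
\[
A_0^{-1}\,\omega_Y\leq\chi_t\leq A_0\,\omega_Y,\qquad -A_0\,\omega_Y\leq\dot\chi_t\leq A_0\,\omega_Y,\qquad \ddot\chi_t\leq A_0\,\omega_Y
\]
as $(1,1)$-forms on $Y$, uniformly in $t$. (In the situation at hand $\dot\chi_t=\chi$ and $\ddot\chi_t=0$, so the last two relations are in fact immediate.)

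Then I would pull back by $\pi$. Using that $\pi^*$ sends semi-positive $(1,1)$-forms to semi-positive $(1,1)$-forms, and that $\partial_t$ commutes with $\pi^*$ (so $\dot\omega_t=\pi^*\dot\chi_t$ and $\ddot\omega_t=\pi^*\ddot\chi_t$), the displayed inequalities become
\[
A_0^{-1}\,\theta\leq\omega_t\leq A_0\,\theta,\qquad -A_0\,\theta\leq\dot\omega_t\leq A_0\,\theta,\qquad \ddot\omega_t\leq A_0\,\theta .
\]
The first chain gives at once $\theta/A_0\leq\omega_t$ (the first assertion) together with $\theta\leq A_0\,\omega_t$; substituting the latter into the remaining two inequalities replaces $\theta$ by $A_0\,\omega_t$ and yields $-A_0^2\,\omega_t\leq\dot\omega_t\leq A_0^2\,\omega_t$ and $\ddot\omega_t\leq A_0^2\,\omega_t$. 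Taking $A:=A_0^2$ then finishes the argument. The main (and only modest) obstacle is ensuring that the comparison constant on $Y$ is uniform in $t$, which is precisely where compactness of both $Y$ and of the time interval is used; everything else is formal manipulation with pull-backs.
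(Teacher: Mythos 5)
Your proof is correct and follows the same strategy as the paper's: establish the uniform comparability of $\chi_t$, $\dot\chi_t$, $\ddot\chi_t$ with $\omega_Y$ on the compact base $Y$ over a finite time interval, then pull back by $\pi$. You are only slightly more explicit in recording the intermediate bound $\theta\leq A_0\omega_t$ and the resulting substitution giving $A=A_0^2$, which the paper elides.
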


\begin{proof}
The corresponding inequalities are valid on $Y$ since $\omega_Y,\theta_0$ are K\"ahler
and $t \mapsto \chi_t$ is smooth, as long as we work on a finite interval of time (which is
implicit).
One can then transpose the inequalities from $Y$ to $X$ by 
 the holomorphic mapping $\pi$.
\end{proof}

One can similarly consider the
 normalized K\"ahler-Ricci flow on $Y$,
$$
\frac{\partial \theta_t}{\partial t}=-\rm{Ric}(\theta_t)-\lambda \theta_t,
$$
starting from an  initial data $\theta_0 = \chi_0 + dd^c \phi_0$ with $\phi_0$ being a bounded  potential which is plurisubharmonic 
with respect to the given K\"ahler form $\chi_0$ on $Y$,
and where $\lambda \in \R$. 

By rescaling one can reduce to the cases $\lambda=1,0,-1$.
To simplify the discussion we restrict to the case $\lambda=1$.
At the cohomological level, this yields
a first order ODE showing that the cohomology class of $\theta_t$ evolves as
$$
\{ \theta_t \} =e^{-t} \{ \theta_0 \} +(1-e^{-t} ) K_Y.
$$
We thus  define  by
$$
T_{max} :=\sup\{ t>0   \setdef e^{-t}\{\theta_0\}+(1-e^{-t}) K_{Y} \in \mathcal{K}(Y) \}
$$
the maximal (cohomological) time of existence of the flow. 

\smallskip

Denote by $\mathcal{K}(Y) \subset H^{1}(Y, \mathcal{PH}_Y)$ the
open convex cone of K\"ahler classes and
let $\chi_0$ be a smooth K\"ahler representative of the K\"ahler class $\{\theta_0\}$.
Assume $h$ is a 
smooth hermitian metric on the holomorphic $\Q$-line bundle underlying $O_Y(K_Y)$. 
Then 
$
\chi:=-dd^c \log h
$
is a smooth representative of $K_Y\in H^{1}(Y, \mathcal{PH}_Y)$ and we set
$$
\chi_t= e^{-t} \chi_0+ (1-e^{-t}) \chi.
$$

The solution to the normalized K\"ahler-Ricci flow can be written as 
$\theta_t = \chi_t + dd^c \phi_t$, with $\pi^* \phi  \in \Pc(X_T)$. 
We now define 
$$
\mu_{NKRF}= c_n \frac{\pi^*\eta \wedge \overline{ \pi^*\eta}}{ \pi^* \| \eta \|^2_h} \in C^0(X,\Omega^{n,n}_X)
$$
which we view as a  continuous element of $C^0(X_T, \Omega^{n,n}_{X_T/[0,T[} )$ and $c_n$ is the unique
 complex number of modulus $1$ such that the expression is positive. 
As the notation suggests, $\mu_{NKRF}$ is independent of the auxiliary multivalued holomorphic form $\eta$ but depends on $h$.

In local coordinates $\mu_{NKRF}$ has density of the form 
$$
v_{NKRF}=\prod_E |f_E|^{2a_E} v
$$ 
where $v>0$ is smooth and  $f_E$ is an equation of $E$ in these local coordinates.

\begin{theo}\label{thm:nkrf}
 The Cauchy problem with initial data $ S_0 := \chi_0+dd^c\phi_0$
for the normalized K\"ahler-Ricci flow on $Y$ admits a unique   pluripotential solution defined
on $ [0,T_{max}[ \times Y$.
\end{theo}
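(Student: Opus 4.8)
The plan is to transfer the Cauchy problem for the normalized K\"ahler-Ricci flow on the singular variety $Y$ to a degenerate parabolic complex Monge-Amp\`ere equation of the form \eqref{eq:CMAF} on a log-resolution $\pi : X \to Y$, and then to invoke the existence result (Theorem~\ref{thm:existence}) together with the uniqueness result (Corollary~\ref{cor:unique1}). First I would fix, as in Section~\ref{sec:checkgeom}, a log-resolution $\pi$, the smooth family $\chi_t = e^{-t}\chi_0 + (1-e^{-t})\chi$ with $\chi = -dd^c \log h$, and set $\omega_t := \pi^*\chi_t$, $\theta := \pi^*\omega_Y$ for an auxiliary K\"ahler form $\omega_Y$ on $Y$. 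By the Lemma preceding the statement, $(\omega_t)$ satisfies $\theta/A \leq \omega_t$, $-A\omega_t \leq \dot\omega_t \leq A\omega_t$ and $\ddot\omega_t \leq A\omega_t$ on every finite time interval, so that the structural assumptions on the forms from the introduction hold, with $\theta$ semipositive and big and $\mathrm{Amp}(\theta) = \Omega$ the complement of $\mathrm{Exc}(\pi)$ (more precisely the locus where $\pi$ is an isomorphism).

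Next I would identify the density and the nonlinearity. Writing $\theta_t = \chi_t + dd^c\phi_t$ and pulling back by $\pi$, the normalized flow $\partial_t\theta_t = -\mathrm{Ric}(\theta_t) - \theta_t$ becomes, on $X_{T_{max}}$, the scalar equation
\begin{equation*}
(\omega_t + dd^c\varphi_t)^n = e^{\dot\varphi_t + \varphi_t}\, v_{NKRF}\, dV,
\end{equation*}
where $\varphi = \pi^*\phi$ and $v_{NKRF} = \prod_E |f_E|^{2a_E} v$ is the local density of $\mu_{NKRF}$, with $v > 0$ smooth. Here the log terminal condition $a_E > -1$ for all $E$ is exactly what guarantees that $g := v_{NKRF}/v_0 \in L^p(X,dV)$ for some $p > 1$ (with $p$ controlled by $\min_E(a_E+1)$) and $g > 0$ almost everywhere, so $\{g = 0\}$ has measure zero; and $F(t,x,r) = r$ (after absorbing the smooth positive factor and the rescaling) is continuous, increasing in $r$ hence quasi-increasing, smooth, hence locally uniformly Lipschitz and semi-convex. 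Thus the triple $(\omega_t, F, g)$ meets all the hypotheses of the introduction on any interval $[0,T[$ with $T < T_{max}$. The initial datum $\phi_0$, being a bounded $\chi_0$-psh potential on $Y$, pulls back to a bounded $\omega_0$-psh function $\varphi_0 = \pi^*\phi_0$ on $X$.

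Given this dictionary, Theorem~\ref{thm:existence} produces a pluripotential solution $\varphi \in \mathcal{P}(X_T,\omega)$ to \eqref{eq: MAF as measures} with $\varphi_t \to \varphi_0$ in $L^1$ and pointwise, locally uniformly semi-concave in $t$, continuous on $]0,T[\times\Omega$, and Corollary~\ref{cor:unique1} guarantees this solution is unique in that class and independent of $T < T_{max}$, hence glues to a solution on $[0,T_{max}[\times X$. I would then push forward: since $\varphi$ is constant along the fibres of $\pi$ over $\mathrm{Exc}(\pi)$ (by a standard argument, being $\pi^*(\omega_Y)$-psh up to bounded correction and bounded, it descends), it defines a potential $\phi$ on $Y$ with $\theta_t := \chi_t + dd^c\phi_t$ solving the normalized K\"ahler-Ricci flow on $Y_{T_{max}}$ in the weak sense, with initial current $S_0 = \chi_0 + dd^c\phi_0$; uniqueness on $Y$ follows from uniqueness on $X$ since the correspondence $\phi \leftrightarrow \pi^*\phi$ is a bijection on the relevant class. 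The main obstacle I anticipate is not the existence/uniqueness machinery, which is by now a black box, but the bookkeeping needed to verify the geometric assumptions precisely — in particular confirming the $L^p$-integrability exponent from the discrepancies $a_E$, checking that $\mu_{NKRF}$ is well-defined and continuous as a relative volume form independently of the multivalued form $\eta$, and ensuring that the descent from $X$ to $Y$ is legitimate, i.e. that the pluripotential solution on $X$ is genuinely pulled back from $Y$ and that ``solving the flow on $Y$'' is correctly interpreted through this pullback.
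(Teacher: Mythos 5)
Your proposal follows the same overall route as the paper — pull back to a log resolution, check the structural hypotheses, quote Theorem~\ref{thm:existence} and Corollary~\ref{cor:unique1}, glue over $T < T_{\max}$ and descend to $Y$ — but there is a genuine gap at the step where you verify the hypotheses on the family of reference forms. You take $\omega_t = \pi^*\chi_t$ with $\chi_t = e^{-t}\chi_0 + (1-e^{-t})\chi$, where $\chi = -dd^c\log h$ is merely a smooth representative of $K_Y$, and you invoke the Lemma to deduce $\theta/A \leq \omega_t$. But $\chi$ is not K\"ahler in general (since $K_Y$ need not be nef), so for $t$ bounded away from $0$ the form $\chi_t$ can acquire negative eigenvalues even though the class $\{\chi_t\} = e^{-t}\{\chi_0\} + (1-e^{-t})K_Y$ remains K\"ahler; in that case there is no constant $A$ with $\omega_Y/A \leq \chi_t$, and the Lemma (whose proof uses precisely the positivity of the forms on $Y$) does not apply. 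The inequality $\theta \leq \omega_t$ is one of the standing hypotheses of the whole theory, so this is not a cosmetic issue.

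The paper fixes this by observing that, for each $T < T_{\max}$, the classes $\{\chi_t\}$ trace a compact path inside the open cone $\mathcal{K}(Y)$, so one can choose a \emph{new} smooth family of genuine K\"ahler forms $\hat\chi_t$ on $Y$ with $\{\hat\chi_t\} = \{\chi_t\}$; setting $\omega_t := \pi^*\hat\chi_t$ and rewriting the potential equation accordingly, the Lemma and the framework now apply as stated. Equivalently, one can absorb the change $\chi_t \rightsquigarrow \hat\chi_t = \chi_t + dd^c\rho_t$ via the translation invariance of Section~\ref{sec:invariance}, replacing $\varphi_t$ by $\varphi_t + \pi^*\rho_t$ and modifying $F$ by smooth bounded terms, which preserves all the assumptions on $F$. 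Either way, a sentence selecting this K\"ahler reference path is needed before the pullback and the application of Theorem~\ref{thm:existence}. The remaining points you raise — $g = \prod_E |f_E|^{2a_E} \in L^p$ for some $p>1$ by $a_E > -1$, $F(t,x,r) = r$ quasi-increasing/Lipschitz/semi-convex, gluing via uniqueness, and descent from $X$ to $Y$ by boundedness of the potentials along the exceptional fibers — are all correct and in line with the paper's argument.
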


\begin{proof}
Fix $T < T_{max}$. Since for any $t \in [0,T]$, $e^{-t}\{\omega_0\}+(1-e^{-t}) K_{Y} \in  \mathcal{K}(Y)$, 
there exists a smooth family of K\"ahler forms  $(\chi_t)_{0 \leq t\leq T}\in \mathcal{K}(Y)$
 such that for any $t \in [0,T]$, $\{\chi_t\} = \{\omega_t\}$.

We can write $\theta_t=\chi_t + dd^c \phi_t$, where $\phi$ is a solution to the corresponding Monge-Amp\`ere flow at the level of potentials,
\begin{equation*} 
(\chi_t  +dd^c \phi_t)^n=e^{\partial_t \phi+\phi_t} v_Y,
\end{equation*}
on $Y_T$ for some admissible volume form $v_Y$ on $Y$,  or equivalently
$$
(\omega_t+dd^c \f_t)^n=e^{\partial_t \f +\f_t} \mu_{NKRF}
=e^{\partial_t \f+F(t,x,\f_t)}g dV_X,
$$
on a log resolution $\pi:X \rightarrow Y$, where $\mu_{NKRF}$ is a volume form on $X$ 
which can be locally written 
$$
\mu_{NKRF}=\Pi_E |f_E|^{2a_E} dV_X=g dV_X,
$$
where 
$g= \Pi_E |f_E|^{2a_E} \in L^{p}$
for some $p>1$, since $-1<a_E $ for all $E$,
and $g>0$ almost everywhere.

We write here $\f := \pi^* \phi$ and $\omega_t := \pi^* \chi_t$.
Since $(\chi_t)_{0 \leq t\leq T}$ is a smooth family of K\"ahler forms on $Y$, it follows that the family of semi-positive 
forms $[0,T[ \ni t \longmapsto \theta_t $ 
satisfies all our requirements.

 Theorem \ref{thm:existence} can thus be applied (with $F(t,x,r) \equiv r$) and guarantees the existence
 of  a unique pluripotential
 solution to the Monge-Amp\`ere flow on $X_T$ for any fixed $T < T_{max}$ starting at $\f_0$. 
 By uniqueness all these solutions glue into a unique solution of the Monge-Amp\`ere flow on $[0,T_{max}[ \times X$ starting at $\f_0$. Pushing this solution down to $Y$ we obtain a solution to the NKRF starting at $S_0$.
\end{proof}

  \subsubsection{Song-Tian program}
  
  A natural and difficult problem is to understand the asymptotic behavior of $\omega_t$ as 
$t \rightarrow T_{max}$. 
Song and Tian have proposed in \cite{ST17} an ambitious program,
combining the Minimal Model Program and Hamilton-Perelman approach to
the Poincar\'e conjecture.

We focus here on the case when $X$ has non-negative Kodaira dimension.
One would ideally like to proceed as follows :

\begin{itemize}
\item[{\it Step 1.}] Show that $(Y,\omega_t)$ converges to a midly singular K\"ahler variety $(Y_1,S_1)$
equipped with a singular {\it K\"ahler  current} $S_1$, as $t \rightarrow T_{1,max}$;

\item[{\it Step 2.}]  Restart the NKRF on $Y_1$ with initial data $S_1$;

\item[{\it Step 3.}] Repeat  finitely many times to reach a minimal model $Y_r$ ($K_{Y_r}$ is {\it nef});

\item[{\it Step 4.}]  Study  the long term behavior of the NKRF 
and show that $(Y_r,\omega_t)$ converges to
a canonical model $(Y_{can},\omega_{can})$, as $t \rightarrow +\infty$.
\end{itemize}

This program is more or less complete in dimension $\leq 2$ (see the lecture notes by Song-Weinkove in \cite{SW13}
or Tosatti \cite{TosAFST} and references therein).
It is largely open in dimension $\geq 3$, but for {\it Step 2} which has been completed in 
\cite{ST17,GZ13,DNL14,EGZ16,Dat1}.
  
In the sequel we focus on the final {\it Step 4}, i.e. we assume that $T_{max}=+\infty$, so that  $Y$ is a minimal model with log terminal singularities.
The normalized K\"ahler-Ricci flow is then well defined for all times $t>0$, and
our goal is to understand its asymptotic behavior, as $t \rightarrow +\infty$.

 \subsection{Convergence of the NKRF}

 \subsubsection{Convergence of the NKRF on l.t. varieties of general type}  
 
 Let $Y$ be a compact K\"ahler variety with terminal singularities and 
 assume $K_Y$ is big and nef. It has been shown in \cite{EGZ09} that there exists
 a unique positive  closed  current $\omega_{KE}$ on $Y$ such that 
 \begin{itemize}
\item  $ \omega_{KE} \in C_1(K_Y)$  and it has bounded potentials;
\item $\omega_{KE}$ is smooth in $Amp(K_Y)$ where it satisfies ${\rm Ric}(\omega_{KE})=-\omega_{KE}$.
 \end{itemize}
 
The current $\omega_{KE}$ is called the {\it singular K\"ahler-Einstein current}.

 \begin{thm} \label{thm:typegeneral}
Fix $S_0$ a positive closed current with bounded potentials,
 whose cohomology class is K\"ahler.
 The normalized K\"ahler-Ricci flow 
 continuously deforms $S_0$ towards 
 $\omega_{KE}$, as $t \rightarrow +\infty$, 
 at an exponential speed.
 \end{thm}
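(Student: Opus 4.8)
The statement asserts convergence (at exponential speed) of the normalized Kähler--Ricci flow, started from an arbitrary positive closed current $S_0$ with bounded potential in a Kähler class, to the singular Kähler--Einstein current $\omega_{KE}\in c_1(K_Y)$ on a minimal model $Y$ of general type. By Theorem~\ref{thm:nkrf} the flow exists on $[0,+\infty[\times Y$ and, pulling back by a log-resolution $\pi:X\to Y$, is encoded by the unique pluripotential solution $\varphi=\Phi(F,g,\omega_t,\varphi_0)$ of \eqref{eq:CMAF} with $F(t,x,r)\equiv r$, $\lambda=1$, $\omega_t=\pi^*\chi_t$ where $\chi_t=e^{-t}\chi_0+(1-e^{-t})\chi$, $\chi\in c_1(K_Y)$, and $g=\Pi_E|f_E|^{2a_E}\in L^p$, $p>1$. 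The strategy is the classical one (cf.\ \cite{Cao85,TZ06,EGZ16b}): reduce to a statement about the potentials $\varphi_t$ and show $\varphi_t\to\varphi_\infty$ uniformly and exponentially fast, where $\varphi_\infty$ is the potential of $\omega_{KE}$ relative to $\chi:=\lim_t\chi_t\in c_1(K_Y)$, i.e.\ the bounded $\omega_\infty$-psh solution (with $\omega_\infty=\pi^*\chi$) of the elliptic equation $(\omega_\infty+dd^c\varphi_\infty)^n=e^{\varphi_\infty}g\,dV$ furnished by \cite{EGZ09}.

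\textbf{Step 1: Uniform bounds and the limiting elliptic equation.} I would first record that, by Proposition~\ref{pro:bdd} (applied with $\lambda_F=1$, which is harmless since $F(t,x,r)=r$ and the time interval is infinite — here one uses that the class $\chi_t$ converges, so $\rho_1,\rho_2$ can be taken uniform in $t$, as in the lemma of Section~\ref{sec:checkgeom}), the solution $\varphi_t$ is \emph{uniformly bounded on $[0,+\infty[\times X$}: $\|\varphi_t\|_{L^\infty}\le C_0$, a bound that does \emph{not} degenerate as $t\to\infty$ because $\lambda_F=1>0$ and the exponential factors $e^{\lambda_F t}$ that appeared in finite-time estimates are absorbed once one works with the normalized flow (this is precisely the point of Lemma~\ref{lem quasi-increasing vs increasing} and the scaling invariance of Section~\ref{sec:invariance}). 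Simultaneously I would invoke \cite{EGZ09} to get the bounded elliptic solution $\varphi_\infty$ and note that $e^{At}\omega_t$ increasing plus $\chi_t\to\chi$ gives $\|\omega_t-\omega_\infty\|=O(e^{-t})$ in any fixed smooth norm.

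\textbf{Step 2: Exponential contraction via the comparison principle.} The heart of the argument: I would construct explicit super- and sub-barriers of the form $\psi^\pm_t(x)=\varphi_\infty(x)\pm C e^{-\delta t}+(\text{correction terms})$ for suitable small $\delta\in]0,1[$ and large $C$, and check that $\psi^+$ is a pluripotential supersolution and $\psi^-$ a pluripotential subsolution of \eqref{eq:CMAF} with the same initial data majorized/minorized by $\varphi_0$. The verification uses: (i) $\dot\psi^\pm_t=\mp C\delta e^{-\delta t}$; (ii) $(\omega_t+dd^c\psi^\pm_t)^n=(\omega_t+dd^c\varphi_\infty)^n$ up to the $O(e^{-t})$ error coming from $\omega_t-\omega_\infty$, which is dominated by choosing $C$ large and $\delta<1$; (iii) $e^{\dot\psi^\pm_t+\psi^\pm_t}g\,dV=e^{\pm C(1-\delta)e^{-\delta t}}e^{\varphi_\infty}g\,dV$, and since $(\omega_\infty+dd^c\varphi_\infty)^n=e^{\varphi_\infty}g\,dV$ and $\theta/A\le\omega_t$ one gets the correct inequality for $C(1-\delta)$ large enough. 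Because these barriers are constructed from the continuous-in-$\Omega$ function $\varphi_\infty$ (by \cite{EGZ09} $\varphi_\infty$ is continuous on $\mathrm{Amp}(\theta)$) and are affine in $t$, they satisfy the regularity hypotheses of Theorem~\ref{thm: weak comparison principle v3}; applying it on each interval $[t_0,+\infty[$ and letting $t_0\to\infty$ gives $\psi^-_t\le\varphi_t\le\psi^+_t$, hence $\|\varphi_t-\varphi_\infty\|_{L^\infty(X)}\le Ce^{-\delta t}$.

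\textbf{Step 3: Translation to currents and smoothness on the ample locus.} Descending to $Y$, the uniform exponential estimate $\|\varphi_t-\varphi_\infty\|_{L^\infty}\le Ce^{-\delta t}$ means the currents $\theta_t=\chi_t+dd^c\varphi_t$ converge to $\omega_{KE}=\chi+dd^c\varphi_\infty$ in the sense of potentials at exponential speed, giving the stated convergence. On $\mathrm{Amp}(K_Y)$ one upgrades this to $C^\infty$-convergence by a standard bootstrap: the flow is a genuine parabolic Monge--Ampère equation there with smooth nondegenerate data, the $C^0$ exponential estimate feeds into Evans--Krylov and Schauder interior estimates (uniform in $t$ since the geometry is fixed on compact subsets of the ample locus), and one concludes exponential convergence of all derivatives; this is where I would cite the interior parabolic estimates already used in Section~\ref{sec:apriori} and in \cite{EGZ16b}.

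\textbf{Main obstacle.} The delicate point is Step~1: obtaining the $L^\infty$ bound on $\varphi_t$ \emph{uniform in $t\in[0,\infty[$}. The finite-time estimates of Proposition~\ref{pro:bdd} blow up like $e^{\lambda_F T}$, so one genuinely needs the normalization $\lambda=1$ and the fact that the cohomology classes $\chi_t$ stay in a \emph{compact} subset of the Kähler cone converging to $\chi\in c_1(K_Y)$ (this is where "general type", i.e.\ $K_Y$ big and nef, enters — it guarantees $\chi$ lies on the boundary of the Kähler cone but still has an ample locus and admits a bounded potential by \cite{EGZ09}, so the uniform $L^\infty$ input of \cite[Prop.\ 2.6]{EGZ09} applies with constants independent of $t$). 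Once this uniform bound is in hand, the barrier construction and the comparison principle (Theorem~\ref{thm: weak comparison principle v3}) do the rest essentially mechanically.
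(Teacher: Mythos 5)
Your overall strategy — construct explicit sub/super-barriers and invoke the comparison principle Theorem~\ref{thm: weak comparison principle v3} — is indeed the paper's strategy, and your Step~1 and Step~4 observations (uniform bounds, scaling invariance) are in the right spirit. But your proposed barriers have a genuine gap that the paper's construction is specifically designed to avoid.

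You set $\psi^\pm_t=\varphi_\infty\pm Ce^{-\delta t}+(\text{correction terms})$, where the corrections are (judging from your item (ii)) functions of $t$ alone, and you treat the discrepancy $\omega_t-\omega_\infty=e^{-t}(\omega_0-\omega_\infty)$ as a measure-theoretic error of size $O(e^{-t})$ that a large constant $C$ in the exponential factor can absorb. This conflates two different problems. The form $\omega_0-\omega_\infty$ has no sign in general (on $Y$ neither $\chi_0\geq\chi$ nor $\chi_0\leq\chi$), so $\omega_t+dd^c\varphi_\infty=(\omega_\infty+dd^c\varphi_\infty)+e^{-t}(\omega_0-\omega_\infty)$ is \emph{not necessarily positive}: $\varphi_\infty$ need not be $\omega_t$-psh. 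Adding a scalar $Ce^{-\delta t}$ does not repair this, since it does not change $dd^c\psi^\pm_t$. The paper sidesteps this by building the barrier as a convex interpolation of potentials, $u_t=e^{-t}\varphi_0+(1-e^{-t})\varphi_{KE}+h(t)e^{-t}$, so that $\omega_t+dd^c u_t=e^{-t}(\omega_0+dd^c\varphi_0)+(1-e^{-t})(\omega_\infty+dd^c\varphi_{KE})$ is automatically a sum of positive currents; for the supersolution it multiplies $\varphi_{KE}$ by $(1+Be^{-t})$ using $\omega_0\leq(1+B)\omega_\infty$. Without an analogous device your barriers are not parabolic potentials, so the comparison principle cannot be applied to them.

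A second, smaller gap: even after fixing psh-ness, the mixed Monge--Amp\`ere inequality produces a factor $(1-e^{-t})^n$, and the required inequality becomes $n\log(1-e^{-t})\geq\dot\psi^-_t+\psi^-_t-\varphi_\infty$. The left side tends to $-\infty$ as $t\to 0^+$, so a correction of the form $Ce^{-\delta t}$ — bounded at $t=0$ — cannot compensate it and still dominate the initial condition uniformly. This is exactly the role of the paper's term $h(t)e^{-t}$ with $h(t)=n(e^t-1)\log(e^t-1)-nte^t$: it vanishes at $t=0$, behaves like $n(t\log t-t)$ near $0$ to match the barrier from Proposition~\ref{pr:sbarrier}, and decays like $-n(t+1)e^{-t}$ as $t\to\infty$. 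Incidentally, this also yields the sharper rate $O((t+1)e^{-t})$, whereas your ansatz only delivers $O(e^{-\delta t})$ for some $\delta<1$. Finally, your Step~3 (bootstrap to $C^\infty$ on the ample locus) proves more than the theorem states and is not part of the paper's argument; it is fine as a remark but should not be presented as required.
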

 
 \begin{proof}
 It is classical that the problem boils down to solving and studying the longterm behavior
 of the parabolic scalar equation
 \begin{equation} \label{eq:type}
  (\chi_t+dd^c \f_t)^n=e^{\partial_t \f_t+\f_t} v_Y,
 \end{equation}
 with initial data $\f_0$, where $T_0=\chi_0+dd^c \f_0$,
 and $\chi_t=e^{-t}\chi_0+(1-e^{-t})\chi$.
 Here $\chi$ is a K\"ahler form representing $c_1(K_Y)$.
 
 The existence of the unique maximal solution $\f_t$ has been explained in Theorem \ref{thm:nkrf},
 so the problem is to show that $\f_t \rightarrow \f_{KE}$, as $t \rightarrow +\infty$,
 where  $\omega_{KE}=\chi+dd^c  \f_{KE}$.
 We let the reader check that 
 $$
 u(t,x)=e^{-t} \f_0+(1-e^{-t})  \f_{KE} +h(t)e^{-t}
 $$
 is a subsolution to (\ref{eq:type}), where 
 $$
 h(t)=n(e^t-1)\log (e^t-1)-ne^t \log e^t=O(t).
 $$
 The computations are the same as that of \cite[Theorem 4.3, Step 1]{EGZ16}.
 The comparison principle (Theorem \ref{thm: weak comparison principle v3})  yields
 $$
 \f_{KE}(x)-C(t+1)e^{-t} \leq u(t,x) \leq \f(t,x),
 $$
 for some uniform constant $C>0$.
 
 The proof for the upper bound is similar.
 Since $\chi$ is K\"ahler, we can fix $B>0$ such that $\omega_0 \leq (1+B) \chi$,
 thus $\chi_t \leq (1+Be^{-t}) \chi$ for all $t$.
   We set
 $$
 v_t(x):=(1+Be^{-t})\f_{KE}+Ce^{-t},
 $$
 where $C$ is chosen so that $v_0 \geq \f_0$. The function
 $v$ is a supersolution to the Cauchy problem for the parabolic equation 
 $$
   ([1+Be^{-t}]\chi+dd^c v_t)^n=e^{\partial_t v_t+v_t+n \log[1+Be^{-t}]} v_Y
   \leq e^{\partial_t v_t+v_t+n Be^{-t}} v_Y
 $$
 with initial data $\f_0$, while 
 $w(t,x)=\f(t,x)-nBte^{-t}$ is a subsolution to this equation since
 $$
    ([1+Be^{-t}]\chi+dd^c w)^n \geq    (\chi_t+dd^c \f_t)^n=e^{\partial_t \f_t+\f_t} v_Y
    =e^{\partial_t w_t+w_t+nBe^{-t}} v_Y.
 $$
 
   The comparison principle thus yields
 $$
 \f(t,x) \leq \f_{KE}(x)+C'(t+1)e^{-t}.
 $$
 The conclusion follows.
 \end{proof}

 \subsubsection{Convergence of the KRF on l.t. $\Q$-Calabi-Yau varieties}
 
In this section we study the K\"ahler-Ricci flow on a $\Q$-Calabi-Yau variety $Y$
(i.e. a Gorenstein K\"ahler space of finite index with trivial first Chern class and log-terminal singularities), and prove Theorem D 
of the introduction.

  \begin{thm} \label{thm:calabiyau}
 Fix $S_0$ a positive closed current with bounded potentials,
 whose cohomology class is K\"ahler.
 The  weak K\"ahler-Ricci flow 
 $$
 \frac{\partial \omega_t}{\partial t}=-{\rm Ric}(\omega_t)
 $$
 exists for all times $t>0$, and deforms $S_0$ towards 
the unique Ricci flat K\"ahler-Einstein current 
 $\omega_{KE}$ cohomologous to $S_0$, as $t \rightarrow +\infty$.
 \end{thm}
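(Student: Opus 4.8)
The plan is to pull the flow back to a log resolution, recognise it there as the parabolic complex Monge-Amp\`ere equation \eqref{eq: MAF as measures} with \emph{time-independent} form and \emph{constant} $F$, invoke the existence, uniqueness and stability results already established, and then extract the large-time limit by a compactness argument anchored at the uniqueness of the singular Ricci-flat potential. \emph{Reduction and existence.} Fix a log resolution $\pi\colon X\to Y$, a smooth hermitian metric $h$ on the $\mathbb{Q}$-line bundle $O_Y(K_Y)$ and a trivialising multivalued canonical form $\eta$, exactly as in the proof of Theorem~\ref{thm:nkrf}. Since $c_1(Y)=0$, the Ricci form of a smooth K\"ahler metric equals $-dd^c\log(\cdot/\mu_{CY})$ for the continuous Calabi-Yau volume form $\mu_{CY}$ attached to $(\eta,h)$, whose pull-back to $X$ is $g\,dV$ with $g=\prod_E|f_E|^{2a_E}\in L^p$ for some $p>1$ and $g>0$ almost everywhere. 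The K\"ahler class $\{\omega_t\}=\{S_0\}$ does not move along the flow, so $T_{\max}=+\infty$; writing $\omega_t=\chi+dd^c\phi_t$ for a fixed K\"ahler representative $\chi$ of $\{S_0\}$ and normalising the harmless additive function of $t$, the flow becomes the scalar equation $(\chi+dd^c\phi_t)^n=e^{\dot\phi_t+F}\mu_{CY}$ with a normalising constant $F$. Pulling back via $\pi$ and setting $\omega:=\pi^*\chi$ (big, semipositive, time independent), this is precisely \eqref{eq: MAF as measures} for the admissible data $(\omega,F,g)$, so Theorem~\ref{thm:existence} and Corollary~\ref{cor:unique1} produce a unique pluripotential solution $\varphi$ on $[0,+\infty[\times X$, locally uniformly semi-concave in $t$, with $\varphi_t\to\varphi_0:=\pi^*\phi_0$; pushing it down to $Y$ yields the weak K\"ahler-Ricci flow starting from $S_0$, defined for all $t>0$.

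\emph{Uniform bounds and entropy monotonicity.} Let $\varphi_{KE}$ be the unique bounded $\omega$-psh solution of the \emph{static} equation $(\omega+dd^c\varphi_{KE})^n=e^F g\,dV$, so that $\omega_{KE}:=\omega+dd^c\varphi_{KE}$ is the claimed Ricci-flat K\"ahler-Einstein current (existence, uniqueness and smoothness on $\Omega:=\Amp(\theta)$ from \cite{EGZ09}). The functions $\varphi_{KE}\pm C$, constant in space and time, are respectively a sub- and a supersolution of \eqref{eq: MAF as measures}, so with $C=\|\varphi_0-\varphi_{KE}\|_\infty$ the comparison principle (Theorem~\ref{thm: weak comparison principle v3}) gives the uniform bound $\|\varphi_t-\varphi_{KE}\|_\infty\le C$ for all $t>0$. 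Next, since $\int_X(\omega+dd^c\varphi_t)^n$ equals $\Vol(\{S_0\})=e^F\int_X g\,dV$, Jensen's inequality applied to $\exp$ (as in Proposition~\ref{prop: average}) gives $\int_X\dot\varphi_t\,g\,dV\le0$ for almost every $t$, with equality precisely when $(\omega+dd^c\varphi_t)^n=e^F g\,dV$; hence $t\mapsto\int_X\varphi_t\,g\,dV$ is non-increasing and, being bounded, converges to some $L\in\mathbb{R}$. Finally, restarting the flow at an arbitrary time $T$ and feeding the uniformly bounded datum $\varphi_T$ into the a priori estimates of Section~\ref{sec:apriori} (Theorem~\ref{thm:sumup}) yields $\sup_{t\ge1}|\dot\varphi_t|<\infty$ and $\ddot\varphi_t\le C$ for $t\ge1$, with constants independent of $T$.

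\emph{Large-time convergence.} Fix $T_j\to+\infty$ and consider the time-translated flows $\varphi^{(j)}_s:=\varphi_{T_j+s}$; the equation being autonomous, each $\varphi^{(j)}$ solves \eqref{eq: MAF as measures} with bounded initial datum $\varphi_{T_j}$, and by the previous paragraph $(\varphi^{(j)})$ is uniformly bounded, uniformly Lipschitz and uniformly semi-concave on compacts of $]0,+\infty[$. By Theorem~\ref{thm:Montel} a subsequence converges in $L^1_{\loc}(]0,+\infty[\times X)$ to some $\bar\varphi$, which is again a flow and satisfies $\dot\varphi^{(j)}\to\dot{\bar\varphi}$ almost everywhere (Theorems~\ref{thm:conv1}-\ref{thm:conv2}). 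Since $T_j+s\to+\infty$, dominated convergence gives $\int_X\bar\varphi_s\,g\,dV=L$ for every $s\ge0$, so $s\mapsto\int_X\bar\varphi_s\,g\,dV$ is constant as well as non-increasing, hence $\int_X\dot{\bar\varphi}_s\,g\,dV=0$ for almost every $s$; the equality case in Jensen's inequality then forces $(\omega+dd^c\bar\varphi_s)^n=e^F g\,dV$ for almost every $s$. By the uniqueness theorem of \cite{EGZ09}, $\bar\varphi_s=\varphi_{KE}+c(s)$ with $c(s)$ constant in $x$; since $\dot{\bar\varphi}_s=c'(s)\equiv 0$, this constant is independent of $s$, and integrating against $g\,dV$ identifies it as $c=(L-\int_X\varphi_{KE}\,g\,dV)/\int_X g\,dV$, a value independent of the subsequence and of the sequence $(T_j)$. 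A routine subsequence argument therefore yields $\varphi_t\to\varphi_{KE}+c$ in $L^1(X)$ as $t\to+\infty$, whence $\omega_t=\omega+dd^c\varphi_t\to\omega_{KE}$ weakly as currents; combined with the uniform bound and the regularity on $\Omega$, this upgrades (via the stability estimates of Section~\ref{sec: uniqueness}) to convergence in capacity on $X$ and locally uniform convergence on $\Amp(K_Y)$, which proves the theorem.

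\textbf{The main difficulty} is the large-time step. In the general-type case (Theorem~\ref{thm:typegeneral}) the cohomology class moves like $e^{-t}\{S_0\}+(1-e^{-t})K_Y$ and supplies explicit exponentially decaying barriers; here the class is frozen, there is no cohomological source of decay, and convergence must instead be squeezed out of the interplay between the monotonicity of the $g\,dV$-average of $\varphi_t$, the compactness of $\mathcal{P}(X_T,\omega)$, the continuity of the Monge-Amp\`ere operator along such limits, and the uniqueness of the singular Ricci-flat potential. The two delicate points are: \emph{(i)} obtaining a priori bounds on the tail of the flow that are uniform in the time horizon --- handled by restarting the flow and re-running the estimates of Section~\ref{sec:apriori} with the uniformly bounded datum $\varphi_T$; and \emph{(ii)} ruling out residual oscillation in the limit flow $\bar\varphi$ --- handled above through the equality case of Jensen's inequality rather than through a Harnack or spectral-gap estimate, the latter being unavailable because the metrics $\omega_t$ degenerate along $\mathrm{Exc}(\pi)$.
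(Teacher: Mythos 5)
Your proposal is correct, but it takes a genuinely different route through the large-time step than the paper does. Both arguments agree on the reduction to the scalar flow on a log resolution, on the uniform $C^0$-bound via the static barriers $\varphi_{KE}\pm C$, on the monotonicity of the $g\,dV$-average of $\varphi_t$ coming from Jensen's inequality, and on the uniform in-time bound $|\dot\varphi_t|\le C$ for $t\ge1$ obtained by sliding the flow. From there the two proofs diverge. The paper (Steps~2--3) introduces the Monge--Amp\`ere energy $E(\varphi_t)$, proves in Lemma~\ref{lem: E differentiable} that $\frac{d}{dt}E(\varphi_t)=\int\dot\varphi_t(\theta_0+dd^c\varphi_t)^n\geq0$, and deduces that the Mabuchi-type functional $\mathcal F(\varphi_t)=E(\varphi_t)-\int\varphi_t\,v_Y$ is bounded and increasing with $\frac{d}{dt}\mathcal F\geq C^{-1}\int|\dot\varphi_t|^2$; this produces a sequence $t_j\to+\infty$ along which $\dot\varphi_{t_j}\to0$ in $L^2$, after which the elliptic $L^1$--$L^\infty$ stability of \cite{GZ12} forces uniform convergence $\varphi_{t_j}\to\varphi_{KE}$, and the semi-group property with $\varphi_{KE}\pm\varepsilon$ as barriers upgrades this to full convergence. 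You bypass the energy functional entirely: you translate in time, invoke the compactness of $\mathcal P(X_T,\omega)$ together with the uniform semi-concavity from Section~\ref{sec:apriori} to extract a limit flow $\bar\varphi$ (which requires, as you note, the full stability machinery to see the limit is again a flow), and then exploit the fact that the $g\,dV$-average of $\bar\varphi_s$ is frozen at its limiting value $L$ to get equality in Jensen and hence $\dot{\bar\varphi}_s\equiv0$, identifying $\bar\varphi$ as the static KE potential by uniqueness. What the paper's energy argument buys is a quantitative conclusion (decay of $\int|\dot\varphi_t|^2$) and a shorter path to uniform convergence via the elliptic stability estimate; it also displays the gradient-flow structure explicitly, which was the guiding principle in \cite{Cao85,EGZ16}. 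What your route buys is that it replaces the monotonicity formula for $E$ (which requires the a.e.\ differentiability argument of Lemma~\ref{lem: E differentiable}) with a softer dynamical-systems style compactness argument, and it isolates the only structural input actually needed: that the fibred average $t\mapsto\int\varphi_t\,g\,dV$ is monotone and converges. One small presentational remark: the two steps you summarise as ``routine'' --- that the translated limit $\bar\varphi$ genuinely satisfies the parabolic equation, and the passage from subsequential $L^1_{\loc}$-convergence to convergence of all slices in $L^1(X)$ --- both lean on Proposition~\ref{pro:stab}, Lemma~\ref{lem:L1Slice-L1} and Theorems~\ref{thm:conv1}--\ref{thm:conv2} in a way worth making explicit, since this is where the parabolic pluripotential machinery is doing real work.
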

 
 The existence of the singular Ricci flat K\"ahler-Einstein current 
 $\omega_{KE}$ has been shown in \cite{EGZ09}, generalizing Yau's celebrated solution to the Calabi conjecture \cite{Yau78}.
 
 \begin{proof}
 It is classical that the problem boils down to solving and studying the longterm behavior
 of the parabolic scalar equation
 \begin{equation} \label{eq:CY}
  (\theta_0+dd^c \f_t)^n=e^{\partial_t \f_t} v_Y,
 \end{equation}
 with initial data $\f_0$, where $S_0=\theta_0+dd^c \f_0$.
 
 The existence of the unique semi-concave solution $\f_t$ has been explained in Theorem \ref{thm:nkrf}.
 We are going to show
that $\f_t$ uniformly converges to $\f_{KE}$, as $t \rightarrow +\infty$,
 where  $\omega_{KE}=\theta_0+dd^c  \f_{KE}$ with
 $$
 (\theta_0+dd^c \f_{KE})^n=v_Y,
 $$
 and the bounded $\theta_0$-\psh function $\f_{KE}$ is properly normalized.
 We proceed in several steps.
 
 \smallskip
 
\noindent  {\it Step 1: ${\mathcal C}^0$-bounds and normalization.}
  It follows  from the comparison principle that $(\f_t)$ 
 remains uniformly bounded : indeed $\f_{KE} - C$ (resp. $\f_{KE}+C$) provides a static subsolution (resp. supersolution)
  to the Cauchy problem if 
 $C>0$ is so large  that $\f_{KE}-C \leq \f_0$ (resp. $\f_{KE}+C \geq \f_0$).
 
 We assume without loss of generality that $v_Y$ and $\theta_0$ are normalized by
 $$
 \int_Y \theta_0^n=v_Y(Y)=1.
 $$
 The concavity of the logarithm insures that 
 \begin{eqnarray*}
 \int_Y \partial_t \f_t \, v_Y &=& \int_Y \log \left[ (\theta_0+dd^c \f_{t})^n / v_Y \right] v_Y \\
&\leq & \log \left[  \int_Y  (\theta_0+dd^c \f_{t})^n \right] =0,
 \end{eqnarray*}
hence $t \mapsto \int_Y \f_t \, v_Y$ is decreasing.
We therefore impose the normalization
 $$
 \int_Y \f_{KE} \, v_Y=\lim_{t \rightarrow +\infty} \int_Y \f_{t} \, v_Y.
 $$

\smallskip

\noindent  {\it Step 2: Monotonicity of the Monge-Amp\`ere Energy along the flow.}
We now observe that
 $t \mapsto E(\f_t)$ is increasing, where
 $$
 E(\f_t):=\frac{1}{n+1} \sum_{j=0}^n \int_Y \f_t (\theta_0+dd^c \f_t)^j \wedge \theta_0^{n-j}.
 $$
 More precisely :
 
 \begin{lemma}
	\label{lem: E differentiable}
	The function $t\mapsto E(\varphi_t)$ is differentiable almost everywhere  with
	$$
	\frac{d}{dt} E(\varphi_t) = \int_Y \dot{\varphi}_t (\theta_0+dd^c \varphi_t)^n \geq 0,
	$$
	for almost every $t\in ]0,+\infty[$. 
\end{lemma}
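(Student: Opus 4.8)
The plan is to prove Lemma \ref{lem: E differentiable} by combining the regularity of $t \mapsto \varphi_t$ already established (local uniform Lipschitz continuity in $t$, hence absolute continuity of $t \mapsto \varphi_t(x)$ for a.e.\ $x$, and local uniform semi-concavity, so that $\partial_t \varphi$ exists a.e.\ and is locally bounded) with the known first variation formula for the Monge-Amp\`ere energy $E$. First I would recall that for bounded $\theta_0$-psh functions $u,v$ one has the classical identity
$$
E(u) - E(v) = \frac{1}{n+1} \sum_{j=0}^n \int_Y (u-v)\, (\theta_0+dd^c u)^j \wedge (\theta_0 + dd^c v)^{n-j},
$$
and that $E$ is concave along affine paths; its directional derivative is
$$
\frac{d}{ds}\Big|_{s=0} E\big((1-s)u + s v\big) = \int_Y (v-u)\, (\theta_0+dd^c u)^n .
$$
More generally, if $s \mapsto u_s$ is a path of bounded $\theta_0$-psh functions which is Lipschitz in $s$ (uniformly on $Y$) and differentiable in $s$ at a point $s_0$ for a.e.\ $x$, then $s \mapsto E(u_s)$ is differentiable at $s_0$ with $\frac{d}{ds} E(u_s)\big|_{s_0} = \int_Y \dot u_{s_0}\, (\theta_0+dd^c u_{s_0})^n$; this is exactly the computation in \cite{BBGZ,BBEGZ}.

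Concretely, the key steps in order would be: (1) Show $t \mapsto E(\varphi_t)$ is locally Lipschitz on $]0,+\infty[$, using the energy difference formula above together with the uniform bound on $\|\varphi_t - \varphi_s\|_{L^\infty}$ coming from the Lipschitz control in time and the uniform bound on the masses $\int_Y (\theta_0+dd^c \varphi_t)^j \wedge \theta_0^{n-j} = \int_Y \theta_0^n$. Hence $t \mapsto E(\varphi_t)$ is differentiable a.e. (2) Fix a time $t$ at which $\partial_t \varphi(t,x)$ exists for a.e.\ $x$ (this holds for a.e.\ $t$ by Lemma \ref{lem: existence of time derivative}, applied with $\mu = v_Y$) and at which $t \mapsto E(\varphi_t)$ is differentiable. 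Write the difference quotient
$$
\frac{E(\varphi_{t+h}) - E(\varphi_t)}{h} = \frac{1}{n+1}\sum_{j=0}^n \int_Y \frac{\varphi_{t+h} - \varphi_t}{h}\, (\theta_0+dd^c \varphi_{t+h})^j \wedge (\theta_0 + dd^c \varphi_t)^{n-j}.
$$
(3) Pass to the limit $h \to 0$. The integrand $\frac{\varphi_{t+h}-\varphi_t}{h}$ converges pointwise a.e.\ to $\dot\varphi_t$ and is uniformly bounded (by the time-Lipschitz constant on a compact neighbourhood of $t$); meanwhile, by uniform convergence of $\varphi_{t+h} \to \varphi_t$ on $Y$ and Bedford–Taylor continuity of the mixed Monge-Amp\`ere operators, $(\theta_0+dd^c\varphi_{t+h})^j \wedge (\theta_0+dd^c\varphi_t)^{n-j} \to (\theta_0+dd^c\varphi_t)^n$ weakly. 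Since weak convergence of these measures against a uniformly bounded sequence of functions converging a.e.\ to a continuous-enough limit (or, more robustly, using the quasi-continuity of bounded $\theta_0$-psh functions and a dominated-convergence argument on capacity) passes to the limit in the product, each of the $n+1$ terms converges to $\int_Y \dot\varphi_t\,(\theta_0+dd^c\varphi_t)^n$, giving $\frac{d}{dt}E(\varphi_t) = \int_Y \dot\varphi_t\,(\theta_0+dd^c\varphi_t)^n$. (4) Finally, for the sign: on $Y_T$ the flow equation reads $(\theta_0 + dd^c \varphi_t)^n = e^{\dot\varphi_t}\, v_Y$, so
$$
\int_Y \dot\varphi_t\,(\theta_0+dd^c\varphi_t)^n = \int_Y \dot\varphi_t\, e^{\dot\varphi_t}\, v_Y \geq \int_Y (e^{\dot\varphi_t} - 1)\, v_Y = \int_Y (\theta_0+dd^c\varphi_t)^n - v_Y(Y) = 0,
$$
using $x e^x \geq e^x - 1$ and the cohomological normalization $\int_Y \theta_0^n = v_Y(Y)$.

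The main obstacle is step (3): justifying the interchange of limit and integral when the measures $(\theta_0+dd^c\varphi_{t+h})^j\wedge(\theta_0+dd^c\varphi_t)^{n-j}$ are merely weakly convergent and the test functions $\frac{\varphi_{t+h}-\varphi_t}{h}$ are only bounded and a.e.\ convergent, not continuous. This is the standard subtlety in the theory of the Monge-Amp\`ere energy; the clean way around it is to invoke the convergence results for weighted Monge-Amp\`ere measures along Lipschitz-in-time paths of bounded potentials (Theorem \ref{thm:conv1}, or the analogous statements in \cite{BBEGZ, BEGZ}), possibly after first approximating $\varphi$ by the smooth flows $\varphi^j$ from Theorem \ref{thm:existence}, for which the identity and its proof are elementary, and then letting $j\to\infty$ using the stability/convergence of energies. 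Everything else is routine: the energy difference formula, the uniform mass bounds, and the elementary inequality for the sign.
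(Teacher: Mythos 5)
Your overall plan (Lipschitz $\Rightarrow$ a.e.\ differentiability, then identify the derivative, then the sign) matches the paper's, and steps (1), (2) and (4) are essentially the same (the paper uses Jensen in place of your elementary inequality $xe^x\geq e^x-1$, which is equivalent). The genuine gap is exactly where you flag it, in step (3): passing to the limit in
$$
\frac{1}{n+1}\sum_{j=0}^n \int_Y \frac{\varphi_{t+h}-\varphi_t}{h}\,(\theta_0+dd^c\varphi_{t+h})^j\wedge(\theta_0+dd^c\varphi_t)^{n-j}
$$
requires integrating a sequence of merely a.e.-convergent bounded functions against a sequence of weakly convergent Monge--Amp\`ere measures, and you do not actually supply the argument that closes this; invoking Bedford--Taylor convergence or quasi-continuity, as you suggest, does not by itself control the cross term. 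The paper sidesteps this entirely by the concavity of $E$ (as in \cite[Proposition 2.1]{BBGZ}): the difference quotient is sandwiched,
$$
\int_Y \frac{\varphi_t-\varphi_{t_0}}{t-t_0}\,(\theta_0+dd^c\varphi_t)^n \;\leq\; \frac{E(\varphi_t)-E(\varphi_{t_0})}{t-t_0} \;\leq\; \int_Y \frac{\varphi_t-\varphi_{t_0}}{t-t_0}\,(\theta_0+dd^c\varphi_{t_0})^n,
$$
and then the flow equation converts each side into an integral against the \emph{fixed} absolutely continuous measures $e^{\dot\varphi_t}g\,dV$ and $e^{\dot\varphi_{t_0}}g\,dV$. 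The local semi-concavity of $t\mapsto\varphi_t$ supplies the one-sided continuity of $\dot\varphi_t^{\pm}$ needed to see that both bounds converge, by dominated convergence against a fixed density, to the same limit $\int_Y\dot\varphi_{t_0}e^{\dot\varphi_{t_0}}g\,dV$. This sandwich is the key idea your proposal is missing. Your alternative suggestion (approximate by the smooth flows $\varphi^j$, differentiate, and pass to the limit via Theorem~\ref{thm:conv1}) can be made rigorous, but not at the level of the pointwise-in-$t$ derivative: one must first integrate in time to get $E(\varphi^j_{t_1})-E(\varphi^j_{t_0})=\int_{t_0}^{t_1}\int_Y\dot\varphi^j_t\,e^{\dot\varphi^j_t}g_j\,dV\,dt$, then apply Theorem~\ref{thm:conv1} with $h(r)=re^r$ to the space--time measure, and only then differentiate the resulting absolutely continuous function of $t$. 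As written, your step (3) does not do this and leaves the interchange unjustified.

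Minor remark: in the paper's argument one also needs to know that $t\mapsto E(\varphi_t)$ is differentiable at $t_0$ and that $\dot\varphi_{t_0}^+=\dot\varphi_{t_0}^-$ a.e.\ simultaneously; the paper sets this up carefully by restricting to a full-measure set $I$ of times before applying the sandwich, a bookkeeping step that you should make explicit as well.
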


\begin{proof}
It is straightforward to check that $t\mapsto E(\varphi_t)$ is locally Lipschitz, its differentiability  almost everywhere  
thus follows from Rademacher theorem.  Our goal is now to compute its derivative. 

	By the Lipschitz property of $t\mapsto\varphi_t$ we can find a subset $I\subset ]0,T[$ with $]0,+\infty[\setminus I$ having measure zero such that for every $t_0\in  I$ fixed the function $t\mapsto \varphi(t,x)$ is differentiable at $t_0$ for almost every $x\in Y$. By the first observation we can also assume that $t\mapsto E(\varphi_t)$ is differentiable at  every $t\in I$. 
The semi concavity property of $\varphi_t$ in $t$  moreover ensures that for every $t\in I$  and
almost every $x\in Y$,
	$$
	\dot{\varphi}_t^{+}(x) = \dot{\varphi}_t^{-}(x).
	$$
	
	The semi concavity property of $t\mapsto \varphi_t$ ensures that, for $x\in Y$ fixed, 
	the function $t\mapsto \dot{\varphi}_t^{+}(x)$ is lower semicontinuous, while 
	$t\mapsto \dot{\varphi}_t^{-}(x)$ is upper semicontinuous in $]0,+\infty[$. 
	In particular, for $t_0\in I$ fixed,
	$$
	\liminf_{t\to t_0} \dot{\varphi_t}^+(x) \geq  \dot{\varphi}_{t_0}^{+}(x) =\dot{\varphi}_{t_0}^{-}(x) \geq \limsup_{t\to t_0} \dot{\varphi}_{t}^{-}(x), 
	$$
	for almost every $x\in Y$. 
	
Fix $t_0 \in I$ and  $t\in I, t>t_0$. By concavity of the Monge-Amp\`ere energy  (see \cite[Proposition 2.1]{BBGZ}) we obtain
	$$
	\int_Y \frac{\varphi_t-\varphi_{t_0}}{t-t_0} (\theta_0+dd^c \varphi_{t})^n \leq \frac{E(\varphi_t)-E(\varphi_{t_0})}{t-t_0} \leq \int_Y \frac{\varphi_t-\varphi_{t_0}}{t-t_0} (\theta_0+dd^c \varphi_{t_0})^n.
	$$
Using that $\varphi_t$ is a  solution to \eqref{eq:CY} and $\dot{\varphi}_t^+(x)=\dot{\varphi}_t^-(x)$ a.e., we get
	$$
	\int_Y \frac{\varphi_t-\varphi_{t_0}}{t-t_0}e^{\dot{\varphi}_t} gdV \leq \frac{E(\varphi_t)-E(\varphi_{t_0})}{t-t_0} 
	\leq \int_Y \frac{\varphi_t-\varphi_{t_0}}{t-t_0} e^{\dot{\varphi}_{t_0}}gdV .
	$$
Letting $I\ni t\to t_0$ and using Lebesgue dominated convergence theorem we arrive at the desired formula for the  derivative of $t\mapsto E(\varphi_t)$ at $t_0$. 
	
	It remains to check that $\frac{d}{dt} E(\varphi_t) \geq 0$. This follows from 
	Jensen inequality,
 $$
 \frac{d}{d t} E(\f_t)=\int_Y \dot{\f}_t (\theta_0+dd^c \f_t)^n
 \geq -\log \int_Y v_Y=0.
 $$
\end{proof}

 \smallskip

\noindent  {\it Step 3: Asymptotic behavior of $\dot{\f}_t(x)$.}
We claim that  there exists a constant $C>0$ such that for all $t \geq 1$ and $x \in Y$,
	 $$
	 |\dot{\varphi}_t(x)| \leq C.
	 $$
	 Indeed, since $t\mapsto \varphi_t(x)$ is locally uniformly Lipschitz (away from $t=0$),
	there is  $C>0$ such that  $|\varphi_{s+1} -\varphi_1| \leq  Cs$, for every $s\in [0,1]$. Fix such $s$ and consider,
	 for $ t>0$ and $x\in Y$,
	 $$
	 u_{t}(x):= \varphi(s+t+1,x) -Cs. 
	 $$
	 Observe that $u_0\leq \varphi_{1}$ and 
	 $$
	 (\theta_0+dd^c u_t)^n = e^{\dot{u}_t} v_Y.
	 $$
	 Since the function $(t,x) \mapsto \varphi(t+1,x)$ solves the above equation, it follows from  Theorem \ref{thm: weak comparison principle v3} that $u_t \leq \varphi_{t+1}$, for all $t>0$.  Thus
	 $$
	 \f_{s+t+1} \leq \f_{t+1}+Cs,
	 $$
	 and letting $s \rightarrow 0$
 yields a uniform upper bound for $\dot{\varphi}_t$. The lower bound follows similarly.

\smallskip
 
 We now claim that there is a sequence of times
 $t_j \rightarrow +\infty$ such that
$\dot{\varphi}_{t_j}(x) \rightarrow 0$ for almost every $x \in Y$.
Indeed observe that 
the functional
 $
t \mapsto  {\mathcal F}(\f_t):=E(\f_t)-\int_Y \f_t \, v_Y
 $
 is increasing along the flow :  for a.e. $t \geq 1$,
	 $$
	 \frac{d}{dt} \mathcal{F}(\varphi_t) = \int_Y \dot{\varphi}_t (e^{\dot{\varphi}_t}-1) dv_Y  
	  \geq C^{-1} \int_Y |\dot{\varphi}_t |^2 dv_Y \geq 0. 
	 $$
	 
	  Since $\mathcal{F}$ is uniformly bounded along the flow  there is   $t_j \to +\infty$ such that 
	 $$
	 \int_Y |\dot{\varphi}_{t_j}|^2 dv_Y \to 0
	 $$
Since the time derivative $\dot{\varphi}_t$ is uniformly bounded for $t\geq 1$,
	  it follows that 
	  $$
	  e^{\dot{\varphi}_{t_j}} \to  1
	  $$
	    in $L^{q}(Y,dv_Y)$ for all $1<q<2$,
	    and $\dot{\varphi}_{t_j}(x) \rightarrow 0$ for almost every $x \in Y$
	    (up to extracting and relabelling).
It follows from the elliptic $L^1$-$L^{\infty}$ stability \cite[Theorem C]{GZ12}  that
$\varphi_{t_j}$ uniformly converges to some $\p$ which satisfies
	 $$
	 (\theta_0+dd^c \p)^n =v_Y
	 $$
	 and $\int_Y \p dv_Y=\int_Y \f_{KE} dv_Y$,
	since $\int_X \varphi_t  dv_Y$ decreases to $\int_Y \f_{KE} dv_Y$.
	The uniquess of the normalized
	K\"ahler-Einstein potential \cite{EGZ09} now ensures that $\p=\f_{KE}$,
	i.e. $\varphi_{t_j}$ uniformly converges to $\f_{KE}$.
	  
	   \smallskip

\noindent  {\it Step 4: The semi-group property.}
The conclusion follows now from the fact that our equation is invariant under translations in time : observe that
for all $s>0$, the function $(t,x) \mapsto \p(t,x)=\f(t+s,x)$ is again a bounded parabolic potential, solution to the equation
$$
(\theta_0+dd^c \p_t)^n=e^{\partial_t{\p}_t} dv_Y.
$$

Fix $\varepsilon>0$ and $j$ large enough so that
$$
\sup_X |\f_{t_j}-\f_{KE}| <\e.
$$
The function $\p(t,x)=\f_{KE}(x)-\e$ is  a subsolution to the Cauchy problem for the above equation with initial data 
$\f_{t_j}$. Similarly $\f_{KE}(x)+\e$ is  a supersolution to the same Cauchy problem.
The comparison principle (Theorem \ref{thm: weak comparison principle v3}) therefore yields, for all $t \geq 0$ and $x \in X$,
$$
\f_{KE}(x)-\e \leq \f(t+t_j,x) \leq \f_{KE}(x)+\varepsilon.
$$
Letting $t \rightarrow +\infty$ and then $\e \rightarrow 0$ yields the conclusion.
 \end{proof}

  \subsubsection{Minimal models of intermediate Kodaira dimension}

We finally just say a few words of the more delicate volume collapsing case.
We assume here $Y$ is an abundant minimal model of Kodaira dimension
$1 <\kappa= kod(Y) <n$, i.e.
$K_Y$ is a semi-ample $\Q$-line bundle with $K_Y=f^*K_{Y_{can}}$,
where 
 $f: Y \rightarrow Y_{can}$ is the Iitaka fibration,   with $K_{Y_{can}}$ ample.
 
  A generic fiber $X_y=f^{-1}(y)$ is a $\Q$-Calabi-Yau variety.
We fix $h_A$ a positive hermitian metric of $A$ with curvature form $\theta_A$,
and $\eta$ local (multivalued) non-vanishing holomorphic section of $K_Y$.
It occurs that
$$
v(h_A)=c_n\frac{\eta \wedge \overline{\eta}}{||\eta ||^2_{f^*{h}_A}}
$$ 
is a globally well defined volume form on $Y$
such that the measure $f_*v(h_A)$ has density in $L^{1+\e}$ w.r.t to $\theta_A^{\kappa}$.

Generalizing   \cite{ST12,ST17}, it has been shown 
in \cite{EGZ16b} that there exists a unique bounded $\theta_A$-psh function $\f_{can}$ on $Y_{can}$ s.t.
 
 \begin{itemize}
 \item $(\theta_A+dd^c \f_{can})^{\kappa}=e^{\f_{can}} f_*(v(h_A))$;
 \item the current $\omega_{can}=\theta_A+dd^c \f_{can}$ is  {independent of $h_A$};
 \item it is  {smooth in $Y_{can}^{reg} \setminus \text{critical values  of } f$}.  
\item it satisfies $ \rm{Ric}(\omega_{can})=-\omega_{can}+\omega_{WP}$ in $Y_{can}^{reg} \setminus \rm{critical \text{ } values}$.
 \end{itemize}

The Weil-Petersson term $\omega_{WP}$ is a semi-positive $(1,1)$-form which measures the change of complex structures in the fibers 
of the Iitaka fibration. The current $T_{can}=f^* \omega_{can}$ is an important birational invariant s.t.
 $$
 T_{can}^{\kappa} \wedge \omega_{SF}^{n-\kappa} =e^{\f_{can} \circ f} v(h_A).
 $$
 Here $\omega_{SF}=\omega_0+dd^c  {\rho}$ denotes the fiberwise family of Ricci flat KE metrics,
 $$
 {\omega_{SF}}_{| X_y }=\text{unique Ricci flat metric in } {\{\omega_0 \} }_{| X_y } ,
 $$
  whose existence has been obtained in \cite{EGZ09}.

Extending the main result of \cite{EGZ16b}, the tools developed in this article allow one to establish  the following :

\begin{thm}
If $\dim_{\C} Y \leq 3$ then the normalized K\"ahler-Ricci flow    deforms 
$\omega_0$ towards the canonical current $T_{can}$, as $t \rightarrow +\infty$.
\end{thm}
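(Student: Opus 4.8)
\emph{Reduction to a scalar parabolic flow, and the candidate limit.} The plan is to follow the scheme of \cite{EGZ16b}, replacing the viscosity comparison principle used there by the pluripotential comparison principle of Theorem \ref{thm: weak comparison principle v3}, which is exactly what lets us work with the merely $L^p$ densities produced by the log terminal singularities of the generic fibre. First I would pull the normalized K\"ahler-Ricci flow back to a log resolution $\pi\colon X\to Y$: writing $\omega_t:=\pi^*\bigl(e^{-t}\theta_0+(1-e^{-t})f^*\theta_A\bigr)$, so that $\omega_t\to\theta:=\pi^*f^*\theta_A$, the flow becomes the degenerate Monge-Amp\`ere flow
$$
(\omega_t+dd^c\f_t)^n=e^{\dot\f_t+\f_t}\,g\,dV
$$
on $X_T$ with $T=+\infty$, where $\theta$ is semi-positive and big and $g\in L^p(X,dV)$ for some $p>1$ (thanks to $a_E>-1$; here $n=\dim_\C Y=3$ and $\kappa=2$, so the base $Y_{can}$ is a surface and the density of $f_*v(h_A)$ with respect to $\theta_A^{\kappa}$ lies in $L^{1+\e}$). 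The hypotheses of the introduction are checked as in Section \ref{sec:checkgeom}, so Theorem \ref{thm:nkrf} provides the unique pluripotential solution $\f$, defined for all $t>0$, locally uniformly semi-concave in $t$, and obeying the a priori bounds of Theorem \ref{thm:sumup}. The limiting object is $T_{can}=f^*\omega_{can}=f^*\theta_A+dd^c(f^*\f_{can})$, where $\f_{can}$ is the bounded continuous $\theta_A$-psh solution on $Y_{can}$ of $(\theta_A+dd^c\f_{can})^{\kappa}=e^{\f_{can}}f_*(v(h_A))$; the algebraic identity $T_{can}^{\kappa}\wedge\omega_{SF}^{n-\kappa}=e^{f^*\f_{can}}\,v(h_A)$, with $\omega_{SF}=\omega_0+dd^c\rho$ the fibrewise family of Ricci flat metrics, is what makes the volume collapse computable: combined with a ``no mass on the critical values of $f$'' argument it replaces the genuine $n$-dimensional Monge-Amp\`ere mass of near-collapsed metrics by this base-times-fibre expression.

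\emph{Collapsing barriers and the comparison argument.} For each $\e>0$ I would build, as in \cite{EGZ16b}, barriers of the shape
$$
\f^{\pm}_t:=(1\pm\e)\,f^*\f_{can}\pm\e\,\rho+\ell(t)\mp C\bigl((t+1)e^{-t}+\e\bigr),
$$
where $\ell$ is the explicit affine function of $t$ dictated by the volume-collapse normalization $\{\omega_t\}^n=O(e^{-(n-\kappa)t})$ and $C$ is a large constant chosen so that $\f^-_0\le\f_0\le\f^+_0$. Using the mixed Monge-Amp\`ere inequalities (Lemma \ref{lem: MA mixed}), the fibrewise Ricci flatness of $\omega_{SF}$ and the identity above, one checks that $\f^-_t$ is a pluripotential subsolution and $\f^+_t$ a pluripotential supersolution of the flow. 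Since $\f^+$ is static up to the $O((t+1)e^{-t})$ correction, it is trivially locally uniformly semi-concave in $t$, so Theorem \ref{thm: weak comparison principle v3} applies and yields $\f^-_t\le\f_t\le\f^+_t$ on $X_T$. Letting first $t\to+\infty$ and then $\e\to0$ gives $\f_t-\ell(t)\to f^*\f_{can}$, locally uniformly on $\pi^{-1}$ of the regular locus of $Y_{can}$ minus the critical values of $f$, and in $L^1$ globally; passing to currents, which kills the additive normalization $\ell(t)$, this says precisely that $\omega_t\to T_{can}$, which is the theorem. The overall pattern is the same as in the proofs of Theorems \ref{thm:typegeneral} and \ref{thm:calabiyau}, the only genuinely new feature being the collapsing shape of the barriers.

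\emph{The main obstacle.} The hard part will be the construction and verification of the collapsing barriers $\f^{\pm}$. One must control the transverse behaviour of the semi-flat potential $\rho$ near the singular fibres of $f$ and absorb the Weil-Petersson contribution $\omega_{WP}$ to $\Ric$ into the error terms; this is where $\dim_\C Y\le3$ enters, since it forces the base to be a surface, where the elliptic theory of the degenerate equation $(\theta_A+dd^c\f_{can})^{\kappa}=e^{\f_{can}}f_*v(h_A)$ --- continuity of $\f_{can}$ and $L^{1+\e}$-integrability of its density --- is available with the precision needed to match the Monge-Amp\`ere measures along the flow. Everything else (a priori estimates, existence, comparison) is supplied verbatim by Theorems \ref{thm:sumup}, \ref{thm:nkrf} and \ref{thm: weak comparison principle v3}.
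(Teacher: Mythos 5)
Your overall approach matches the paper's, which at this point is deliberately brief: the paper reduces to the scalar parabolic flow
$$\frac{(\omega_t+dd^c\f_t)^n}{C_n^{\kappa}e^{-(n-\kappa)t}}=e^{\partial_t\f+\f_t}v(h),$$
invokes Theorem \ref{thm:nkrf} for existence and uniqueness, and then simply declares that ``once the objects are well defined, the proof is then identical to that in [EGZ16b, Theorem D].'' Your sketch of collapsing barriers $\f_t^{\pm}$ built from $f^*\f_{can}$ and the semi-flat potential $\rho$, combined with the pluripotential comparison principle of Theorem \ref{thm: weak comparison principle v3}, is a fair description of what that argument does, and you correctly identify that the novelty here over [EGZ16b] is precisely the replacement of the viscosity comparison principle by the pluripotential one, which is what accommodates the $L^p$ density $g$ coming from the klt singularities.

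Where you go astray is in the explanation of the dimension restriction. You attribute $\dim_\C Y\le 3$ to the base $Y_{can}$ being a surface, ``where the elliptic theory of the degenerate equation $(\theta_A+dd^c\f_{can})^{\kappa}=e^{\f_{can}}f_*v(h_A)$ is available with the precision needed.'' That elliptic theory (boundedness and continuity of $\f_{can}$ from an $L^{1+\e}$ density, cf.\ [EGZ09], [Kol98]) is in fact dimension-independent and is already established in [EGZ16b] for all $\kappa$. The paper is explicit that the restriction is instead ``related to a regularity issue for some families of Ricci flat metrics,'' namely the fibrewise family $\omega_{SF}|_{X_y}$. Since $1<\kappa<n$ forces $n\ge3$, the bound $n\le3$ gives $n=3$, $\kappa=2$, so the fibres $X_y$ are \emph{one}-dimensional $\Q$-Calabi--Yau curves; it is the regularity of the family of Ricci flat metrics on these fibres (equivalently of the semi-flat potential $\rho$ entering your barriers) that is only available in the needed strength when the fibres are curves, not the elliptic theory on the base. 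This does not change the logical skeleton of your argument, which still defers the hard barrier construction to [EGZ16b], but the rationale you give for the hypothesis $\dim_\C Y\le3$ is incorrect and should be replaced.
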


\begin{proof}
For a suitable choice of the normalizing constants, the normalized K\"ahler-Ricci flow is equivalent 
the following parabolic complex Monge-Amp\`ere flow of potentials,
\begin{equation*} 
\frac{(\omega_t+dd^c \f_t)^n}{C_n^{\kappa} e^{-(n-\kappa)t}}=  e^{\partial_t \f+\f_t} v(h),
\end{equation*}
starting from an initial bounded potential $\f_0 \in \PSH(X,\omega_0)$.
We have normalized here both sides so that the volume of the left hand side converges to
$1$ as $t \rightarrow +\infty$. Here $C_n^k$ denotes the binomial coefficient
$
C_n^k=\binom{n}{k}.
$ 
It follows from Theorem \ref{thm:nkrf} that this flow admits a unique bounded  pluripotential solution.

Once the objects are well defined, the proof is then identical to that in \cite[Theorem D]{EGZ16b}. 
The restriction on $\dim_{\C} Y$   is related to a regularity
 issue for some families of  Ricci flat metrics.
\end{proof}


\begin{thebibliography}{99}

 
 \bibitem[BT76]{BT76} E. Bedford, B.A.  Taylor,  {\em The Dirichlet problem for a complex Monge-Amp\`ere equation.}  Invent. Math. {\bf 37}  (1976), no. 1, 1--44. 
 
\bibitem[BT82]{BT82} E.~Bedford, B.A.~Taylor, {\em  A new capacity for plurisubharmonic functions. } 
Acta Math.  {\bf 149}  (1982), 1--40.
 
\bibitem[BBEGZ]{BBEGZ} R.~Berman, S.~Boucksom, P.Eyssidieux, V.~ Guedj, A.~Zeriahi,  {\em K\"ahler-Einstein metrics and the K\"ahler-Ricci flow on log Fano varieties}. Preprint,  arXiv:1111.7158. To appear in Crelle.

\bibitem [BBGZ13] {BBGZ} R.~Berman, S.~Boucksom, V.~Guedj, A.~Zeriahi,  {\it A variational approach to complex Monge-Amp\`ere equations}.  Publ. Math. I.H.E.S. {\bf 117} (2013), 179--245.

  
 
 



 
 
 


\bibitem [Cao85]{Cao85} H.~D.~Cao,  {\em Deformation of K\"ahler metrics to K\"ahler-Einstein metrics on compact K\"ahler manifolds.} Invent. Math. {\bf 81} (1985), no. 2, 359--372.
 
 \bibitem[CT15]{CT15} T. Collins, V. Tosatti,   {\it K\"ahler currents and null loci.} Invent. Math. 202 (2015), no. 3, 1167--1198.
 
 \bibitem[CS16]{CS16} T. Collins, G. Sz\'ekelyhidi,  {\it The twisted K\"ahler-Ricci flow.}  J. Reine Angew. Math. 716 (2016), 179--205. 
  

 
 



 \bibitem[Diw09]{Diw09} S. Dinew, {\em An inequality for mixed Monge-Amp\`ere measures.}  Math. Zeit. {\bf 262} (2009), no. 1, 1--15. 

  \bibitem[DNL17]{DNL14} E.DiNezza, C.H. Lu,  {\em Uniqueness and short time regularity of the weak K\"ahler-Ricci flow.}
  Adv. Math. {\bf 305} (2017), 953--993.
 

 \bibitem[EGZ08]{EGZ08}  P.~Eyssidieux, V.~ Guedj, A.~ Zeriahi,  {\em A priori $L^{\infty}$-estimates for degenerate  complex Monge-Amp\`ere equations.} IMRN  (2008).
 
 \bibitem[EGZ09]{EGZ09}  P.~Eyssidieux, V.~ Guedj, A.~ Zeriahi,  {\em Singular K\"ahler-Einstein metrics }  
 J. Amer. Math. Soc. {\bf 22} (2009), 607--639. 
  
   \bibitem[EGZ11]{EGZ11}  P.~Eyssidieux, V.~ Guedj, A.~ Zeriahi,  {\em Viscosity solutions to Degenerate Complex Monge-Amp\`ere Equations.}  Comm. Pure Appl. Math.  {\bf 64}  (2011),  no. 8, 1059--1094. 
  
 


\bibitem[EGZ16]{EGZ16} P. Eyssidieux, V. Guedj, A. Zeriahi,  {\em Weak solutions to degenerate complex Monge-Amp\`ere flows II.}  Advances in Math. {\bf 293} (2016), 37--80.
 
   \bibitem[EGZ17]{EGZ17} P.~Eyssidieux, V.~ Guedj, A.~ Zeriahi,   {\it Corrigendum to Viscosity solutions to complex Monge-Amp\`ere equations},  Comm. Pure Appl. Math. 70 (2017), no. 5, 815--821.  
 
 \bibitem[EGZ18]{EGZ16b} P. Eyssidieux, V. Guedj, A. Zeriahi, 
 {\em Convergence of weak K\"ahler-Ricci Flows on minimal models of positive Kodaira dimension}.
 Comm. Math. Phys. {\bf 357} (2018), no. 3, 1179--1214. 
 



  

\bibitem[GLZ17]{Guedj_Lu_Zeriahi_2017subsolution} V.  Guedj, C. H. Lu, A. Zeriahi,  \emph{{Weak subsolutions to complex Monge-Amp\`ere equations}}, arXiv:1703.06728 (2017). J.Math.Soc.Jap., to appear.

 
\bibitem[GLZ18]{GLZstability}  V.~ Guedj, C.H.~ Lu,  A.~ Zeriahi,   {\it Stability of solutions to   complex Monge-Amp\`ere flows.} Preprint 2018. Ann.Inst.Fourier, to appear.


  \bibitem[GLZ1]{GLZparab1} V.~ Guedj, C.H .Lu, A.~ Zeriahi,  {\em The pluripotential Cauchy-Dirichlet problem for complex Monge-Amp\`ere flows}. Preprint (2018).
 
  
   \bibitem[GLZ3]{GLZparab3} V.~ Guedj, C.H .Lu, A.~ Zeriahi,  {\em Viscosity vs Pluripotential solutions to complex Monge-Amp\`ere flows}. In preparation.
 

 
\bibitem[GZ05]{GZ05} V.~ Guedj, A.~ Zeriahi,  {\em Intrinsic capacities on compact K{\"a}hler  manifolds. } J. Geom. Anal.  {\bf 15}  (2005),  no. 4, 607--639.

\bibitem[GZ12]{GZ12} V.~ Guedj, A.~ Zeriahi,  {\em Stability of solutions to the complex Monge-Amp\`ere equations in big cohomology classes.} Math. Res. Lett. {\bf 19} (2012), no. 05, 1025--1042.

\bibitem[GZ17]{GZ13} V.~ Guedj, A.~ Zeriahi,   {\em Regularizing properties of the twisted K\"ahler-Ricci flow.} 
Journal f\"ur die reine und ang. Math., {\bf 729} (2017), 275--304. 

\bibitem[GZ]{GZ16} V.~ Guedj, A.~ Zeriahi,  {\em Degenerate Complex Monge-Amp\`ere Equations}. EMS Tracts in Mathematics {\bf 26} (2017), 496pp.


  \bibitem[Ham82]{Ham82} R. Hamilton,  {\em Three-manifolds with positive Ricci curvature.}
  J. Diff. Geom. {\bf 17} (2) (1982), 255--306.


 
 




 




\bibitem[KM]{KM} J. Koll\'ar, S. Mori,  {\em Birational geometry of algebraic varieties.}  Cambridge Tracts in Math, {\bf 134} (1998), 254pp.

\bibitem[Ko{\l}96]{Kolodziej_1996Monge}
S. Ko{\l}odziej,    \emph{Sufficient conditions for solvability of the Dirichlet problem for the complex Monge-Amp\`ere operator.} Ann. Polon. Math. 65 (1996), no. 1, 11--21. 


 \bibitem[Ko{\l}98]{Kol98}  S.~Ko\l{}odziej,  {\em The complex Monge-Amp{\`e}re equation. } Acta Math. {\bf 180} (1998), no. 1, 69--117.
 
 \bibitem[Ko{\l}03]{Kol03} S.~Ko\l{}odziej,  {\em The Monge-Amp\`ere equation on compact K\"ahler manifolds.}  Indiana Univ. Math. J. {\bf 52} (2003), no. 3, 667--686.



\bibitem[PSSW08]{PSSW08} D.H.  Phong,  J. Song, J. Sturm,  B. Weinkove,   {\em The K\"ahler-Ricci flow with positive bisectional curvature}, Invent. Math. {\bf 173} (2008), no. 3, 651--665.

\bibitem[PSSW09]{PSSW09} D.H.  Phong,  J. Song, J. Sturm,  B. Weinkove,  {\it The K\"ahler-Ricci flow and the $\bar{\partial}$-operator on vector fields.} J. Differential Geom. 81 (2009), no. 3, 631--647. 


 



  \bibitem[SSW13]{SSW13} J. Song, G. Sz\'ekelyhidi, B. Weinkove,  {\it The K\"ahler-Ricci flow on projective bundles.} Int. Math. Res. Not.  2013, no. 2, 243--257. 

\bibitem[ST12]{ST12} J. Song, G. Tian,  {\em  Canonical measures and K\"ahler-Ricci flow.} 
  J. Amer. Math. Soc. {\bf 25} (2012), no. 2, 303--353.
  


\bibitem[ST17]{ST17}
J. Song, G. Tian,   \emph{The {K}\"ahler-{R}icci flow through
  singularities}, Invent. Math. \textbf{207} (2017), no.~2, 519--595.
  

  
  \bibitem [SW13]{SW13} J.~Song, B.Weinkove,  Lecture notes on the K\"ahler-Ricci flow. 
    ``Introduction to the K\"ahler-Ricci flow'', eds S. Boucksom, P. Eyssidieux, V. Guedj, L.N.M.  {\bf 2086}  (2013).


\bibitem[To17]{Dat1} T.D.To,  {\em Regularizing properties of complex Monge-Amp\`ere flows. } J. Funct. Anal. 272 (2017), no. 5, 2058--2091. 
 
\bibitem[Tos18]{TosAFST} V.~Tosatti,  {\em  KAWA lectures notes on the K\"ahler-Ricci flow}.  Ann. Fac. Sci. Toulouse Math. 
{\bf 27}, no2, (2018), 285--376.

\bibitem[TZ15]{TZ15} V. Tosatti, Y. Zhang: {\it Infinite-time singularities of the K\"ahler-Ricci flow.} 
Geom. Topol. 19 (2015), no. 5, 2925--2948. 


 
     

\bibitem[TZ06]{TZ06} G. Tian, Z. Zhang,   {\em On the K\"ahler-Ricci flow on projective manifolds of general type},   Chinese Ann. Math. Ser. B  {\bf 27}  (2006),  no. 2, 179--192.

\bibitem[Tsu88]{Tsu88} H.~Tsuji,  {\em Existence and degeneration of K{\"a}hler-Einstein metrics on minimal algebraic varieties of general type.  }
Math. Ann. {\bf  281}  (1988),  no. 1, 123--133.
 
 \bibitem[Yau78]{Yau78} S.T.~Yau,  {\em On the Ricci curvature of a compact K\"ahler manifold and the complex Monge-Amp\`ere equation}, Comm. Pure Appl. Math. {\bf 31}, 339--441 (1978). 
 
 \end{thebibliography}
\end{document}